\newlist{multienum}{enumerate}{1}
\setlist[multienum]{
    label=\alph*),
    before=\begin{multicols}{2},
    after=\end{multicols}
}
\newlist{multiitem}{itemize}{1}
\setlist[multiitem]{
    label=\textbullet,
    before=\begin{multicols}{2},
    after=\end{multicols}
}
\newcommand{\noi}{{\noindent}}
\newcommand{\cF}{\mathcal{F}}
\newcommand{\M}{\mathcal{M}}
\newcommand{\R}{\mathbb{R}}
\newcommand{\Z}{\mathbb{Z}}
\newcommand{\Q}{\mathbb{Q}}
\newcommand{\C}{\mathbb{C}}
\newcommand{\T}{\mathbb{T}}
\newcommand{\sS}{\mathbb{S}}
\newcommand{\N}{\mathbb{N}}
\newcommand{\PSL}{PSL(2,\R)}
\renewcommand{\H}{\mathbb{H}}
\newcommand{\cL}{\mathcal{L}}
\newcommand{\cD}{\mathcal{D}}
\newcommand{\cS}{\mathcal{S}}
\newcommand{\cU}{\mathcal{U}}
\newcommand{\cA}{\mathcal{A}}
\newcommand{\hz}{\hat\Z}
\newcommand{\tg}{\tilde{g}}
\newcommand{\tM}{\tilde{M}}
\newcommand{\tx}{\tilde{x}}
\newcommand{\kh}{\mathfrak{h}}
\newcommand{\kg}{\mathfrak{g}}
\newcommand{\wg}{\widehat{\Gamma}}
\newcommand{\wpi}{\widehat{\pi_1(M)}}
\newcommand{\ie}{{\em i.e.,\,\,}}
\newtheorem{theorem}{Theorem}
\newtheorem{lemma}{Lemma}
\newtheorem{proposition}{Proposition}
\newtheorem{corollary}{Corollary}
\theoremstyle{definition}
\newtheorem{definition}{Definition}
\newtheorem{example}{Example}
\newtheorem{remark}{Remark}
\newtheoremstyle{named}{}{}{\itshape}{}{\bfseries}{.}{.5em}{\thmnote{#3} #1}
\theoremstyle{named}
\begin{document}
 
\title{Low-dimensional solenoidal manifolds}

\author[ Alberto Verjovsky]{Alberto Verjovsky}
\address{Instituto de Matem\'aticas,\\
Universidad Nacional Aut\'onoma de M\'exico\\
Apartado Postal 273,
Admon. de correos \#3,
C.P. 62251 Cuernavaca,
Morelos, México.}
\email{alberto@matcuer.unam.mx}
 
\date{}
\thanks{}
\keywords{Solenoidal manifolds, Laminations, Profinite completions}
\subjclass[2010]{57M50, 32G15}
\maketitle
\vspace{-.7cm}
{\footnotesize
\centerline{\it To Dennis Sullivan on the occasion of his 80th birthday,}
\centerline{\it for all the inspiration and mathematical ideas he has shared with me.}
}
\vspace{-.4cm}

\begin{abstract} In this paper we survey $n$-dimensional solenoidal manifolds for $n=1,2$ and 3, and present new results about them.
Solenoidal manifolds of dimension $n$ are metric spaces locally modeled on the product of a Cantor set and an open $n$-dimensional disk. Therefore,
they can be ``laminated'' (or ``foliated'') by $n$-dimensional leaves.
 By a theorem of A. Clark and S. Hurder, topologically homogeneous, compact solenoidal manifolds are McCord solenoids
\ie are obtained as the inverse limit of an increasing tower of finite, regular covering spaces of a compact manifold with an infinite and residually finite fundamental group. In this case, their structure is very rich since they are principal
Cantor-group bundles over a compact manifold and behave like ``laminated'' versions of compact manifolds, thus they share many of their properties. These objects codify the commensurability properties of manifolds.
\end{abstract}

\tableofcontents

\section{Introduction} \label{introduction}

\noi  Given an integer $k\geq0$, a topological $k$-dimensional {\it solenoidal manifold} or {\it solenoidal lamination} or simply {\it $k$-dimensional solenoid}, is a metric space which is locally the product of a euclidean open $k$-disk and an infinite, compact, perfect and totally disconnected set  (\ie a Cantor set). If the space is locally the product of an open subset of the closed $n$-disk and the Cantor set, the space is a {\it solenoidal manifold with boundary}.
The spaces are laminated with $k$-dimensional leaves and with a Cantor set transverse structure. When $k=1$ sometimes we will simply call them {\it solenoids} and when
$k=2$ we will refer to them as {\it Riemann surface solenoidal laminations}.
These solenoidal manifolds appear naturally in many branches of mathematics.
For example,
in topology, the Vietoris-Van Dantzig $p$-adic solenoids \cite{VD, Vi} are some of the fundamental examples which motivated the development of new homology and cohomology theories that could apply to these spaces, for instance in the papers by Steenrod \cite{St1, St2}.

\noindent Solenoids appear naturally also as Pontryagin duals of discrete locally compact Hausdorff abelian groups. For instance, if ${\mathbb Q}$ denotes additive rationals with the discrete topology then
its Pontryagin dual   ${\mathbb Q}^*$ is the abelian universal 1-dimensional solenoid which is a compact abelian group that fibers over the circle ${\mathbb S}^1$ via an epimorphism
$p:{\mathbb Q}^*\to{\mathbb S}^1$ where the fiber is the Cantor group $\hz$ which is the profinite completion of the integers $\mathbb Z$.
This fact has a close relationship with the {\it adèles} and {\it idèles}  and its properties are the first steps in Tate's thesis \cite{T}.

\noi  Cantor groups \ie topological groups homeomorphic to the Cantor set,
 play an important role in the present paper. In particular,
 the profinite groups which are the profinite completions of
 residually finite groups.
 
 \noi A compact 0-dimensional, solenoidal manifold is just a Cantor set.
 
\noi Solenoids have a relevant role in dynamical systems, as basic sets of Axiom A diffeomorphisms in the sense of Smale \cite{Sm}. In particular, one-dimensional expanding attractors are solenoidal manifolds and were studied extensively by Robert Williams \cite{Wi}. In \cite{ALM} it is shown that
a similar construction as that in \cite{Wi} holds in higher dimensions: compact solenoidal manifolds without holonomy are obtained as inverse limits of coverings of branched manifolds.
 
\noi Some solenoidal manifolds encapsulate tiling spaces \cite{AP}, \cite{Gh1}, \cite{LS}, \cite{SW},
\cite{BBG}.
 
 \noi Laminations play an important role in holomorphic dynamics
\cite{Gh1,DNS, LM, LMM, Su2}.
Some results on dynamics on laminations (geodesic flows, horocycle flows,
harmonic measures) can be found, for instance, in  
\cite{ADMV, ADMV2, BM ,De, De1, MMV}.

\noi The papers \cite{CH}, \cite{CHL}, \cite{CHL1}, \cite{HL} by A. Clark, S. Hurder and O. Lukina deal with solenoidal manifolds which they call {\it matchbox manifolds}.
 
\noi A beautiful theory, covering many topics of solenoids, in particular regarding their ergodic properties, has been developed by Vicente Muñoz and Ricardo Pérez Marco \cite{MPM1, MPM2, MPM3, MPM4, MPM5, MPM6}.
 
\noi Higher-dimensional laminations with hyperbolic leaves have been considered
by Misha Kapovich in \cite{Ka2}.
 
\noi Lastly, one of the most fascinating subjects and achievements in mathematics of the first quarter of this century is the work of Peter Scholze and his collaborators on {\it Perfectoid spaces} \cite{Sch1, Sch2}. These spaces are closely related to solenoids. In fact, the prototype of a perfectoid space is the $p$-adic solenoid.
 
\smallskip
\noi The paper is organized as follows. In {\it section} \ref{Preliminaries} we introduce the definition of {\it solenoidal manifolds} and establish some of their properties. In {\it section} \ref{thsm} we study topologically homogeneous solenoidal manifolds. Some of the most important examples of compact $n$-dimensional solenoidal manifolds are obtained as inverse limits of an infinite tower of finite regular coverings of compact $n$-dimensional manifolds, with an infinite and residually finite fundamental group.
 
\noi A theorem of M. C. McCord implies that a solenoid obtained by such a tower
is homogeneous, in fact, it is a principal locally trivial fiber bundle over a compact $n$-manifold with fiber a Cantor group \cite{McC}.
 
\noi The reciprocal is a theorem of A. Clark and S. Hurder \cite{CH}
(see {\it Theorem} \ref{ClarkHurder} {\it section} \ref{thsm}). It states that
compact solenoidal manifolds which are also topologically homogeneous are McCord solenoids \ie they have
the property of being obtained as the inverse limit of
an infinite tower of regular coverings
of increasing order of a compact manifold. If we take the complete tower of
regular coverings the solenoid is the {\it algebraic universal covering} of $M$.
The Cantor group fiber is the {\it profinite completion} of the fundamental group.
All non simply connected compact surfaces have infinite, residually finite, fundamental groups, therefore we have a solenoidal 2-manifold obtained by an infinite tower of finite {\it regular} coverings of order $\geq2$.
Since the subgroups of finite index in the tower are normal,
the inverse limit is a principal fiber bundle over the surface with fiber the Cantor group which is the profinite completion of the sequence of finite groups of deck transformations. In  particular this
Cantor group acts transitively on the Cantor fiber and one can show that the Riemann surface lamination is topologically homogeneous. Haar measure is a holonomy--invariant transverse measure. In particular, these laminations are minimal (\ie every leaf is dense). MacCord solenoids endowed with a laminated Riemannian metric have a very natural ``volume form'' which is a measure that disintegrates into the volume form
(with respect to the Riemannian metric) in the leaves and disintegrates into the
Haar measure in the Cantor group transversals. In section \ref{Differentiable-McCord-solenoids}
we define {\it differentiable McCord solenoids} and in {\it proposition} \ref{smooth-McCord} (and {\it corollary} \ref{smooth}) we describe some of their properties. In \ref{exotic-examples} we propose examples of smooth 7-dimensional solenoidal manifolds that
we conjecture are homeomorphic but not diffeomorphic.
 We also give examples of solenoids that are potentially non-smoothable (and perhaps their leaves are not homeomorphic to a simplicial complex).
In \ref{suspension} we define the solenoids which are obtained as suspensions of representations of fundamental groups of compact manifolds.  In \ref{solenoidal-lifting} we deal with the problem of
lifting maps between solenoidal coverings (with the caveat that, because of lack of local connectedness, liftings of homeomorphisms to noncompact solenoids might not exist or might not be homeomorphisms).
 
\noi In {\it section} \ref{lhm} we introduce the notion of harmonic maps between Riemannian solenoids and we establish the solenoidal version of the theorem of Eells-Sampson: a differentiable map between Riemannian solenoids with target a solenoid with a laminated Riemannian metric having all leaves with negative sectional curvature is homotopic to a harmonic map.
 
\noi Compact, {\it oriented}, 1-dimensional solenoids are in one-to-one correspondence
with conjugacy classes of homeomorphism of the Cantor set. This is because Dennis Sullivan proved that these solenoids are mapping tori of homeomorphisms
of the Cantor set \cite{Su1,V}. In the first part of {\it section} \ref{1dsm} we use this fact to give some properties of these solenoids.
We show, following D. Sullivan \cite{Su1}, that these
1-dimensional laminations are null-cobordant \ie there is a compact, oriented,
2-dimensional Riemann surface lamination with boundary equal to the given
1-dimensional solenoid.
 
\noi  In {\it subsection} \ref{hom1d}
we describe these compact, oriented
1-dimensional solenoids in the case when they are topologically transitive.
By the results in {\it section} \ref{thsm}, compact solenoidal manifolds which are also topologically homogeneous have
the property of being obtained as the inverse limit of
an infinite tower of regular coverings
of increasing order of a compact manifold.  Therefore, a compact connected, topologically homogeneous 1-dimensional solenoidal manifold is a 1-dimensional compact abelian group,
since it is obtained as the inverse limit of a tower of coverings of the circle
$\sS^1$ and every covering of the circle is equivalent to a homomorphism of the form
$z\mapsto{z^n},\,n\in\N$. These groups are Pontryagin duals of dense subgroups of the rationals.
 
\noi {\it Section} \ref{srs} deals with solenoidal surfaces, their uniformization, {\it theorem} \ref{uniformization}, the Ricci flow and the uniformization of
hyperbolic solenoidal surfaces by the Ricci flow,
 {\it theorem} \ref{continuity} in {\it subsection} \ref{Ricci_Flow}.
 
\noi  In {\it section} \ref{UHL} we describe Dennis Sullivan's 2-dimensional {\it universal hyperbolic solenoid}. We use our version of the Eells-Sampson theorem to prove a version of the Earle-Eells theorem in \ref{EaE}.
 
 \noi In {\it section} \ref{hom2d} we study the
$n$-dimensional solenoidal compact abelian groups and the {\it universal
euclidean solenoid} when $n=2$. We indicate the complexity of their
classification when $n>1$.
 
\smallskip
\noi In {\it section} \ref{g3dsm} we study 3-dimensional locally homogeneous solenoidal manifolds. A theorem of Hempel \cite{He}, plus the geometrization theorem of Perelman
\cite{KL}, imply that the fundamental group of any 3-manifold is residually finite, in particular, every non-simply connected 3-manifold has a proper finite-sheeted covering space. Locally homogeneous, 3-dimensional solenoidal manifolds are the inverse limit of an infinite tower of regular coverings of finite order of a compact 3-manifold having one of the 7 non-spherical Thurston geometries.
A version of {\it Mostow rigidity theorem} (\ref{Mostow}), {\it Thurston hyperbolization theorem} \ref{hyperbolization} and {\it Virtual
fibration theorem} \ref{vft} for hyperbolic laminations hold.
These profinite solenoidal coverings of the 3-manifolds provide commensurability invariants.
 
\noi In the hyperbolic and solvable case we remark the relation of these laminations with number fields and their group of units.
 
\noi {\it We mainly deal with the hyperbolic and solvable case but we make a few comments for the other geometries.}

\section{Preliminaries}\label{Preliminaries}
 
\begin{definition}\label{definition:lamination}
 
\noi For the general definition of a {\em lamination} or {\em foliated
space}, we refer the reader to Chapter 11 of the book \cite{CC}.
A topological {\em solenoidal $n$-dimensional manifold} $M$ is a metrizable space $M$ endowed with an atlas $\cA=(U_\alpha, \varphi_\alpha)_{_{\alpha \in \mathfrak{I}}}$,  where $\left\{U_\alpha\right\}_{\alpha\in \mathfrak I}$ is an open cover of M and  $\mathfrak I$ a set of indices,
such that:
for each $\alpha \in \mathfrak{I}$, $\varphi_\alpha:U_\alpha\to{D\times T_\alpha}$ is a homeomorphism from $U_\alpha$ to a product $D\times T_\alpha$, where $D$
 is  the open unit disk in $\R^n$ and
 $T_\alpha$ is homeomorphic to the Cantor set. The inverse of
 $\varphi_\alpha^{-1}:=\overline\varphi_\alpha:D\times{T_\alpha}\to{U_\alpha}$ is called a \emph{local parametrization.}
 
\noi It follows from the definition that
 whenever $U_\alpha\cap U_\beta \neq \emptyset$, the
 {\it change of coordinates}
 $$\Psi_{\alpha\beta}:=\varphi_\beta\circ\varphi_\alpha^{-1}:\varphi_\alpha(U_\alpha\cap{U_\beta})
 \to\varphi_\beta(U_\alpha\cap{U_\beta})$$ is of the form
 $(x,t)\mapsto (\lambda_{\alpha\beta}(x,t), \tau_{\alpha\beta}(t))$.
\end{definition}

 \begin{definition}[Smooth solenoidal manifolds]\label{definition:difflamination}
The {\em solenoidal $n$-dimensional manifold} $M$ is called
a {\em smooth solenoidal $n$-dimensional manifold} if
for each fixed $t$ the map of change of coordinates
 \begin{equation}\label{changeofcoordinates}
  \varphi_\alpha({U_\alpha\cap U_\beta})\ni(\,\cdot\,,t)
  \xrightarrow{f_{_{(\alpha\beta,t)}}}(\lambda_{\alpha\beta}(\,\cdot\,,t), \tau_{\alpha\beta}(t))
 \end{equation}
  is smooth (of class $C^\infty$) in the $x$ variable and its derivatives
  of all orders depend continuously on $t$.

\noi For each $\alpha \in \mathfrak{I}$ we can orient $D$ with
 the standard orientation of $\R^n$. If for each fixed $t$ the map
 $(\cdot,t)\mapsto(\lambda_{\alpha\beta}(\cdot,t), \tau_{\alpha\beta}(t))$ preserves the orientation  we say that the lamination is {\it oriented}.
 
\end{definition}

\begin{definition}If for each $\alpha \in \mathfrak{I}$,
$\varphi_\alpha:U_\alpha\to{V_\alpha}\subset{\overline{D}\times T_\alpha}$ is a homeomorphism from $U_\alpha$ to an open set $V_\alpha$ in the  product
$\overline{D}\times T_\alpha$, where $\overline{D}$ is
the closed unit $n$-disk in $\R^n$ with boundary $\sS^{n-1}$, we call
$M$ an {\em solenoidal $n$-dimensional manifold with boundary}.
The boundary is defined in the same fashion as the usual definition for manifolds with boundary and it is an $(n-1)$-solenoidal manifold.
\end{definition}
 
\begin{remark}
We will also call a solenoidal $n$-dimensional manifold
an {\em $n$-dimensional Cantor lamination} or simply a
{\em lamination} if the Cantor transverse structure is understood.
 
\noi Of course, smooth solenoidal manifolds
are generalizations of smooth foliated manifolds, but the local holonomy is represented by local homeomorphisms of Cantor sets. We could define
foliations of Hölder class $C^r$ for any $r>0$ by imposing the
changes of charts to be of class $C^r$.
\end{remark}
\begin{definition}\label{definition:charts} The atlas $\cA=(U_\alpha, \varphi_\alpha)_{_{\alpha \in \mathfrak{I}}}$ is called a \emph{lamination atlas} and the maps $\varphi_\alpha$
are called \emph{flow boxes} or \emph{foliation charts}.
The sets $\varphi_\alpha^{-1}(D\times\{t\})$ are called {\em plaques}. Condition (2)
says that the plaques glue together to form maximal connected $n$-dimensional smooth manifolds,
called {\em leaves}, which are immersed in $M$.
 
\noi A set of the form $\varphi_\alpha^{-1}(\{y\}\times T_\alpha)$ is called
{\em transversal}.
 The associated {\it lamination}
$\cL$ is the partition of $M$ into leaves.
More precisely, the leaves are smooth $n$-dimensional manifolds whose local
charts are the restrictions of the maps $p_\alpha\circ\varphi_\alpha$ to plaques,
where $p_\alpha:D_\alpha\times T_\alpha\to D_\alpha$
is the projection onto the first factor.
\end{definition}
 
\begin{definition}\label{transverse-measure} If $M$ is an $n$-dimensional solenoidal manifold, a {\em holonomy invariant measure} is a family of Radon measures in all transversal Cantor sets (see {\it definition} \ref{definition:charts}) which are invariant
under the maps $\tau_{\alpha\beta}$ in formula (\ref{changeofcoordinates}),
{\it definition} \ref{definition:lamination},
 when restricted to their domains and
ranges. Analogously, a {\em holonomy invariant metric} is a metric in all transversals which is invariant under
the maps $\tau_{\alpha\beta}$ when restricted to their domains and
ranges. This means that these measures, or metrics, are invariant under
the holonomy pseudogroup.
\end{definition}

\begin{remark}\label{always_connected}
{\it We will be dealing mostly with solenoidal manifolds which are
compact, oriented, and connected. However, sometimes we consider ``covering'' solenoidal manifolds that are neither connected (or even locally connected) nor compact}.
 
\noi If the laminated atlas $\cA$ is understood we omit it
and simply refer to $M$ as a {\it solenoidal manifold}.
\end{remark}
 
\noi An important class of laminations consists of minimal laminations:
\begin{definition}\label{minimal} A lamination is said to be {\em minimal} if all the leaves are dense.
\end{definition}
\noi On a smooth lamination one can define the notions of differential topology and geometry along the leaves. For instance, one has concepts such as {\it laminated smooth functions, laminated Riemannian metrics, laminated curvature, de Rham theory}, etc. \cite{MS},\cite{CC}. In particular, we have the following:
\begin{definition}\label{smoothfunctions} $C^{(\infty,0)}(M)$ consists of
those continuous functions on $M$ which are smooth when restricted to the leaves
with the standard definition with respect to foliated charts. Analogously
we can define laminated smooth vector-valued functions $C^{(\infty,0)}(M,\R^n)$.
\end{definition}
One can define
in a natural way the ``laminated''
tangent bundle $T(M)$ of the lamination by considering all the
tangent spaces of the leaves at all the points of such leaves. All of the notions about differential topology and differential geometry of manifolds can be adapted to smooth laminations. In particular one has the notion of smooth vector fields, differential forms, tensors, Riemannian metrics, laminated connections, curvature along the leaves,
etc. We require these objects to be continuous and differentiable when restricted to each leaf. {\it In this paper we will mostly deal with differentiable solenoidal laminations}.
 
\begin{definition}\label{foliated_map} If $M_1$ and $M_2$ are solenoidal manifolds, a {\em smooth foliated map}
(or {\em smooth laminated map}) is a continuous map $f:M_1\to{M_2}$ such that $f$ sends leaves of $M_1$ into leaves of $M_2$ and, in terms of local charts, the restrictions
of $f$ to plaques of $M_1$ are differentiable functions into plaques of $M_2$.
Such a map we also call {\it leafwise differentiable map}. In particular, a diffeomorphism between two solenoidal manifolds $M_1$ and $M_2$ is a homeomorphism
$f:M_1\to{M_2}$ such that $f$ and $f^{-1}$ are leafwise differentiable.
\end{definition}
 
\section{Topologically homogeneous solenoidal manifolds}\label{thsm}
 
\noi Since all non-empty, compact, metric spaces which are perfect and totally disconnected are homeomorphic (to the standard ternary Cantor set) we simply speak about ``the'' Cantor set $K$. In this paper we will use very frequently the following:
 
\begin{definition}\label{homcantor} $\mathcal{H}(K)$ will denote the group of homeomorphisms of the Cantor set.
 \end{definition}

The most canonical examples of compact $n$-dimensional solenoidal manifolds
are locally-trivial fibrations $p: X\to{M}$ over a compact $n$-dimensional
connected manifold $M$ with fiber the Cantor set $K$. We assume that $X$ is connected. Fibrations like $p$ have the
{\it unique path-lifting property} and therefore they are a natural generalization
of the notion of covering spaces (Steenrod \cite{St0},  $\S 13$). This property allows us to define
holonomy along a path $f:[0,1]\to {M}$: a homeomorphism
$h_f:p^{-1}(f(0))\to{p^{-1}(f(1))}$. The holonomy $h_f$ only depends on the
homotopy class of $f$ with endpoints $f(0)$ and $f(1)$ fixed. If $x\in{M}$
then holonomy of loops based at $x$ gives a representation
$\rho:\pi_1(M,x)\to\mathcal{H}(K)$ called the holonomy of the fibration.
The leaves of the lamination are the path components of $X$.
 
 Let us recall some facts about compact solenoidal manifolds $\cL$ which are topologically
  homogeneous \ie the group of homeomorphisms $\mathcal{H}(\cL)$ of $\cL$ acts transitively on $\cL.$
 
 \medskip
 \noi In \cite{McC} M. C. McCord proves the following:
 
\begin{theorem}[M.C. McCord]\label{McCord}
If  $\mathfrak{S}$ is a compact solenoidal manifold which is obtained as the inverse limit of an infinite tower of {\em regular} finite
coverings of degree $d\geq2$ of a compact, connected, manifold $M$ then $\mathfrak{S}$ is {\bf topologically homogeneous} \cite{McC}. The solenoid $\mathfrak{S}$ is a connected principal
fiber bundle with fiber a Cantor group.
\end{theorem}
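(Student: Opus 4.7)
The plan is to exhibit $\mathfrak{S}$ explicitly as a principal fiber bundle over $M$ with structure group the Cantor group $\wg:=\varprojlim \pi_1(M)/\Gamma_n$, where the $\Gamma_n\triangleleft\pi_1(M)$ are the normal subgroups of finite index classifying the tower $\cdots\to M_2\to M_1\to M_0=M$. Once this principal bundle structure is in place, topological homogeneity will follow by combining the free transitive action of $\wg$ on each fiber with the well-known topological homogeneity of the connected manifold $M$.

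First, I verify that $\wg$ is a Cantor group. Writing $G_n:=\pi_1(M)/\Gamma_n$, the inverse limit $\wg$ is a closed subgroup of the compact product $\prod_n G_n$; the finiteness and discreteness of each $G_n$ make the product compact, totally disconnected, and metrizable, and the fact that $|G_n|\to\infty$ forces $\wg$ to have no isolated points. Continuity of the group operations is inherited from the finite quotients. Next, $\wg$ acts on $\mathfrak{S}=\varprojlim M_n$ coordinate-wise through the deck transformation actions of each $G_n$ on $M_n$, with regularity of the covers being precisely what is needed for these coordinatewise actions to descend correctly along the inverse system. This action is continuous, free and transitive on the fibers of $p:\mathfrak{S}\to M$.

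Local triviality is then checked on any evenly covered open ball $U\subset M$: at each finite level $p_n^{-1}(U)\cong U\times G_n$ as $G_n$-spaces, compatibly with the covering projections $M_n\to M_{n-1}$, and passing to the inverse limit yields $p^{-1}(U)\cong U\times \wg$ as a principal $\wg$-bundle. This completes the identification of $p:\mathfrak{S}\to M$ as a principal fiber bundle with Cantor-group fiber.

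For topological homogeneity, given $\tilde x,\tilde y\in\mathfrak{S}$ with $p(\tilde x)=x$ and $p(\tilde y)=y$, I use homogeneity of the connected manifold $M$ to produce a homeomorphism $\phi:M\to M$ with $\phi(x)=y$ arising as the time-$1$ map of an ambient isotopy $\phi_t$ from $\mathrm{id}_M$ supported in a coordinate ball. By the homotopy lifting property of finite covers, $\phi_t$ lifts uniquely to a continuous family $\phi_n^t:M_n\to M_n$ of homeomorphisms with $\phi_n^0=\mathrm{id}_{M_n}$; uniqueness of path lifts then forces $\pi_n\circ\phi_n^t=\phi_{n-1}^t\circ\pi_n$ for all $n$ and $t$, so the lifts assemble into a continuous family of self-homeomorphisms of $\mathfrak{S}$ whose time-$1$ map $\widetilde\phi$ covers $\phi$. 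Since $\widetilde\phi(\tilde x)$ lies in $p^{-1}(y)$, the free transitive $\wg$-action on that fiber supplies a unique $g\in\wg$ with $g\cdot\widetilde\phi(\tilde x)=\tilde y$, and $g\circ\widetilde\phi$ is the desired self-homeomorphism of $\mathfrak{S}$ sending $\tilde x$ to $\tilde y$. The step I expect to be most delicate is precisely the tower-compatibility of the lifts of $\phi_t$; this is where the normality of each $\Gamma_n$ in $\pi_1(M)$ is essential, since it guarantees that the uniquely determined lifts interact equivariantly with the deck actions and therefore glue coherently across the whole inverse system.
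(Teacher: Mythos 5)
Your proof is correct and follows essentially the same route as the source: the paper itself does not prove Theorem \ref{McCord} but defers to \cite{McC}, and the principal $\widehat{\pi_1(M)}$-bundle structure you build coordinatewise from the compatible deck actions of the finite quotients $G_n$ is exactly the structure the paper sets up in subsection \ref{profinite-McCord} (there presented as the quotient $\pi_1(M)\backslash(\tilde{M}\times\widehat{\pi_1(M)})$), with homogeneity obtained, as in McCord's argument, by combining the free transitive fiberwise action of the profinite deck group with lifts of an ambient isotopy of $M$. The only point worth making explicit is that each $M_n$ must be connected for your unique-lifting step to force the tower-compatibility $\pi_n\circ\phi_n^t=\phi_{n-1}^t\circ\pi_n$, an assumption implicit in the statement of the theorem.
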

 
\begin{definition}[McCord Solenoids]\label{McCord-Solenoids} We will call solenoidal manifolds
$\mathfrak{S}$ obtained as the previous theorem {\em McCord solenoidal manifold based on the manifold $M$}.
\end{definition}
\noi The reciprocal of McCord theorem for {\em topologically homogeneous} solenoidal manifolds  $\mathfrak{S}$ (for any two points $x,y\in \mathfrak{S}$
there exists a homeomorphism $f$ of $\mathfrak{S}$ such that $f(x)=y$) was obtained by A. Clark, and S. Hurder in \cite{CH}.
(Related results are found in \cite{AF}, \cite{ALM}, \cite{CH}, \cite{CHL},\cite{Kee}, \cite{FO} and \cite{Kw}):

\begin{theorem}[A. Clark, S. Hurder]\label{ClarkHurder} If $\mathfrak{S}$ is a compact, connected, topologically homogeneous solenoidal manifold then $\mathfrak{S}$ is a McCord solenoidal manifold.
In particular, it is minimal: all leaves are dense.
\end{theorem}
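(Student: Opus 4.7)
The plan is to derive a rigid algebraic structure on the holonomy pseudogroup from the purely topological homogeneity hypothesis, and then identify $\mathfrak{S}$ with an inverse limit of quotients by a profinite group action. My first observation would be that the leaves of $\mathfrak{S}$ are exactly its path components, so every $h\in\mathcal{H}(\mathfrak{S})$ permutes the leaves and their closures; in particular, the collection of minimal sets (saturated closed subsets minimal under inclusion) is preserved by the global homeomorphism group. Fix a base point $x_0$ and a transversal $T\cong K$ through it, and let $\mathcal{P}$ denote the holonomy pseudogroup on $T$ generated by the plaque-chain maps coming from a finite lamination atlas. The goal is to produce a compact topological group $\mathcal{G}$ acting transitively on $T$ whose profinite filtration yields the tower of regular covers.

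The central step is to prove that $\mathcal{P}$ is equicontinuous on $T$: for every $\varepsilon>0$ there exists $\delta>0$ such that any $\gamma\in\mathcal{P}$ moves each pair of points at transverse distance less than $\delta$ by at most $\varepsilon$. The strategy is, given a holonomy germ $\gamma$ obtained by chaining plaques along a loop $\sigma$ in a leaf, to use transitivity of $\mathcal{H}(\mathfrak{S})$ to produce a global homeomorphism $h_\sigma$ of $\mathfrak{S}$ whose transverse component near $T$ realizes $\gamma$. The uniform continuity of $h_\sigma$ on the compact space $\mathfrak{S}$ then transfers to a uniform modulus of continuity for $\gamma$, and an Ascoli--Arzel\`a argument on the family of such $h_\sigma$, parametrized by a compact family of loops built from a finite good cover by flow boxes, produces the uniform $\delta$. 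Matching a global modulus of continuity to the pseudogroup germs requires a careful organization of holonomy along a finite good atlas, and constitutes the technical heart of the Clark--Hurder paper.

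Once equicontinuity is in hand, the closure $\mathcal{G}$ of $\mathcal{P}$ in the compact-open topology on $T$ is a compact topological group acting on $T$; transitivity of $\mathcal{H}(\mathfrak{S})$ forces the induced action of $\mathcal{G}$ on $T$ to be transitive, so $T\cong \mathcal{G}/H$ for an isotropy subgroup $H\leq\mathcal{G}$. Since $T$ is a Cantor set, $\mathcal{G}$ is profinite, and after quotienting by the ineffective kernel admits a neighborhood basis of the identity by clopen normal subgroups $\mathcal{G}_n \triangleleft \mathcal{G}$ with $\bigcap_n \mathcal{G}_n = \{e\}$. Setting $M := \mathfrak{S}/\mathcal{G}$ (a compact manifold, since locally $\mathfrak{S}\cong D\times\mathcal{G}$ and $\mathcal{G}$ acts transitively on the Cantor factor) and $M_n := \mathfrak{S}/\mathcal{G}_n$ yields a tower of finite regular covers $M_n\to M$ with deck groups $\mathcal{G}/\mathcal{G}_n$, and the canonical map $\mathfrak{S}\to\varprojlim M_n$ is a homeomorphism, giving the McCord presentation. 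Minimality is then a corollary: transitivity of $\mathcal{G}$ on $T$ makes every leaf dense in $\mathfrak{S}$. The main obstacle throughout is the equicontinuity step, where one must upgrade the pointwise assumption of topological homogeneity into uniform control on holonomy germs; once this is achieved, the conversion into an inverse limit of regular finite covers is essentially a profinite-group argument.
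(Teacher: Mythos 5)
The paper does not prove this statement: it is quoted from Clark--Hurder \cite{CH} as a known theorem, so there is no in-paper argument to compare against. Your overall architecture --- (i) upgrade topological homogeneity to equicontinuity of the holonomy pseudogroup, (ii) pass to a compact (profinite) group acting transitively on a transversal, (iii) read off the tower of finite regular covers from a neighborhood basis of clopen normal subgroups --- is indeed the strategy of \cite{CH}. But two of your steps contain genuine gaps.

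First, the equicontinuity step. You propose to realize a holonomy germ $\gamma$ along a leafwise loop $\sigma$ by a global homeomorphism $h_\sigma$ ``whose transverse component near $T$ realizes $\gamma$,'' obtained from transitivity of $\mathcal{H}(\mathfrak{S})$. Transitivity only supplies, for each pair of points, \emph{some} homeomorphism carrying one to the other; it gives no control on the transverse behavior of that homeomorphism near $T$, and a holonomy translation along a leaf path need not be the restriction of any global homeomorphism at all. The tool that actually converts pointwise homogeneity into uniform transverse control is Effros' micro-transitivity theorem (for every $\varepsilon>0$ there is $\delta>0$ such that any two $\delta$-close points are exchanged by a homeomorphism $\varepsilon$-close to the identity); this, combined with a comparison of plaque-chains issuing from nearby transverse points, is what drives the Clark--Hurder argument, and your Ascoli--Arzel\`a argument does not substitute for it because the family of all homeomorphisms witnessing transitivity is not equicontinuous a priori. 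Second, the profinite step: the closure of the holonomy \emph{pseudogroup} in the compact-open topology is not a group --- its elements are local maps with varying domains. One needs equicontinuity together with minimality to extend the holonomy maps to the whole transversal before taking the closure, so minimality must be established along the way rather than deduced as a corollary at the end; and one must separately show that the germinal holonomy is trivial (homogeneous matchbox manifolds are without holonomy) before $\mathfrak{S}/\mathcal{G}$ and $\mathfrak{S}/\mathcal{G}_n$ can be asserted to be manifolds and the coverings asserted to be regular.
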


\begin{remark}\label{chain-cofinal}
Any {\it countable} directed set $(A, \preceq)$ contains a cofinal subset
$C$ which is totally ordered and well-ordered, under $\preceq$ (\ie it is a chain). Therefore
in theorems (\ref{McCord}) and (\ref{ClarkHurder}) the tower of coverings
can be taken to be a linear sequence of finite-sheeted coverings:
\begin{equation}\label{chain-directed-set}
 \cdots \xrightarrow{p_4}{M_3}\xrightarrow{p_3}
 {M_2}\xrightarrow{p_2}{M_1}
 \xrightarrow{p_1}{M_0}=M,
\end{equation}
\noi corresponding to the sequence of normal subgroups of finite index of $\pi_1(M)$:
\[
\cdots\subset{G_4}\subset{G_3}\subset{G_2}\subset{G_1}\subset{G_0}=\pi_1(M)
\]
Therefore, in this paper, we can suppose that we are dealing with inverse limits of chains of finite-sheeted regular
coverings. Such inverse limits are given by special spaces of sequences:
\[
\cL={\lim_{\overset{p_{_{i}}}{\longleftarrow}}} \,\,
\{p_{_{i}}:
M_i\to{M_{i-1}},\, i\in\N\}=
\left\{\{x_{i-1}\}_{i\in\N}\,\, \mathbb{:}\,\forall i\geq1,\,\,x_i\in{M_i},\,\, p_i(x_i)=x_{i-1}\right\},
\]
where $p_i$ ($i\in\N$) is a finite-sheeted regular covering map of degree $d\geq2$.
Therefore $\cL$ has a natural topology as a compact subset of $\overset{\infty}{\underset{i=0}\prod}M_i$.
 \begin{definition}\label{shift}
 The {\it shift
map} $\sigma$ is the map $\{x_{i-1}\}_{i\in\N}\overset{\sigma}
\mapsto\{x_{i}\}_{i\in\N}$.
The map $\sigma$ is a homeomorphism. One has a {\it canonical projection}
\begin{equation}\label{cp}
\Pi:\cL\to{M}, \quad\quad \Pi(\{x_{i-1}\}_{i\in\N})=x_0
\end{equation}
\end{definition}
\end{remark}
%%&&&&&
%%%&&&&
 
\subsection{Differentiable McCord solenoids}\label{Differentiable-McCord-solenoids}
 Let $\cL$ be an $n$-dimensional McCord solenoid. Then, as pointed in {\it remark} \ref{chain-cofinal}, $\cL$ is obtained as the inverse limit of an infinite chain of finite-sheeted regular coverings,  
$\,\cdots\xrightarrow{f_3}M_2\xrightarrow{f_2}M_1\xrightarrow{f_{1}}{M_0}=M$, where $M$ is a connected topological $n$-manifold. By {\it theorem} \ref{McCord},
the canonical projection  $\Pi:\cL\to{M}$ (equation \ref{cp})
is a principal $\Gamma$-bundle where $\Gamma$
is a Cantor group. This group is obtained as a quotient of the profinite completion of the fundamental group of $M$ as explained in {\it remark} \ref{UPM} in
subsection \ref{profinite-McCord} below.
The Cantor group $\Gamma$ acts properly and freely on
the right on $\cL$ and preserves the fibers. For $g\in\Gamma$ and $x$ in
$\Gamma$ let
$x\cdot{g}$ denote the action of $g$ on $x$.
 
If $M$ admits a differentiable structure $\cD$, given by the smooth subatlas
$\mathcal{A}=\left\{\varphi_\alpha:\cU_\alpha\to\R^n:\alpha\in{I}\right\}$,
where $\cU_\alpha\subset{M}$ is open and contractible, then $\cL$ can be endowed with the
subatlas
$\tilde{\mathcal{A}}=\left\{\tilde\varphi_\alpha:\Pi^{-1}(\cU_\alpha)\to\R^n\times\Gamma\,:\alpha\in{I}\right\}$, where $\tilde\varphi_\alpha$ is a local trivialization determined
by a local section $\sigma_\alpha:\cU_\alpha\to\Pi^{-1}(\cU_\alpha)$
(which exists since $\cU_\alpha$ is contractible). If $x\in\cU_\alpha$, then any element of the fiber
$\Pi^{-1}(\{x\})$
is of the form $\sigma_\alpha(x)\cdot{g}$ for a unique $g\in\Gamma$ (here we use the group
structure in the fiber). The local trivialization $\tilde\varphi_\alpha$ is obtained as follows: for each $\alpha\in{I}$, there exists a continuous
projection $p_\alpha:\Pi^{-1}(\cU_\alpha)\to\Gamma$ defined by
$p_\alpha(z)=g$ if $z=\sigma_\alpha(\Pi(z))\cdot{g}$,
then $\tilde\varphi_\alpha(z)=(\varphi_\alpha(\Pi(z)),p_\alpha(z))$
(se \cite{St0} \S 13 pages 59--67). Therefore
${\tilde\varphi_\alpha}^{-1}(x,g)=\sigma_\alpha(\varphi_{\alpha}^{-1}
(x))\cdot{g}$, and we have:
\[
\tilde\varphi_{_\beta}\circ\tilde\varphi_\alpha^{-1}(x,g)=
\tilde\varphi_{_\beta}\left(\sigma_\alpha(\varphi_{\alpha}^{-1}(x))
\cdot{g}\right)
=\Big(\varphi_{_\beta}(\varphi_{\alpha}^{-1}(x)),p_\beta\big(\sigma_\alpha(\varphi_{\alpha}^{-1}(x))\big)\cdot{g}\Big),
\]
where $x\in\varphi_\beta(\cU_\alpha\cap\cU_\beta), \, g\in\Gamma$.
We note that $g_{\alpha\beta}:=p_\beta\big(\sigma_\alpha(\varphi_{\alpha}^{-1}(x))\big)\in\Gamma$ is independent of
$x\in\varphi_\beta(\cU_\alpha\cap\cU_\beta)$. Hence the change of coordinates
$\tilde\varphi_{_\beta}\circ\tilde\varphi_\alpha^{-1}$ is of the form:
\begin{equation}\label{smooth-changes}
\tilde\varphi_{_\beta}\circ\tilde\varphi_\alpha^{-1}(x,g)=
\Big(\varphi_{_\beta}(\varphi_{\alpha}^{-1}(x)),g_{\alpha\beta}\cdot{g}\Big)
\end{equation}
We endow $\cL$ with the differentiable structure given by the atlas $\tilde\cA$.
It follows directly from equation \ref{smooth-changes} the following:
\begin{proposition}\label{smooth-McCord}
Let $\cL$ be an $n$-dimensional McCord solenoid, obtained as the inverse limit of an infinite chain of finite-sheeted regular coverings
\[\cdots\xrightarrow{f_3}M_2\xrightarrow{f_2}M_1\xrightarrow{f_{1}}{M_0}=M,\]
\noi where $M$ is a  $\text{\bf differentiable}$ $n$-manifold. Then $\cL$ has a natural differentiable
structure (induced by the differentiable structure on $M$) so that $\cL$ is a
smooth $n$-dimensional solenoidal manifold as in {\it definition \ref{definition:difflamination}}. Furthermore, the canonical projection
$\Pi:\cL\to{M}$
is a differentiable principal $\Gamma$-bundle. More precisely:
\begin{enumerate}
\item $\Pi$ is a differentiable map and the restriction of $\Pi$ to a leaf
$L$ is a smooth covering map from $L$ to $M$.
\item \label{shift-equivalent} The shift map $\sigma:\cL\to\cL$
({\it definition} \ref{shift}) is a diffeomorphism. Therefore,
for all integers $k\geq0$, the solenoids
corresponding to the chains
\[\cdots \xrightarrow{p_4}{M_3}\xrightarrow{p_3}
 {M_2}\xrightarrow{p_2}{M_1}\xrightarrow{p_1}{M_0}=M\]
 \noi and
 \[\cdots \xrightarrow{p_{k+4}}{M_{k+3}}\xrightarrow{p_{k+3}}
 {M_{k+2}}\xrightarrow{p_{k+2}}{M_{k+1}}\xrightarrow{p_{k+1}}{M_k}\]
\noi are diffeomorphic solenoids.
 \item The right action of $\Gamma$ on
 $\cL$, given by the formula $F_g(z)=z\cdot{g}$,
is by solenoidal diffeomorphisms
\ie the map $z\overset{R_g}\mapsto{z\cdot{g}}\,\,(z\in\cL, g\in\Gamma)$, is differentiable.
In particular, the restriction of $R_g$ to a leaf $L_z$ ($z\in{L_z}$) is a diffeomorphism
from $L_z$ to $L_{z\cdot{g}}$.
\end{enumerate}
\end{proposition}
\begin{corollary}\label{smooth} Let $\cL$ and $M$, $\Pi$ and $\Gamma$ be as in {\it proposition} \ref{smooth-McCord}.
Let us assume that $M$ has a differentiable structure $\cD$. Let
$p:\tilde{M}\to{M}$ be the universal covering projection. We endow the universal covering manifold $\tilde{M}$ with the pullback of the differentiable structure $\cD$
(\ie the unique differentiable structure $\tilde\cD$ that makes $p$ a local diffeomorphism). Let
$\,\tilde\Pi:\tilde\cL\to\tilde{M}$ be the pullback under $p$ of the principal
$\Gamma$-bundle $\Pi:\cL\to{M}$. Since $\tilde\Pi$ admits a section, $\tilde\cL$
is the trivial bundle $\tilde{M}\times\Gamma$. On the other hand, since the bundle
$\Pi:\cL\to{M}$ has the {\it unique path lifting property} it follows that
if we lift loops based at a point $x\in{M}$ they determine homeomorphisms
of the fiber $F_x:=\Pi^{-1}(\{x\})\simeq\Gamma$. These homeomorphisms depend only
on the homotopy classes of the loops based at $x$, and are right-translations on the fiber $F_x$.
These liftings determine a homomorphism $\chi_x:\pi_1(M,x)\to\Gamma$ called the
{\it monodromy} of the fibration
(called \text{{\bf characteristic class}} in the book by Steenrod \cite{St0} \S 13).
If $x_1,x_2\in{M}$ then $\chi_{x_1}$ $\chi_{x_2}$
differ only by inner conjugation by an element in $\Gamma$.
The conjugation classes determine the principal
$\Gamma$-bundles over $M$ (\cite{St0} \S 138). The fundamental group
$\pi_1(M,x)$
acts freely, differentiable, and properly discontinuously on
$\tilde\cL=\tilde{M}\times\Gamma$ by the formula
$\Psi_g(x,h)=(\gamma_g(x), h\cdot\chi_x(g))$
(on the first factor the action is by deck
transformations and on the second by right-translations). Furthermore
\[
\cL=(\tilde{M}\times\Gamma)/\pi_1(M,x).
\]
\end{corollary}

\begin{remark} The differentiable structure on $\cL$ in {\it proposition}
 \ref{smooth-McCord} depends on the differentiable structure
 of the base manifold $M$. Furthermore, there might exist solenoidal
 manifolds whose leaves cannot be homeomorphic to simplicial complexes. The following two examples show the subtleties of these concepts.
\end{remark}
\subsection{Examples}\label{exotic-examples}
\begin{example} Let $f:\sS^6 \to \sS^6$ be an orientation-preserving
diffeomorphism of the 6-sphere which is not isotopic to the identity. Such a diffeomorphism exists by Milnor's celebrated result on the non-uniqueness of differentiable structures on the 7-sphere.
In fact, Milnor proves that there are 28
isotopy classes of orientation-preserving diffeomorphisms of $\sS^6$.
Let $M_f^7$ be the mapping torus
of $f$ endowed with its natural differentiable structure. Then $M^7_f$ is homeomorphic to $\sS^6\times\sS^1$ but not diffeomorphic to it. In fact
$M_f^7$ is an example of a locally trivial differentiable $\sS^6$-bundle
which is not the boundary of a {\it smooth} $\mathbb{D}^7$-bundle over the circle (the structure group of the sphere bundle cannot be reduced to a Lie group,
see \cite{Ve3} for details). Since the covering of order 28 of $M^7_f$ is the mapping
torus $M^7_{f^{28}}$ of $f^{28}$, which is isotopic to the identity
(thus $M^7_{f^{28}}$ is diffeomorphic to $\sS^6\times\sS^1$),  
it follows from {\it proposition} \ref{smooth-McCord} item \ref{shift-equivalent},
that the towers of all finite-sheeted
coverings of $M^7_f$ and $\sS^6\times\sS^1$ give smooth solenoidal
7-dimensional manifolds which are diffeomorphic solenoids. I conjecture that
if $p$ is a prime and we consider the infinite sequence of coverings of
$\sS^6$ and $M_f^7$ of indices $p, p^2, \cdots, p^n,\cdots,$ then the corresponding solenoids are not diffeomorphic.
On the other hand, results by W.C. Hsiang, and C.T.C. Wall \cite{HW}, show that
for $n>5$ every smooth fake $n$-torus $\tau^n$ is smoothly covered by the
standard torus $\T^n$. Therefore, the two solenoids obtained by the corresponding towers of all
finite-sheeted normal coverings are diffeomorphic.
 
\end{example}
A more drastic example is the following.
\begin{example}
By \cite{DFL}, for any $n>6$ there exist closed aspherical
$n$–manifolds $M^n$ that cannot be triangulated
(\ie they are not homeomorphic to a simplicial complex). Since the universal
covering of $M^n$ is contractible $\pi_1(M^n)$ has no
torsion; suppose, in addition, it is also residually finite. Then the $n$-dimensional compact solenoid obtained by all the finite-sheeted coverings is a topological solenoid and most probably its leaves cannot be triangulated
(and therefore the solenoid would not admit a differentiable structure). I conjecture that such an example exists. Of course, it is an interesting question to have a version of the {\it Hauptvermutung} (a subject very dear to Dennis Sullivan \cite{Su4}, \cite{Su5}) for solenoidal manifolds.
\end{example}
 
\begin{remark}[Differentiably homogeneous solenoidal manifolds] One can adapt the results in \cite{CH} and \cite{ALM} to show that if $\cL$ is a
{\it differentiable} $n$-dimensional solenoid which is {\it differentiably homogeneous}
(\ie given $x,y\in\cL$ there exists a diffeomorphism
({\it definition}   \ref{foliated_map})
$f:\cL\to\cL$ such that $f(x)=y$) then $\cL$ is diffeomorphic to the inverse limit of an
infinite tower of smooth regular coverings of compact smooth manifolds and the differentiable structure of $\cL$ is equivalent to the differentiable structure given by {\it proposition} \ref{smooth-McCord}. However, we will not deal with that here, and instead, we will assume that our solenoids
are of the aforesaid type.

\end{remark}
 
\begin{remark}\label{all-smooth} Henceforth we will assume that all the McCord solenoids are obtained
by towers of smooth coverings over a smooth manifold and are endowed with the
differentiable structure of {\it proposition} \ref{smooth-McCord}.
\end{remark}
 
 \subsection{Profinite completions and McCord Solenoids}\label{profinite-McCord}
 
\noi We recall, once again, that on a smooth lamination one can use all the tools of differential topology and geometry along the leaves. Concepts such as
laminated smooth functions, laminated Riemannian metric, laminated curvature, de Rham theory, etc. can be defined \cite{MS},\cite{CC}.

\noi Let $M$ be a {\it smooth}, compact, connected, manifold whose fundamental group
is infinite and residually finite. Let $\tilde{M}$ be its universal covering. Let
$g$ be a Riemannian metric on $M$.
 
\noi We take the tower of {\it all} of its finite pointed coverings
$p_{\beta}:(\tilde{M}_\beta,\tilde{x}_\beta)\to(M,x_0)$
corresponding to the directed system under inclusion $\mathfrak{B}$ of subgroups of finite index $\beta\subset\pi_1(M,x_0)$. Hence
if ${\beta_1},\, {\beta_2}\in\mathfrak{B}$ and $\beta_1\subset\beta_2$
we simply write the order as
$\beta_1\preceq\beta_2$ and one has
a corresponding pointed coverings
$(\tilde{M}_{\beta_1},\tilde{x}_{\beta_1})$ and $(\tilde{M}_{\beta_2},\tilde{x}_{\beta_2})$ and a bonding pointed covering map:
\[
p_{_{(\beta_1,\beta_2)}}:
(\tilde{M}_{\beta_1},\tilde{x}_{\beta_1})\to
(\tilde{M}_{\beta_2},\tilde{x}_{\beta_2}),\quad\quad \beta_1\preceq\beta_2
\]
 
\noi We endow each $\tilde{M}_\beta$
with the pullback metric under $p_\beta$.
Then the inverse limit
$$
\mathbb{M}={\lim_{\overset{p_{_{(\beta_1,\beta_2)}}}{\longleftarrow} }} \,\,
\{p_{_{(\beta_1,\beta_2)}}:
(\tilde{M}_{\beta_1},\tilde{x}_{\beta_1})\to
(\tilde{M}_{\beta_2},\tilde{x}_{\beta_2}),\,\,\,
{\beta_1},\,{\beta_2}\in\mathfrak{B},\,\,\beta_1\preceq\beta_2\}
$$
has the structure of a smooth lamination by
{{\it proposition} \ref{smooth-McCord}. In addition, $\mathbb{M}$
is a minimal n-dimensional solenoidal manifold (see {\it proposition} \ref{McCord-minimal} below). The pulled-back metrics
induce a smooth laminated Riemannian metric $\hat{\mathbf{g}}$ on $\mathbb{M}$. The foliation is minimal: all leaves are dense and all leaves are isometric to $\tilde{M}$ with the pull-back metric $\tilde{g}$ under the universal covering
$\tilde{M}\to{M}$. Furthermore, by construction, each covering projection $p_\beta$ is a local isometry.
 
\noi Since the directed set $\mathfrak{N}$ of all the normal subgroups of finite index of $\pi_1(M,x)$ is cofinal in $\mathfrak{B}$, the inverse limit corresponding to
$\mathfrak{N}$ defines the same lamination (the directed set of characteristic subgroups of finite index is cofinal so also defines the same lamination). We have:
 
\begin{equation}\label{towercovers}
\mathbb{M}={\lim_{\overset{p_{_{(\beta_1,\beta_2)}}}{\longleftarrow} }} \,\,
\{p_{_{(\beta_1,\beta_2)}}:
(\tilde{M}_{\beta_1},\tilde{x}_{\beta_1})\to
(\tilde{M}_{\beta_2},\tilde{x}_{\beta_2}),\,\,\,
{\beta_1},\,{\beta_2}\in\mathfrak{N},\,\beta_1\preceq\beta_2\}
\end{equation}
%%%%
%%%%
\[
\mathbb{M}=\left\{\{x_\beta\}_{_{\beta\in\mathfrak{N}}} \in\prod_{\beta\in\mathfrak{N}}\tilde{M}_{\beta}
\;:\;x_{_{\beta_1}}=p_{_{(\beta_1,\beta_2)}}(x_{_{\beta_2}}){\text{ for all }}\beta_1\preceq\beta_2{\text{ in }}\mathfrak{N}\right\}.
\]

\begin
{definition}\label{univ-alg-cov} $\mathbb{M}$ is called the {\em algebraic universal covering} of $M$. We also refer to. $\mathbb{M}$ as the
{\em algebraic universal solenoidal covering space}.
For each fixed $\alpha\in\mathfrak{N}$ there is a projection
\begin{equation}\label{canonical-projection}
\Pi_\alpha:\mathbb{M}\to{M_\alpha},\quad \quad \Pi_\alpha(\{x_\beta\}_{_{\beta\in\mathfrak{N}}})=x_\alpha
\end{equation}
\noi When $\beta_0=\pi_1(M,x)$, we have $M_{\beta_0}=M$. Let $\Pi:\mathbb{M}\to{M}$
denote the
corresponding projection $\Pi(\{x_\beta\}_{_{\beta\in\mathfrak{N}}})=x_0\in{M}$. We call $\Pi$ {\it the canonical projection}.
\end{definition}
\noi If $\beta\in\mathfrak{N}$ is a {\em normal }subgroup of finite index let $H_\beta=\pi_1(M)/\beta$ be the finite group which is the group
of deck transformations of the finite covering $p_\beta:\tilde{M}_\beta\to{M}$. The finite group $H_\beta$ acts by isometries on $M_\beta$.
 
\noi If $\beta_1\preceq\beta_2$, one has the epimorphism $q_{_{\beta_1,\beta_2}}:H_{\beta_1}\to{H_{\beta_2}}$.
\smallskip
 
\noi Then one has the following standard fact:
\begin{proposition}
The map $\Pi_\beta$ describes $\mathbb M$ as a principal fiber bundle over $M$ with group fiber the Cantor group
$\widehat{\pi_1(M)}$  where $\widehat{\pi_1(M)}$ is the profinite group which is given as the inverse limit:
\begin{equation}\label{profincompletion}
\widehat{\pi_1(M)}={\lim_{\overset{q_{_{\beta_1,\beta_2}}}{\longleftarrow}}} \,\,\{q_{_{\beta_1,\beta_2}}:H_{\beta_1}\to{H_{\beta_2}}
,\quad \beta_1,\beta_2\in\mathfrak{N},\,\beta_1\preceq\beta_2\}.
\end{equation}
\end{proposition}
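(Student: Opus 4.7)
The plan is to verify the three ingredients that make $\Pi:\mathbb{M}\to M$ into a principal $\wpi$-bundle: a continuous right (or left) action of $\wpi$ on $\mathbb{M}$ that is free and transitive on each fiber of $\Pi$, together with local triviality over an open cover of $M$.

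First I would identify the fiber group. By construction of the profinite completion, $\wpi$ is the inverse limit appearing in \eqref{profincompletion}, so its elements are coherent families $(\hat\gamma_\beta)_{\beta\in\mathfrak N}\in\prod_\beta H_\beta$ with $q_{_{\beta_1,\beta_2}}(\hat\gamma_{\beta_1})=\hat\gamma_{\beta_2}$ whenever $\beta_1\preceq\beta_2$. Since $\pi_1(M)$ is infinite and residually finite, the natural map $\pi_1(M)\to\wpi$ is injective with dense image, and $\wpi$ is a compact, totally disconnected, metrizable, perfect topological group, hence homeomorphic to a Cantor set.

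Next I would define the action of $\wpi$ on $\mathbb{M}$ by letting $\hat\gamma\cdot(x_\beta)_\beta := (\hat\gamma_\beta\cdot x_\beta)_\beta$, where $H_\beta$ acts on $\tilde M_\beta$ by deck transformations. This is well-defined: since $p_{_{(\beta_1,\beta_2)}}$ is equivariant with respect to $q_{_{\beta_1,\beta_2}}$, the family $(\hat\gamma_\beta\cdot x_\beta)_\beta$ is again compatible. The action is continuous because each factor $(\hat\gamma_\beta,x_\beta)\mapsto \hat\gamma_\beta\cdot x_\beta$ is continuous (in fact $H_\beta$ is discrete), and product/subspace topologies match up in the inverse limit. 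Freeness at each coordinate $\beta$ (since each $H_\beta$ acts freely on $\tilde M_\beta$) gives freeness of the action. Transitivity on fibers of $\Pi$ follows from the transitivity of each $H_\beta$ on the fiber $p_\beta^{-1}(x)$: given $(x_\beta),(y_\beta)\in\Pi^{-1}(x)$, for each $\beta$ there is a unique $\hat\gamma_\beta\in H_\beta$ with $\hat\gamma_\beta\cdot x_\beta=y_\beta$, and uniqueness forces compatibility under the $q_{_{\beta_1,\beta_2}}$, yielding an element of $\wpi$.

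For local triviality, fix $x\in M$ and take a path-connected evenly covered neighborhood $U$ of $x$ with respect to the universal cover $\tilde M\to M$; then $U$ is evenly covered by every intermediate cover $p_\beta$, so a choice of base lift $\tilde x_\beta\in p_\beta^{-1}(x)$ determines a section $s_\beta:U\to\tilde M_\beta$ with $p_\beta^{-1}(U)\cong U\times H_\beta$ via $(u,h)\mapsto h\cdot s_\beta(u)$. Choosing the $\tilde x_\beta$ compatibly (which is possible because the $p_{_{(\beta_1,\beta_2)}}$ are surjective and $\mathfrak{N}$ is a directed set, so the fiber $\Pi^{-1}(x)$ is nonempty), the sections $s_\beta$ also fit together compatibly and define a continuous section $s:U\to\mathbb{M}$. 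The map $U\times\wpi\to\Pi^{-1}(U)$, $(u,\hat\gamma)\mapsto \hat\gamma\cdot s(u)$, is then a homeomorphism, since at each finite level it is the obvious trivialization and the inverse limit of homeomorphisms is a homeomorphism. Equivariance with respect to the $\wpi$-action is immediate from the construction.

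The main technical obstacle is the compatibility of choices across the inverse system: one has to verify that local sections $s_\beta$ over a common neighborhood $U$ can be chosen to intertwine with the bonding maps $p_{_{(\beta_1,\beta_2)}}$, so that their limit yields a genuine continuous section $s$ of $\Pi$. This relies on the directedness of $\mathfrak{N}$ and the exactness of the inverse limit functor on the surjective system of finite $H_\beta$-sets $p_\beta^{-1}(U)$; granting this, the proposition reduces to assembling standard facts about coverings with the standard description of $\wpi$ as a profinite limit.
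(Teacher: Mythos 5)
Your argument is correct and complete; note that the paper itself offers no proof of this proposition, introducing it only as a ``standard fact,'' so you are supplying the verification the author omits. Your route is the direct one: define the coordinatewise action of $\widehat{\pi_1(M)}$ on the inverse limit via the equivariance of the bonding maps $p_{_{(\beta_1,\beta_2)}}$ with respect to the $q_{_{\beta_1,\beta_2}}$, check freeness and fiberwise transitivity from the corresponding properties of each finite deck group $H_\beta$, and build local trivializations from compatible sheets over an evenly covered neighborhood. The one point you rightly flag as the technical crux --- choosing the base lifts $\tilde{x}_\beta$ coherently so that the level-$\beta$ sections $s_\beta$ intertwine with the bonding maps --- is handled exactly as you say: a point of $\Pi^{-1}(x)$ exists because an inverse limit of nonempty finite sets over a directed system is nonempty, and such a point determines all the $s_\beta$ simultaneously. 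The paper's own perspective on this structure appears only later, in Proposition \ref{algcover}, where $\mathbb{M}$ is presented as the quotient $\pi_1(M)\backslash(\tilde{M}\times\widehat{\pi_1(M)})$ of the diagonal action (\ref{profiniteaction}); from that description the principal $\widehat{\pi_1(M)}$-bundle structure is immediate, since the right translation action on the second factor descends to the quotient and local triviality is inherited from the product. The quotient picture is slicker and is what the paper uses repeatedly afterwards (e.g.\ for the solenoids $\cS_\alpha$ in (\ref{form-of-solenoids})), but it presupposes Proposition \ref{algcover}; your inverse-limit argument is self-contained, works directly from the definition (\ref{towercovers}), and makes explicit why normality of the subgroups $\beta$ (regularity of the covers, hence simple transitivity of $H_\beta$ on fibers) is what upgrades the Cantor-fiber bundle to a principal bundle.
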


\begin{definition}\label{alg-fund-grp} The group
$\widehat{\pi_1(M)}$  is called the {\bf profinite completion} of the fundamental group or {\bf algebraic fundamental group}
of $M$. It is a profinite Cantor group. Recall that the profinite completion of a group $\Gamma$ is the inverse limit of the directed
system of finite quotients of $\Gamma$. Note that $\widehat{\pi_1(M)}$ is a Cantor group and
$\widehat{\pi_1(M)}\subset\prod_{\beta\in\mathfrak{N}}H_{\beta_1}$.
We refer to \cite{RZ} for properties of profinite groups.
\end{definition}

\begin{definition}\label{inclusion-j} Since $\pi_1(M)$ is residually finite there is a {\it canonical monomorphism} $j:\pi_1(M)\to{\widehat{\pi_1(M)}}$ given by $j(g)=(({\mathfrak{p}}_\beta(g)))_{\beta\in\mathfrak{B}}$
 where $\mathfrak{p}_\beta$ is the canonical epimorphism
$\mathfrak{p}_\beta:\pi_1(M)\to\frac{\pi_1(M)}{H_\beta}$.
\end{definition}

\begin{remark}\label{G-injects-densely} Since $\pi_1(M)$ is residually finite it follows that $j$ is injective and $j(\pi_1(M))$ is dense in $\widehat{\pi_1(M)}$.
\end{remark}
\noi Let $\tilde{M}$ be the universal covering of $M$ corresponding to the trivial subgroup. By {\it corollary} \ref{smooth}, the group $\pi_1(M)$ acts differentiably on $\tilde{\mathbb{M}}:=\tilde{M}\times{\widehat{\pi_1(M)}}$
as follows:
\begin{equation}\label{profiniteaction}
f_{\gamma}(x,h)=(\gamma(x), j(\gamma)h),
\end{equation}
\noi where the action of $\gamma\in\pi_1(M)$ on $\tilde{M}$ is by isometric deck transformations with respect to $\tilde{g}$ and on $\widehat{\pi_1(M)}$ by left translation by $j(\gamma)$. The action is differentiable ({\it corollary} \ref{smooth}) free and properly discontinuous.

Let $j:\pi_1(M)\to\widehat{\pi_1(M)}$ be the canonical injection.
Then $\pi_1(M)\times\widehat{\pi_1(M)}$ acts on $\tilde{M}\times\widehat{\pi_1(M)}$
differentiably
(by {\it proposition} \ref{smooth-McCord} and {\it corollary} \ref{smooth}) as follows:
\begin{equation}\label{action-Gamma-hatGamma}
F_{(\gamma,\mathfrak{g})}(\tilde{x}, \mathfrak{h})=
({\gamma}(\tilde{x}),j(\gamma)\cdot\mathfrak{h}\cdot\mathfrak{g}^{-1})
\quad \left(\tilde{x}\in\tilde{M},\gamma\in\Gamma, \,\, \mathfrak{h},\,\mathfrak{g}
\in\widehat{\pi_1(M)}\right)
\end{equation}
This is indeed a left action:
 
\[
F_{(\gamma_2\cdot\gamma_1,\mathfrak{g}_2\cdot \mathfrak{g}_1)}
(\tilde{x}, \mathfrak{h})=
(({\gamma_2}{\gamma_1})(\tilde{x}),\,j(\gamma_2\gamma_1)\cdot
\mathfrak{h}\cdot(\mathfrak{g}_2\cdot\mathfrak{g}_1)^{-1})=
\]
 
\[
({\gamma_2}({\gamma_1}(\tilde{x})),\,j(\gamma_2)\cdot{j(\gamma_1)}\cdot
\mathfrak{h}\cdot\mathfrak{g}_1^{-1}\cdot\mathfrak{g}_2^{-1})=
F_{(\gamma_2,\mathfrak{g}_2)}
(F_{(\gamma_1,\mathfrak{g}_1)}(\tilde{x}, \mathfrak{h}))
\]

\noi We have:
 
\begin{proposition}\label{algcover}The solenoid $\mathbb M$ is obtained from
$\tilde{\mathbb M}=\tilde{M}\times{\widehat{\pi_1(M)}}$
as the orbit space of the left action (\ref{profiniteaction}) of $\pi_1(M)$ \ie
\begin{equation}\label{barM}
\,\mathbb{M}=\pi_1(M)\backslash\tilde{\mathbb M}=\pi_1(M)\backslash({\tilde{M}}
\times{\widehat{\pi_1(M)}})
\end{equation}
\end{proposition}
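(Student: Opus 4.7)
The plan is to construct an explicit continuous, $\pi_1(M)$-invariant map
\[
\Phi:\tilde{M}\times\widehat{\pi_1(M)}\longrightarrow\mathbb{M}
\]
and show that it descends to a homeomorphism of $\pi_1(M)\backslash(\tilde{M}\times\widehat{\pi_1(M)})$ with $\mathbb{M}$. For each $\beta\in\mathfrak{N}$, write $\mathfrak{h}_\beta\in H_\beta$ for the image of $\mathfrak{h}\in\widehat{\pi_1(M)}$ under the canonical projection, and let $p_\beta:\tilde{M}\to\tilde{M}_\beta$ be the covering projection, so that $H_\beta$ acts freely and transitively on each fiber of $\tilde{M}_\beta\to M$. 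I would set
\[
\Phi(\tilde{x},\mathfrak{h})_\beta := \mathfrak{h}_\beta^{-1}\cdot p_\beta(\tilde{x}).
\]
Using $p_{_{(\beta_1,\beta_2)}}\circ p_{\beta_1}=p_{\beta_2}$ together with the equivariance of $p_{_{(\beta_1,\beta_2)}}$ under the epimorphism $q_{_{\beta_1,\beta_2}}:H_{\beta_1}\to H_{\beta_2}$, the family $\{\Phi(\tilde{x},\mathfrak{h})_\beta\}_{\beta\in\mathfrak{N}}$ is compatible with the bonding maps and hence defines a point of $\mathbb{M}$.

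The second step is to verify $\pi_1(M)$-invariance of $\Phi$ under the action (\ref{profiniteaction}). Using the two identities $p_\beta(\gamma\tilde{x})=\bar\gamma\cdot p_\beta(\tilde{x})$ (with $\bar\gamma$ the class of $\gamma$ in $H_\beta$) and $(j(\gamma)\mathfrak{h})_\beta=\bar\gamma\,\mathfrak{h}_\beta$, the inverse in the definition of $\Phi$ is precisely what forces cancellation:
\[
\Phi(f_\gamma(\tilde{x},\mathfrak{h}))_\beta=(\bar\gamma\mathfrak{h}_\beta)^{-1}\cdot\bar\gamma\cdot p_\beta(\tilde{x})=\mathfrak{h}_\beta^{-1}\cdot p_\beta(\tilde{x}).
\]
Thus $\Phi$ factors through a continuous map $\bar\Phi:\pi_1(M)\backslash(\tilde{M}\times\widehat{\pi_1(M)})\to\mathbb{M}$.

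For bijectivity of $\bar\Phi$: given a thread $\{x_\beta\}\in\mathbb{M}$, pick any lift $\tilde{x}\in\tilde{M}$ of $\Pi(\{x_\beta\})\in M$; since $H_\beta$ acts freely and transitively on the fiber of $\tilde{M}_\beta\to M$ over that point, there is a unique $\mathfrak{h}_\beta\in H_\beta$ with $\mathfrak{h}_\beta^{-1}\cdot p_\beta(\tilde{x})=x_\beta$, and the compatibility of $\{x_\beta\}$ under the $p_{_{(\beta_1,\beta_2)}}$'s forces the compatibility of $\{\mathfrak{h}_\beta\}$ under the $q_{_{\beta_1,\beta_2}}$'s, yielding $\mathfrak{h}\in\widehat{\pi_1(M)}$ with $\Phi(\tilde{x},\mathfrak{h})=\{x_\beta\}$. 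For injectivity, if $\Phi(\tilde{x},\mathfrak{h})=\Phi(\tilde{y},\mathfrak{k})$, projecting to $M$ gives $\tilde{y}=\gamma\tilde{x}$ for some $\gamma\in\pi_1(M)$, and free transitivity on each $H_\beta$-orbit then forces $\mathfrak{k}=j(\gamma)\mathfrak{h}$.

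Finally, to upgrade $\bar\Phi$ from a continuous bijection to a homeomorphism, I would invoke compactness: the diagonal action of $\pi_1(M)$ on $\tilde{M}\times\widehat{\pi_1(M)}$ is free and properly discontinuous (the first factor alone is), the quotient fibers over the compact manifold $M$ with compact fiber $\widehat{\pi_1(M)}$ and is therefore compact, while $\mathbb{M}$ is Hausdorff as an inverse limit of Hausdorff spaces. The only genuinely subtle point—and the one I would spend most care on—is the bookkeeping of left versus right actions: seeing that the inverse on $\mathfrak{h}_\beta$ in the definition of $\Phi$ is exactly the ingredient needed for the diagonal action (\ref{profiniteaction}) to descend, and that the commuting right translation $(\tilde{x},\mathfrak{h})\mapsto(\tilde{x},\mathfrak{h}\,\mathfrak{g}^{-1})$ on the source transports under $\bar\Phi$ to the principal $\widehat{\pi_1(M)}$-action on $\mathbb{M}$ described in the preceding proposition.
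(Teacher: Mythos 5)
Your proof is correct and complete. The paper does not actually prove Proposition \ref{algcover}: it is stated as a standard fact with no argument, so there is nothing to compare against except the canonical proof, which is exactly what you have written. Your map $\Phi(\tilde{x},\mathfrak{h})_\beta=\mathfrak{h}_\beta^{-1}\cdot p_\beta(\tilde{x})$, the verification of compatibility with the bonding maps, the invariance under the action (\ref{profiniteaction}), the bijectivity via simple transitivity of $H_\beta$ on the fibers of the regular covers, and the final compactness-plus-Hausdorff upgrade to a homeomorphism are all sound. The two points that genuinely require care are the ones you flag: the sign (inverse) on $\mathfrak{h}_\beta$, and the identifications $p_\beta(\gamma\tilde{x})=\bar\gamma\cdot p_\beta(\tilde{x})$ and $q_{_{\beta_1,\beta_2}}$-equivariance of the bonding maps, which depend on the consistent choice of basepoints $\tilde{x}_\beta$ that the paper builds into its tower of pointed covers; with those conventions fixed, everything you wrote goes through.
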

\noi $\mathbb{M}$ is sometimes denoted as $M\times_{_{\pi_1(M)}}\widehat{\pi_1(M)}$.
Consider the canonical projection:
\begin{equation}\label{pihat}
\hat{\Pi}:\tilde{\mathbb{M}}\to\mathbb{M},
\end{equation}
 
\noi Then $\hat{\Pi}$ is a solenoidal covering projection \ie each point in
$\mathbb{M}$ has an open neighborhood $U$ so that
${\hat\Pi}^{-1}(U)$ is a disjoint union of open sets and the restriction to
each of these open sets is a homeomorphism {\it onto} $U$ and ${\hat\Pi}$ has
the unique path-lifting property \cite{St0} \S 13.
 
\begin{proposition}$\mathbb{M}$ is compact and the solenoidal lamination of $\mathbb{M}$ is minimal.\label{McCord-minimal}
\end{proposition}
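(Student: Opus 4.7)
The plan is to split the proposition into its two assertions and handle each separately, using the descriptions of $\mathbb{M}$ already established in the excerpt.

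For compactness, I would view $\mathbb{M}$ as a subset of the product $\prod_{\beta\in\mathfrak{N}}\tilde M_\beta$. Each $\tilde M_\beta$ is a finite cover of the compact manifold $M$, hence compact, so by Tychonoff the product is compact. The description of $\mathbb{M}$ in the excerpt exhibits it as the set of threads $\{x_\beta\}$ satisfying $x_{\beta_1}=p_{(\beta_1,\beta_2)}(x_{\beta_2})$ for all $\beta_1\preceq\beta_2$. Since every bonding map $p_{(\beta_1,\beta_2)}$ is continuous and each $\tilde M_\beta$ is Hausdorff, each such compatibility condition cuts out a closed subset of the product, and $\mathbb{M}$ is the intersection of these closed sets, hence closed in a compact Hausdorff space, hence compact.

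For minimality, I would exploit the identification $\mathbb{M}=\pi_1(M)\backslash(\tilde M\times\widehat{\pi_1(M)})$ from Proposition \ref{algcover} and the density of $j(\pi_1(M))$ in $\widehat{\pi_1(M)}$ from Remark \ref{G-injects-densely}. The plan is: first, observe that the product lamination on $\tilde M\times\widehat{\pi_1(M)}$ (whose leaves are the slices $\tilde M\times\{\mathfrak g\}$) descends to the lamination on $\mathbb{M}$ because the action (\ref{profiniteaction}) sends the slice $\tilde M\times\{\mathfrak g\}$ to the slice $\tilde M\times\{j(\gamma)\mathfrak g\}$. Second, compute the holonomy: by lifting a loop $\sigma$ in $M$ based at $x_0$ to $\tilde M$ via its universal covering, and reading off the effect on the $\widehat{\pi_1(M)}$-coordinate after re-identifying using the $\pi_1(M)$-action, one sees that the local holonomy around the loop $\sigma$ acts on a transversal (naturally identified with a clopen neighborhood in $\widehat{\pi_1(M)}$) by left translation by $j([\sigma])$.

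With these two ingredients, the minimality assertion reduces to a purely group-theoretic fact: the orbit of any $\mathfrak g\in\widehat{\pi_1(M)}$ under left translation by elements of $j(\pi_1(M))$ equals the coset $j(\pi_1(M))\mathfrak g$, which is dense in $\widehat{\pi_1(M)}$ by Remark \ref{G-injects-densely}. Hence every leaf meets every transversal in a dense set, so every leaf is dense in $\mathbb{M}$. The main obstacle I anticipate is bookkeeping the identification of transversals with subsets of $\widehat{\pi_1(M)}$ in a way that correctly accounts for the $\pi_1(M)$-action and makes the holonomy-is-translation claim precise; once that is set up cleanly, both halves of the statement follow immediately.
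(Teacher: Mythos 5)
Your proposal is correct and follows essentially the same route as the paper: the minimality claim rests in both cases on the density of $j(\pi_1(M))$ in $\widehat{\pi_1(M)}$, applied through the identification $\mathbb{M}=\pi_1(M)\backslash(\tilde M\times\widehat{\pi_1(M)})$, so that the saturation of a leaf upstairs is $\tilde M\times j(\pi_1(M))\mathfrak g$, which is dense. Your Tychonoff argument for compactness is the standard one implicit in the inverse-limit description; the paper's own proof simply omits that half.
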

 
\begin{proof} It follows from the fact that $j(\pi_1(M))$ is dense in
$\widehat{\pi_1(M)}$ ({\it remark} \ref{G-injects-densely}).
 Also $\widehat{\pi_1(M)}$ acts on the right without fixed points on $\tilde{M}\times{\widehat{\pi_1(M)}}$ and acts simply-transitive on $\{\tilde{x}\}\times\widehat{\pi_1(M)}$
 by the formula $(\tilde{x},h)\overset{g}\mapsto(\tilde{x},hg)$ (formula (\ref{action-Gamma-hatGamma}) when $\gamma=Id$). Thus
the leaves on $\mathbb{M}$ are dense and isometric to the
universal covering $\tilde{M}$ with the lifted metric $\tilde{g}$ from the metric $g$ on $M$, in particular, they are simply connected.
\end{proof}
\begin{remark} Of course not every laminated Riemannian metric on $\mathbb{M}$ is obtained
pulling back a Riemannian metric $g$ on $M$, using the canonical
projection $\Pi$, even if
the laminated metric is very homogeneous. For
instance, compact 2-dimensional smooth McCord solenoids obtained from an infinite tower of
smooth regular coverings of a smooth compact, orientable, surface $\Sigma$ of genus
$h$ greater than one, have an infinite dimensional Teichmüler space of metrics of constant negative curvature -1 whereas the Teichmüller space of $\Sigma$ is of real dimension $6h-h$. In fact, in \cite{BV} it is shown that the Teichmüler space
of Sullivan's universal hyperbolic solenoid $\cL_\mathcal{h}$ \ie
the algebraic universal covering of $\Sigma$ is homeomorphic to the space
of continuous maps from the Cantor group $\widehat{\pi_1(\Sigma)}$ to the
Teichmüller space $\mathcal{T}(\cL_\mathcal{h})$
of $\Sigma$ (which shows that $\mathcal{T}(\Sigma)$ is
an infinite-dimensional Kähler manifold). See {\it subsection} \ref{UHL} below.
 \end{remark}
 
\noi Let $M$ be a compact manifold with infinite and
residually finite fundamental group $\pi_1(M)$.
Let
$\mathcal{G}:={G_1}\subset{G_2}\subset\cdots\subset{G}_i\subset\cdots$
and
$\mathcal{G}':{G'_1}\subset{G'_2}\subset\cdots\subset{G'}_i\subset\cdots$
be two infinite chains in the directed system of normal subgroups of
finite index of $\pi_1(M)$. Assume that $\mathcal{G}$ and $\mathcal{G}'$
correspond to the McCord solenoids $\cS_1$ and $\cS_2$.
 
\noi We say that $\mathcal{G}$ is equivalent to $\mathcal{G}'$ if there exists
a chain $\mathcal{G}''$ of normal subgroups such that
$\mathcal{G}''\supset\mathcal{G}'$,  $\,\,\mathcal{G}''\supset\mathcal{G}$
and both $\mathcal{G}'$ and $\mathcal{G}$ are cofinal in $\mathcal{G}''$.
This is an equivalence relation.
Two chains in the same equivalence class have homeomorphic corresponding inverse limits of coverings. These equivalence classes determine closed subgroups of the profinite completion $\widehat{\pi_1(M)}$.

By \cite{GS} there exists uncountably many non-homomorphic normal subgroups $\Gamma_\alpha$ of $\widehat{\pi_1(M)}$ if $M$ is a hyperbolic manifold.
 
\noi Furthermore, uncountably many of these subgroups are closed normal subgroups which are Cantor groups. Restricting, for each $\alpha$, the action (\ref{action-Gamma-hatGamma})
to the subgroup $\pi_1(M)\times\Gamma_\alpha$, of the previous type, we obtain that
the restricted action is free and proper (but not discontinuous). The orbit-space quotient
\begin{equation}\label{form-of-solenoids}
\cS_\alpha:=(\pi_1(M)\times{\Gamma_\alpha})\backslash(\tilde{M}\times\widehat{\pi_1(M)})
\end{equation}
is a McCord solenoid which is a principal bundle over $M$ with fiber
the Cantor group $\hat\Gamma_\alpha=\widehat{\pi_1(M)}/\Gamma_\alpha$. So that
$\Gamma_\alpha$ acts fiberwise on $\tilde{\mathbb{M}}$ and
$\cS_\alpha=\tilde{\mathbb{M}}/\Gamma_\alpha$.
Furthermore, there
is a natural solenoidal covering map $\Pi_\alpha:\mathbb{M}\to\cS_\alpha$
(see {\it definition} \ref{Solenoidal-covering-spaces} below).
\noi Summarizing:
 
\begin{proposition}[Profinite Galois correspondence of solenoidal coverings of hyperbolic manifolds]
\label{uncountable-subgroups} Let $M$ be a compact hyperbolic manifold. Then ${\widehat{\pi_1(M)}}$ has uncountably many closed $\rm{normal}$ subgroups
$\Gamma_\alpha$
with quotients, a Cantor group \cite{GS}. Therefore there exist uncountably many non-homeomorphic McCord solenoids $\cS_\alpha$ that are principal Cantor bundles over $M$. For every $\Gamma_\alpha$ there exists a solenoidal covering map
$\pi_\alpha:\mathbb{M}\to{\cS_\alpha}$ ({\it definition} \ref{Solenoidal-covering-spaces} below).
\end{proposition}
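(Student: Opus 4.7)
The plan is to peel off the three claims of the proposition in order, leveraging the structure of $\mathbb{M}$ as a principal $\widehat{\pi_1(M)}$-bundle that has already been established in proposition \ref{algcover}.

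First I would invoke \cite{GS} essentially verbatim: for $M$ a compact hyperbolic manifold the profinite completion $\widehat{\pi_1(M)}$ carries uncountably many closed normal subgroups $\Gamma_\alpha$ whose quotients $\widehat{\pi_1(M)}/\Gamma_\alpha$ are Cantor groups, and moreover one can arrange these quotients to fall into uncountably many pairwise non-isomorphic topological groups. This step is a direct translation of a purely group-theoretic statement into our setting.

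Next, for each $\Gamma_\alpha$, I would verify that the restriction of the action (\ref{action-Gamma-hatGamma}) to the subgroup $\pi_1(M)\times\Gamma_\alpha$ is free and proper. Freeness is automatic from the first coordinate, because $\pi_1(M)$ already acts freely on $\tilde{M}$ as the deck group. Properness is cleanest seen in two stages: quotient first by $\pi_1(M)$ to recover $\mathbb{M}$, then let the compact group $\Gamma_\alpha$ act on $\mathbb{M}$ by fiberwise right-translation on each $\widehat{\pi_1(M)}$-fiber. Thus $\cS_\alpha\cong\mathbb{M}/\Gamma_\alpha$ is a Hausdorff orbit space, and the natural projection $\pi_\alpha\colon\mathbb{M}\to\cS_\alpha$ is well-defined. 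Because $\Gamma_\alpha$ is totally disconnected, in a product flow box $D\times T$ for $\mathbb{M}$ the map descends to $D\times(T/\Gamma_\alpha)$, so $\pi_\alpha$ is locally a projection of Cantor transversals; this identifies it as a solenoidal covering in the sense of {\it definition} \ref{Solenoidal-covering-spaces}. The principal bundle statement then follows formally: since $\mathbb{M}\to M$ was principal with structure group $\widehat{\pi_1(M)}$ and $\Gamma_\alpha$ is a closed normal subgroup of the structure group, the quotient $\cS_\alpha\to M$ is principal with structure group the Cantor quotient $\widehat{\pi_1(M)}/\Gamma_\alpha$.

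The genuine obstacle is the \emph{non-homeomorphism} claim. Distinct $\Gamma_\alpha$ clearly produce non-isomorphic principal bundles over $M$, but a priori the total spaces could still be homeomorphic via a map that fails to respect the bundle projection. The strategy I would follow is to recover the Cantor structure group from the total space intrinsically: by the Clark--Hurder theorem \ref{ClarkHurder}, $\cS_\alpha$ is forced to be a McCord solenoid, and one can read off the profinite structure group, up to isomorphism, as an inverse limit invariant of the leaf-space of finite solenoidal quotients of $\cS_\alpha$ (equivalently, via the profinite completion of the fundamental group of any compact manifold over which $\cS_\alpha$ fibers principally). Any homeomorphism $\cS_\alpha\cong\cS_{\alpha'}$ would then force the Cantor groups $\widehat{\pi_1(M)}/\Gamma_\alpha$ and $\widehat{\pi_1(M)}/\Gamma_{\alpha'}$ to be isomorphic as topological groups, contradicting the uncountability of isomorphism classes supplied by \cite{GS}. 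Making the intrinsic recovery of the structure group rigorous, rather than just invoking it, is the single step where genuine work is required; everything else is formal from the material already in the preceding sections.
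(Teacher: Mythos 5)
Your construction of $\cS_\alpha$ and of the covering $\pi_\alpha$ follows the paper's route exactly: the paper restricts the action (\ref{action-Gamma-hatGamma}) to $\pi_1(M)\times\Gamma_\alpha$, observes that the restricted action is free and proper, and takes the orbit space (\ref{form-of-solenoids}). Your two-stage quotient --- first by $\pi_1(M)$ to get $\mathbb{M}$, then by the compact group $\Gamma_\alpha$ acting fiberwise on the right --- is the cleanest way to see properness, and your flow-box description $D\times T\to D\times(T/\Gamma_\alpha)$ actually verifies that $\pi_\alpha$ satisfies {\it definition} \ref{Solenoidal-covering-spaces}, which the paper only asserts. (Minor quibble: freeness is not purely ``from the first coordinate''; $\gamma(\tilde x)=\tilde x$ forces $\gamma=e$, and only then does the second coordinate force $\mathfrak{g}=e$.) On the existence claims you are at least as complete as the source.

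The gap is exactly where you locate it, and it is worth naming precisely because the paper does not close it either: the passage from \cite{GS} to \emph{non-homeomorphism}. Two distinct things are missing. First, \cite{GS} is invoked for uncountably many pairwise non-isomorphic closed normal subgroups $\Gamma_\alpha$; your parenthetical that ``one can arrange these quotients to fall into uncountably many pairwise non-isomorphic topological groups'' is a strictly stronger statement (non-isomorphic kernels need not have non-isomorphic quotients, nor even distinct quotient isomorphism types), and neither you nor the paper justifies it. Second, even granted uncountably many isomorphism types among the $\widehat{\pi_1(M)}/\Gamma_\alpha$, one must show that the structure Cantor group is an invariant of the homeomorphism type of the total space; your plan to recover it via {\it theorem} \ref{ClarkHurder} and the inverse-limit presentation is the right idea --- this is in substance the classification of weak solenoids up to return equivalence in \cite{FO}, \cite{CHL}, \cite{HL} --- but you explicitly defer it, so the second sentence of the proposition remains unproved as written. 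The paper's own ``proof'' is the paragraph preceding the statement, which simply declares ``Therefore there exists uncountably many non-homeomorphic McCord solenoids''; so you have not misread the intended argument --- you have identified its weak point, and a complete write-up would have to supply both missing steps.
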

 
\begin{remark}[Universal property of $\mathbb{M}$]\label{UPM} Let $M$ be a compact
smooth manifold (not necessarily hyperbolic).
 Any McCord solenoid over $M$ corresponds to a closed normal subgroup
 $\Gamma_\alpha$ of
 ${\widehat{\pi_1(M)}}$ with quotient $\hat\Gamma_\alpha$ a Cantor group and it is of the form given by the
 formula (\ref{form-of-solenoids})
 \ie $\cS_\alpha=\tilde{\mathbb{M}}/\Gamma_\alpha$, and $\Gamma_{\alpha}$
 acts fiberwise and properly by right translations on the principal
 $\widehat{\pi_1(M)}$-bundle $\mathbb{M}$.
 \end{remark}
\begin{remark} This result is also valid for the profinite completion of the fundamental group
of any 3-manifold $M$ with infinite fundamental group. Therefore there exist uncountably many non-homeomorphic McCord solenoids $\cS_\alpha$ that are principal Cantor bundles over $M$, corresponding to closed normal subgroups $\Gamma_\alpha$ of
$\widehat{\pi_1(M)}$ such that $\pi_1(M)/\Gamma_\alpha$ is a Cantor group.
\end{remark}
 
\noi The algebraic universal covering solenoidal lamination $\mathbb{M}$ has an invariant normalized transverse measure obtained from
the Haar measure on the Cantor group $\widehat{\pi_1(M)}$. The same holds for an invariant transverse metric; one obtains one, for instance, by the formula
\[
d'(g,h)=\sup\left\{d(xgy,xhy): x,y  \in \widehat{\pi_1(M)}\right\},
\]
where $d$ is any distance in the Cantor group $\widehat{\pi_1(M)}$:
\begin{definition}[Transverse invariant measure and distance]\label{Transverse-measure} For each $x\in{M}$ let $K_x=\Pi^{-1}(\{x\})$ be the
fiber of the canonical projection (\ref{canonical-projection}) over $x$. Since $\Pi$ is a principal bundle, the fiber $K_x$ is homeomorphic to the Cantor
group $\widehat{\pi_1(M)}$, by choice of any point in the fiber.
Thus
we have the Haar measure $d\mu_x$ in $K_x$.
This measure is called the {\it canonical transverse measure} at $K_x$. This measure is invariant under the holonomy pseudogroup.
 
\medskip
\noi Let $d$ be any bi-invariant distance on the Cantor group
$\widehat{\pi_1(M)}$.  Then $d_x$ the corresponding distance on the fiber $K_x$
is a distance that is invariant under the holonomy pseudogroup.
\end{definition}
 
\noi To simplify the notation let $G=\widehat{\pi_1(M)}$, $\,\tilde{\mathbb{M}}=\tilde{M}\times{G}$, and $p_2:\tilde{M}\times{G}\to{G}$ be the projection into the second factor. Let $\rm{d}\mathbf{h}$ be the normalized Haar measure on $G$.

\noi Let $g$ be any laminated Riemannian metric on $\mathbb{M}$. We can pull-back
$g$ by the canonical projection
$\hat\Pi:\tilde{M}\times{G}\to\mathbb{M}$ to obtain a
laminated Riemannian metric $\tilde{g}$ on $\tilde{M}\times{G}$.
For each $\mathbf{k}$ in the Cantor group $G$ let
$\mathrm{d}\nu(\mathbf{k})$ be the measure given by the volume form
(of the metric $\tilde{g}$) on the
leaf $p_2^{-1}(\mathbf{k})=\tilde{M}\times{\{\mathbf{k}\}}$.
 
\begin{definition}\label{solenoidalmeasure} Let $\tilde\mu_g$ be the measure on $\tilde{\mathbb{M}}=\tilde{M}\times{G}$ such that for each continuous function with compact support $f:\tilde{M}\times{G}\to\R$ one has:

\[\int_{\tilde{\mathbb{M}}}f(\tilde{x},\mathbf{k})\,\mathrm{d}\tilde\mu_g
=\int _{G}\int _{p_2^{-1}(\mathbf{k})}f(\tilde{x},\mathbf{k})\,
\mathrm{d}\mathbf{h}\,\mathrm{d} \nu (\mathbf{k}).
\]
 In particular, for each measurable bounded set $E\subset\tilde{\mathbb{M}}$: \[\tilde\mu_g (E)=\int _{G}\,f_{_{E}}(\mathbf{k})\,\mathrm {d}\mathbf{h}(\mathbf{k}),\]
 
\noi with  $E_{\mathbf{k}}=E\cap{p_2^{-1}(\mathbf{k})}$ and $f_{_{E}}(\mathbf{k})$  the
 measure of $E_{\mathbf{k}}$ with respect to $\mathrm{d} \nu (\mathbf{k})$.
 \end{definition}
 
 \noi  The measure $\tilde\mu_g$ is invariant under the action of $G$ on
 $\tilde{\mathbb{M}}$ and therefore descends to a measure $\mu_g$ on the
 algebraic covering space $\mathbb{M}$. In a few words: the measure $\mu_g$
 is the measure that disintegrates into the Riemannian volume measure along the leaves and the Haar measure along the Cantor group transversals.
 \begin{remark} This type of measure can be defined for every McCord Solenoid
 or, equivalently, for any principal bundle with fiber a Cantor group or, again equivalently, a compact topologically homogeneous solenoidal manifold.
\end{remark}
 
\begin{remark}[Equicontinuity of McCord solenoids]\label{Equicontinuity of McCord solenoids}
The fact that the compact topologically homogeneous solenoidal manifolds admit
a transverse metric invariant under holonomy implies that these laminations are equicontinuous in the sense of \cite{AC}, thus they behave very
similarly to Riemannian foliations \cite{DHL},\cite{Mol}. In fact, it follows from S. Matsumoto in \cite{Mat}  that $\mathbb{M}$, being a minimal equicontinuous, compact lamination, is uniquely ergodic \ie it has a unique (up to scaling) transverse invariant Radon measure.
 \end{remark}
 
\begin{definition}[Solenoidal volume form]\label{lamvol} If $\cL$ is a smooth McCord solenoid with a laminated Riemannian metric $g$, the measure $\mu_g$ defined above is called the
{\it laminated volume form} of the McCord with respect to the laminated Riemannian metric $g$.
 
\noi The volume of $\cL$ with respect to the laminated volume form is:
\begin{equation}vol(\cL)=\int_{{\cL}}\,\rm{d}\mu_{g,h}
\end{equation}\label{volume}

\end{definition}

\subsection{The suspension of a representation}\label{suspension} Let $M$  be a compact $n$-dimensional connected
smooth manifold.
Let $\rho:\pi_1(M)\to \mathcal{H}(C)$ be a representation of $\pi_1(M)$ into
the group of homeomorphisms of the Cantor set $\mathcal{H}(C)$. As before,
let $\tilde{M}$ be the universal covering of $M$. Let $\pi_1(M)$ act on
$\tilde{M}\times{K}$ as follows:
\begin{equation}\label{actionsusp}
\gamma(m,\mathbf{k})=(\gamma(m),\rho(\gamma)(\mathbf{k})),\quad \gamma\in\pi_1(M),
 \,\,\,\tilde{m}\in\tilde{M},  \,\,\mathbf{k}\in{K}.
\end{equation}
 \noi The action on $\tilde{M}$ is by deck transformations and in the Cantor set
 is via the representation $\rho$.
 The following holds:
 \begin{proposition}\label{actionsusp-free-disc} The action of $\pi_1(M)$ given by formula (\ref{actionsusp})
 is free and properly discontinuous and co-compact \ie the orbit space is compact with respect to the quotient topology.
 \end{proposition}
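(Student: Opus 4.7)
The plan is to verify the three properties (free, properly discontinuous, co-compact) one at a time, in each case transporting the corresponding property of the deck-transformation action of $\pi_1(M)$ on $\tilde{M}$ across the product with the compact Cantor set $K$. The representation $\rho$ plays essentially no role beyond ensuring that the formula defines a genuine action; the heavy lifting is done by the first coordinate.

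First I would check freeness. If $\gamma \in \pi_1(M)$ fixes some $(\tilde{m}, \mathbf{k}) \in \tilde{M} \times K$, then the first coordinate of the defining formula (\ref{actionsusp}) gives $\gamma(\tilde{m}) = \tilde{m}$. Since $\pi_1(M)$ acts freely on its universal covering by deck transformations, this forces $\gamma = 1$. Next, for proper discontinuity, I would use the fact that the deck action on $\tilde{M}$ is properly discontinuous: every $\tilde{m} \in \tilde{M}$ admits an open neighborhood $U$ with $\gamma(U) \cap U = \emptyset$ for all $\gamma \neq 1$. Then $U \times K$ is an open neighborhood of $(\tilde{m},\mathbf{k})$ whose translates $\{\gamma \cdot (U \times K)\}_{\gamma \neq 1}$ are pairwise disjoint from it, because already the first coordinate fails to meet $U$. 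This gives the standard local form of proper discontinuity, which combined with freeness ensures that the quotient is a Hausdorff topological space locally homeomorphic to $U \times K$.

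For co-compactness, compactness of $M = \pi_1(M)\backslash \tilde{M}$ allows me to pick a compact fundamental domain $F \subset \tilde{M}$ meeting every $\pi_1(M)$-orbit. Then $F \times K$ is compact (product of two compact sets, since $K$ is a Cantor set) and surjects onto the quotient $\pi_1(M) \backslash (\tilde{M} \times K)$ under the canonical projection, because every orbit of the diagonal action meets $F \times K$: given $(\tilde{m},\mathbf{k})$, choose $\gamma$ with $\gamma(\tilde{m}) \in F$, and then $F_\gamma(\tilde{m},\mathbf{k}) \in F \times K$. Continuity of the quotient map finishes the argument.

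No step presents a serious obstacle; the most subtle point is simply to notice that each of the three properties is \emph{decoupled} from the Cantor factor and so follows automatically from the known behavior of $\pi_1(M)$ on $\tilde{M}$ together with the compactness of $K$. Consequently, the quotient carries the natural solenoidal structure that realizes the suspension of $\rho$ as an $n$-dimensional solenoidal manifold over $M$ with transverse Cantor fiber $K$ and holonomy representation $\rho$.
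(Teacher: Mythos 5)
Your proof is correct; note that the paper states this proposition without proof, treating it as a standard fact, and your argument is precisely the expected one. Each of the three properties is inherited from the deck-transformation action on $\tilde{M}$ (freeness and proper discontinuity read off from the first coordinate alone, co-compactness from a compact fundamental domain crossed with the compact fiber $K$), which is exactly the "decoupling" the paper implicitly relies on here and in the analogous suspension construction in the proof of Proposition~\ref{nullcobordant}.
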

 \begin{definition}
 By {\it proposition} \ref{actionsusp-free-disc} the quotient
  \begin{equation}
 \mathbb{M}_\rho=\pi_1(M)\backslash(\tilde{M}\times{\widehat{\pi_1(M)}})
 \end{equation}
 is a compact Hausdorff space
called the {\it suspension} of the representation $\rho$. The suspension
 of $\rho$ is of sometimes denoted $M\times_\rho{K}$.
 
 \noi There is a natural projection $\mathfrak{p}: \mathbb{M}_\rho\to{M}$ which is
 a locally trivial fibration with fiber $K$. The holonomy at any point $x\in{M}$
 is conjugate to $\rho$.
 
 \noi The space $\tilde{M}\times{K}$ is obviously a noncompact solenoidal manifold and the canonical projection
 $\Pi:\tilde{M}\times{K}\to{\mathbb{M}_\rho}=M\times_\rho{K}$ sends the leaves
 $(\tilde{M}\,,\,\left\{\mathbf{k}\right\})$ into the leaves of $M\times_\rho{K}$.
It is a solenoidal covering as in definition \ref{Solenoidal-covering-spaces} below.
\end{definition}
 
\begin{remark} As we have seen, a McCord solenoid is the suspension
of a representation $\rho:\pi_1(M)\to{G}$, where $G$ is a Cantor group acting on itself by left translations.
The suspension of a representation $\rho:\pi_1(M)\to\mathcal{H}(K)$ is not, in general, a McCord solenoid. For this to be true it is necessary that
the representation is by (left) translations on a Cantor group and every orbit
in the Cantor set, under the representation, must be dense.
\end{remark}
 
 \subsection{Lifting maps between McCord Solenoids}\label{solenoidal-lifting} Let
 \begin{equation}\label{lifting-canonical-projection}
\Pi_1:\mathbb{M}_1\to{M_1},\,\,\,\text{and} \quad\Pi_2:\mathbb{M}_2\to{M_2}.
\end{equation}
\noi be the two algebraic universal coverings of the compact, connected, smooth manifolds $M_1$ and $M_2$ (both with infinite and residually finite fundamental groups), respectively.
 
\noi $\mathbb{M}_1$ and $\mathbb{M}_2$ are compact solenoids
which are Cantor group fiber bundles over $M_1$ and $M_2$, respectively, and
are laminated with simply connected leaves homeomorphic to the universal coverings of $M_1$ and $N_2$, respectively.
 
\noi Let $G_1=\widehat{\pi_1(M_1)}$, $\,\tilde{\mathbb{M}}_1=\tilde{M}_1\times{G_1}$, and $G_2=\widehat{\pi_1(M_2)}$, $\,\tilde{\mathbb{M}}_2=\tilde{M}_2\times{G_2}$.
Let $\hat\Pi_1$ and $\hat\Pi_2$ be the corresponding solenoidal coverings (formula (\ref{pihat})):
\begin{equation}\label{universal-canonical-projection}
\hat\Pi_1:\tilde{\mathbb{M}}_1\to{\mathbb{M}}_1,\,\,\,\text{and} \quad\hat\Pi_2:\tilde{\mathbb{M}}_2\to{\mathbb{M}}_2.
\end{equation}
\begin{remark}\label{leaves-homeo-leaves}It follows from the definitions that
$\hat\Pi_i$  
maps leaves of $\tilde{\mathbb{M}}_i$ homeomorphically
onto the leaves of ${\mathbb{M}}_i$, $i=1,2$.
\end{remark}
 
\begin{definition} Let $f_1, f_2:\mathbb{M}_1\to\mathbb{M}_2$ be two
continuous maps. We say that $f_1$ is {\it leafwise equivalent} to $f_2$
if $f_1(L)$ and $f_2(L)$ are contained in the same leaf of $\mathbb{M}_2$ for
every leaf $L$ of $\mathbb{M}_1$. Two homotopic maps are leafwise equivalent.
 
\end{definition}
\noi Henceforth in this subsection we assume that $\mathbb{M}_2$ has a laminated  Riemannian metric $h$ which renders all leaves with negative sectional curvature.
 Then all the leaves of $\mathbb{M}_2$ are simply connected Hadamard manifolds and
 therefore two points in the same leaf are connected by a unique geodesic parametrized by arc length.
 
 \noi Furthermore, unit speed geodesics in Hadamard manifolds depend continuously on their endpoints.
 Using these unique parametrized geodesics it is very easy to obtain a homotopy between any two leafwise equivalent
 maps from $\mathbb{M}_1$ to $\mathbb{M}_2$.
 
 \noi Since maps between solenoids necessarily send leaves into leaves, in our case,  we only need to know how a map ``shuffles'' the leaves to determine its homotopy class.
 
\noi By analogy with the usual definition of covering maps one has the following definition:
 \begin{definition}[Solenoidal covering spaces]\label{Solenoidal-covering-spaces}
  Let $\cS_1$ and $\cS_2$ be two $n$-dimensional solenoidal manifolds (not necessarily compact or connected).
 A continuous map $p:\cS_1\to\cS_2$ is called a {\it solenoidal covering map} if
 every point of $\cS_2$ is {\it evenly covered} by $p$.
 This means that every
 point $x\in\cS_2$ has an open neighborhood of the form:
$\mathcal{U}=D\times{K}$, where $D$ is an open disk in $\R^n$ and $K$ the Cantor set, such that
$p^{-1}(\mathcal{U})=\underset{\alpha\in\mathcal{I}} \bigsqcup
 \,\mathcal{V}_\alpha$, where $\mathcal{V}_\alpha$ is open, and
the restriction of $p$ to each $\mathcal{V}_\alpha$
 is a homeomorphism onto $\mathcal{U}$.
 \end{definition}
 
\begin{remark}\label{unique-path-lifting}
 Solenoidal covering maps have the unique path-lifting property and the homotopy-lifting property. However, since the spaces are neither connected nor locally connected the standard lifting theorems do not apply. For instance, if
 $f:\cS\to\cS$ is a homeomorphism of a compact McCord solenoid it may happen that
 a lifting to $\tilde\cS$ is not bijective. Examples happen even in dimension one
 (see, for instance, examples in \cite{Kw}).
 \end{remark}
 \noi The mappings  $\hat\Pi_1$ and $\hat\Pi_2$ in equation
  (\ref{universal-canonical-projection}) are solenoidal coverings maps.
 
 \noi We have the following lifting theorem for maps between McCord solenoids:
 \begin{theorem}\label{solenoid-lifting} Let $\mathbb{M}_1$ and $\mathbb{M}_2$
 be McCord solenoids and $\hat\Pi_1$, $\hat\Pi_2$ as in equation \ref{universal-canonical-projection}.
 Let $f:\mathbb{M}_1\to\mathbb{M}_2$ be a continuous map.
 There exists a continuous map
 $\tilde{f}:\tilde{\mathbb{M}}_1\to\tilde{\mathbb{M}}_2$ such that
 $f\circ\hat\Pi_1=\hat\Pi_2\circ\tilde{f}$. The map $\tilde{f}$ is of the form
 \begin{equation}\label{lift}
 \tilde{f}(y,\mathfrak{g_1})=(\tilde{f}_{\mathfrak{g_1}}(y),h(\mathfrak{g_1})),
 \end{equation}
 where $h:G_1\to{G_2}$ is continuous, and $\tilde{f}_{\mathfrak{g_1}}$ is a continuous map from $\tilde{M}_1\times\{\mathfrak{g_1}\}$ to
 $\tilde{M}_2\times\{h(\mathfrak{g_1})\}$.
 \end{theorem}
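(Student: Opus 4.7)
\noindent\emph{Proof proposal.} The plan is to exploit the product structure $\tilde{\mathbb{M}}_i=\tilde{M}_i\times G_i$, the fact (Remark \ref{leaves-homeo-leaves}) that $\hat\Pi_i$ carries each leaf $\tilde{M}_i\times\{\mathfrak{g}_i\}$ homeomorphically onto a leaf of $\mathbb{M}_i$, and the fact that $\hat\Pi_2$ is a classical covering projection (the fiber $\pi_1(M_2)$ is discrete and acts freely and properly discontinuously on $\tilde M_2\times G_2$). I will construct $\tilde f$ in two stages: first a \emph{transversal lift} that selects, continuously in $\mathfrak{g}_1$, a target leaf in $\tilde{\mathbb M}_2$ and thereby defines $h:G_1\to G_2$; then the unique leafwise lift inside that target leaf.

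Fix a basepoint $\tilde x_1\in\tilde M_1$ and set $b_1=(\tilde x_1,e_{G_1})$. Let $q_0=f(\hat\Pi_1(b_1))\in\mathbb{M}_2$ and pick any $b_2=(\tilde x_2,\mathfrak{g}_2^0)\in\hat\Pi_2^{-1}(q_0)$. Consider the continuous ``transversal'' map $\psi:G_1\to\mathbb{M}_2$ defined by $\psi(\mathfrak{g}_1):=f(\hat\Pi_1(\tilde x_1,\mathfrak{g}_1))$. The key step is to produce a continuous lift $\tilde\psi:G_1\to\tilde{\mathbb M}_2$ with $\hat\Pi_2\circ\tilde\psi=\psi$ and $\tilde\psi(e_{G_1})=b_2$; the composition $h:=p_{G_2}\circ\tilde\psi$ with the projection to $G_2$ will be the desired $h$. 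Such a $\tilde\psi$ exists because $G_1$ is a Cantor space: cover $\psi(G_1)$ by evenly covered open sets of $\hat\Pi_2$, pull back to an open cover of $G_1$, and refine by compactness together with the clopen-basis property of Cantor sets to a finite clopen partition $G_1=C_1\sqcup\cdots\sqcup C_n$ on each of whose pieces $\psi$ lands in a single evenly covered neighborhood. Choose the branch of $\hat\Pi_2^{-1}$ that contains $b_2$ on the piece containing $e_{G_1}$, and any branch on the remaining pieces; the resulting $\tilde\psi$ is continuous because its restriction to each (clopen, hence open) piece is a continuous composition.

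For each $\mathfrak{g}_1$, the map $\hat\Pi_2$ restricted to $\tilde M_2\times\{h(\mathfrak{g}_1)\}$ is a homeomorphism onto a leaf of $\mathbb{M}_2$, while $f\circ\hat\Pi_1$ carries $\tilde M_1\times\{\mathfrak{g}_1\}$ continuously into a single leaf of $\mathbb{M}_2$ (continuous maps preserve path components, and leaves are the path components of a McCord solenoid). Since $\tilde\psi(\mathfrak{g}_1)\in\tilde M_2\times\{h(\mathfrak{g}_1)\}$ projects under $\hat\Pi_2$ to $\psi(\mathfrak{g}_1)$, which lies in that image leaf, the two leaves coincide, so I may set
$$\tilde f|_{\tilde M_1\times\{\mathfrak{g}_1\}}:=\bigl(\hat\Pi_2|_{\tilde M_2\times\{h(\mathfrak{g}_1)\}}\bigr)^{-1}\circ f\circ \hat\Pi_1|_{\tilde M_1\times\{\mathfrak{g}_1\}},$$
and read off $\tilde f_{\mathfrak{g}_1}$ as the $\tilde M_2$-component. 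By construction $\hat\Pi_2\circ\tilde f=f\circ\hat\Pi_1$ and $\tilde f$ has the form (\ref{lift}).

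It remains to check continuity of $\tilde f$ on $\tilde{\mathbb M}_1$ (leafwise continuity is automatic). Given $(y_n,\mathfrak{g}_1^n)\to(y,\mathfrak{g}_1)$, continuity of $f\circ\hat\Pi_1$ eventually places $f(\hat\Pi_1(y_n,\mathfrak{g}_1^n))$ inside an evenly covered neighborhood $\mathcal U$ of $f(\hat\Pi_1(y,\mathfrak{g}_1))$, and continuity of $h$ places $h(\mathfrak{g}_1^n)$ eventually in the same local branch of $\hat\Pi_2^{-1}(\mathcal U)$ as $h(\mathfrak{g}_1)$; consequently $\tilde f(y_n,\mathfrak{g}_1^n)$ and $\tilde f(y,\mathfrak{g}_1)$ lie in the same local sheet, on which $\hat\Pi_2$ is a homeomorphism, so $\tilde f(y_n,\mathfrak{g}_1^n)\to\tilde f(y,\mathfrak{g}_1)$. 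The main obstacle is the first stage, where the Cantor structure of $G_1$ must be combined with the local triviality of $\hat\Pi_2$ to extract a continuous selection $h$; everything that follows is essentially forced by the leaf-homeomorphism property together with the simple connectivity of the leaves in the target.
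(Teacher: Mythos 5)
Your proof follows essentially the same route as the paper's: lift the restriction of $f$ to a transversal Cantor fiber by partitioning $G_1$ into finitely many clopen pieces mapping into evenly covered neighborhoods (this yields $h$), then define $\tilde f$ leafwise as $(\hat\Pi_2|_{\mathrm{leaf}})^{-1}\circ f\circ\hat\Pi_1|_{\mathrm{leaf}}$ using the fact that $\hat\Pi_i$ restricts to homeomorphisms of leaves. Your closing continuity check is slightly more explicit than what the paper records, but the construction and the key ideas are the same.
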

 \begin{proof} Let $p:\tilde{M}_1\to{M}_1$ be the universal covering of
 $M_1$.  Let $x\in{M}_1$ and $\tilde{x}$ such that $p(\tilde{x})=x$. Let
 $G_1(x)=\Pi_1^{-1}(\{x\})\sim{G_1}$ be the Cantor fiber over $x$ of the
 fibration $\Pi_1:\tilde{\mathbb{M}}_1\to{M}_1$. Let $f_x:G_1(x)\to\mathbb{M}_2$
 be the restriction of $f$ to $G_1(x)$.
 
 \noi  We can cover the Cantor set $G_1(x)$  with
 a finite number of disjoint {\it clopen} sets (Cantor sets) of small diameter, such that their images under $f$ have diameter smaller than the Lebesgue number of
 a cover of $\mathbb{M}_2$ by open
 sets that are evenly covered.  We can restrict $f_x$ to these small Cantor sets and ``lift'' them using the inverses of the local homeomorphisms
 on the evenly covered open sets. Then, using these liftings we see that
 $f_x$ admits a ``lifting'' \ie there exists a continuous function
 $\tilde{f}_x:\{\tilde{x}\}\times{G_1}\subset\tilde{\mathbb{M}}_1
 \rightarrow\tilde{M}_2\times{G_2}=\tilde{\mathbb{M}}_2$ such that
 ${f_x}\circ{i_x}=\hat\Pi_2\circ\tilde{f}_x$. Here ${i_x}$ is
 the restriction of $\hat\Pi_1$ to $\{\tilde{x}\}\times{G_1}$ so that
 ${i_x}(\{\tilde{x}\}\times{G_1})=G_1(x)$.
 
 \noi Let
 $p_2:\tilde{M}_2\times{G_2}\to{G_2}$ be the projection onto the second factor.
 The map $\tilde{f}_x$ induces a continuous map $h:G_1\to{G_2}$ as follows:
 $h(\mathfrak{g_1})=p_2(\tilde{f}_x(\{\tilde{x}\}, \mathfrak{g}_1))$.
 
 \noi The leaves of $\tilde{\mathbb{M}}_1$ and $\tilde{\mathbb{M}}_2,$
 are of the form
 $\tilde{L}^1_{\mathfrak{g}_1}:=\tilde{M}_1\times\{\mathfrak{g}_1\}\,\,$
 and $\tilde{L}^2_{\mathfrak{g}_2}:=\tilde{M}_2\times\{\mathfrak{g}_2\}$,  
 with $\mathfrak{g}_1\in{G_1}$ and
 $\mathfrak{g}_2\in{G_2}$, respectively.
 
 \noi By {\it remark} \ref{leaves-homeo-leaves}
 $L^1_{\mathfrak{g}_1}:=\hat\Pi_1(\tilde{L}^1_{\mathfrak{g}_1})$
 is a leaf of $\mathbb{M}_1$
 and $L^2_{h({\mathfrak{g}_1})}:=\hat\Pi_2(\tilde{L}^2_{h(\mathfrak{g}_1)})$ is a leaf of $\mathbb{M}_2$, and  $f$ maps $L^1_{\mathfrak{g}_1}$ into
 $L^2_{h({\mathfrak{g}_1})}$. The restrictions of $\hat\Pi_1$ and $\hat\Pi_2$
 to these leaves are homeomorphisms (with respect to the manifold topologies of the leaves). {\it We denote these restricted homeomorphisms by the same symbols}.
 
 \noi Define  $\tilde{f}_{\mathfrak{g}_1}:\tilde{L}^1_{\mathfrak{g}_1}
 \to\tilde{L}^2_{h(\mathfrak{g}_1)}$ as the following composition:
 \[
 \tilde{f}_{\mathfrak{g}_1}= (\hat\Pi_2)^{-1}
\circ{f}\circ\hat\Pi_1.
\]
\noi With this map  $\tilde{f}_{\mathfrak{g}_1}$, which depends continuously
upon ${\mathfrak{g}_1}$,
 the  function given by the formula (\ref{lift}) is a lifting of $f$.
\end{proof}
\begin{remark}[{\bf Caveat:} liftings of homeomorphisms might not be homeomorphisms]
\label{liftfail} The usual lifting properties can fail.
Let $M$ be a compact manifold with infinite residually finite fundamental group
 $\pi_1(M)$. Let $G=\widehat{\pi_1(M)}$. Let $\tilde{\mathbb{M}}=\tilde{M}\times{G}$ and $\hat\Pi:\tilde{\mathbb{M}}\to{\mathbb{M}}$ the canonical projection onto the algebraic universal covering $\Pi:\mathbb{M}\to{M}$ given in {\it definition} \ref{univ-alg-cov}, then $\hat\Pi$ is a solenoidal covering map.
 
\noi  Endow $\pi_1(M)$
 with the discrete topology. Let $\varphi:G\to\pi_1(M)$ be a continuous map
 and let $F_\varphi:\tilde{\mathbb{M}}=\tilde{M}\times{G}\to
 \tilde{\mathbb{M}}=\tilde{M}\times{G}$ be given by the
 formula:
 \[
 F_\varphi(x,\mathfrak{g})=
 (\varphi(\mathfrak{g})(x), j(\varphi(\mathfrak{g}))\cdot{\mathfrak{g}}),
 \]
 where $j:\pi_1(M)\to{G}$ is the canonical monomorphism. As before,
 the action on the left of $\pi_1(M)$ on $\tilde{M}$ is by deck transformations
 and on $G$ by left translations.
 
 \noi Then $\hat\Pi\circ{F_{\varphi}}=\hat\Pi$,
 therefore $F_\varphi$ is a lifting to $\tilde{\mathbb{M}}$ of the identity map
 $I:\mathbb{M}\to\mathbb{M}$. There are uncountably many such liftings. {\it Notice that we don't have a notion of uniqueness of these liftings}, however in this paper we use only the existence of a lifting.
\end{remark}
 
\noi  We have the following corollary using the construction of a lifting in {\it theorem} \ref{solenoid-lifting}:
 
\begin{corollary} With the same protagonists of the previous {\it remark} \ref{liftfail}, suppose
that $\mathbb{M}$ has a laminated metric with leaves of strictly negative curvature. Then:
 
\noi given any continuous map $f:\mathbb{M}\to\mathbb{M}$ such
that $f(L)=L$ for every leaf of $\mathbb{M}$ (\ie $f$
does not ``shuffle'' the leaves)  there exist a lift
$\tilde{f}:\tilde{\mathbb{M}}\to\tilde{\mathbb{M}}$ of the form:
\begin{equation}\label{goodlift}
\tilde{f}(x,\mathfrak{g})=(\tilde{f}_\mathfrak{g}(x),\mathfrak{g})
\end{equation}
\end{corollary}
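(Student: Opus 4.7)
The plan is to realize $\tilde{f}$ as the time-one map of a lift, along the solenoidal covering $\hat\Pi$, of a leafwise geodesic straight-line homotopy from $\mathrm{id}_{\mathbb{M}}$ to $f$, starting from the canonical lift $\mathrm{id}_{\tilde{\mathbb{M}}}$.

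First, I construct a leaf-preserving homotopy $H:\mathbb{M}\times[0,1]\to\mathbb{M}$ from $\mathrm{id}_{\mathbb{M}}$ to $f$. By hypothesis, for every $p\in\mathbb{M}$ the points $p$ and $f(p)$ lie in the same leaf $L$, and $L$ is a Hadamard manifold (simply connected, complete, strictly negative sectional curvature). Thus there is a unique unit-speed geodesic $\sigma_p:[0,1]\to L$ joining $p$ to $f(p)$, and setting $H(p,t):=\sigma_p(t)$ does the job. Joint continuity of $H$ across the Cantor-transverse direction follows from the continuous transverse dependence of the laminated Riemannian metric together with the continuous dependence of geodesics on their endpoints in Hadamard manifolds; each $H_t$ preserves every leaf setwise since $\sigma_p$ stays in $L$. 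This is exactly the leafwise straight-line homotopy alluded to just before Theorem \ref{solenoid-lifting}.

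Second, I lift $H$ with initial lift $\tilde{H}_0:=\mathrm{id}_{\tilde{\mathbb{M}}}$. Using the even-covering property and unique path lifting of $\hat\Pi$ (Remark \ref{unique-path-lifting}), for each $(x,\mathfrak{g})\in\tilde{\mathbb{M}}$ the path $t\mapsto H_t(\hat\Pi(x,\mathfrak{g}))$ admits a unique continuous lift to $\tilde{\mathbb{M}}$ starting at $(x,\mathfrak{g})$; these assemble into a continuous map $\tilde{H}:\tilde{\mathbb{M}}\times[0,1]\to\tilde{\mathbb{M}}$ satisfying $\hat\Pi\circ\tilde{H}_t=H_t\circ\hat\Pi$, since $\hat\Pi$ is locally a product projection on flow boxes and the local inverses can be chosen consistently with the fixed initial lift. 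Set $\tilde{f}:=\tilde{H}_1$; this is a lift of $f$. Now fix $(x,\mathfrak{g})\in\tilde{M}\times G=\tilde{\mathbb{M}}$. Because every $H_t$ preserves leaves, the path $t\mapsto H_t(\hat\Pi(x,\mathfrak{g}))$ lies entirely in the leaf $L_\mathfrak{g}:=\hat\Pi(\tilde{M}\times\{\mathfrak{g}\})$. By Remark \ref{leaves-homeo-leaves}, the restriction $\hat\Pi|_{\tilde{M}\times\{\mathfrak{g}\}}$ is a homeomorphism onto $L_\mathfrak{g}$, so the unique lift starting at $(x,\mathfrak{g})$ is obtained by inverting $\hat\Pi|_{\tilde{M}\times\{\mathfrak{g}\}}$ and therefore remains in $\tilde{M}\times\{\mathfrak{g}\}$ for all $t$. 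At $t=1$ this yields $\tilde{f}(x,\mathfrak{g})=(\tilde{f}_\mathfrak{g}(x),\mathfrak{g})$ with $\tilde{f}_\mathfrak{g}(x)\in\tilde{M}$ depending continuously on $(x,\mathfrak{g})$, as required.

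The main obstacle is the homotopy-lifting step itself: Remark \ref{liftfail} warns that the standard lifting machinery breaks down for solenoidal covers because of the failure of local connectedness, and one cannot invoke a black-box statement. The resolution is that we never try to lift an arbitrary map from an auxiliary space; instead, the lift is built pathwise in the $t$-variable from a prescribed initial lift $\tilde{H}_0=\mathrm{id}_{\tilde{\mathbb{M}}}$, in exact parallel with the proof of Theorem \ref{solenoid-lifting} — one covers each Cantor fiber by small clopen pieces contained in evenly covered neighborhoods of $\mathbb{M}$ and propagates the initial lift forward in $t$ via local inverses of $\hat\Pi$, obtaining a genuinely continuous $\tilde{H}$ without any appeal to a global lifting theorem.
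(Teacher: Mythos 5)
Your proof is correct and uses essentially the same mechanism as the paper's: the unique leafwise geodesics from $p$ to $f(p)$ in the Hadamard leaves, lifted by unique path lifting from a prescribed starting point, force the transverse coordinate $\mathfrak{g}$ to be preserved. The only organizational difference is that you lift the full geodesic homotopy globally and take its time-one map, whereas the paper restricts to a single Cantor fiber $G(x)$ to conclude $h=\mathrm{id}$ and then reuses the leafwise construction of Theorem \ref{solenoid-lifting}; the content is the same.
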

 
\begin{proof} We follow the proof of {\it theorem} \ref{solenoid-lifting} when
$\mathbb{M}_1=\mathbb{M}_2=\mathbb{M}$ , $G_1=G_2=G$. We have to prove
that $h$ can be chosen to be the identity map on $G$.
 
\noi Let $x\in{M}$ and $G(x)=\Pi^{-1}(\{x\})\subset\mathbb{M}$ the
Cantor fiber over $x$. By choosing a point in $G(x)$  we can identify
$G(x)$ with the Cantor group $G$.
 
 Since the leaves of $\mathbb{M}$
are Hadamard spaces, and $f$ preserves each leaf, for each
$\mathfrak{g}\in{G(x)}$ there exists a unique parametrized
geodesic joining $\mathfrak{g}$ to $f(\mathfrak{g})$. We can reparametrize the geodesic to have a map
$\gamma_\mathfrak{g}:[0,1]\to{\mathbb{M}}$,
$\gamma_\mathfrak{g}(0)=\mathfrak{g}$,
$\gamma_\mathfrak{g}(1)=f(\mathfrak{g})$.
Since the metric is continuous in the transversal Cantor structure we can assume that the map $\mathfrak{g}\mapsto\gamma_\mathfrak{g}$ is a continuous map
from $G(x)$ to $C^0([0,1],\mathbb{M})$.
 
\noi Let $p:\tilde{M}\to{M}$ be the universal covering of
 $M$ and $\tilde{x}\in\tilde{M}$ be such that $p(\tilde{x})=x$. Let ${i_x}$ be
 the restriction of $\hat\Pi$ to $\{\tilde{x}\}\times{G}$ so that
 ${i_x}(\{\tilde{x}\}\times\{\mathfrak{g}\})=\mathfrak{g}$ and
 ${i_x}(\{\tilde{x}\}\times{G})=G(x)$.
 
\noi Each $\gamma_\mathfrak{g}$ can be lifted to a map
$\tilde\gamma_\mathfrak{g}:[0,1]\to\tilde{\mathbb{M}}$ so that
$\tilde\gamma_\mathfrak{g}(0)=(\tilde{x},\mathfrak{g})$. Then as in
{\it theorem} \ref{solenoid-lifting}
we can lift
the restriction $f_x$ of $f$ to $G(x)$ as the function
$\tilde{f}_x(\tilde{x},\mathfrak{g})=\tilde\gamma_\mathfrak{g}(0)$ This implies
that $h:G\to{G}$ is the identity and therefore the lifting granted by theorem (\ref{solenoid-lifting})
is of the form given by the formula (\ref{goodlift}).
\end{proof}

\section{Laminated harmonic maps and the Eells-Sampson Theorem}\label{lhm}
\noi In 1964
groundbreaking paper \cite{ES} J. Eells and J. Sampson started the theory of harmonic maps between Riemannian manifolds. They showed that for certain Riemannian manifolds, arbitrary maps could be deformed into harmonic maps. In particular, they showed that given $(M, g)$ and $(N, h)$, two smooth and closed Riemannian manifolds, where the sectional curvature of $(N, h)$ is nonpositive, then for any $C^\infty$ map $f:M\to{M}$ the maximal harmonic heat flow
$\left\{f_t\,:\,0<t<T \right\}$, with $f_0=f$, can be prolonged to $T=\infty$.
Using a result by Philip Hartman the maps $f_t$  converge strongly in the $C^\infty$ topology to a harmonic map.
In particular, this shows that, if $(N, h)$, has strictly negative
   curvature then every continuous map is homotopic to a unique harmonic map.
Their work was the inspiration for Richard Hamilton's initial work on the Ricci flow and the crowning work of G. Perelman about the {\it Geometrization Theorem}.

\noi Let $M_1$ and $M_2$ be two compact smooth manifolds with infinite residually finite fundamental groups.
 
\noi Let
$(\mathcal{S}_1, g)$ and $(\mathcal{S}_2, h)$ be the two compact, smooth, McCord solenoids of dimensions $m$ and $n$, respectively, which are algebraic coverings of the compact manifolds $M_1$ and $M_2$, respectively.
 
\noi Endow $(\mathcal{S}_1, g)$ and $(\mathcal{S}_2, h)$
with laminated Riemannian metrics $g$ and $h$, respectively. Thus we may think of
$\mathcal{S}_1$ and $\mathcal{S}_1$ as principal bundles with fibers the Cantor
groups $G_1$ and $G_2$, respectively.
 
\noi Let $f:\mathcal{S}_1\to\mathcal{S}_2$ be a leafwise $C^\infty$ map. By continuity, such a map sends leaves to leaves and it is a differentiable map
from one leaf into its image. For such maps, we define the {\it energy} functional
as follows:
\begin{definition} The {\it Dirichlet energy} of $f$ is defined by the equation:
\begin{equation}\label{energy}
E(f)=\frac12\int_{\mathcal{S}_1}||df(x)||^2\,\rm{d}\mu_{g}(x)
\end{equation}
\end{definition}
 
\noi In this formula $df$ is a section of the bundle  
$T^*(\mathcal{S}_1)\otimes{f^{-1}T(\mathcal{S}_2)}$ and
$\mu_{g}(x)$ is the  laminated volume form of $\mathcal{S}_1$, with respect to $g$,
as in {\it definition} \ref{lamvol}. The laminated metrics on
$\mathcal{S}_1$ and $\mathcal{S}_2$ induce a bundle metric on this bundle
and $||\cdot||_{(g,h)}$ is the associated norm (Hilbert-Schmidt norm).  
Since $f$ sends leaves to leaves, $f$ is expressed in terms of local coordinates
as follows:
\begin{equation}\label{maplocally}
f(x_1,\cdots,x_m; \mathfrak{g})=
 \end{equation}\label{mapincoordinates}
\hfil $\left(f_1(x_1,\cdots,x_m; h(\mathfrak{g})),
f_2(x_1,\cdots,x_m;h(\mathfrak{g})),\cdots,
f_n(x_1,\cdots,x_m;h(\mathfrak{g}))\right)$,
\newline with $\mathfrak{g}\in{G_1}$ and $h:\mathcal{V}\to{G_2}$ a continuous
map from the open set $\mathcal{V}\in{G_1}$ to $G_2$. For each $\mathfrak{g}\in{G_1}$
fixed, the map in formula (\ref{maplocally}) is $C^\infty$-differentiable with respect to the variables $x_i's$. In terms of these local laminated coordinates
the Hilbert-Schmidt norm is given by the formula:
\[
||df(x,\mathfrak{g})||^2=
\]
\begin{equation}\label{energycoordinates}
g^{ij}(x,\mathfrak{g})h_{\alpha\beta}(f(x,\mathfrak{g}))\frac{\partial{f}_\alpha(x,\mathfrak{g})}{\partial{x_i}}
\frac{\partial{f}_\beta(x,\mathfrak{g})}{\partial{x_i}},\quad  x=(x_1,\cdots,x_m),\, \mathfrak{g}\in{G_1}
\end{equation}
Here, we use the usual notation in terms of coordinates: $g^{ij}$ are the coefficients of the inverse of the matrix corresponding
to the metric $g$ and $h_{\alpha\beta}$ the coefficients of the matrix corresponding to the metric $h$ and we use the Einstein summation convention.
 
\begin{definition}\label{energy-density} The {\it energy density} of $f$ is the function $e(f):\mathcal{S}_1\to\R^{\geq0}$
defined, in local coordinates, by the formula:
\begin{equation}
e(f)(x,\mathfrak{g})=\frac12||df(x,\mathfrak{g})||^2
\end{equation}
\end{definition}
 
\begin{definition} A leafwise $C^\infty$ map
 $f:\mathcal{S}_1\to\mathcal{S}_2$ is set to be {\it leafwise harmonic} if it is an extremal of the energy functional (\ref{energy}).
\end{definition}
 
\noi The Euler-Lagrange equations corresponding to this variational problem imply
$f:\mathcal{S}_1\to\mathcal{S}_1$ is harmonic if and only if
its {\it tension vector field}  $\tau(f)$ vanishes:
\begin{equation}\label{harmonic-equation}
\tau(f)=\rm{trace}_g\,\nabla{df}=0
\end{equation}
 Hence $f$ is harmonic if and only if $\tau(f)=0$.
 
\noi  The expression of the tension field in terms of local coordinates can be found
 in \cite{ES}.  
 \noi The following is the solenoidal version of the theorem by Eells-Sampson
 \cite{ES}:
 \begin{theorem}[Solenoidal Eells-Sampson Theorem]\label{Solenoidal-Eells-Sampson}
 Let $(\mathcal{S}_1, g)$ and $(\mathcal{S}_2, h)$ be two compact
 McCord solenoids of dimensions $m$ and $n$, and
laminated Riemannian metrics $g$ and $h$, respectively. Suppose that the leaves
of $\mathcal{S}_2$ have strictly negative sectional curvature with respect to  $h$.
Let $f_0:(\mathcal{S}_1, g)\to(\mathcal{S}_2, h)$ be a smooth map (necessarily sends leaves
of $(\mathcal{S}_1, g)$ into leaves of $(\mathcal{S}_2, h)$) then $f_0$ is leafwise
homotopic to a unique harmonic map $f$.
 \end{theorem}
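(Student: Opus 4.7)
The plan is to adapt the classical Eells--Sampson heat flow argument to the laminated setting, running the flow leafwise and using the compactness of $\mathcal{S}_1$ to secure estimates uniform across all leaves. First, I would seek a one-parameter family $\{f_t\}_{t\geq 0}$ of leafwise smooth laminated maps $f_t:\mathcal{S}_1\to\mathcal{S}_2$ (in the sense of Definition \ref{foliated_map}) with $f_0$ the given initial map and satisfying
\begin{equation*}
\partial_t f_t = \tau(f_t),
\end{equation*}
where $\tau(f_t)$ is the leafwise tension field of equation (\ref{harmonic-equation}). In each foliation chart $U_\alpha \cong D \times T_\alpha$ this is a system of parabolic PDE along the disk factor whose coefficients depend continuously on the transverse Cantor parameter, so short-time existence follows from standard parabolic theory applied uniformly in that parameter; the change-of-coordinates conditions of Definition \ref{definition:lamination} then assemble the local solutions into a global family $f_t$.

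Next, I would establish long-time existence and $C^\infty$ precompactness of $\{f_t\}_{t\geq 0}$. The key input is the leafwise Bochner--Weitzenb\"ock identity which, under the hypothesis of nonpositive sectional curvature on the leaves of $\mathcal{S}_2$, gives
\begin{equation*}
(\partial_t - \Delta_{\mathrm{leaf}})\, e(f_t) \;\leq\; C \cdot e(f_t),
\end{equation*}
where $C$ absorbs the bounded leafwise Ricci curvature of $(\mathcal{S}_1, g)$. Because $e(f_t)$ is a continuous function on the compact solenoid $\mathcal{S}_1$, its spatial maximum is attained, and the parabolic maximum principle applied in a foliation chart at the attaining point yields a uniform bound $\sup_{\mathcal{S}_1} e(f_t) \leq C'$ for all $t \geq 0$. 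Schauder-type parabolic bootstrapping in laminated charts then produces leafwise $C^{k,\alpha}$ bounds that vary continuously in the transverse direction, hence precompactness of $\{f_t\}$ in the laminated $C^\infty$ topology.

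Third, the standard monotonicity
\begin{equation*}
\frac{d}{dt} E(f_t) = -\int_{\mathcal{S}_1} \|\tau(f_t)\|^2 \, d\mu_g
\end{equation*}
forces $\tau(f_{t_n}) \to 0$ along a subsequence $t_n \to \infty$, so any laminated $C^\infty$ subsequential limit $f_\infty$ is leafwise harmonic. The flow itself furnishes the leafwise homotopy from $f_0$ to $f_\infty$: since the velocity $\tau(f_t)$ is tangent to the target leaves, each leaf of $\mathcal{S}_1$ is sent throughout the flow into the same leaf of $\mathcal{S}_2$ containing its image at $t=0$. For uniqueness, the strict negativity of the leafwise sectional curvature of $\mathcal{S}_2$ lets me invoke Hartman's classical theorem leaf by leaf: on a simply connected Hadamard target of strictly negative curvature, the harmonic representative in a fixed homotopy class of maps is unique outside the degenerate case where the image lies in a single geodesic, which is vacuous here thanks to the density of leaves in $\mathcal{S}_1$. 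Transverse continuity of the construction then promotes this leafwise uniqueness to uniqueness of $f_\infty$ as a laminated map.

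The hard part will be the bookkeeping for transverse continuity of the parabolic estimates: individual leaves of $\mathcal{S}_1$ are non-compact covers of $M_1$, so leafwise compactness arguments must be replaced by the global compactness of $\mathcal{S}_1$, and the classical Eells--Sampson constants must be shown uniform in the Cantor variable and continuous in it. Once this uniformity is in place, the remainder of the argument transfers to the laminated setting with only formal modifications of the manifold proof.
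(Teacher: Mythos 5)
Your overall strategy coincides with the paper's: run the leafwise heat flow $\partial_t f_t=\tau(f_t)$, control it with the laminated Bochner identity, extract a harmonic limit from the monotonicity of the Dirichlet energy, and get uniqueness from Hartman. Two of your steps, however, would fail as written.

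First, the differential inequality $\left(\partial_t-\Delta_{\mathrm{leaf}}\right)e(f_t)\leq C\,e(f_t)$ together with the maximum principle on the compact solenoid only yields $\sup_{\mathcal{S}_1}e(f_t)\leq e^{Ct}\sup_{\mathcal{S}_1}e(f_0)$, which grows exponentially and does not give the time-uniform bound $\sup e(f_t)\leq C'$ you assert. The time-uniform gradient bound in Eells--Sampson comes from combining the Bochner inequality with the a priori bound $E(f_t)\leq E(f_0)$ (from energy monotonicity) via the parabolic Moser--Harnack mean-value inequality, which converts the integral bound on $e(f_t)$ over a parabolic cylinder into a pointwise bound. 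This is exactly the route the paper takes (citing Lemma 5.3.4 of Lin--Wang), and it is also where the paper handles your ``hard part'': it lifts $f$ to the solenoidal covering $\tilde{M}_1\times G_1\to\mathcal{S}_1$ and uses a fundamental domain of the form $\mathfrak{D}\times G_1$ with $\mathfrak{D}\subset\tilde{M}_1$ compact to make the Eells--Sampson constants uniform in the Cantor variable.

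Second, the leaf-by-leaf appeal to Hartman for uniqueness does not work. The leaves of a McCord solenoid $\mathcal{S}_1$ are non-compact and simply connected (copies of $\tilde{M}_1$), and on such a domain a harmonic map into a Hadamard leaf of $\mathcal{S}_2$ is massively non-unique within its homotopy class --- every constant map is harmonic and all maps between simply connected leaves are leafwise homotopic --- so there is no leafwise uniqueness to ``promote'' by transverse continuity, and the density of leaves is not the relevant issue. Uniqueness must be proved globally: given two laminated harmonic maps in the same leafwise homotopy class, join them by the unique leafwise geodesic homotopy (available since the target leaves are Hadamard), and show that the total Dirichlet energy, integrated over the compact solenoid against the laminated volume form $\mu_g$ of Definition \ref{lamvol}, is convex along this homotopy and strictly so under the negative curvature hypothesis. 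This is Hartman's argument run on the compact laminated space rather than on individual leaves, and it is what the paper's convexity computation for $\frac{d^2}{dt^2}E(f_t)$ is for.
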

\noi{\bf Sketch of the proof.}
 The proof follows almost {\it verbatim} the steps of Eells-Sampson\cite{ES} using the heat flow.
 We also refer to Chapter 5 of \cite{LW} or Chapter 9 in \cite{Jo} for other detailed presentations.
Consider the evolution equation:
 
\begin{equation}\label{evoeque}
\begin{cases}
 & \partial_t f(x,t)=\tau(f(x,t)\\
 & f(x,0)=f_0(x)
\end{cases}
\end{equation}
\noi where $f(\cdot,t)\in{C^\infty(\mathcal{S}_1,\mathcal{S}_2)}$
(\ie it is a laminated smooth map).
 
\medskip
 
\noi The proof of theorem 11.1 consists of 6 steps:
\begin{enumerate}
\item The existence of $\epsilon>0$ such that $f(x, t)$ exists for
$0\leq{t}<\epsilon$.
\item The existence of $f(x, t)$ for all $t>0$.
\item The existence of $\underset{t\to\infty}\lim{f(x, t)}=f(x,\infty)=f(x)$.
\item The proof that $f(x)$ is harmonic.
\item The proof that $f(x)$ is homotopic to $f_0$.
\item The uniqueness of $f$.
\end{enumerate}
 The uniqueness follows by a theorem of Philip Hartman \cite{Har}. The reason
 that all the steps can be achieved is that the {\it a priori} estimates of Eells
 and Sampson hold in our case because we endowed the solenoids with a measure
 which is very tame, therefore the energy  $E(f(\cdot,t))$
 remains bounded by a constant independent of $t$. The proof uses the Bochner identity
 which also holds in our laminated case:
 let $f:\mathcal{S}_1\to\mathcal{S}_2$ be a smooth map. Let $df$ denote the derivative (pushforward) of $f$,
 $\nabla$ the laminated gradient \ie the covariant derivative on
 $T^*(\mathcal{S}_1)\otimes{f^{-1}T(\mathcal{S}_2)}$.
 
 \vskip.05cm
 
 \noi Let $\Delta^g$ the laminated  Laplace–Beltrami operator on $\mathcal{S}_1$,
 $\rm{Riem}^{\mathcal{S}_2}$ the laminated Riemann curvature tensor on
 $\mathcal{S}_2$ and $\rm{Ric}^{\mathcal{S}_1}$ the Ricci curvature tensor on
 $\mathcal{S}_1$.
 
 \noi Then if $\{f_t\}_t$ satisfies the evolution equation
 (heat flow equation) (\ref{evoeque}) then one has the {\it Bochner identity}:

\begin{equation}\label{Bochner}
{\Big (}{\frac{\partial }{\partial t}}-\Delta^{g}{\Big )}e(f_t)=-{\big |}\nabla (df_t){\big |}^{2}-{\big \langle }\rm{Ric}^{g},f_t^{\ast }h{\big \rangle }_{g}+{\rm{scal}} ^{g}{\big (}f_t^{\ast }\rm{Riem}^{h}{\big )}.
\end{equation}
 
\noi  In terms of the local coordinates (\ref{maplocally}),
 $f_t=(f_{t,1},\cdots,f_{t,n})$, and using Einstein's summation, the terms are defined as follows:
\begin{equation}\label{nabla}
{\big |}\nabla (df_t)(x,\mathfrak{g}){\big |}^{2}=
\underset{i,j,k}\sum{\Big [}\frac{\partial^2{f_{t,k}}}{\partial{x_i}\partial{x_j}}
(x,\mathfrak{g}){\Big ]}^2
\end{equation}
 
\begin{equation}\label{Ricci}
{\big \langle }\rm{Ric}^{g}(x,\mathfrak{g}),
(f_t^{\ast }h)(x,\mathfrak{g}){\big \rangle }_{g}={\rm{Ric}}^{\mathcal{S}_1}_{_{ij}}(x,\mathfrak{g})h_{\alpha\beta}(f_t(x,\mathfrak{g}))\frac{\partial{f}_{t,\alpha}(x,\mathfrak{g})}{\partial{x_i}}
\frac{\partial{f}_{t,\beta}(x,\mathfrak{g})}{\partial{x_j}}
\end{equation}
 
\begin{equation}\label{Riem}
{\rm{scal}} ^{g}{\big (}f_t^{\ast }\rm{Riem}^{h}{\big )}(x,\mathfrak{g})=
\end{equation}
\[
\rm{Riem}^{\mathcal{S}_2}_{\alpha\beta\gamma\delta}(f_t(x,\mathfrak{g}))
{\Big [}\frac{\partial{f}_{t,\alpha}(x,\mathfrak{g})}{\partial{x_i}}
\frac{\partial{f}_{t,\beta}(x,\mathfrak{g})}
{\partial{x_j}}\frac{\partial{f}_{t,\gamma}(x,\mathfrak{g})}{\partial{x_k}}
\frac{\partial{f}_{t,\delta}(x,\mathfrak{g})}{\partial{x_l}}{\Big ]}
g^{ik}(x,\mathfrak{g})g^{jl}(x,\mathfrak{g})
\]
We see in equation (\ref{Ricci}) that the $m\times{m}$ symmetric matrix
 ${\Big (}{\rm{Ric}}^{\mathcal{S}_1}_{_{ij}}{\Big )}$ can be compared with the
 positive-definite symmetric $m\times{m}$ matrix $g_{ij}(x,\mathfrak{g})$
and, since the leafwise sectional curvature of $\mathcal{S}_2$ is negative
comparing (\ref{energycoordinates}) with (\ref{Ricci}) we obtain the following:
\begin{proposition} There exists  a constant $C>0$ depending only on the leafwise Ricci curvature of $\mathcal{S}_1$ such that:
\begin{equation}\label{Bochnerinequality}
{\Big (}{\frac{\partial }{\partial t}}-\Delta^{g}{\Big )}e(f_t)\leq{C}e(f_t)
\end{equation}
\end{proposition}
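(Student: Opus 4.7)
The plan is to bound each of the three terms on the right-hand side of the Bochner identity (\ref{Bochner}). The inequality asserts an upper bound on $(\partial_t-\Delta^g)e(f_t)$, so every manifestly non-positive term may simply be discarded, and the real work reduces to controlling the Ricci contribution by a constant multiple of $e(f_t)$.

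First, $-|\nabla(df_t)|^2 \le 0$ pointwise, so this term is discarded. Second, under the hypothesis that the leafwise sectional curvature of $(\mathcal{S}_2,h)$ is strictly negative (non-positivity would already suffice for the inequality), the contraction $\mathrm{scal}^g(f_t^*\mathrm{Riem}^h)$ displayed in (\ref{Riem}) is pointwise non-positive: this is the classical Eells--Sampson sign computation \cite{ES} applied on each plaque, since after diagonalising with respect to a leafwise $g$-orthonormal frame the expression becomes a sum of sectional curvatures of $h$ evaluated on $2$-planes in the image of $df_t$, weighted by non-negative coefficients.

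For the Ricci term, observe that $\mathrm{Ric}^g$ is a continuous section of the laminated symmetric $2$-tensor bundle on $\mathcal{S}_1$. By compactness of $\mathcal{S}_1$, combined with continuity of the laminated metric $g$ (and of all its leafwise derivatives) in the Cantor transverse direction, which is part of the definition of a laminated Riemannian metric, there exists a constant $\kappa>0$, depending only on the leafwise Ricci curvature of $(\mathcal{S}_1,g)$, such that $\mathrm{Ric}^g \ge -\kappa\, g$ as symmetric bilinear forms, uniformly on $\mathcal{S}_1$. Since $f_t^*h$ is positive semidefinite as a symmetric $2$-tensor on the tangent bundle to each leaf, the $g$-pairing satisfies
\[
\langle \mathrm{Ric}^g,\, f_t^*h\rangle_g \;\ge\; -\kappa\,\mathrm{tr}_g(f_t^*h)\;=\;-\kappa\,|df_t|^2 \;=\; -2\kappa\, e(f_t),
\]
so that $-\langle \mathrm{Ric}^g,\, f_t^*h\rangle_g \le 2\kappa\, e(f_t)$.

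Combining the three bounds with (\ref{Bochner}) yields the desired estimate with $C = 2\kappa$. The only genuine subtlety is the uniformity across the Cantor transverse direction: it is conceivable \emph{a priori} that the Ricci lower bound could degenerate as one varies the transverse parameter. This is ruled out precisely by the definition of a laminated Riemannian metric on a compact solenoid, since the metric coefficients and all derived curvature invariants depend continuously on the transverse parameter and therefore attain their bounds on the compact space $\mathcal{S}_1$.
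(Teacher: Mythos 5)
Your argument is correct and follows essentially the same route as the paper: discard the manifestly nonpositive terms $-|\nabla(df_t)|^2$ and $\mathrm{scal}^g(f_t^*\mathrm{Riem}^h)$ (the latter by the Eells--Sampson sign computation under nonpositive target curvature), and bound the Ricci term by comparing $\mathrm{Ric}^{\mathcal{S}_1}$ with $g$ uniformly over the compact solenoid. The paper compresses this into a single sentence preceding the proposition; your version merely makes the uniform lower bound $\mathrm{Ric}^g\geq-\kappa g$ and the positive semidefiniteness of $f_t^*h$ explicit.
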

\noi  Once the Bochner identity is valid, using proposition (\ref{Bochnerinequality})
 and the theorem of Moser-Harnack  Lemma (Lemma 5.3.4, page 115 in \cite{LW})
we can prove steps (1) to (4) exactly as in references \cite{ES}, \cite{LW}, \cite{Jo}, \cite{Jo1}.
The proofs of (6) and (7) follow Hartman's proof \cite{Har}. In fact, a direct
calculation shows that if $f_t$ is a solution of the heat equation (\ref{evoeque}) then the Dirichlet energy satisfies:
 
\begin{equation} \frac{\rm{d}}{\rm{dt}}\,E(f_t)=
-\int_{\mathcal{S}_1}||\partial_t{f_t}||^2\,\rm{d}\mu\leq0
\end{equation}\label{energydecreasing}
 
\begin{equation}\label{energyconvex} \frac{\rm{d^2}}{\rm{dt^2}}\,E(f_t)=
-\frac{\partial}{\partial{t}} \int_{\mathcal{S}_1}||\partial_t{f_t}||^2\,\rm{d}\mu=
-\int_{\mathcal{S}_1}\partial_t||\partial_t{f_t}||^2\,\rm{d}\mu=
||\nabla\partial_t{f_t}||^2\geq0
\end{equation}
 
\noi To be sure that the leafwise evolution of the heat flow is as in the case of \cite{ES} we proceed as follows. The solenoids
are of the form $\,\cS_1=\tilde{M}_1\times{G}_1/G_1$ and
$\,\cS_2=\tilde{M}_2\times{G}_1/G_1$
where $\tilde{M}_1$ and $\tilde{M}_2$ are the universal coverings of
the compact manifolds $M_2$ and $M_2$ and $G_1$, $G_2$ are the respective profinite completions of their fundamental groups.
 
\noi By theorem (\ref{solenoid-lifting}) applied to the
solenoidal coverings $\hat\Pi_1:\tilde{M}_1\times{G}_1\to\cS_1$
and $\hat\Pi_2:\tilde{M}_2\times{G}_2\to\cS_2$, the map $f$ lifts to a map
$\tilde{f}:\tilde{M}_1\times{G}_1\to\tilde{M}_2\times{G}_2$.
 
\noi The map $\tilde{f}$
is of the form: $\tilde{f}(x,\mathfrak{g_1})=(\tilde{f}_{\mathfrak{g_1}}(x),h(\mathfrak{g_1}))$,
where $h:G_1\to{G_2}$ is continuous, $\mathfrak{g_1}\in{G_1}$,
$x\in{\tilde{M}_1}\times\{\mathfrak{g_1}\}$ and
 \[
 \tilde{f}_{\mathfrak{g_1}}:\tilde{M}_1
 \times\{\mathfrak{g_1}\}\to\tilde{M}_2
 \times\{\mathfrak{h(g_1)}\}
\]
is a smooth map from the leaf $\tilde{M}_1
 \times\{\mathfrak{g_1}\}$ into the leaf $\tilde{M}_2\times\{\mathfrak{h(g_1)}\}$.
 
\noi The Cantor group $G_1$ acts freely, properly discontinuously, and cocompactly on
$\tilde{M}_1\times{G}_1$, therefore
the solenoidal covering projection $\hat\Pi_1:\tilde{M}_1\times{G}_1\to\cS_1$
has a fundamental domain $\mathbb{D}$ of the form
$\mathbb{D}=\mathfrak{D}\times{G_1}\subset\tilde{M}_1\times{G}_1$ with
$\mathfrak{D}\subset\tilde{M}_1$ compact. If we restrict $\tilde{f}$ to
$\mathbb{D}$ we see that all the previous estimates, like Bochner's identity and Moser-Harnack inequality, hold for each
$\tilde{f}_{\mathfrak{g_1}}$, uniformly
with respect to $\mathfrak{g}_1\in{G_1}$.
 
\noi Hence, the heat equation evolves
in each leaf as in the case of compact manifolds. The main fact is:
{\it if the sectional curvature of the leaves of $\cS_2$ is strictly negative, the
Dirichlet integral \ref{energy} is a strictly decreasing,
and convex function of $f$.}
 
\noi Summarizing, we have:
 
\begin{proposition} Under our hypothesis, there is a unique harmonic map in each homotopy class of maps in $C^\infty(\cS_1,\cS_2)$. Furthermore, the harmonic map is the unique map in its homotopy class which minimizes the energy. The harmonic map is stable and depends continuously on the initial map $f_0$.
\end{proposition}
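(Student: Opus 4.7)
The plan is to deduce all three claims from the strict convexity of the energy functional along geodesic homotopies in the target, carried out leaf-by-leaf in the spirit of Hartman \cite{Har}, with the compactness of $\cS_1$ supplying uniformity in the transverse Cantor direction.

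For uniqueness, suppose $f, f' \in C^\infty(\cS_1, \cS_2)$ are two leafwise harmonic maps in the same homotopy class. Using Theorem \ref{solenoid-lifting} I would lift both to $\tilde{\mathbb{M}}_1 \to \tilde{\mathbb{M}}_2$ with a common transverse component $h:G_1 \to G_2$, which is possible since $f$ and $f'$ are leafwise equivalent. On each leaf $\tilde{M}_1 \times \{\mathfrak{g}\}$, the lifts $\tilde{f}_{\mathfrak{g}}$ and $\tilde{f}'_{\mathfrak{g}}$ land in the same Hadamard manifold $\tilde{M}_2 \times \{h(\mathfrak{g})\}$; since any two points in a Hadamard space are joined by a unique geodesic, there is a well-defined vector field $V_{\mathfrak{g}}(x) \in T_{\tilde{f}_{\mathfrak{g}}(x)}\tilde{M}_2$ with $\exp(V_{\mathfrak{g}}(x)) = \tilde{f}'_{\mathfrak{g}}(x)$, depending continuously on $\mathfrak{g}$. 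The geodesic homotopy $F_s(x,\mathfrak{g}) = (\exp_{\tilde{f}_{\mathfrak{g}}(x)}(s V_{\mathfrak{g}}(x)), h(\mathfrak{g}))$ descends to a smooth family $f_s:\cS_1 \to \cS_2$ with $f_0 = f$ and $f_1 = f'$. The standard second-variation formula under strictly negative sectional curvature yields $\tfrac{d^2}{ds^2} E(f_s) > 0$ unless $V_{\mathfrak{g}}$ vanishes wherever $df_s$ has positive rank. Since $f, f'$ are critical points of $E$, one has $\tfrac{d}{ds} E(f_s)\big|_{s=0} = \tfrac{d}{ds} E(f_s)\big|_{s=1} = 0$, and the strict convexity forces $V \equiv 0$, hence $f = f'$ (the degenerate case where the image lies in a single point is handled directly).

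For the energy-minimization claim I would apply the same geodesic-homotopy construction with $f$ equal to the heat-flow limit $f_\infty$ and $f'$ an arbitrary smooth map in the same homotopy class. Strict convexity of $s \mapsto E(f_s)$ together with $\tfrac{d}{ds} E(f_s)\big|_{s=0} = 0$ gives $E(f') > E(f_\infty)$ unless $f' = f_\infty$. For continuous dependence on $f_0$, the heat flow is continuous in the initial datum in the leafwise $C^\infty$ topology by standard parabolic regularity applied on the fundamental domain $\mathbb{D} = \mathfrak{D} \times G_1$, uniformly in $\mathfrak{g}_1$. The monotonicity (\ref{energydecreasing}) and convexity (\ref{energyconvex}) of $E(f_t)$, combined with the Bochner inequality (\ref{Bochnerinequality}) and the Moser-Harnack estimate, force uniform convergence of $f_t$ to $f_\infty$ as $t \to \infty$, so continuous dependence of $f_\infty$ on $f_0$ follows from continuous dependence of $f_t$ at fixed $t$ together with an equicontinuity argument in $t$.

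The main obstacle is uniformity in the transverse parameter $\mathfrak{g}_1 \in G_1$ at every step: Hartman's leafwise argument, the second-variation estimate, and the parabolic regularity constants must all be bounded independently of $\mathfrak{g}_1$. Compactness of $\cS_1$ together with continuity of the laminated metrics $g, h$ in the Cantor transverse direction (and hence of the leafwise sectional and Ricci curvatures) supplies exactly this uniformity, which is what allows the classical leaf-by-leaf analysis of Eells-Sampson and Hartman to globalize to the solenoid.
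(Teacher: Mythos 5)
Your proposal follows essentially the same route as the paper: the paper also derives uniqueness and energy-minimization from Hartman's convexity argument along leafwise geodesic homotopies in the Hadamard leaves of the target (citing \cite{Har}), obtains the a priori control from the Bochner identity and the Moser--Harnack estimate, and secures uniformity in the Cantor direction exactly as you do, by lifting via Theorem \ref{solenoid-lifting} and working on the fundamental domain $\mathbb{D}=\mathfrak{D}\times G_1$ with the transverse continuity of the laminated metrics. Your write-up is, if anything, slightly more explicit than the paper's sketch about the second-variation step and the common transverse component $h$ of the two lifts, but the underlying argument is the same.
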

\section{1-dimensional solenoidal manifolds}\label{1dsm}
\subsection{1-dimensional solenoidal manifolds as mapping tori.} Part of this subsection
is based on the paper by Dennis Sullivan \cite{Su1} and its companion \cite{V}.
 
\noi A (smooth) compact 1-dimensional, oriented, solenoidal manifold $\cS$ can be given a foliated Riemannian metric and thus we can define a unit vector field along the leaves and, as a consequence, one can define a nonsingular flow
 $\varphi_t:\cS\to{\cS}$.
 
\begin{proposition}[Sullivan \cite{Su1}] Let $\cS$ be a compact, oriented,
1-dimensional solenoidal manifold. Then, $\cS$ admits a global transversal Cantor set
$K$. More precisely, the flow $\phi_t$ has a global Poincaré cross-section. Therefore,
$\cS$ is the mapping torus or suspension of a homeomorphism of $K$.
\end{proposition}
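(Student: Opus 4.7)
The plan is to build a continuous circle-valued function $F:\cS\to S^1$ which is strictly monotone along every orbit of the flow $\varphi_t$; the preimage $K:=F^{-1}(0)$ will then be the required global Cantor transversal, and $\cS$ will be recovered as the mapping torus of the first-return map of the flow on $K$.

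By compactness of $\cS$, I would first cover it with finitely many flow-box charts $U_i\cong(-\epsilon_i,\epsilon_i)\times K_i$, chosen so that the first factor parametrizes arc length along the flow (with respect to the unit vector field) and the second factor is a clopen piece of a Cantor transversal. Each chart carries a local time function $\tau_i:U_i\to\R$ with $\tau_i(\varphi_t(x))=\tau_i(x)+t$, and on overlaps $U_i\cap U_j$ the differences $\tau_i-\tau_j$ are constant along flow segments, depending only on the transversal variable. These differences form a \v{C}ech $1$-cocycle on the nerve of the cover with values in continuous functions on the transversal intersections. Using the orientation of the flow (which fixes signs coherently) together with a partition-of-unity smoothing, one patches the local $\tau_i$'s together, modulo a common period extracted from the cocycle, into a globally defined continuous $F:\cS\to S^1$ whose restriction to each orbit is strictly monotone.

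Setting $K:=F^{-1}(0)$, the local product structure of each flow box shows that $K\cap U_i$ is a clopen subset of the transversal factor of $U_i$, so $K$ is closed, totally disconnected, and perfect, hence a Cantor set. Strict monotonicity of $F$ along orbits ensures that every orbit crosses $K$ at a discrete bi-infinite sequence of times. The first-return map $\sigma:K\to K$, sending $x$ to $\varphi_{\tau(x)}(x)$ with $\tau(x)>0$ the smallest positive return time, is continuous by an implicit-function argument using the strict monotonicity of $F$ along orbits, and invertible with continuous inverse via the reverse flow; hence $\sigma\in\mathcal{H}(K)$. The assignment $(x,s)\mapsto\varphi_{s\tau(x)}(x)$ on $K\times[0,1]$ then descends, under the identification $(x,1)\sim(\sigma(x),0)$, to a homeomorphism onto $\cS$, exhibiting $\cS$ as the mapping torus of $\sigma$.

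The main obstacle is the globalization step producing $F$: the cocycle $\{\tau_i-\tau_j\}$ a priori takes values in continuous functions on the transversal intersections rather than in constants, and one must show that it can be replaced by a cohomologous representative admitting a well-defined quotient to $S^1$. This is the content, in the present laminated setting, of a Schwartzman-type theorem on asymptotic cycles; Sullivan's argument in \cite{Su1} proceeds more directly by exploiting the Cantor transversal structure to adjust the local time coordinates explicitly on a common refinement of the flow-box cover. Once $F$ has been constructed, the remainder is the standard Poincar\'e cross-section argument carried through with care for the Cantor transverse topology.
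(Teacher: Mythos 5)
There is a genuine gap, and it sits exactly where you locate it: the globalization step producing $F$. The \v{C}ech cocycle $\{\tau_i-\tau_j\}$ of transition constants cannot in general be replaced by a cohomologous cocycle with values in a discrete subgroup of $\R$, and no amount of partition-of-unity smoothing will manufacture a circle-valued function that is strictly increasing along orbits: your argument as written never uses the hypothesis that the transversals are Cantor sets at this stage, so the same words would apply verbatim to any nonsingular flow on a compact manifold covered by flow boxes --- for instance the Hopf flow on $\sS^3$ --- and would produce a leafwise-monotone map $F:\sS^3\to\sS^1$, hence a nonzero class in $H^1(\sS^3;\Z)$, which is absurd. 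The existence of such an $F$ is \emph{equivalent} to the existence of a global cross-section (this is the Schwartzman--Fried circle of ideas you allude to), so deferring the obstruction to ``a Schwartzman-type theorem in the laminated setting'' is deferring the entire content of the proposition.

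The feature that makes the statement true, and which the paper's proof uses directly, is the zero-dimensionality of the transversals. One covers $\cS$ by finitely many flow boxes and takes the middle transversals; because each transversal is a Cantor set, it can be partitioned into clopen pieces which can then be slid slightly along the flow so that the finitely many resulting closed local cross-sections become pairwise disjoint, while their union $K$ still meets every orbit. A finite disjoint union of closed Cantor transversals has no boundary in the transverse direction, so $K$ is automatically a global Poincar\'e section with return time bounded away from $0$ and $\infty$, and the first-return map is a homeomorphism of $K$. Your final paragraph (the mapping torus identification $(x,s)\mapsto\varphi_{s\tau(x)}(x)$ and the continuity of the return map) is correct and agrees with the paper once a cross-section is in hand; if you want to salvage the circle-valued-function formulation, construct $K$ by disjointification first and then read off $F$ from the resulting mapping torus structure, rather than the other way around.
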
\label{1-dimensional-mapping-torus}
\begin{proof} One can choose a finite set of transversals intersecting every leaf and such that starting at a point on one transversal and going forward (with respect to the orientation of the leaves) one first meets a different transversal.
 
\noi This presents the solenoid as a {\it mapping torus} or {\it suspension} of a homeomorphism $f:K\to{K}$ on the Cantor set $K$, where $f$ is the
first-return map of the flow to the cross-section.
\end{proof}
 
\begin{corollary} 1-dimensional compact oriented solenoids are classified by the conjugacy classes of the group $\mathcal{H}(K)$ of homeomorphisms of the Cantor set.
\end{corollary}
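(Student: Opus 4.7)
The plan is to establish a bijection between conjugacy classes $[f]\in \mathcal{H}(K)/{\sim_{\mathrm{conj}}}$ and homeomorphism classes of compact oriented $1$-dimensional solenoids via the assignment $[f]\mapsto [M_f]$, where $M_f:=K\times[0,1]/{(x,1)\sim (f(x),0)}$ is the mapping torus, oriented by its suspension flow. First I would check that this is well-defined on conjugacy classes: if $g=hfh^{-1}$ with $h\in\mathcal{H}(K)$, then $[(x,t)]\mapsto[(h(x),t)]$ is an oriented homeomorphism $M_f\to M_g$, since the identification $(x,1)\sim(f(x),0)$ in $M_f$ maps to $(h(x),1)\sim(g(h(x)),0)$ in $M_g$. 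Surjectivity is exactly the content of the preceding Proposition.

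For injectivity, I would start from an arbitrary homeomorphism of oriented solenoids $\Psi\colon M_f\to M_g$ and transport the standard cross-section: $T:=\Psi(K\times\{0\})\subset M_g$ is itself a Cantor Poincar\'e section of $M_g$, because $\Psi$ preserves leaves and orientation and therefore sends Cantor transversals to Cantor transversals. The restriction $\Psi|_{K\times\{0\}}\colon K\to T$ manifestly conjugates $f$, the first-return map on $K\times\{0\}\subset M_f$, to the first-return map $f_T$ of the flow of $M_g$ on $T$, because $\Psi$ carries forward-return data of $M_f$ to forward-return data of $M_g$.

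The main obstacle is then the remaining auxiliary claim: on a fixed compact oriented $1$-dimensional solenoid, the first-return map to any Cantor Poincar\'e section is canonical up to conjugacy in $\mathcal{H}(K)$. Granting this, $f_T$ would be conjugate to $g$ (the return map on $K\times\{0\}\subset M_g$), and hence $f$ is conjugate to $g$, completing injectivity. To prove this auxiliary statement I would fix a flow $\phi_t$ realising the orientation --- any two such flows differ by leafwise reparametrization, which preserves the conjugacy class of any first-return map --- and, given two Poincar\'e sections $T,T'$, use compactness to bound the forward return time $\sigma(x):=\min\{t>0:\phi_t(x)\in T'\}$ uniformly on $T$. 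The holonomy transport $x\mapsto\phi_{\sigma(x)}(x)$ would then be a continuous map $T\to T'$, and I would verify that it is a homeomorphism intertwining $f_T$ with $f_{T'}$ by unwinding the definition of first-return.

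The delicate point is precisely the bijectivity of this holonomy transport: two Cantor cross-sections can a priori meet a single leaf with different ``multiplicities'', so one must either restrict attention to Poincar\'e sections that arise from the construction in the preceding Proposition --- a finite collection of local transversals arranged cyclically along the flow, which automatically have matched multiplicities --- or replace $T$ and $T'$ by a common refinement along $\phi_t$ before comparing. Once bijectivity is in hand, continuity of the conjugation follows from continuity of the flow together with closedness of $T'$, and the intertwining $f_{T'}\circ h=h\circ f_T$ is a direct consequence of the minimality of $\sigma(x)$.
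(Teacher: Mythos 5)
Your well-definedness and surjectivity arguments are fine, and they already contain everything the paper itself establishes: the corollary there is presented as an immediate consequence of the preceding proposition (every compact oriented $1$-dimensional solenoid is the mapping torus of some $f\in\mathcal{H}(K)$), together with the trivial observation that conjugate homeomorphisms have homeomorphic mapping tori; the paper offers no injectivity argument at all.

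The injectivity half of your proposal, however, rests on an auxiliary claim that is false: the first-return map to a Cantor Poincar\'e section of a fixed compact oriented $1$-solenoid is \emph{not} canonical up to conjugacy in $\mathcal{H}(K)$. The multiplicity problem you flag is fatal rather than cosmetic. The holonomy transport $x\mapsto\phi_{\sigma(x)}(x)$ from $T$ to $T'$ fails to be injective whenever a leaf meets $T'$ several times between consecutive visits to $T$, and no common refinement repairs this: passing to a finer section replaces the return map by a tower over it, and the equivalence relation generated on return maps by changing the cross-section is exactly \emph{flow equivalence} in the sense of Parry and Sullivan, which is strictly coarser than conjugacy. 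Concretely, let $f$ be the full $2$-shift and $g$ the golden mean shift, both homeomorphisms of Cantor sets; by Franks' classification of flow equivalence for irreducible shifts of finite type, both have trivial Bowen--Franks group and Parry--Sullivan invariant $\det(I-A)=-1$ for their defining matrices, so they are flow equivalent, i.e.\ $M_f$ and $M_g$ are homeomorphic as oriented solenoids. Yet $f$ and $g$ are not conjugate, since their topological entropies are $\log 2$ and $\log\frac{1+\sqrt5}{2}$. Your transported-section argument does correctly show that $f$ is conjugate to the return map on $\Psi(K\times\{0\})$; what cannot be shown is that the latter is conjugate to $g$. Thus $[f]\mapsto[M_f]$ is surjective but not injective on conjugacy classes, and the statement becomes a genuine bijection only if conjugacy classes are replaced by flow equivalence classes.
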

 
\subsection{1-dimensional, compact, solenoidal manifolds are null-cobordant}
 
\noi By {\it proposition} \ref{1-dimensional-mapping-torus} any oriented 1-dimensional solenoidal manifold is the suspension of a homeomorphism $f$ of the Cantor set and this implies the following:
 
\begin{proposition}[Sullivan \cite{Su1}]\label{nullcobordant} Any 1-dimensional, compact, oriented, solenoidal manifold $\cS$ is null-cobordant: there exists a compact 2-dimensional solenoidal manifold with boundary and this boundary is the given solenoidal 1-dimensional manifold.
 \end{proposition}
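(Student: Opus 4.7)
The plan is to apply the preceding proposition to write $\cS = \cS_f = (K\times[0,1])/{\sim}$ as the mapping torus of a homeomorphism $f:K\to K$, with $(x,1)\sim(f(x),0)$, and then to construct a compact oriented $2$-dimensional solenoidal manifold $W$ satisfying $\partial W = \cS_f$ by extending $f$ to a $2$-dimensional setting where a natural filling is available.

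First I would embed $K$ as a tame Cantor subset of the interior of the closed $2$-disk $\overline{D}^2$ and extend $f$ to an orientation-preserving homeomorphism $\bar f:\overline{D}^2\to\overline{D}^2$ that restricts to the identity on $\partial\overline{D}^2$; such an extension exists by classical results on homeomorphisms of tame Cantor subsets of surfaces. By the Alexander trick, $\bar f$ is isotopic rel boundary to the identity via an isotopy $\{\bar f_t\}_{t\in[0,1]}$, so the mapping torus $M = (\overline{D}^2\times[0,1])/((y,1)\sim(\bar f(y),0))$ is a compact $3$-manifold homeomorphic to the solid torus $\overline{D}^2\times S^1$, inside which $\cS_f$ sits canonically as the sub-$1$-solenoid $(K\times[0,1])/{\sim}$.

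Next, inside $M$ I would construct a $2$-dimensional sub-lamination $W_0$ as follows: choose a $\bar f$-equivariant family of arcs $\{\alpha_x\}_{x\in K}$ in $\overline{D}^2$, with $\alpha_x$ joining $x$ to a fixed basepoint $p\in\partial\overline{D}^2$ and $\bar f(\alpha_x)=\alpha_{f(x)}$, obtained by propagating a chosen arc along $f$-orbits using the isotopy $\bar f_t$ and extending continuously across $K$. Sweeping the arcs through the mapping-torus parameter produces a Cantor-parameterized family of $2$-dimensional surfaces $W_0\subset M$ whose leaves are disks or half-planes (according to whether the $f$-orbit of $x$ is periodic or not), with boundary $\partial W_0 = \cS_f\sqcup C$, where $C\subset\partial M$ is a smooth circle traced by $p$. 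Attaching a meridian disk $\Delta\subset M$ bounded by $C$ (which exists because $C$ is null-homotopic in the solid torus $M$) eliminates the extraneous smooth boundary, giving $W = W_0\cup_C\Delta$ with $\partial W = \cS_f$.

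The main obstacle will be verifying that $W$ is a genuine compact $2$-dimensional solenoidal manifold: on one hand the $\bar f$-equivariant family of arcs must be chosen carefully to be both continuous in $x$ and compatible with the mapping-torus identification, and on the other the attached smooth disk $\Delta$ carries no Cantor transversal of its own and so must be treated as a distinguished ``degenerate'' leaf in the solenoidal atlas, with the local product structure verified near $\Delta$. Following Sullivan's treatment in \cite{Su1}, I expect both points to be resolved by a careful local analysis near the degenerate leaf and by invoking the flexibility in the choice of the arcs $\alpha_x$.
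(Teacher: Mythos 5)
Your reduction to the mapping torus of $f\in\mathcal H(K)$ matches the paper's starting point, but from there the paper takes a completely different and much more robust route, and your construction has two genuine gaps. First, the equivariant family of arcs $\{\alpha_x\}_{x\in K}$ is unlikely to exist: if $f$ is minimal (an odometer, say), every orbit is dense in $K$, so ``propagating a chosen arc along $f$-orbits'' forces $\alpha_{f^n(x)}=\bar f^{\,n}(\alpha_x)$, and there is no control on the iterates $\bar f^{\,n}$ away from $K$ — the arcs need not converge as $f^n(x)$ accumulates on $K$, so the required continuous extension across $K$ can fail. Second, and more decisively, all of your arcs share the endpoint $p$, so they are not pairwise disjoint and the swept object $W_0$ is not a lamination near the circle $C$: every leaf passes through (or wraps infinitely often around) $C$, so no point of $C$, nor of the capping disk $\Delta$, has a neighborhood homeomorphic to an open subset of $\overline{D}^2$ times a Cantor set. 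The paper's definition of a compact solenoidal $2$-manifold with boundary requires a Cantor transversal at \emph{every} point, so a ``degenerate leaf'' $\Delta$ without one is not admissible; and replacing the single basepoint $p$ by a Cantor set of endpoints in $\partial\overline{D}^2$ only trades $C$ for another compact oriented $1$-solenoid that you would again have to bound, so the problem recurs.

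The paper's actual argument (due to Sullivan, following an idea of Thurston) avoids all of this by an algebraic step: by a theorem of R.~D.~Anderson the group $\mathcal H(K)$ is perfect, so one writes $f=[h_1,k_1]\cdots[h_g,k_g]$ and obtains a representation $\rho:\pi_1(\Sigma)\to\mathcal H(K)$ of the fundamental group of a compact genus-$g$ surface $\Sigma$ with one boundary circle, sending the boundary class to $f$. The suspension of $\rho$ (the quotient of $\tilde\Sigma\times K$ by the diagonal action) is automatically a compact $2$-dimensional solenoidal manifold with boundary, locally a product by construction, and its boundary is the suspension of $f$, i.e.\ $\cS$. If you want to salvage a geometric picture along your lines, the commutator identity is precisely the ingredient that replaces your capping disk: it lets the Cantor fibre be carried around a surface with one boundary circle so that the total holonomy along that circle is $f$, with no extraneous boundary component ever created.
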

 \begin{proof} The proof is based on an idea by Thurston and it follows from the fact that any homeomorphism of the Cantor set is a finite product of commutators, \ie ${\mathcal H}(K)$  ({\it definition} \ref{homcantor}) is a perfect group.
 The proof of the fact that the group of homeomorphisms of the Cantor set is perfect is proven in all detail in the paper \cite{An} by R.D. Anderson.
 
\noi   Let $\cS$ be the suspension of $f\in\mathcal{H}(K)$ and
 $f=[h_1,k_1]\cdots[h_g,k_g]$.
 Then, there exists a representation $\rho:\pi_1(\Sigma)\to{\mathcal{H}(K)}$,
 where $\Sigma$ is a smooth compact surface of genus $g$
 with connected boundary a circle, such that the restriction of $\rho$ to the element represented by the boundary circle is $f$.  
  \noi The lamination we want is the suspension of the representation
 $\rho$.
 More precisely, and generally, let  $N$ be a compact $n$-dimensional manifold and
$r:\pi_1(N)\to{\mathcal H}(K)$ a homomorphism from the fundamental group of $N$ to ${\mathcal H}(K)$, \ie $r$ is a representation of
$\pi_1(N)$ into ${\mathcal H}(K)$.
 
\vskip.1cm
\noi The $n$-dimensional solenoidal lamination ${\mathcal{L}}_\rho$ associated to $\rho$ is called the
{\it suspension} of $r$. It is obtained by taking the quotient of
$\tilde{N}\times{K}$ under the action of $\pi_1(N)$
given by $\gamma(x,k)=(\gamma(x),r(\gamma)(k))$ where $\tilde{N}$ is the universal covering of $N$ and the action of
of $\pi_1(N)$ on $\tilde{N}$ is by deck transformations.
 
\noi The action of $\pi_1(N)$ is free and totally discontinuous so the quotient is a nice space. The canonical
map $\Pi:\tilde{N}\times{K}\to{\mathcal{L}}_\rho$ is a local homeomorphism and
it has the path-lifting property.
 
\noi One has a natural locally trivial fibration $p:{\mathcal{L}}_\rho\to{N}$ with fiber $K$. Applying this construction to our $\rho$ we obtain
the proof of {\it proposition} \ref{nullcobordant} since the restriction of the representation to the boundary circle is the suspension of $f$. \end{proof}
 
\subsection{1-dimensional compact  topologically homogeneous solenoids}\label{hom1d}
 
 \begin{theorem} \label{tophom1d} Every compact, connected, 1-dimensional solenoidal manifold
 which is topologically homogeneous is homeomorphic to a compact, connected, abelian group $\Gamma$. There exists a dense (with respect to the standard topology) subgroup $A\subset\Q$ of the additive
 rationals $(\Q,+)$ such the Pontryagin dual of $A_\delta$ is isomorphic to
 $\Gamma$, where $A_\delta$ denotes $A$ equipped with the discrete topology.

  \end{theorem}

\begin{proof} If $\cS$ is a compact, topologically homogeneous, 1-dimensional solenoid
then by {\it theorem} \ref{ClarkHurder} it is a MacCord solenoid and therefore it is
obtained as the inverse limit of an infinite tower of regular finite coverings of the circle $\sS^1=\left\{z\in\C\,:\,|z|=1\right\}$. Every finite covering map of the circle is equivalent to a map of the form
$z\mapsto{z^n}$, $n\in\N$, therefore it is a group homomorphism from the circle to the circle. It follows that $\cS$ is the inverse limit of a sequence
(see {\it remark} \ref{chain-cofinal}):
\begin{equation}
 \cdots \xrightarrow{z\to{z}^{n_4}}{\mathbb S}^1\xrightarrow{z\to{z}^{n_3}}
 {\mathbb S}^1\xrightarrow{z\to{z}^{n_2}}\sS^1
 \xrightarrow{z\to{z}^{n_1}}\sS^1, \quad n_i\geq2.
\end{equation}

\noi Therefore the inverse limit is of a sequence of homomorphisms, hence this limit is a compact abelian group. Since $\cS$ is of topological dimension one, it follows by Pontryagin duality that $\cS$ is the Pontryagin
dual of a countable discrete abelian group $\Gamma$ of {\it rank one}, and therefore
$\Gamma$ is a subgroup of the rationals $\Q$.
 
\noi The nontrivial subgroups of $\Q$ are
either dense or infinite cyclic, the latter must be excluded since its Pontryagin dual is the circle and not a solenoid. \end{proof}
 
\noi By {\it theorem} \ref{tophom1d}, compact, connected, orientable, 1-dimensional topologically homogeneous solenoidal manifolds are in one-to-one correspondence with the isomorphism classes of subgroups of the additive rationals $\Q$ via the Pontryagin duality. These solenoids have a distinguished leaf, the component of the identity called the base leaf, \cite{Od}. A theorem by Reinhold Baer, \cite{Baer}, describes the subgroups of $\Q$ up to isomorphism by equivalence classes of sequences called types (see also \cite{Tho}). There are uncountably many such types. Hence there are also uncountably many isomorphism classes of
additive subgroups of $\Q$.  \noi On the other hand it is shown in
 \cite{Sche} that
 two homeomorphic locally compact, connected, abelian groups are topologically isomorphic, and therefore their Pontryagin duals are isomorphic subgroups of $\Q$.
  Hence non-isomorphic subgroups of the rationals correspond to non- homeomorphic solenoids.
 
  Therefore we have the following:
  \begin{corollary} There exist uncountably many homeomorphism classes of compact, connected, orientable, topologically homogeneous 1-dimensional solenoidal manifolds.
  They are all suspensions of a minimal translation on a Cantor abelian group.
  \end{corollary}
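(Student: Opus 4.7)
The plan is to assemble three ingredients already marshalled just above the corollary: Theorem~\ref{tophom1d}, the rigidity result of Scheinberg \cite{Sche}, and Baer's classification \cite{Baer}. By Theorem~\ref{tophom1d}, every compact, connected, orientable, topologically homogeneous $1$-solenoid $\cS$ is a compact connected abelian group whose Pontryagin dual is a dense subgroup of $(\Q,+)$. Scheinberg's theorem upgrades every homeomorphism between two such solenoids to a topological group isomorphism, so, by Pontryagin duality, classifying these solenoids up to homeomorphism is the same as classifying dense subgroups of $\Q$ up to abstract isomorphism.

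Next, I would invoke Baer's theorem, which parametrizes the isomorphism classes of torsion-free rank-one abelian groups (equivalently, subgroups of $\Q$) by \emph{types}, namely equivalence classes of sequences in $(\N\cup\{\infty\})^{\{\text{primes}\}}$. The set of types has the cardinality of the continuum; excluding the single class of $\Z$ (whose Pontryagin dual is $\sS^1$, not a solenoid), uncountably many of the remaining types correspond to dense subgroups of $\Q$ and hence to pairwise non-homeomorphic solenoids of the required kind.

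For the second assertion, Theorem~\ref{tophom1d} presents $\cS$ as the inverse limit of a chain of finite covers $z\mapsto z^{n_k}$ of $\sS^1$, so Proposition~\ref{algcover} applied to $M=\sS^1$ identifies $\cS$ with the quotient of $\R\times G$ by the $\Z$-action $n\cdot(x,g)=(x+n,\,j(n)\cdot g)$, where $G=\varprojlim \Z/(n_1\cdots n_k)\Z$ is the profinite Cantor abelian group determined by the chain and $j:\Z\to G$ is the canonical embedding from Definition~\ref{inclusion-j}. This quotient is precisely the mapping torus of the translation $T_{j(1)}:G\to G$, $g\mapsto j(1)\cdot g$; minimality of $T_{j(1)}$ is equivalent to the density of $\langle j(1)\rangle=j(\Z)$ in $G$, which is Remark~\ref{G-injects-densely}.

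The only delicate step in the whole argument is Scheinberg's rigidity: topological homogeneity alone does not trivially imply that a self-homeomorphism of a connected compact abelian group is a group isomorphism (after translation), and this upgrade is exactly the input provided by \cite{Sche}. The remaining work is bookkeeping among the three equivalent classification problems (homeomorphism of solenoids, topological isomorphism of compact connected abelian groups, abstract isomorphism of dense subgroups of $\Q$) and the observation that Baer's invariant already produces uncountably many inequivalent dense subgroups.
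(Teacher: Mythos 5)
Your proposal is correct and follows essentially the same route as the paper: Theorem~\ref{tophom1d} plus Baer's classification of rank-one torsion-free abelian groups by types, combined with Scheinberg's theorem that homeomorphic compact connected abelian groups are topologically isomorphic, yields uncountably many homeomorphism classes. Your explicit mapping-torus argument for the second assertion (via Proposition~\ref{algcover} and Remark~\ref{G-injects-densely}) is a welcome elaboration of a point the paper leaves implicit, but it is not a different method.
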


 \begin{remark}\label{integeradeles} Let $\mathbb{A} _{\mathbb{Z}}$ be the ring of integral adèles \cite{RV} then:
 $\mathbb{R}\times{\hat{\mathbb {Z}}}=\mathbb{A}_{\mathbb{Z}}=
 \mathbb{R}\times\prod_{p}\mathbb{Z}_{p}$.
 The map $\Z\overset{i}\hookrightarrow\mathbb{R}\times{\hat{\mathbb {Z}}}$,
 $\,n\mapsto(n,\mathbf{n})$
 (where $\mathbf{n}$ is the image of $n$ by the natural inclusion of $\Z$ into $\hat\Z$)
injects $\Z$ into a discrete co-compact subgroup $\Gamma$. The Pontryagin dual
 $\Q^*$ of the rationals is isomorphic, as a compact, abelian, topological group, to
$\widehat{\sS}^1:=(\R\times\hat\Z)/\Gamma$.
 
\noi The subgroup
$\left\{(0,\mathbf{n}): n\in\Z\right\}$, isomorphic to $\Z$, is dense in
$\left\{0\right\}\times\hat\Z$,  where $\hz$ is the profinite completion of the integers $\mathbb Z$.
 This implies that the canonical flow whose orbits are the translates of the component of the identity (isomorphic to $\R$) is minimal.
\end {remark}
\begin{definition}\label{adelesclassgroup} The group $\widehat\sS^1$ is called the
{\it àdele class group or universal 1-dimensional arithmetic solenoid}. It is the algebraic universal covering
of the circle $\sS^1$ ({\it definition} \ref{univ-alg-cov}).
\end{definition}
\noi The inclusion of a subgroup $\Gamma\subset\Q$ of $\Q$, induces by Pontryagin duality
an epimorphism of their duals $\widehat{\sS}^1=\Q^*\to\Gamma^*$.
Thus the universal solenoid $\hat{\sS}^1$ maps epimorphically onto any 1-dimensional compact, connected, abelian group.

  \section{Solenoidal Riemann surfaces}\label{srs}
\subsection{Surface laminations and uniformization via de Ricci flow}
\begin{definition}\label{sl} A {\em compact lamination by surfaces} $\cL$ is a compact metrizable space $\cL$ endowed with an atlas
$\cA=(U_\alpha, \varphi_\alpha)$
such that:
\begin{itemize}
 \item[(1)] Each $\varphi_\alpha$ is a homeomorphism from $U_\alpha$ to a product $D\times T_\alpha$, where $D$
 is the unit disk in the plane $\R^2$ and
 $T_\alpha$ is a locally compact space (not necessarily a Cantor set).
 \item[(2)] Whenever $U_\alpha\cap U_\beta \neq \emptyset$, the change of coordinates
 $\varphi_\beta\circ\varphi_\alpha^{-1}$ is of the form
 $$(z,\zeta)\mapsto (\lambda_{\alpha\beta}(z,\zeta), \tau(\zeta)),$$ where $\lambda_{\alpha\beta}$ is smooth in the $z$ variable. If
 the $\lambda_{\alpha\beta}$ preserve a fixed orientation of the 2-disk we say that the lamination is oriented. The solenoidal case is when $T_\alpha$ is a Cantor set
 for all $\alpha$.
\end{itemize}
 
\end{definition}
 
\noi In the following definition we identify $\R^2$ with $\C$ via $(x,y)\mapsto{z=x+\mathbf{i}y}$
and thus $D$  in {\it definition} \ref{definition:lamination} can be considered as the open unit disk $D=\left\{z\in\C:\,|z|<1\right\}$ in $\C$ and one can use complex notation.
\begin{definition}\label{definition:holomorphic_lamination} A 2-dimensional lamination $\cL$ is said to be a \emph{lamination by Riemann surfaces}
or simply a \emph{Riemann surface lamination}  if the atlas $\cA=(U_\alpha, \varphi_\alpha)_{_{\alpha \in \mathfrak{I}}}$
satisfies that the change of coordinates
 $\varphi_\beta\circ\varphi_\alpha^{-1}$ is of the form
 $(x,t)\mapsto (\lambda_{\alpha\beta}(x,t), \tau_{\alpha\beta}(t)),$ where $\lambda_{\alpha\beta}$ is a holomorphic diffeomorphism in the $x$ variable for each $t$. In this case, $\lambda_{\alpha\beta}$ preserves the preferred orientation of the disk induced by the orientation of $\C$, and the lamination is necessarily orientable.
 An atlas satisfying this condition is called a \emph{complex atlas}.
 If  $\cA=(U_\alpha, \varphi_\alpha)_{_{\alpha \in \mathfrak{I}}}$ is a complex atlas as in {\it definition} \ref{sl} the leaves are immersed copies of \emph{Riemann surfaces}.
\end{definition}
 
Several vector bundles over $\cL$ can be defined in a natural way using the fact that the $\lambda_{\alpha\beta}$ are smooth
in the variable $x$.
 
\noi These include the tangent bundle $T\cL$ to the lamination, tensor bundles, frame bundles, etc. Their fibers
vary continuously in the smooth topology along the direction transverse to the laminated structure, that is, the direction given by
the $T_\alpha$ and are smooth along the leaves.
 
\noi If $\cL$ is a Riemann surface lamination as in {\it definition} \ref{definition:holomorphic_lamination}
one can define laminated complex line bundles with fiber $\C$
like, for instance, the \emph{complex tangent line bundle} $T_\C\cL$.
The complex tangent line bundle is the line bundle obtained by providing the real tangent bundle $T\cL$ with the natural almost complex structure obtained by the complex atlas using multiplication by $\mathbf{i}$ in each real tangent plane $T_x L_x$ of the leaf $L_x$ through $x$.
 
\bigskip
\noi One can also define the \emph{laminated canonical line bundle} $K\cL$ and products of tensor bundles such as
$(T_\C\cL)^{\otimes{m}}\otimes{(K\cL)^{\otimes{n}}}$.  One has
 $(T_\C\cL)\otimes{K\cL}\cong\cL\times\C$ is the trivial line bundle.
 
 \noi Thus one can define the Picard group of Riemann surface lamination (\ie the group of line bundles with multiplication given by the tensor product). In particular the sections of
 $K\cL\otimes{K\cL}$
are laminated quadratic differentials which are important to study the Teichmüller laminated theory (in general the spaces are of infinite dimension).
 
\begin{definition}A {\em laminated (or foliated) Riemannian metric} $g$ is a continuous tensor such that restricted to each leaf is a Riemannian metric and it is smooth along the leaves. In terms of a foliated chart
$\varphi_\alpha:U_\alpha\to{D}\times{T_\alpha}$ the laminated metric is determined by a continuous function
$g_\alpha:{D}\times{T_\alpha}\to{\rm{Symm}_+(2)}$, where $\rm{Symm}_+(2)$ is the space of real positive definite
$2\times2$ symmetric matrices:
\begin{equation}\label{eq:rm}
 g_\alpha(z,t)=
 \begin{pmatrix}
E(z,t) & F(z,t)\\
F(z,t) & G(z,t)
\end{pmatrix},
\end{equation}
\noi where $E(z,t)>0$, $E(z,t)G(z,t)-F^2(z,t)>0$.  
 
The functions $E$, $F$ and $G$ are differentiable with respect to $z$.
Thus in this chart, the metric is written in the traditional notation
\[
ds^2=E(z,t)\,dx^2+2F(z,t)dx\,dy+F(z,t)\,dy^2, \quad z=x+\mathbf{i}y.
\]
 
In terms of complex notation
\begin{equation}\label{beltrami}
ds^2=\gamma(z,t)|dz+\mu(z,t)d\overline{z}|^2,
 \end{equation}
 
\noi where $z=x+\mathbf{i}y$, $\gamma(z,t)>0$, and $\mu(z,t)<1$.
\end{definition}
 
\begin{definition}\label{beltramicoeff} The function $\mu$ in formula (\ref{beltrami})
is called the {\it Beltrami coefficient} of the metric $g$.
In a coordinate-invariant fashion, we can regard the Beltrami coefficient
as a $(-1,1)$-form \ie
a section of $K^{-1}\cL\otimes{\bar{K}\cL}$.
 
\noi One must impose some regularity
in order to have a good description of quasi-conformal maps
  between Riemann surface laminations and their theory of Teichmüller spaces.
 
\end{definition}
\begin{remark}\label{metric=complex} If $g$ is a laminated Riemannian metric on
a surface lamination $\cL$ then $g$ induces an almost complex structure
$J:T\cL\to{T\cL}$ as follows: if $x$ is in the leaf $L$ of $\cL$ and $v\in{T_x{L}}$,
then $J(v)=w$ where $w$ is orthogonal to $v$
(with respect to $g$), $|v|_g=|w|_g$ and $(v,w)$ is an oriented frame of
$T_x{L}$ (with respect to the orientation induced by the almost complex structure).
In dimension 2 every almost complex is integrable, therefore an almost complex structure on $\cL$ induces on $\cL$ the structure of a Riemann surface lamination
and therefore the structure of a Riemann surface lamination.
 
\noi This is the laminated version of the Gauss-Korn-Lichtenstein on the existence of local isothermal coordinates and it follows by the Ahlfors-Bers theory \cite{AB}.
 
\end{remark}
 
\begin{remark}[Reeb stability]\label{nosphleaf} If a connected surface lamination $\cL$ contains a leaf
homeomorphic to the sphere $\sS^2$, then by Reeb's stability theorem
(which is valid for laminations by Theorem 5.3 in \cite{Ca}), the leaf space of $\cL$ is a compact Hausdorff space and $\cL$ homeomorphic to a fiber bundle over the compact leaf space
with fiber $\sS^2$.
{\it To avoid this trivial case we will assume heretofore that no leaf of the surface
laminations considered in this paper are spherical}.
 
\end{remark}
 
\begin{definition}\label{quasi-isom} Two laminated Riemannian metrics $g_1$ and $g_2$ with induced norms $|\xi|_{g_1}$, $|\xi|_{g_2}$, respectively, on a smooth surface
lamination $\cL$, are said to be {\it quasi-isometric} if there exists
 a constant $a\geqq 1$ such that:
\[
a^{-1}|\xi|_{g_1}\leqq|\xi|_{g_2}\leqq a|\xi|_{g_1},\,\, \forall \xi\in T\cL.
\]
\end{definition}
As explained in {\it remark} \ref{metric=complex} laminated Riemannian metrics on $\cL$ induce complex structures on the leaves
of $\cL$ so that they become Riemann surfaces and $\cL$ is a Riemann surface lamination. By the Koebe-Poincaré uniformization theorem, the universal cover $\tilde{L}$ of a leaf $L$, with the lifted complex structure, is conformally equivalent to either the Riemann sphere $\bar\C$, the complex plane $\C$ or the Poincaré disk $\Delta$;
we say that $L$ is of {\it elliptic}, {\it parabolic} or {\it hyperbolic}
type, respectively. We exclude the case of spherical leaves
({\it remark} \ref{nosphleaf}).
Let $S$ be a simply connected non-compact surface endowed with a complete Riemannian metric $g$.
Let $x\in{S}$ and $r>0$; define $A_g(r,x)$ to be the area of the geodesic disk of radius $r$ centered at $x$. One says that $g$ has  {\it polynomial area growth} if there exists
a constant $c>0$ and an integer $n$, such that $A_g(r,x)\leq{cr^n}$.
We say that the area {\it grows exponentially} if there exists
positive constants $c, b$ such that $A_g(r,x)\geq{c} e^{br}$. These two properties are independent of the point $x$.
By Theorem 3.3 of M. Kanai in \cite{Kan},
 these growth properties depend only on the quasi-isometry class of the Riemannian metric provided that the injectivity radius is positive and the Ricci curvature is bounded below. Thus, under these conditions, the conformal type of a noncompact simply connected surface
is determined by the growth rate of the area.
 If $\cL$ is a compact surface lamination with a laminated Riemannian metric $g$ then, by compactness, there exists $\delta>0$ such that the injectivity radius of every leaf is greater than $\delta$.
 In addition, the Ricci curvature of the leaves is uniformly bounded below.
 
 Leaves of parabolic type have polynomial growth (\ie their universal covering is conformally equivalent to $\C$) and leaves of a hyperbolic type have exponential area growth (\ie their universal covering is conformally equivalent to the unit disk
 $\Delta$). Therefore we have the unambiguous notion of the {\it conformal type} of a leaf
 $L$ of a smooth {\it compact} lamination by surfaces:
 \begin{proposition} Let $\cL$ be a $\text{\bf compact}$, smooth, surface lamination. Let $L$ be a leaf of $\cL$, then the conformal type of $L$, with respect to the complex structure induced by a laminated Riemannian metric $g$, is independent of $g$. This allows us to
speak about {\em hyperbolic} or {\em parabolic} leaves,
independently of the laminated metric (recall that we excluded spherical leaves).
\end{proposition}

%%%%
%%%%

\begin{definition}[Hyperbolic lamination]\label{hyperboliclamination}
If all leaves are of hyperbolic type with respect to a laminated metric $g$ we refer to
$\cL$ as a {\it hyperbolic lamination} (the property is independent of $g$).
\end{definition}
\noi We recall that by {\it remark} \ref{metric=complex} on an \emph{oriented} lamination a laminated Riemannian metric $g$ determines a conformal structure on every leaf, that is,
it turns every leaf into a Riemann surface.
 
\noi A complex leaf $L$ is hyperbolic if and only if it can be uniformized by the unit disk \ie there is a conformal covering map $\varphi:\Delta\to{L}$. It is easy to construct surface laminations with leaves of mixed types.
 
\noi When all leaves are hyperbolic, the uniformization maps of individual leaves vary continuously from leaf to leaf.
More precisely, the following {\em Uniformization Theorem}
holds (see \cite{Ca}, \cite{Ve}):
 
\begin{theorem}\label{uniformization}
 Let $\cL$ be a compact lamination by hyperbolic surfaces endowed with a laminated Riemannian metric $g$. Then there is
 a laminated Riemannian metric $g'$ which is conformally equivalent to $g$ and for which every leaf has constant curvature -1.
 The metric $g'$ has a continuous variation in the smooth topology in the direction transverse to the leaves of $\cL$.
\end{theorem}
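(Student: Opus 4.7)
The plan is to reduce the uniformization problem to solving a single semilinear elliptic PDE leafwise, and then to establish transverse continuity of the resulting conformal factor. Writing $g' = e^{2u} g$, the two-dimensional conformal transformation law for Gaussian curvature gives
\begin{equation*}
K_{g'} = e^{-2u}\bigl(K_g - \Delta_g u\bigr),
\end{equation*}
so the requirement $K_{g'} \equiv -1$ on every leaf becomes the leafwise equation $\Delta_g u - e^{2u} = K_g$. The task therefore reduces to producing a function $u \in C^{\infty,0}(M,\Lam)$ that solves this PDE on every leaf simultaneously, and then setting $g' = e^{2u} g$.

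For leafwise existence, fix a leaf $L$ with the complete Riemannian metric induced by $g$. By hypothesis \ref{hyperbolic_leaf} its universal cover is the disk, so by the classical uniformization of Riemann surfaces there is a unique bounded metric of constant curvature $-1$ in the conformal class of $g|_L$; its logarithmic conformal factor $u_L$ is the desired solution. Concretely, compactness of $M$ yields a uniform bound $|K_g|\le C$, which produces a constant super-solution $u_+ = \tfrac{1}{2}\log C$; a sub-solution is obtained via a barrier built from the hyperbolic Green's function on the universal cover, and a monotone iteration scheme then gives $u_L$. The key output of this step is a solution $u_L$ satisfying uniform $L^\infty$ bounds independent of the leaf $L$.

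To assemble these leafwise solutions into a continuous function on $M$, I would argue by applying the maximum principle to the difference of the solutions on two plaques sitting on nearby leaves in the same foliated chart $\varphi_\alpha : U_\alpha \to D \times T_\alpha$. Because the coefficients of $\Delta_g$ and the function $K_g$ depend continuously on the transverse parameter $t \in T_\alpha$ in the smooth topology, and because the uniform $L^\infty$ bound feeds into interior Schauder estimates on compact plaques, one obtains uniform leafwise $C^{k,\alpha}$ bounds on the family $\{u_L\}$. Uniqueness of the bounded solution on each leaf then forces the transverse stability of $u$: two plaques whose transverse parameters are close must carry solutions that are close in $C^k$. This is exactly the regularity required by Definition \ref{diffclassmetric}, so $u \in C^{\infty,0}(M,\Lam)$ and $g' = e^{2u}g$ is the desired laminated Riemannian metric.

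The principal obstacle is the transverse continuity step. Each leafwise equation is standard, but leaves that pass close to one another in $M$ can have wildly different global geometry and topology, so one cannot simply extract continuity from compactness of the space of leaves. The content of the argument, as developed by Candel \cite{Ca} and Verjovsky \cite{Ve}, is the transverse stability of the semilinear elliptic PDE: uniform a priori bounds together with uniqueness of the bounded solution force continuous dependence on the transverse parameter. Once this stability is established the rest of the proof is essentially formal.
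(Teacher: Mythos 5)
Your reduction to the leafwise equation $\Delta_g u - e^{2u} = K_g$ and your overall strategy (uniform a priori bounds plus uniqueness of the bounded solution) identify the right skeleton, but the two steps that carry the actual content of the theorem are not supplied by the arguments you propose. First, the uniform lower bound on $u_L$ (equivalently, the non-degeneracy of the leafwise Poincar\'e metric relative to $g$) cannot be produced by a barrier constructed on a single leaf or its universal cover: the hyperbolic Green's function of the disk gives no quantitative information that is uniform over the uncountably many leaves, whose global conformal geometry can degenerate as one moves transversally. That bound is genuinely global in $M$: the standard argument (Candel \cite{Ca}; see also \cite{Ve}, \cite{Gh1}) is a Brody-type normal-families argument --- if the leafwise Poincar\'e metric degenerated along a sequence of points of $M$, one would extract, using compactness of $M$ and the transverse continuity of the leafwise complex structures, a nonconstant holomorphic map of $\C$ into some leaf, contradicting the hypothesis that every leaf is hyperbolic. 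Without this step your ``uniform $L^\infty$ bounds independent of the leaf'' is an assertion, not a consequence of what precedes it. Second, the maximum principle applied to the difference of two solutions on nearby plaques cannot yield transverse continuity, because a plaque is a compact piece with boundary and you have no control of the boundary values: $u_L$ is determined by the global conformal type of $L$, which the plaque does not see. The workable mechanism (which your final paragraph gestures at and then defers to the references) is that the two-sided $L^\infty$ bounds feed into interior Schauder estimates to give leafwise $C^{k,\alpha}$ precompactness, and uniqueness of the complete hyperbolic metric in the conformal class of the limit leaf then identifies every subsequential limit, forcing convergence. As written, the proposal correctly names the two hard points but outsources both.

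It is also worth noting that your route differs from the one taken in this paper: here Theorem \ref{uniformization} is obtained as a corollary of the laminated Ricci flow theorem \ref{continuity}, after first invoking Ghys's theorem \ref{lemma:etienne} to replace $g$ by a conformally equivalent laminated metric with strictly negative leafwise curvature; the parabolic flow then converges in $C^{\infty,0}$ to the leafwise hyperbolic metric. The elliptic route you chose is that of the original references \cite{Ca} and \cite{Ve} and is perfectly viable, but only once the Brody-type uniform bound above is actually established.
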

\begin{definition}[Hyperbolic laminations]  Laminated Riemannian metrics $g$ of  $\cL$ with
all leaves of constant Gaussian curvature -1 are called
{\it laminated hyperbolic metrics}.
\end{definition}

\subsection{Laminated Ricci flow}\label{Ricci_Flow}
 
In this subsection, we will state a stronger version of
{\it theorem} \ref{uniformization} established in \cite{MV}, which uses the geometric flow determined
by the Ricci flow in the spirit of Hamilton \cite{Ha}.  Given a compact lamination
by surfaces $\cL$ with a laminated Riemannian
metric $g_0$ with all leaves of negative curvature, we find a laminated metric $g_\infty$ of class $C^{(\infty,0)}$
({\it definition} \ref{smoothfunctions}) which renders each leaf of constant negative curvature conformally equivalent to $g_0$.
 
\noi Let $\cL$ be a compact lamination and $g_0$ a $C^{(\infty,0)}$
laminated metric on it.
In two dimensions, the Ricci curvature for a given metric $g$ is equal to
${\frac12}Rg$, where $R$ is the scalar curvature or $R=2K$ where $K$ is the Gauss curvature.
We can consider the {\it ``normalized laminated Ricci flow''} as the evolution of the metric under
the equation
\begin{equation}\label{riccieq}
\left\{\begin{aligned} \frac{\partial g(t)}{\partial t} &=(c-R(t))g(t),\\
                       g(0)&=g_0
       \end{aligned}\right.
\end{equation}

\noi Here $R(t)$ is the scalar curvature of the leaves for the metric $g(t)$ and $c$ is a normalizing constant, {\it independent of $t$,
which is chosen conveniently}.
\noi Let us denote by $R_0$ the curvature (in the leaf
direction) of the metric $g_0$. Since $M$ is compact the leaves are complete and
$R_0$, being a
continuous function on $M$, is bounded.
From this, it is possible to conclude that there exists $\epsilon >0$ such that for each time $t$ in an interval
$[0,\epsilon)$ there is a solution $g(t)$ to the  Ricci flow equation; for
$g(t):=g_t$ to be a solution to (\ref{riccieq}) on $\cL$ it
has to vary continuously in the transverse direction; this is a consequence
of the continuous dependence of the solution to (\ref{riccieq}) with respect to the initial
condition.
 
\noi The curvature of a family of metrics $g(t)$ satisfying (\ref{riccieq}) evolves under the
 equation:
   \begin{equation}
\frac{\partial}{\partial t}R(t)=\Delta_t R(t)+(R(t)-c)R(t).
   \end{equation}
Here $\Delta_t$ denotes the Laplacian in the leaf direction
(with respect to $g(t)$), or equivalently, we
consider the above equation on each leaf.
 
 In the 2-dimensional case the metrics given by the equation
(\ref{riccieq}) leave invariant the conformal class of the initial metric $g_0$, hence we
can write the evolution as an evolution of a single function $u$.
 
\noi More precisely, by writing
$g=e^ug_0$ for a metric in the conformal class of $g_0$,
 we have that under the normalized Ricci flow $u$ evolves according to
  \begin{equation}
   \frac{\partial u}{\partial t}=c-R=\Delta_t u-e^{-u}R_0+c=e^{-u}(\Delta_0 u-R_0)+c,
  \end{equation}
 \noi where $\Delta_0$ denotes the Laplacian operator associated with $g_0$.
 Since $\Delta_t u=e^{-u}\Delta_0$ the equation follows from
 well-known fact that $\Delta_{e^ug_0}=e^{-u}\Delta_0$.
 
 \noi The following theorem in \cite{MV} shows that if we start with an initial laminated
 metric $g_0$ such that every leaf has negative curvature at every point then the Ricci flow
 exists for all positive time and defines a one-parameter family of metrics
 $\left\{g_t\right\}_{_{t\geq0}}$ such that $\underset{t\to\infty}\lim\,g_t=g_\infty$
 exists and defines a smooth metric with all leaves of constant negative curvature
 (rescaling we can assume that this constant curvature is -1).
 
 \noi The metric $g_\infty$
 is conformally equivalent to $g_0$. By the following theorem due to
Étienne Ghys (a proof can be found in \cite{AJ} and \cite{AS} Théorème 6.5) we can choose as the initial metric a globally hyperbolic metric:
 
\begin{theorem}[É. Ghys]\label{lemma:etienne}
 Let $\cL$ be a compact lamination by surfaces of hyperbolic type. Then, in each conformal class of $\cL$,
 there exists a  Riemannian laminated metric in such a way that the leaves of $\cL$ have negative curvature \end{theorem}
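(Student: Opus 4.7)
The plan is to find, starting from any laminated Riemannian metric $g_0$ in the prescribed conformal class, a conformal factor $u\colon M\to\R$, continuous on $M$ and smooth along the leaves, such that $g:=e^{2u}g_0$ has strictly negative leafwise Gauss curvature everywhere. In dimension two the conformal change formula reads
\[
K_g = e^{-2u}\bigl(K_{g_0}-\Delta_{g_0}u\bigr),
\]
so setting $C:=\sup_M K_{g_0}<\infty$ (finite by compactness of $M$), it suffices to produce a leafwise $C^\infty$ function $u$ on $M$ satisfying $\Delta_{g_0}u>C$ at every point.

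First I would fix a finite cover of $M$ by foliation charts $\varphi_i\colon U_i\to D\times T_i$, $i=1,\dots,N$, together with smaller ``cores'' $U'_i:=\varphi_i^{-1}(D_{1/2}\times T_i)$ (where $D_{1/2}=\{|z|<1/2\}$) still covering $M$, and smooth cutoffs $\chi_i\colon D\to[0,1]$ compactly supported in $D$ with $\chi_i\equiv 1$ on $\overline{D_{1/2}}$. On each $U_i$ define the local bump $\phi_i(z,t):=A\,\chi_i(z)\,|z|^2$ for a parameter $A>0$ to be chosen, and extend by zero to a continuous, leafwise $C^\infty$ function on $M$. By the two-dimensional conformal covariance of the Laplacian on functions, on every plaque $\Delta_{g_0}$ is a positive continuous multiple of the Euclidean Laplacian, and since $\Delta_{\mathrm{Eucl}}|z|^2=4$, the leafwise Laplacian of $\phi_i$ is bounded below by a positive constant times $A$ on $U'_i$, whereas on the transition region $D\setminus D_{1/2}$ it satisfies only $|\Delta_{g_0}\phi_i|\le M_i A$ for a constant $M_i$ depending on $\chi_i$ and on $g_0$.

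The candidate conformal factor is $u:=\sum_{i=1}^N\phi_i$. At any $x\in M$ one has $x\in U'_j$ for some $j$, so $\Delta_{g_0}\phi_j(x)\ge 4A\lambda_{\min}$ for a uniform $\lambda_{\min}>0$, while the remaining terms contribute an amount bounded below by $-A\sum_{i\ne j}M_i$. Provided the cover is chosen so that the constants $M_i$ are small relative to $\lambda_{\min}$, taking $A$ sufficiently large will yield $\Delta_{g_0}u>C$ throughout $M$; the metric $g:=e^{2u}g_0$ then lies in the same conformal class as $g_0$ and has strictly negative leafwise Gauss curvature.

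The main obstacle is exactly this last step: guaranteeing that the positive ``core'' contribution genuinely overpowers the negative contributions from the cutoff annuli of the other bumps. A na\"\i ve partition of unity does not suffice, because sharpening the cutoff transitions to shrink those annuli simultaneously inflates the second derivatives of $\chi_i$ and hence the constants $M_i$. A clean way around this is an inductive scheme, ordering the charts $U_1,\dots,U_N$ and adding one bump at a time with an amplitude chosen large enough to enforce $\Delta_{g_0}u>C$ on the newly handled core region while compensating for the disturbance on the previously handled cores. This scheme uses the hypothesis in an essential way: on a parabolic leaf no bounded leafwise strictly subharmonic function on $M$ could exist (by the maximum principle applied to the bounded continuous function $u$ restricted to the leaf), so the construction relies on each leaf being of hyperbolic type.
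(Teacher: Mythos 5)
Your strategy cannot work as stated, for a reason more basic than the cutoff bookkeeping you identify as ``the main obstacle''. You seek a function $u$ that is continuous on the compact space $M$ and leafwise smooth, with $\Delta_{g_0}u > C$ everywhere, where $C=\sup_M K_{g_0}$. But any such $u$ attains its maximum at some point $x_0\in M$ (compactness plus continuity), and $x_0$ is an interior point of the leaf through it; the leafwise Hessian of $u$ at $x_0$ is therefore negative semidefinite, so $\Delta_{g_0}u(x_0)\le 0$. Hence $\Delta_{g_0}u>C$ forces $C<0$, i.e.\ the construction can only succeed when $K_{g_0}$ is already everywhere negative and there is nothing to prove. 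Your closing remark misplaces where hyperbolicity enters: the maximum-principle obstruction occurs at the global maximum of $u$ over the compact space $M$, not along a single parabolic leaf, and it is insensitive to the conformal type of the leaf through $x_0$ (bounded strictly subharmonic functions certainly exist on the hyperbolic disc; what cannot exist is one arising as the restriction of a continuous function on a compact lamination, evaluated at its global maximum). No rearrangement of bumps or sharpening of the induction can evade this.

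The obstruction is in fact conformally invariant and indicates what any correct proof must use. For every holonomy-invariant (or, more generally, harmonic) transverse measure $\mu$, leafwise integration by parts gives $\int_M \Delta_{g_0}u\,d(\mu\otimes\mathrm{vol})=0$, so the sign of the $\mu$-average of the leafwise curvature is an invariant of the conformal class; the hypothesis that all leaves are of hyperbolic type is exactly what guarantees that every such average is negative, and this global input must be fed into the construction of the conformal factor. The paper itself does not reprove the theorem: it invokes the arguments of \cite{AJ} and \cite{AS} (Théorème 6.5), which rely on this global use of hyperbolicity rather than on a local partition-of-unity argument. A secondary, repairable point: your claim that $\Delta_{g_0}$ is a positive multiple of the Euclidean Laplacian on each plaque presupposes leafwise isothermal coordinates depending continuously on the transverse parameter, which is itself a nontrivial (though true) fact and should not be taken for granted at this stage of the argument.
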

 
 \begin{remark} Let $g_0$ be an initial metric as in {\it theorem} \ref{lemma:etienne}.
 Let $R_0:M\to\R_{<0}$  be the Ricci curvature of $g_0$. Then, by compactness
there exists two constants $R_{\text{min}}$ and $R_{\text{max}}$ such that
$R_{\text{min}}\leq{R_0(x)}\leq{R_{\text{max}}}<0,\, \forall\,x\in{M }$. We may assume
that $R_0$ is not a (negative) constant since in such a case there is nothing to prove, hence we may suppose $R_{\text{min}}<R_{\text{max}}$. The normalizing constant $c$ is chosen such that $c\in(R_{\text{min}}, R_{\text{max}})$.
\end{remark}
 
\noi In \cite{MV} the following is proven:
\begin{theorem}[R. Muñiz, A. Verjovsky]\label{continuity}
 Let $g_0$ be a laminated Riemannian metric of non-constant negative curvature on a compact, connected, surface lamination $\cL$.
 Let $g_t=g(t)=e^{u_t}g_0$ be the leafwise solution to the normalized Ricci flow equation (\ref{riccieq}) on $\cL$, with a constant
 $c\in(R_{\text{min}},R_{\text{max}})$
 and initial condition $g_0$.
 
 \noi The function $u_t=u(\,\cdot\,,\,t)$ belongs to $C^{\infty,0}(\cL)$.
 Furthermore,  $\underset{t\to\infty}\lim\,g_t=g_\infty$
 exists and defines a laminated metric of class $C^{(\infty,0)}$ with all leaves of constant negative curvature
 (rescaling we can assume that this constant curvature is -1). The metric $g_\infty$
 is conformally equivalent to $g_0$.
\end{theorem}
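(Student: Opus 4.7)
The plan is to adapt Hamilton's convergence proof for the normalized 2-dimensional Ricci flow on compact surfaces of negative Euler characteristic to the laminated setting, using the fact that $M$ (not the individual leaves) is compact to obtain uniform estimates in the transverse direction. First, I would invoke Ghys's theorem \ref{lemma:etienne} so that, after replacing $g_0$ by a conformally equivalent representative if necessary, we may assume $R_0 < 0$ everywhere on $M$; by continuity of $R_0$ on the compact space $M$ the constants $R_{\min}$ and $R_{\max}$ are both finite and strictly negative, and we fix $c \in (R_{\min}, R_{\max})$. Writing $g_t=e^{u_t} g_0$ reduces equation (\ref{riccieq}) to the scalar leafwise parabolic equation
\[
\partial_t u_t = e^{-u_t}(\Delta_0 u_t - R_0) + c, \qquad u_0 \equiv 0,
\]
whose linearization is strictly parabolic along each leaf with coefficients depending continuously on the transverse parameter.

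Second, I would establish short-time existence in $C^{(\infty,0)}(M,\Lam)$. On each leaf the standard theory of quasilinear parabolic equations yields a smooth solution on some interval $[0,\varepsilon)$, and since the initial datum and the coefficients $g_0,R_0$ vary continuously in the smooth topology in the transverse direction, the usual continuous-dependence results for parabolic Cauchy problems upgrade this to a laminated $C^{(\infty,0)}$ solution. This step is mostly routine; the important point is that $\varepsilon$ can be chosen uniformly in the transverse variable thanks to the compactness of $M$.

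Third, I would extend to long-time existence by controlling the curvature via the evolution (\ref{curvriccieq}). The scalar maximum principle, applied leafwise and then globalized using that each supremum and infimum on $M$ is attained, together with the fact that the ODE $\dot r = (r-c)r$ pushes the range $[R_{\min}, R_{\max}]$ into itself and toward $c$, yields $R_{\min} \le R_t \le R_{\max} < 0$ for all $t \ge 0$, uniformly on $M$. These a priori $C^0$ bounds on $R_t$ translate through the $u$-equation into uniform pointwise bounds on $u_t$, so no finite-time blow-up can occur and the flow exists for all $t \in [0,\infty)$. Higher-order bootstrap estimates along the leaves then give uniform leafwise $C^k$ bounds on $u_t$ depending only on $g_0$ and $k$.

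Finally, I would prove convergence following Hamilton's potential-function argument: let $\phi_t$ solve $\Delta_t \phi_t = R_t - c$ on each leaf (its laminated existence is handled exactly as in \cite{MV}) and analyze the modified curvature $h_t := R_t - c + |\nabla \phi_t|^2$, which satisfies a differential inequality forcing exponential decay $\|R_t - c\|_\infty \le C e^{-\alpha t}$ on every leaf with $C,\alpha$ uniform on $M$. Integrating the $u$-equation in $t$ gives a Cauchy sequence in $C^{(\infty,0)}$ whose limit $u_\infty$ defines the desired laminated metric $g_\infty = e^{u_\infty} g_0$ of constant curvature $c$ (normalized to $-1$); transverse continuity is inherited by the limit because all estimates are uniform in the transverse parameter. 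The main obstacle is exactly this last point: ensuring that the leafwise Hamilton-type estimates and the potential $\phi_t$ can be assembled into laminated $C^{(\infty,0)}$ objects whose $t\to\infty$ limit still lies in $C^{(\infty,0)}(M,\Lam)$. Because the leaves are generally non-compact, one must run all a priori bounds so that they depend only on the global data on the compact space $M$, and verify that the exponential decay rate $\alpha$ is transversely uniform, so that the transverse regularity is not destroyed in passing to the limit.
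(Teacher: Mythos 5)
The paper does not actually prove this theorem here: it is quoted from \cite{MV}, and the surrounding text only records the ingredients (the conformal reduction $g_t=e^{u_t}g_0$, short-time existence via continuous dependence on the transverse parameter, and the curvature evolution equation (\ref{curvriccieq})). Your outline follows the same strategy as that reference: reduce to the scalar leafwise parabolic equation, obtain short-time existence uniformly in the transverse variable from compactness of $M$, trap $R_t$ in $[R_{\text{min}},R_{\text{max}}]\subset(-\infty,0)$ by the maximum principle applied at a point of the compact total space where the extremum is attained, deduce long-time existence, and integrate $\partial_t u_t=c-R_t$ against the exponential decay of $R_t-c$ to get a $C^{(\infty,0)}$ limit. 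Two remarks. First, the appeal to Ghys's theorem is superfluous and slightly off target: the hypothesis already gives $R_0<0$ everywhere on $M$, and replacing $g_0$ by a conformally equivalent representative would change the initial condition of the very flow the theorem is about. Second, and more substantively, the Hamilton potential-function step is the one place where your argument, as written, does not go through: solving $\Delta_t\phi_t=R_t-c$ leaf by leaf is genuinely problematic because the leaves are non-compact (there is no Fredholm alternative, the compatibility condition $\int(R_t-c)\,dA_t=0$ is meaningless on infinite-volume leaves, and Hamilton's analysis of $h_t=R_t-c+|\nabla\phi_t|^2$ uses integration by parts over a closed surface); deferring its ``laminated existence'' to \cite{MV} is circular in a blind proof. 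Fortunately that step is also unnecessary: since $R_0<0$ and $c<0$, the comparison with the ODE $\dot r=(r-c)r$, whose fixed point $c$ attracts all of $(-\infty,0)$, already yields $\sup_M|R_t-c|\le C e^{-|c|t}$ directly from the maximum principle --- the potential function is only needed in Hamilton's treatment of initial data whose curvature changes sign. With that detour removed, your proof is correct and is essentially the argument of \cite{MV}.
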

\noi {\it Theorem} \ref{uniformization}
is a corollary of {\it theorem} \ref{continuity}:
\begin{theorem}[Uniformization via the Ricci flow]\label{ricciunif} Let
$g_0$ be any smooth laminated Riemannian metric on a compact
hyperbolic surface lamination $\cL$. Then the normalized Ricci flow exists for
all $t>0$.
 
\noi The corresponding metric $g_t$ satisfies
$\underset{t\to\infty}\lim=g_\infty$ exists and $g_\infty$ is a continuous,
laminated, Riemannian metric, which renders all leaves with constant negative curvature (which
can be normalized to be equal to -1).
Furthermore $g_\infty$ is of class $C^{\infty,0}$ and it is conformally equivalent to $g_0$.
\end{theorem}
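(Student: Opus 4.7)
The plan is to deduce Theorem \ref{ricciunif} from Theorem \ref{continuity} together with Ghys's Theorem \ref{lemma:etienne}, exploiting the two-dimensional fact that the normalized Ricci flow preserves the conformal class of its initial datum. Thus one may, without loss of generality, first replace $g_0$ by a cleverly chosen representative of its conformal class before running the flow, and then restore $g_0$ at the end via conformal equivalence.

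First I would apply Ghys's theorem (Theorem \ref{lemma:etienne}) to the pair $((M,\Lam),g_0)$: since $(M,\Lam)$ is hyperbolic, its conformal class contains a laminated Riemannian metric $\tilde{g}_0$ each of whose leaves has strictly negative Gaussian curvature at every point. If the leafwise curvature of $\tilde{g}_0$ is already constant, a simple rescaling to $-1$ gives the desired $g_\infty$ and there is nothing more to prove. Otherwise, $\tilde{g}_0$ has negative but non-constant leafwise curvature, hence satisfies the hypotheses of Theorem \ref{continuity}.

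Next I would invoke Theorem \ref{continuity} with $\tilde{g}_0$ as initial datum and a normalizing constant $c$ chosen in $(R_{\min},R_{\max})$. This produces a $C^{(\infty,0)}$ solution $\tilde{g}_t=e^{u_t}\tilde{g}_0$ to the normalized Ricci flow (\ref{riccieq}), defined for all $t\ge 0$, such that $g_\infty:=\lim_{t\to\infty}\tilde{g}_t$ exists as a $C^{(\infty,0)}$ laminated Riemannian metric whose leaves have constant negative curvature (normalizable to $-1$). Since each $\tilde{g}_t$ is conformally equivalent to $\tilde{g}_0$ by construction, so is the limit $g_\infty$; composing with the conformal equivalence between $\tilde{g}_0$ and $g_0$ provided by Ghys gives $g_\infty$ conformally equivalent to $g_0$, as required. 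The statement that the flow exists for all $t>0$ from $g_0$ is recovered by absorbing the initial conformal factor $g_0=e^{-v}\tilde{g}_0$ into a shifted leafwise conformal factor $u_t-v$, which does not affect long-time existence or convergence.

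The main technical obstacle lies entirely inside Theorem \ref{continuity} (long-time existence and convergence of the laminated parabolic equation $\partial_t u=e^{-u}(\Delta_0 u-R_0)+c$, together with the uniform leafwise a priori estimates and the $C^{\infty,0}$ transverse regularity of the limit). The only content genuinely added by the present corollary is the removal of the restrictive hypothesis that the initial metric already have negative leafwise curvature; this is precisely what Ghys's theorem supplies, and conformal invariance of the flow in dimension two ensures that the replacement is legitimate.
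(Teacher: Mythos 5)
Your main line of argument --- replace $g_0$ by a conformally equivalent metric $\tilde g_0$ of everywhere negative leafwise curvature using Ghys's Theorem \ref{lemma:etienne}, run Theorem \ref{continuity} from $\tilde g_0$, and transport the conclusion back through the conformal equivalence --- is exactly the route the paper takes, and it correctly produces the uniformizing metric $g_\infty$ of constant curvature $-1$ in the conformal class of $g_0$.

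There is, however, a genuine gap in your last step, where you claim that long-time existence of the flow started at $g_0$ itself is ``recovered by absorbing the initial conformal factor $g_0=e^{-v}\tilde g_0$ into a shifted leafwise conformal factor $u_t-v$.'' The normalized Ricci flow $\partial_t g=(c-R)g$ is not equivariant under a fixed conformal change of the initial datum: if $\tilde g_t=e^{u_t}\tilde g_0$ solves (\ref{riccieq}) with $\tilde g_0$ as initial condition, then the shifted family $e^{u_t-v}\tilde g_0$ does have the correct initial value $g_0$, but its conformal factor evolves by $\partial_t(u_t-v)=c-R\bigl(e^{u_t}\tilde g_0\bigr)$, whereas a genuine solution $e^{w_t}g_0$ starting from $g_0$ must satisfy $\partial_t w_t=c-R\bigl(e^{w_t}g_0\bigr)$; since scalar curvature is not constant along a conformal class, these are different equations and the shifted family is not a Ricci flow. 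So your argument proves the existence of $g_\infty$ but not the assertion in the statement that the flow with initial condition $g_0$ exists for all $t>0$. The paper closes this gap differently: it observes that on a compact hyperbolic lamination every leafwise metric is uniformly quasi-isometric to the hyperbolic metric on the universal covers of the leaves, and then invokes the proposition of S.~Angenent quoted immediately after Theorem \ref{ricciunif} (long-time existence and convergence of the flow on the disc for any complete initial metric quasi-isometric to $g_{hyp}$, with explicit comparison constants $c_i(1+2t)$), applied leafwise, to get global existence and convergence directly from the arbitrary initial metric $g_0$. You should either adopt that argument or restrict your claim to the existence of the uniformizing limit metric.
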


\noi All Riemannian metrics are quasi-isometric. In particular, all leaves are quasi-isometric to the hyperbolic plane with the same comparison constants.
 
\noi The fact that we can start with any metric quasi-isometric to the constant hyperbolic metric, follows by the methods of \cite{MV} using the following theorem by S. Angenent (\cite{IMS} Proposition 4.5  or the appendix by S. Angenent in \cite{Wu}):
 
\begin{proposition}[S. Angenent] Let $g_0$ be a complete metric on the unit disc
$D$. Suppose $g_0$ is quasi-isometric to the hyperbolic metric $g_{hyp}$ \ie
there exists constants $0 < c_1 < c_2$ such that
 $c_1 |v|_{g_{hyp}} \leq |v|_{g_0} \leq c_2 |v|_{g_{hyp}}$ for any tangent vector on $D$. Then, $\forall{t>0}$
 $c_1 (1+2t)|v|_{g_{hyp}} \leq |v|_{g_t} \leq c_2 (1+2t)|v|_{g_{hyp}}$
 and in particular, for $c<0$ the solution $g_t$ of equation (\ref{riccieq}) exists for all time and converges, as $t\to\infty$, to a metric of constant negative curvature.
\end{proposition}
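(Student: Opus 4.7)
The plan is to reduce the flow to a scalar parabolic PDE for the conformal factor and compare against explicit self-similar solutions on the non-compact hyperbolic disk. Since Ricci flow in dimension two preserves the conformal class, I would write $g_t = e^{u_t} g_{hyp}$. The quasi-isometry hypothesis translates to the uniform initial bound $2\log c_1 \leq u_0 \leq 2\log c_2$, and equation (\ref{riccieq}) becomes a semilinear parabolic equation for $u_t$ on $(D,g_{hyp})$ of the form $\partial_t u = e^{-u}(\Delta_{hyp} u + 2) + c$, where $\Delta_{hyp}$ denotes the hyperbolic Laplacian. Short-time existence of a smooth solution follows from Shi's local theory: the quasi-isometry bound combined with the constant curvature of $g_{hyp}$ gives bounded sectional curvature and bounded geometry for $g_0$, which is enough to start the flow on the complete manifold $(D,g_0)$.

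Next, I would construct two-sided barrier solutions by exploiting the fact that constant conformal multiples of $g_{hyp}$ evolve exactly under the flow. A direct computation shows that $g^{(\lambda)}_t = (\lambda + 2t)g_{hyp}$ is a Ricci flow solution (in the unnormalized version, $c = 0$) starting at $\lambda g_{hyp}$, and analogous explicit self-similar solutions exist for the prescribed $c < 0$. Setting $\lambda = c_1^2$ yields a lower barrier and $\lambda = c_2^2$ an upper barrier that sandwich $g_0$ at the initial time. Comparing $u_t$ with $u^{(\lambda)}_t = \log(\lambda + 2t)$ is then what produces the claimed asymptotic scaling in the target inequality.

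The main obstacle is the maximum principle step itself, because $(D,g_{hyp})$ is non-compact and standard parabolic comparison theorems require either compactness or a sign at infinity that is not automatic here. I would invoke the Omori--Yau maximum principle, which applies since $(D, g_{hyp})$ has Ricci curvature bounded below (in fact constant), and verify that the quasi-isometry hypothesis on $g_0$ prevents the extrema of the difference $u_t - u^{(\lambda)}_t$ from escaping to infinity. Equivalently, one may build exhaustion barriers using radial functions of the hyperbolic distance and take a limit of interior maximum principle arguments on geodesic balls of increasing radius. This is the technical heart of the proof and where Angenent's contribution lies; completeness of $g_0$ and the uniform two-sided comparison with $g_{hyp}$ are both used essentially.

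Once the persistence of the quasi-isometry bounds is established for all $t > 0$, long-time existence is automatic: the conformal factor stays uniformly bounded, hence $R_t = e^{-u_t}(-\Delta_{hyp} u_t - 2)$ is controlled, and no finite-time singularity can form. Convergence as $t \to \infty$ to a metric of constant negative curvature then follows by rescaling (replacing $g_t$ by $g_t/(1+2t)$, or by passing to the normalized flow with $c < 0$) and applying Hamilton's two-dimensional entropy monotonicity together with standard parabolic regularity, which makes the renormalized family precompact in $C^\infty_{loc}$; any subsequential limit has constant scalar curvature, and normalizing fixes the constant at $-1$.
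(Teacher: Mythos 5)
You should first be aware that the paper offers no proof of this proposition: it is quoted verbatim from \cite{IMS} (Proposition 4.5) and from Angenent's appendix to \cite{Wu}, so your sketch can only be measured against those sources. Your central idea --- write $g_t=e^{u_t}g_{hyp}$, observe that $(\lambda+2t)\,g_{hyp}$ is an exact solution of the unnormalized flow, and trap $g_t$ between the barriers with $\lambda=c_1^2$ and $\lambda=c_2^2$ by a comparison principle on the complete non-compact disc --- is indeed Angenent's argument, and you correctly identify the non-compact maximum principle as the technical heart.

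Two of your steps, however, contain genuine gaps. First, short-time existence: you assert that quasi-isometry to $g_{hyp}$ ``gives bounded sectional curvature and bounded geometry for $g_0$'' and then invoke Shi's theorem. This is false. The hypothesis $c_1|v|_{g_{hyp}}\le|v|_{g_0}\le c_2|v|_{g_{hyp}}$ is only a $C^0$ bound on the conformal factor $u_0$, whereas the curvature $K_{g_0}=e^{-u_0}\bigl(-1-\tfrac12\Delta_{hyp}u_0\bigr)$ involves two derivatives of $u_0$, which are not controlled (take $u_0=\epsilon\sin(e^{\rho})$ with $\rho$ the hyperbolic distance to the origin). Part of the content of the proposition is precisely that existence holds under the bare quasi-isometry hypothesis, so existence has to be extracted from the scalar quasilinear equation for $u$ --- which the barriers keep uniformly parabolic --- via exhaustion by initial--boundary value problems on geodesic balls, not from Shi's theorem. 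Second, the convergence step: the two-sided bound by $c_1(1+2t)$ and $c_2(1+2t)$ only confines the rescaled metric to a fixed bi-Lipschitz band around $g_{hyp}$ and does not identify the limit, and Hamilton's entropy monotonicity is a tool for the compact, positive-curvature (spherical) case and plays no role here. In the negative case one instead applies the (non-compact) maximum principle to $R-c$, or to the potential of the conformal factor, to force $R\to c$, and upgrades the uniform $C^0$ bounds on $u_t$ to $C^{\infty}_{loc}$ convergence by interior parabolic estimates. With these two repairs your outline matches the cited proof.
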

\medskip
\begin{corollary}\label{hyp-Lambda} The hyperbolic metric $g_\infty$ depends continuously
on the initial metric $g_0$. Therefore, if $\mathfrak{M}(\cL)$ denotes
the Fréchet manifold of laminated metrics of class $\C^{\infty,0}$
and $\text{Hyp}(\cL)$ denotes the closed
subset of hyperbolic metrics, then $\mathfrak{M}(\cL)$ retracts strongly
onto $\text{Hyp}(\cL)$ via the Ricci flow. Hence there is a retraction
map $\Pi:\mathfrak{M}(\cL)\to\text{Hyp}(\cL)$ such that the fiber
$\Pi^{-1}(\left\{g\right\})$ consists of laminated Riemannian metrics
conformally equivalent to the hyperbolic metric $g$.
\end{corollary}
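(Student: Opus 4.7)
The plan is to set $\Pi(g_0):=g_\infty$, where $g_\infty=\lim_{t\to\infty}g_t$ is the constant-curvature laminated metric produced by {\it theorem} \ref{continuity} applied to the initial datum $g_0$, and to use the (rescaled) Ricci flow itself as the strong deformation retraction. First I would verify that $\Pi$ fixes $\text{Hyp}(\Lambda)$: when $g_0$ already has leafwise curvature $-1$ and $c=-1$ is chosen in (\ref{riccieq}), the right-hand side $(c-R)g$ vanishes identically on every leaf, so by leafwise uniqueness for the parabolic equation $g_t\equiv g_0$ and hence $\Pi(g_0)=g_0$. Reparametrizing the Ricci flow to a compact interval (for instance via $t\mapsto t/(1+t)$) then provides the homotopy $H:\mathfrak{M}(\Lambda)\times[0,1]\to\mathfrak{M}(\Lambda)$, assuming continuity.

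Continuity of $\Pi$ I would establish in two stages. Leafwise the flow reduces to a scalar parabolic equation for the conformal factor $u_t$, and standard parabolic continuous-dependence combined with the uniform quasi-isometry estimates of Angenent cited before the corollary gives continuity of $u_t$ in $u_0$ in the leafwise $C^\infty$ topology, uniformly on any finite time interval. Passing to the limit as $t\to\infty$ uses the exponential rate of convergence from \cite{MV}: the pinching of $R$ toward $c$ is governed by curvature bounds that are uniform on the compact lamination, so the convergence rate does not depend on the leaf. Combined with the transverse continuity built into the $C^{(\infty,0)}$ regularity of $g_t$ granted by {\it theorem} \ref{continuity}, this yields transverse continuity of $g_\infty$ as a function of $g_0$ in the Fréchet topology of $\mathfrak{M}(\Lambda)$.

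The fiber description follows from the relation $g_t=e^{u_t}g_0$: the limit $g_\infty$ is manifestly in the conformal class of $g_0$, so $\Pi^{-1}(\{g\})$ is contained in the conformal class of $g$. Conversely, given two conformally equivalent laminated metrics $g_0$ and $g_0'$, the corresponding limits $\Pi(g_0)$ and $\Pi(g_0')$ are both hyperbolic representatives of the common leafwise conformal class; the classical uniformization theorem forces them to coincide on each leaf, and transverse continuity (again from {\it theorem} \ref{continuity}) promotes this to equality as laminated metrics. The main obstacle I anticipate is the uniform-in-leaf control needed to pass continuity from finite time to $t=\infty$: one must verify that the transverse modulus of continuity of $g_t$ does not degenerate as $t\to\infty$, which in turn requires that the curvature-pinching estimates for the laminated Ricci flow hold uniformly over the universal covers of all leaves, with constants determined by the compact lamination alone.
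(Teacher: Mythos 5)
Your proposal is correct and follows essentially the route the paper intends: the paper gives no separate argument for this corollary, deriving it directly from {\it theorem} \ref{continuity} (continuous dependence of $g_\infty$ on $g_0$, established in \cite{MV}) together with the reparametrized flow as the deformation retraction and the relation $g_t=e^{u_t}g_0$ for the fiber description. The only small wrinkle is your normalization when checking that $\Pi$ fixes $\text{Hyp}(\Lambda)$: since $R=2K$, stationarity requires $c=-2$ rather than $c=-1$ (and {\it theorem} \ref{continuity} takes $c\in(R_{\min},R_{\max})$, which degenerates for constant-curvature data), but this is bypassed entirely by observing that $\Pi(g_0)$ is the unique hyperbolic representative of the conformal class of $g_0$, so $\Pi(g_0)=g_0$ whenever $g_0$ is already hyperbolic.
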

 
\begin{corollary}\label{isothermal}
 Let $\cL$ be a compact, connected, hyperbolic surface lamination
 and $g$ any $C^{\infty,0}$ laminated Riemannian metric.
Then using theorem \ref{continuity} we can endow $\cL$ with a $C^{\infty,0}$ laminated complex structure which turns $\cL$ into a Riemann surface lamination.
 
\noi All metrics $g'$ conformally equivalent  to $g$ determine biholomorphic laminations.
Reciprocally, if $\cL$ is a hyperbolic Riemann surface lamination it
determines a conformal class of laminated metrics.
\end{corollary}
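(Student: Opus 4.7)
The plan is to use Theorem \ref{continuity} to reduce to the case of a laminated metric of constant leafwise curvature $-1$, and then to construct a laminated complex atlas by solving a leafwise Beltrami equation with continuous transverse dependence on the chart.

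First I would apply Theorem \ref{continuity} to the given metric $g$, producing a $C^{(\infty,0)}$ laminated Riemannian metric $g_\infty$, conformally equivalent to $g$, whose every leaf has constant Gaussian curvature $-1$. Since any two conformally equivalent leafwise metrics induce the same conformal (and hence the same almost complex) structure on each leaf, it suffices to exhibit a $C^{(\infty,0)}$ complex atlas that is compatible with $g_\infty$.

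Next I would build the atlas locally. In a foliation chart $\varphi_\alpha\colon U_\alpha\to D\times T_\alpha$, write $g_\infty$ in the Beltrami form (\ref{beltrami}), namely $ds^2=\gamma_\alpha(z,t)\,|dz+\mu_\alpha(z,t)\,d\bar z|^2$, where $\gamma_\alpha$ and $\mu_\alpha$ are $C^{\infty}$ in $z$, continuous in $t\in T_\alpha$, and $\|\mu_\alpha(\,\cdot\,,t)\|_\infty<1$. For each fixed $t$ the smooth version of the measurable Riemann mapping theorem supplies a diffeomorphism $w_\alpha(\,\cdot\,,t)\colon D\to\Omega(t)\subset\C$ solving the Beltrami equation $\partial_{\bar z} w=\mu_\alpha\,\partial_z w$, which pulls back the standard conformal structure of $\C$ to the leafwise conformal structure determined by $g_\infty$. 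The main obstacle, and the technical heart of the argument, is to arrange that $(z,t)\mapsto w_\alpha(z,t)$ is of class $C^{(\infty,0)}$ on $D\times T_\alpha$; this is done by fixing a transverse normalization of $w_\alpha$ (for instance, prescribing the value and first derivative at a base point of $D$ independently of $t$) and invoking the continuous dependence in the $C^\infty$ topology of solutions of the Beltrami equation on their coefficient, exactly as required by the regularity discussion following Definition \ref{beltramicoeff}. Once this is achieved, define the new chart by $\psi_\alpha(p):=(w_\alpha(z,t),t)$ whenever $\varphi_\alpha(p)=(z,t)$. On overlaps $U_\alpha\cap U_\beta$, both $\psi_\alpha$ and $\psi_\beta$ trivialize the same leafwise conformal structure, so the leaf-variable component of $\psi_\beta\circ\psi_\alpha^{-1}$ is holomorphic for every fixed $t$ while remaining $C^{(\infty,0)}$ jointly. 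This is precisely a complex atlas in the sense of Definition \ref{definition:holomorphic_lamination}, and it turns $(M,\Lam)$ into a Riemann surface lamination.

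For the second assertion, if $g'=\rho\cdot g$ for some positive $C^{(\infty,0)}$ function $\rho$, then $g$ and $g'$ have the same leafwise Beltrami coefficient, so the construction produces atlases that differ by a leafwise holomorphic change of coordinates; the identity map of $M$ is then a biholomorphism between the two Riemann surface laminations. For the converse, given a hyperbolic Riemann surface lamination with a complex atlas $(U_\alpha,\varphi_\alpha)$, take a laminated partition of unity $\{\chi_\alpha\}$ subordinate to $\{U_\alpha\}$ (which exists since $M$ is compact metrizable) and set $g:=\sum_\alpha \chi_\alpha\,\varphi_\alpha^\ast(|dz|^2)$; this yields a $C^{(\infty,0)}$ laminated Riemannian metric whose associated leafwise conformal class is the given complex structure, and any two metrics built in this way differ by a positive $C^{(\infty,0)}$ conformal factor, hence determine the same conformal class.
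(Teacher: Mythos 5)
Your argument is correct and follows essentially the same route the paper intends (the corollary is stated without a written proof): apply Theorem \ref{continuity} to pass to a conformally equivalent leafwise hyperbolic metric, then obtain the laminated complex atlas from leafwise isothermal coordinates, i.e.\ by solving the Beltrami equation with transversally continuous dependence, and read off the conformal-class statements from the fact that conformally equivalent metrics share the same Beltrami coefficient. The only loose end — prescribing the value and first derivative at a base point does not by itself determine the local solution uniquely, since one may still post-compose with a conformal map of the image — is harmless: one can instead extend the coefficient by zero and take the globally normalized Ahlfors--Bers solution, which depends continuously on $\mu$ in the topology you need.
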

\noi As in the case of compact Riemann surfaces it is possible to define the corresponding
Teichmüller space:
\begin{definition}\label{lam-teich} The laminated Teichmüller space $T(\cL)$ of the hyperbolic compact
lamination $\cL$ is defined as the set of equivalence classes
$\text{Hyp}(\cL)/\sim\,\,$, where $\text{Hyp}(\cL)$
is as in {\it corollary} \ref{hyp-Lambda}
and $\sim$ is the equivalence relation $g_1\sim{g_2}$ if there exists
a laminated smooth diffeomorphism $f:M\to{M}$ which is isotopic to the identity
(therefore it preserves each leaf) and $f$ is an isometry on each leaf \ie
$g_2$ is the {\it pushforward} by $f$ of $g_1$.
\end{definition}
 
\subsection{Universal hyperbolic solenoid}\label{UHL}
 In \cite{Su2} Dennis Sullivan starts the study of the Teichmüller space of
 hyperbolic Riemann surface laminations $\cL$. For more information on this
 topic see also the papers \cite{BN}, \cite{BN1}, \cite{BNS}, \cite{BV}
 \cite{MSa}, \cite{Od}, \cite{PS},\cite{Sa},\cite{Sa1},\cite{Sa3}. Because of the correspondence between conformal classes of metrics, complex structures, and hyperbolic structures of compact
 hyperbolic laminations, one can define the Teichmüller space of $\cL$
 as the space of all continuous conformal structures along the leaves
 with transversally continuous Beltrami coefficients (definition \ref{beltramicoeff}), modulo the group of quasi-conformal isotopies leaving each leaf invariant. There is a “laminated” Bers embedding theorem.
 
 \noi The natural topology on the Teichmüller space is Hausdorff, and it is biholomorphic to a nonempty open subset of the Banach space of holomorphic quadratic differentials along the leaves, which are continuous on $M$. In general, the Teichmüller space of a compact lamination is infinite-dimensional.
In fact, in \cite{De} B. Deroin proves that the Teichmüller space of a compact surface lamination with
one simply connected leaf is infinite-dimensional.
 
\noi Solenoidal surfaces are important due to their connections with complex analysis and complex dynamics \cite{DNS, Gh1, LM, LMM, Su2}.
 
\noi Dennis Sullivan has constructed an important lamination whose Teichmüller space is remarkable: {\it The universal commensurability Teichmüller space}.
 \noindent The construction of {\it Sullivan's universal hyperbolic solenoid} is based on profinite constructions. More precisely: if $\Sigma$ is a compact orientable surface of genus $g\geq2$ and if we consider
 the inverse limit corresponding to the tower of {\it all} the finite-sheeted coverings of
 $\Sigma$ we obtain a 2-dimensional solenoidal manifold or surface lamination
 $\cL_\mathcal{h}$.

 \begin{remark}
 $\cL_\mathcal{h}$ is the McCord solenoidal manifold which is the algebraic universal covering of $\Sigma$
 discussed in section \ref{profinite-McCord}. However, following Dennis Sullivan, we call this solenoid
 {\it Universal Hyperbolic Solenoid} to emphasize the fact that $\cL_\mathcal{h}$,  together with its laminated complex or hyperbolic structures, encapsulates
 many of the features of Riemann surfaces (or, equivalently, hyperbolic surfaces).
 \end{remark}
 
 We can consider complex structures in this lamination so that each leaf has a complex structure and the complex structures vary continuously in the transversal direction.  There exists a canonical projection $\pi:{\cL}_\mathfrak{h}\to\Sigma$.  For a  dense set of laminated complex structures,
the restriction of $\pi$ to each leaf is a conformal map onto $\Sigma$ endowed with a complex structure independent of the leaf. Moreover the inverse limit of a point
 $K_z:=\pi^{-1}\{z\}, \,\,\,z\in\Sigma$ is a Cantor set.  
 In this construction, one gets the same inverse limit using any co-final set of finite coverings, for instance, normal subgroups or even characteristic subgroups. In these cases, $K_z$ is canonically a nonabelian Cantor group \cite{Sa, Su2, MS, BNS}.
 
 \noi The lamination $\cL_\mathcal{h}$ is the suspension of a homeomorphism $\rho:\pi_1(\Sigma)\to{\mathcal{H}(K)}$ (see definition \ref{nullcobordant}).
 If $\Sigma$ is a surface of genus two we can consider a simple closed curve $\gamma$ in $\Sigma$ which separates the surface into two surfaces of genus one with boundary $\gamma$.
 
 \noi The restriction of the representation to $\gamma$ produces an oriented 1-dimensional solenoid. Thus there exists four homeomorphisms  $f_1,\,\,f_2,\,\,g_1,\,\,g_2$ of the Cantor group $K_z$ such that $[f_1,f_2]=[g_1,g_2]:=h$ and the 1-dimensional solenoid is the suspension of $h$.
 These four homeomorphisms
 of the Cantor set satisfying the commutator relations above determine the universal solenoid.  
\noindent Sullivan constructs the universal Teichmüller space of the ``pointed''
 lamination $\cL_\mathcal{h}$
 obtained by taking the inverse limit of all finite pointed unbranched coverings of a compact surface of genus greater than one and a chosen base point. The base points upstairs in the coverings determine a point and a distinguished leaf called the {\it base leaf}.
\begin{definition}\label{UHLdef} The lamination $\cL_\mathcal{h}$ defined in the previous paragraph
as the inverse limit of the tower of pointed coverings of an orientable compact surface of genus 2 is called the {\it Universal Hyperbolic Solenoid}. It was defined by Dennis Sullivan in \cite{Su2}.
Note that if we take the inverse limit of the tower
of all regular finite covering surfaces over any, oriented, compact, surface
$\Sigma_g$ of genus $g\geq2$
we obtain a lamination that is diffeomorphic to $\cL_\mathcal{h}$. The diffeomorphism is given by a finite iteration of the {\it shift map} (defined in
{\it remark} \ref{chain-cofinal}).
Thus for each $g\geq2$ there
is a fibration
\begin{equation}\label{projection-g>2}
\Pi_g:\cL_\mathcal{h}\to\Sigma_g
\end{equation}
\end{definition}
 
 \begin{theorem} The space of hyperbolic structures on a  hyperbolic compact solenoidal surface (as in {\it definition} \ref{lam-teich}) up
to isometries isotopic to the identity has the structure of a separable complex Banach manifold. The metric is the natural Teichmüller metric based on the minimal conformal distortion of a
map between structures. The isotopy classes of homeomorphisms preserving a chosen leaf $L$ (the base leaf) in $\cL_\mathcal{h}$  act by
isometries on this Banach manifold.
 \end{theorem}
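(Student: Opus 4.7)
The plan is to follow Sullivan's laminated analogue of the classical Bers construction. Let $(M,\Lam) = \cL_\mathcal{h}$ and fix a base hyperbolic laminated metric $g_0$, which exists by {\it Corollary} \ref{hyp-Lambda} and {\it Theorem} \ref{continuity}. By {\it Corollary} \ref{isothermal}, $g_0$ determines a laminated complex structure, and every leaf $L$, being simply connected and hyperbolic, is conformally the unit disk $\disc$. The space $\text{Hyp}(\Lam)/\!\!\sim$ is naturally identified with the space of laminated Beltrami coefficients $\mu \in C^0(M, K^{-1}\Lam\otimes\bar K\Lam)$ with $\|\mu\|_\infty < 1$, modulo the equivalence of inducing leafwise isometric hyperbolic structures up to leafwise isotopy. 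Here continuity in the transverse Cantor direction is essential, while along leaves $\mu$ is only $L^\infty$.

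First, I would define the laminated Bers embedding. Given $\mu$ as above, lift to a Beltrami coefficient $\tilde\mu$ on $\tilde{\mathbb{M}} \cong \disc \times K$, extend it to be zero on the exterior disk in each fiber $\{\mathbf k\} \times \disc^-$, and solve the Beltrami equation leafwise by the measurable Riemann mapping theorem. This produces, for each transversal point $\mathbf k$, a normalized quasiconformal homeomorphism $w^{\mu}_{\mathbf k}$ of the Riemann sphere; its restriction to $\disc^-$ is conformal, so its Schwarzian derivative $\Phi_\mathbf{k}(\mu) = S(w^{\mu}_{\mathbf k}|_{\disc^-})$ is a holomorphic quadratic differential on $\disc^-$, equivariant under the relevant deck group. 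Continuous dependence of solutions of the Beltrami equation on the coefficient (the Ahlfors-Bers parametric theory) implies that $\mathbf k \mapsto \Phi_\mathbf{k}(\mu)$ is continuous in the hyperbolic sup-norm, so we obtain a well-defined map
\[
\Phi : \{\mu : \|\mu\|_\infty < 1\} \longrightarrow B(\Lam),
\]
where $B(\Lam)$ is the Banach space of leafwise holomorphic quadratic differentials on $\cL_\mathcal{h}$ that are continuous on $M$, with the leafwise hyperbolic sup-norm. Because $M$ is compact metrizable and leafwise-holomorphic sections form a closed subspace of $C(M, (K\Lam)^{\otimes 2})$, the space $B(\Lam)$ is a separable Banach space.

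Next, I would show $\Phi$ descends to an injection on the Teichmüller space and its image is open. Injectivity comes from Bers' classical argument applied leafwise with equivariance: two Beltramis give the same Schwarzian on $\disc^-$ in each fiber iff the corresponding $w^\mu$ agree on $\disc^-$, iff the upper-half quasiconformal maps differ by a leafwise conformal automorphism, i.e.\ a leafwise isometry isotopic to the identity. Openness of the image follows by adapting the classical implicit function argument uniformly in $\mathbf k$, using the holomorphic dependence on $\mu$ of $w^{\mu}$ and transverse continuity. This endows $T(\Lam)$ with the structure of a separable complex Banach manifold modeled on $B(\Lam)$. The Teichmüller metric $d_T([\mu_1], [\mu_2]) = \tfrac12 \inf \log K(f)$ over leafwise quasiconformal $f$ in the correct isotopy class is well-defined, realizes the topology of the Banach manifold, and coincides (up to a factor) with the Kobayashi metric of the complex structure, as in Gardiner-Lakic, extended leafwise with transverse continuity.

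Finally, for the mapping class group action: a leafwise diffeomorphism $\varphi : \cL_\mathcal{h} \to \cL_\mathcal{h}$ preserving the base leaf $L$ acts by pullback $\mu \mapsto \varphi^*\mu$ on Beltramis. Since $\|\varphi^*\mu\|_\infty = \|\mu\|_\infty$ and pullback intertwines the equivalence relation, the induced map on $T(\Lam)$ preserves $d_T$; the base-leaf assumption is what guarantees the action is well-defined on the Teichmüller space of the pointed solenoid (otherwise a permutation of leaves could change the basepoint of the Bers embedding). The main obstacle I expect is the careful verification of transverse continuity throughout: specifically, proving that the Ahlfors-Bers solution operator $\mu \mapsto w^\mu$ is jointly continuous in the transverse parameter in the norm needed to land in $B(\Lam)$, and that the openness/implicit function argument is uniform over the Cantor transversal. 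The compactness of $M$ and uniform ellipticity of $\bar\partial - \mu\partial$ across leaves are what make this uniformity possible, reducing the laminated problem to the classical one with parameters.
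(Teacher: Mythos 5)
Your proposal is correct and follows exactly the route the paper indicates: the paper states this theorem without a detailed proof, citing Sullivan \cite{Su2}, and the surrounding discussion describes precisely the laminated Bers embedding into the Banach space of leafwise holomorphic quadratic differentials that are continuous on $M$, which is what you carry out. Your added details (leafwise measurable Riemann mapping theorem with transverse continuity, separability from compactness of $M$ and closedness of the leafwise-holomorphic sections, and the base-leaf condition making the mapping class action well defined) are consistent with that sketch and fill it in correctly.
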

\noi This lamination is a {\it universal compact surface}, \ie it is the universal object in the category of finite unbranched coverings of a compact surface
This is the first example of a Teichmüller space that is separable but not finite-dimensional. We recall that Teichmüller spaces of compact Riemann surfaces of finite type are finite-dimensional complex manifolds and that Teichmüller spaces of geometrically infinite Riemann surfaces are non-separable infinite-dimensional complex Banach manifolds.
 
\noi In this space the commensurability automorphism group of the fundamental group of any higher genus compact surface acts by isometries. This group is independent of the genus by definition \cite{Sa, BNS, BPS, Od, Su2}.
 
\begin{theorem}[Sullivan]  
The space of hyperbolic structures up to isometry preserving the distinguished leaf on this solenoidal surface $\cL_\mathcal{h}$ is non-Hausdorff and any Hausdorff quotient is a point.  
 \end{theorem}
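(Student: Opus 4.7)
\noi \textbf{Plan of proof.} Both claims follow from the single topological fact that every orbit of the base-leaf-preserving mapping class group $G_L$ on the laminated Teichmüller space $T(\cL_\mathcal{h})$ is dense. Granting orbit density, the quotient $\M(\cL_\mathcal{h}):=T(\cL_\mathcal{h})/G_L$ fails Hausdorffness because disjoint opens in the quotient would lift to disjoint $G_L$-saturated opens in $T(\cL_\mathcal{h})$, contradicting that every orbit meets every nonempty open set; and any continuous $f:\M(\cL_\mathcal{h})\to Y$ with $Y$ Hausdorff lifts to a $G_L$-invariant continuous $F:T(\cL_\mathcal{h})\to Y$ whose fibers $F^{-1}(y)$ are closed, $G_L$-invariant, and nonempty, hence contain a dense orbit and so equal all of $T(\cL_\mathcal{h})$, forcing $f$ to be constant.

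\noi The plan for orbit density proceeds in three steps. First, I identify the dense subset of \emph{transversally locally constant} (TLC) hyperbolic structures. For each finite cover $\Sigma_h\to\Sigma_2$ in the tower defining $\cL_\mathcal{h}$, the projection $\Pi_h:\cL_\mathcal{h}\to\Sigma_h$ from \eqref{projection-g>2} pulls back a hyperbolic metric on $\Sigma_h$ to a laminated hyperbolic metric on $\cL_\mathcal{h}$ that is locally constant on Cantor transversals; let $T_h\subset T(\cL_\mathcal{h})$ denote the image of the classical finite-dimensional Teichmüller space $T(\Sigma_h)$. The union $T_\infty:=\bigcup_h T_h$ is dense in $T(\cL_\mathcal{h})$, because laminated Beltrami coefficients (\emph{definition} \ref{beltramicoeff}) are transversely continuous and can be uniformly approximated by coefficients that are constant on the clopen cells of a sufficiently fine finite cover.

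\noi Second, I embed the \emph{commensurability mapping class group} $\mathrm{CMCG}:=\varinjlim_h\mathrm{MCG}(\Sigma_h)$ into $G_L$: each mapping class of $\Sigma_h$ lifts through the tower to a leaf-preserving homeomorphism of $\cL_\mathcal{h}$ that can be chosen to fix $L$ setwise, and this embedding intertwines the classical $\mathrm{MCG}(\Sigma_h)$-action on $T_h$ with the $G_L$-action on $T(\cL_\mathcal{h})$.

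\noi Third---and this is the main obstacle---I would show that the $G_L$-orbit of every point in $T(\cL_\mathcal{h})$ is dense. The subtlety is genuine: at any single level the classical $\mathrm{MCG}(\Sigma_h)$-orbit in $T(\Sigma_h)$ is proper and discrete, so density cannot come from one level alone. One must exploit the whole tower, combining the $\mathrm{CMCG}$-action on the dense TLC stratum $T_\infty$ with the additional transverse shuffling freedom afforded by elements of $G_L$ outside $\mathrm{CMCG}$ (leaf-preserving homeomorphisms permuting the Cantor transversal while fixing the distinguished point of $L$), and arguing that this combined action on $T_\infty$ has dense orbits; an approximation argument using the density of $T_\infty$ in $T(\cL_\mathcal{h})$ then transfers the conclusion to all of $T(\cL_\mathcal{h})$, whence both parts of the theorem follow.
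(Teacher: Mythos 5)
Your reduction is correct and is exactly the route the paper takes: Sullivan's observation is that the universal commensurability (base-leaf-preserving) mapping class group acts on the laminated Teichm\"uller space by isometries, and that minimality of this action --- every orbit dense --- forces any Hausdorff quotient to be a point and (since the group is countable while the space is not, so there are at least two orbits) forces the quotient itself to be non-Hausdorff. Your first two steps (density of the TLC stratum, and the embedding of $\varinjlim_h\mathrm{MCG}(\Sigma_h)$ into the base-leaf-preserving group acting compatibly on the strata $T_h$) are also sound and standard.

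The genuine gap is your third step, which is not a technical obstacle to be handled by ``transverse shuffling freedom'' plus an approximation argument: it is the entire content of the theorem, and no soft argument of the kind you sketch can supply it. To show that the orbit of the TLC point $\Pi_h^*X$ accumulates at the TLC point $\Pi_{h'}^*Y$, you must produce finite covers $\tilde\Sigma\to\Sigma_h$ and $\tilde\Sigma'\to\Sigma_{h'}$ of a common degree over $\Sigma_2$ together with a homeomorphism $\tilde\Sigma\to\tilde\Sigma'$ under which the lifts of $X$ and $Y$ are $(1+\epsilon)$-quasiconformally close; the extra homeomorphisms of $\cL_\mathcal{h}$ permuting the transversal do not change the leafwise hyperbolic structures and cannot create such proximity. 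This statement --- that any two closed hyperbolic surfaces admit finite covers that are $(1+\epsilon)$-quasiconformal --- is precisely the Ehrenpreis conjecture, proved by Kahn and Markovi\'c \cite{KM}, and it is the deep input the paper cites. Without invoking it, your step 3 is an assertion of the theorem rather than a proof of it; at any fixed level the $\mathrm{MCG}(\Sigma_h)$-orbits are discrete, and nothing in your outline explains why passing up the tower produces accumulation.
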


 \noindent The proof of this result relies on deep results by Jeremy Kahn and Vladimir Markovi\'c on the validity of the Ehrenpreis Conjecture \cite{KM}.
 Sullivan's observation is that the action of the {\it universal commensurability automorphism group of the fundamental group} is by isometries
(and thus biholomorphic modular transformations) and the action is minimal.
 The action is described in \cite{BNS} (Remark (\ref{vaut}) below).
 
    \smallskip  
 
 \subsection{The Earle-Eells Theorem for the universal hyperbolic solenoid}
 \label{EaE}
The fiber bundle description of Teichmüller space of a smooth, compact orientable
surface of genus $g\geq2$ given in \cite{EaE} has a version for the universal hyperbolic lamination $\cL_\mathcal{h}$ in {\it definition} \ref{UHLdef}.
 
\noi Let $\mathcal{D}_0(\cL_\mathcal{h})$ be the topological group of diffeomorphisms of $\cL_\mathcal{h}$ which are isotopic to the identity (hence they preserve each leaf). Endow this group
with the $C^\infty$-topology of uniform convergence of differentials
of all orders. The group becomes a Fréchet manifold \cite{Le}.
 
Let $M(\cL_\mathcal{h})$ be the set of all smooth complex structures on
on $\cL_\mathcal{h}$ compatible with the standard orientation of $\cL_\mathcal{h}$ induced by an orientation on the surface of genus 2 and give  $M(\cL_\mathcal{h})$ the $C^\infty$-topology. Then (viewing the elements of $M(\cL_\mathcal{h})$ as smooth tensor fields on $\cL_\mathcal{h}$) we have a natural right action: ${M(\cL_\mathcal{h})}\times\mathcal{D}_0(\cL_\mathcal{h})\to{M(\cL_\mathcal{h})}$, via pullback. Recall that a complex structure on $\cL_\mathcal{h}$ is a smooth bundle automorphism
$J:T(\cL_\mathcal{h})\to{T(\cL_\mathcal{h})}$,
such that for each $x\in\cL_\mathcal{h}$ the linear map $J_x:T_x(\cL_\mathcal{h})\to{T_x(\cL_\mathcal{h})}$
satisfies $J_x^2=-I_x$ ($I_x$ is the identity map on $T_x(\cL_\mathcal{h})$).
 
\noi The structure
$J\in{M(\cL_\mathcal{h})}$ defines
a conformal class of laminated Riemannian metrics. These are those metrics
such that for every point  $x\in\cL_\mathcal{h}$ and each non-zero vector $V_x\in{T_x(\cL_\mathcal{h})}$, the vectors $V_x$ and $J_x(V_x)$ are orthogonal.
 
\noi The uniformization {\it theorem} \ref{continuity} associates to each conformal class of metrics in $\cL_h$ a unique
laminated hyperbolic metric with all leaves of curvature -1.
 
\noi Reciprocally, the
laminated global isothermal coordinates of {\it corollary} \ref{isothermal} associates a complex structure on the lamination for each hyperbolic metric in $\text{Hyp}(\cL_\mathcal{h})$ (the set of hyperbolic metrics on $\cL_\mathcal{h}$). If $\rm{Conf}(\cL_\mathcal{h})$ denotes the set of conformal
equivalence classes of laminated metrics on $\cL_\mathcal{h}$,
we have canonical identifications:
\[
\rm{Conf}(\cL_\mathcal{h})\sim{M(\cL_\mathcal{h})}\sim\text{Hyp}(\cL_\mathcal{h}).
\]
\noi With these identifications each of the sets becomes a Fréchet manifold.
 
\begin{theorem}[Fiber bundle description of the Teichmüller space of $\cL_{\mathfrak{h}}$] The following holds:
 
\begin{enumerate}
\item $\text{Hyp}(\cL_\mathcal{h})$ is a contractible complex analytic manifold modeled on a
Fréchet space.
\item $\mathcal{D}_0(\cL_\mathcal{h})$
 acts (via pullback) continuously, effectively, and properly on $\text{Hyp}(\cL_\mathcal{h})$.
 
\item The quotient map
\begin{equation}
\Phi:\rm{Hyp}(\cL_\mathcal{h})
\to{\mathcal{T}}(\cL_\mathcal{h})=\rm{Hyp}(\cL_\mathcal{h})/\mathcal{D}_0(\cL_\mathcal{h})\,\,(=M(\cL_\mathcal{h})/\mathcal{D}_0(\cL_\mathcal{h})),
\end{equation}
where
$\mathcal{T}(\cL_\mathcal{h})$ the space of orbits (endowed with the quotient topology) is a universal
principal $\mathcal{D}_0(\cL_\mathcal{h})$ bundle.
\item  Both $\mathcal{D}_0(\cL_\mathcal{h})$ and the
Teichmüller space $\mathcal{T}(\cL_\mathcal{h})$ are contractible.
\end{enumerate}
\end{theorem}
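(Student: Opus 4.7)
The plan is to adapt the classical Earle--Eells argument \cite{EaE} to the laminated setting, relying on the identifications $\mathrm{Conf}(\cL_\mathcal{h})\sim M(\cL_\mathcal{h})\sim\text{Hyp}(\cL_\mathcal{h})$ together with the laminated uniformization theorem \ref{continuity} and the solenoidal Eells--Sampson theorem \ref{Solenoidal-Eells-Sampson}. These serve, respectively, as substitutes for the classical uniformization and harmonic-map theorems that underpin the proof for a single compact hyperbolic surface.

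For (1), I would identify $M(\cL_\mathcal{h})$ with the open unit ball in the Fr\'echet space of laminated Beltrami differentials, that is, continuous sections of $K^{-1}\Lam\otimes\bar{K}\Lam$ that are $C^{\infty}$ along leaves and of pointwise norm less than one. This ball is convex and hence contractible, and inherits from $K\Lam$ a natural complex analytic Fr\'echet manifold structure. Transporting this structure through the uniformization map $M(\cL_\mathcal{h})\to\text{Hyp}(\cL_\mathcal{h})$ provided by theorem \ref{continuity}, and using the Ricci-flow strong deformation retraction of the convex cone $\mathfrak{M}(\Lam)$ onto $\text{Hyp}(\Lam)$ of corollary \ref{hyp-Lambda}, simultaneously yields the complex analytic structure and the contractibility of $\text{Hyp}(\cL_\mathcal{h})$.

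For (2), continuity of the pullback action is routine in the laminated $C^{\infty}$ topology, and effectiveness is immediate. Freeness is more substantive: if $\phi^*g=g$ for some $\phi\in\mathcal{D}_0(\cL_\mathcal{h})$, then on each leaf $\phi$ restricts to an isometry of a simply connected, complete, constant-curvature $-1$ surface which is isotopic to the identity, and hence must equal the identity on that leaf; doing this leafwise gives $\phi=\mathrm{id}$. Properness reduces to a laminated Arzel\`a--Ascoli argument: the uniform injectivity radius and curvature bounds on the leaves of any $g\in\text{Hyp}(\cL_\mathcal{h})$, combined with the $C^{\infty,0}$ continuity transverse to the lamination, yield uniform equicontinuity of any sequence $\phi_n$ with $g_n\to g$ and $\phi_n^*g_n\to g'$, producing a $C^{\infty,0}$-convergent subsequence.

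For (3), I would construct a slice using theorem \ref{Solenoidal-Eells-Sampson}: fixing a reference $g_0$, every $g$ sufficiently close to $g_0$ admits a unique leafwise harmonic map $h_g:(\cL_\mathcal{h},g_0)\to(\cL_\mathcal{h},g)$ isotopic to $\mathrm{id}$, and the assignment $g\mapsto h_g^{*}g$ provides a smooth local section transverse to $\mathcal{D}_0(\cL_\mathcal{h})$-orbits, realising $\Phi$ as a principal bundle. For (4), I would invoke the laminated Bers embedding recalled in the paragraph preceding definition \ref{UHLdef}, which exhibits $\mathcal{T}(\cL_\mathcal{h})$ as a bounded open neighbourhood of a distinguished basepoint in a Banach space of laminated holomorphic quadratic differentials; a starlikeness argument modelled on the compact case then gives its contractibility. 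Contractibility of $\mathcal{D}_0(\cL_\mathcal{h})$ follows from the long exact sequence of the principal bundle $\Phi$, since both base and total space are contractible. The main obstacle will be step (3): ensuring that the harmonic-map slice varies genuinely continuously in the Cantor transversal direction, and that local triviality survives the lack of local connectedness of $\cL_\mathcal{h}$ (cf.\ the caveat in remark \ref{liftfail}). This is precisely where the transversally uniform estimates built into theorem \ref{Solenoidal-Eells-Sampson} via the laminated Bochner identity (\ref{Bochner}) and the Moser--Harnack inequality are indispensable.
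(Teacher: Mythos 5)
Your overall strategy (harmonic maps via the solenoidal Eells--Sampson theorem, following Earle--Eells) is the same as the paper's, but you miss the one trick that makes the classical argument work, and this forces you into two steps that have genuine gaps. The paper fixes a reference metric $g_1$ and uses the \emph{unique} harmonic diffeomorphism $f_{(g_1,g_0)}$ homotopic to the identity, together with the equivariance $f_{(g_1,g_0\cdot h)}=h^{-1}\circ f_{(g_1,g_0)}$ coming from uniqueness and the fact that post-composition with an isometry preserves harmonicity, to build a \emph{global} $\mathcal{D}_0(\cL_\mathcal{h})$-equivariant homeomorphism $\Psi(g_0)=(\Phi(g_0),f_{(g_1,g_0)}^{-1})$ from $\text{Hyp}(\cL_\mathcal{h})$ onto $\mathcal{T}(\cL_\mathcal{h})\times\mathcal{D}_0(\cL_\mathcal{h})$. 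Since the harmonic map exists for \emph{every} target metric (the leaves of the target have strictly negative curvature), there is no reason to restrict to $g$ near $g_0$ as you do; the global trivialization gives (3) at once, and (4) falls out for free because $\mathcal{T}(\cL_\mathcal{h})$ and $\mathcal{D}_0(\cL_\mathcal{h})$ are then direct factors, hence retracts, of the contractible space $\text{Hyp}(\cL_\mathcal{h})$. Your local-slice version of (3) is salvageable (the identity $h_{g\cdot\phi}^{*}(g\cdot\phi)=h_g^{*}g$ you would need does follow from uniqueness), but it leaves you needing an independent proof of contractibility of $\mathcal{T}(\cL_\mathcal{h})$, and the one you propose --- starlikeness of the laminated Bers embedding --- is not established anywhere in the paper, is nontrivial even for compact surfaces, and for the infinite-dimensional solenoidal Teichm\"uller space is not something you can simply ``model on the compact case.'' Likewise, the long exact sequence of the fibration only yields weak contractibility of $\mathcal{D}_0(\cL_\mathcal{h})$ without an additional argument that this Fr\'echet manifold has the homotopy type of a CW complex.

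A second concrete gap is your freeness argument in (2). The leaves of $\cL_\mathcal{h}$ are simply connected and, after uniformization, isometric to $\H^2$; since $PSL(2,\R)$ is connected, \emph{every} orientation-preserving isometry of a leaf is isotopic to the identity, so ``isotopic to the identity, hence the identity'' proves nothing here --- that implication is a feature of compact hyperbolic surfaces, not of their universal covers. What actually works is a displacement argument: an element of $\mathcal{D}_0(\cL_\mathcal{h})$ is leafwise at bounded distance from the identity by compactness of the solenoid, and a nontrivial isometry of $\H^2$ (elliptic, parabolic or hyperbolic) displaces points by an unbounded amount, so a $g$-isometric $\phi$ isotopic to the identity must fix each leaf pointwise. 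You should replace your leafwise argument with this one.
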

 
\begin{proof} The proof uses harmonic maps and follows the steps in \cite{EaE}, \S{8E}.
 
\noi Let $g_0,g_1\in\rm{Hyp}(\cL_\mathcal{h})$, be two hyperbolic metrics. Let
  $f\in\mathcal{D}_0(\cL_\mathcal{h})$ and consider the Dirichlet energy of
 $f$ with respect to the two metrics \ie $f:(\cL_\mathcal{h},g_1)\to(\cL_\mathcal{h},g_0)$:
\begin{equation}\label{surfaceenergy}
E(f)=\frac12\int_{\cL_\mathcal{h}}||df(x)||^2\,\rm{d}\mu_{g_0}(x),
\end{equation}
 
\noi where $\mu_{g_0}$ is the laminated volume form ({\it definition} \ref{lamvol}) on $\cL_\mathcal{h}$ with respect to $g_0$, and $||\,\cdot\,||$ the Hilbert-Schmidt norm with respect to $g_0$
and $g_1$ ({\it definition} \ref{energycoordinates}).
 
\noi By {\it theorem} \ref{Solenoidal-Eells-Sampson} there exists a unique harmonic
map $f_{(g_1,g_0)}:(\cL_\mathcal{h},g_1)\to(\cL_\mathcal{h},g_0)$ homotopic to $f$ and depending continuously on $(g_1,g_0)$.
An adaptation of a theorem by R. Schoen and S. T. Yau in \cite{SY} implies that $f_{(g_1,g_0)}$ is a diffeomorphism. Their proof is for compact surfaces with negative curvature but it can be applied in our case because
their proof uses the fact that the Jacobian has only isolated zeros
and the maximum principle shows that the Jacobian does not vanish at any point.
 
 Therefore the Jacobian of $f_{(g_1,g_0)}$ does not vanish and it must be a local (laminated) diffeomorphism. The fact that $\cL_\mathcal{h}$ is compact and connected and
that $f_{(g_1,g_0)}$ is homotopic to the identity implies
it is a global diffeomorphism.
 
\noi In what follows we fix the hyperbolic metric $g_1$.

\noi Let
\[
F:\rm{Hyp}(\cL_\mathcal{h})\to\mathcal{D}_0(\cL_\mathcal{h})
\]
be the map
$F(g_0)=f_{(g_1,g_0)}$. If $h\in\mathcal{D}_0(\cL_\mathcal{h})$
and $g_0\cdot{h}$ is
the pullback under $h$ of the metric $g$ we have that
$h:(\cL_\mathcal{h}, g_0\cdot{h})\to(\cL_\mathcal{h}, g_0)$ is an isometry. The post-composition of a harmonic map with an isometry is a harmonic map, therefore by uniqueness, we have the
commutative diagram:
 
\begin{equation}\label{harmonic.isometry}
  \begin{tikzcd}
  (\cL_\mathcal{h}, g_1)   \arrow{r}{f_{(g_1,g_0)}} \arrow[swap]{dr}{f_{(g_1,g_0.h)}} &  (\cL_\mathcal{h}, g_0)  \\
     & (\cL_\mathcal{h}, g_0\cdot{h}) \arrow{u}{h},
  \end{tikzcd}
\end{equation}
so that
\begin{equation}\label{cdh}
f_{(g_1,g_0)}=h\circ{f_{(g_1,g_0\cdot{h})}}, \quad\forall\, h\in\mathcal{D}_0(\cL_\mathcal{h}).
\end{equation}
\noi Define $\Psi:\text{Hyp}(\cL_\mathcal{h})\to\mathcal{T}(\cL_\mathcal{h})\times \mathcal{D}_0(\cL_\mathcal{h})$
by the formula $\Psi(g_0)=(\Phi(g_0), f_{(g_1,g_0)}^{-1})$. The function $\Psi$
is continuous.
The group
$\mathcal{D}_0(\cL_\mathcal{h})$ acts on $\mathcal{T}(\cL_\mathcal{h})\times\text{Hyp}(\cL_\mathcal{h})$
as follows: $(\tau, g)\overset{h}\mapsto(\tau, g\circ{h}):=(\tau, g)\cdot{h}$.
 
\noi Equation (\ref{cdh}) implies
\begin{equation}\label{cdh2}
\Psi(g_0\cdot{h})=\Psi(g_0)\cdot{h}=
(\Phi(g_0), f_{(g_1,g_0)}^{-1}\circ{h}),\quad \forall\, h\in\mathcal{D}_0(\cL_\mathcal{h}).
\end{equation}
 
\vskip1cm
The function $\Psi$ is injective: if $\Psi(g_1)=\Psi(g_2)$  then $\Phi(g_1)=\Phi(g_2)$. Therefore
there exists  $h\in\mathcal{D}_0(\cL_\mathcal{h})$ such that
$g_2=g_1\cdot{h}$ so that by equation (\ref{cdh2}) $\Psi(g_1)=\Psi(g_2)=\Psi(g_1)\cdot{h}$ and therefore $h=\rm{Id}$ and $g_1=g_2$.
On the other hand, $\Psi$ is surjective: given
$(g,h)\in\text{Hyp}(\cL_\mathcal{h})\times\mathcal{D}_0(\cL_\mathcal{h})$ one has
\[
(\Phi(g),h)=\Psi(g)\cdot(f_{(g_1,g)}\circ{h})
=\Psi(g\cdot(f_{(g_1,g)}\circ{h})),
\]
\noi since $\Phi(g)=\Phi(g\cdot(f_{(g_1,g)}\circ{h}))$.
From this we conclude that $\Psi$ is a homeomorphism, and $\Phi$ is a
principal fiber bundle over $\mathcal{T}(\cL_\mathcal{h})$
with fiber the Fréchet group $\mathcal{D}_0(\cL_\mathcal{h})$.
 
\noi Formula (\ref{cdh2})
establishes a bundle isomorphism between $\Phi$ and the trivial bundle
$p_1:\mathcal{T}(\cL_\mathcal{h})
\times\mathcal{D}_0(\cL_\mathcal{h})\to\mathcal{T}(\cL_\mathcal{h})$. An explicit
section $\sigma:\mathcal{T}(\cL_\mathcal{h})\to\text{Hyp}(\cL_\mathcal{h})$
is given by the formula:
$\sigma(\Phi(g))=\Psi^{-1}(\Phi(g),\rm{Id})=g\cdot{f_{(g_1,g)}}$.
 
\noi The space of all smooth Riemannian metrics on $\cL_\mathcal{h}$ is a convex set
in the Fréchet space of sections of tensors of type $(0,2)$, since a convex linear combination of Riemannian metrics on $\cL_\mathcal{h}$ is a Riemannian metric.
 
\noi In addition, each conformal class of Riemannian metrics is also convex. In each of these conformal classes, there is a unique hyperbolic metric and this metric varies continuously with the conformal class. Therefore $\rm{Hyp}(\cL_\mathcal{h})$
is contractible. Hence the fibration
$\Phi:\mathcal{T}(\cL_\mathcal{h})\to\mathcal{T}(\cL_\mathcal{h})$
is universal. The existence of the locally trivial fiber bundle map $\Phi$
implies that both $\mathcal{T}(\cL_\mathcal{h})$
and $\mathcal{D}_0(\cL_\mathcal{h})$
are contractible.  \end{proof}
\begin{remark}\label{vaut} The Teichmüller theory of $\cL_\mathcal{h}$ has been studied
from the viewpoint  of quasi-conformal mappings in \cite{MSa} \cite{PS},\cite{Sa1},  \cite{Sa3}. See
also the results in \cite{BN}, \cite{BN1} \cite{BNS}, \cite{BV}, \cite{Od} and, of course,\cite{Su2}.
 
\noi A very important part of the Teichmüller theory of  $\cL_\mathcal{h}$
is played by the transversely {\it locally constant complex structures} (TLC).
Particular examples of these are obtained by taking any compact Riemann surface of an arbitrary genus and lifting the complex
structure to the leaves of $\cL_\mathcal{h}$ using the projection $\Pi_g$
in {\it remark} \ref{projection-g>2}, {\it definition} \ref{canonical-projection}.
By \cite{Su2} these TLC structures constitute a dense set of the universal
Teichmüller space of $\cL_\mathcal{h}$.
 
\noi The notions of {\it mapping class group} and {\it base leaf mapping class group}
exist for the solenoidal surface $\cL_\mathcal{h}$ and play an important role. The
{\it abstract virtual automorphism group} ({\it universal commensurability automorphism group}) of the fundamental group of a compact surface of genus 2 provides a solenoidal version of Nielsen's theorem that intertwines
the fundamental group of a surface and its mapping class group. See, for instance, the paper by Chris Odden \cite{Od} as well as \cite{BNS}, \cite{BN1}, \cite{BPS}, \cite{MSa}, \cite{Su2}.
\end{remark}
 
\subsection{The universal 2-dimensional euclidean solenoid}\label{hom2d}
 Theorem \ref{tophom1d} has a higher-dimensional analog for the topologically transitive McCord Solenoids
 which are obtained by an infinite tower of coverings of tori:
 \begin{theorem}\label{McCord-tori} Let $\cS$ be a compact McCord solenoid
 which is obtained as the inverse limit of finite coverings of an $n$-torus
 $\T^n=\sS^1\times\cdots\times\sS^1$ (we call it a McCord solenoid over a torus). Then $\cS$ is a compact connected abelian group. The Pontryagin dual of
 $\sS$ is a dense subgroup of the additive group of the vector space $\Q^n$
 over $\Q$. There is a bijective correspondence between the $n$-dimensional
 McCord solenoids over $\T^n$ and dense subgroups
 of $(\Q^n,+)$.
 \end{theorem}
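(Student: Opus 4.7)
The plan is to mirror the one-dimensional argument of \emph{theorem} \ref{tophom1d}, leveraging two facts specific to tori: every finite connected cover of $\T^n$ is again an $n$-torus, and, after choosing basepoints, every such covering map can be realised as a continuous group homomorphism.

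First I would invoke \emph{remark} \ref{chain-cofinal} to write $\cS$ as the inverse limit of a chain $\cdots\to T_2\to T_1\to T_0=\T^n$ of finite covers. Since $\pi_1(\T^n)=\Z^n$ is abelian, every finite-index subgroup $L_i\subset\Z^n$ is normal, so the covering space corresponding to $L_i$ is $T_i=\R^n/L_i$, again a torus. Choosing basepoints of each $T_i$ above $0\in\T^n$, the bonding map $T_{i+1}\to T_i$ is the quotient homomorphism induced by $L_{i+1}\subset L_i$. Hence $\cS$ is the inverse limit of compact connected abelian groups under continuous surjective homomorphisms, so $\cS$ is itself a compact connected abelian group.

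Next, by Pontryagin duality the dual of $\R^n/L$ is the dual lattice $L^\vee\subset(\R^n)^*$, and the dual of a covering $\R^n/L_{i+1}\to\R^n/L_i$ with $L_{i+1}\subset L_i$ is the inclusion $L_i^\vee\hookrightarrow L_{i+1}^\vee$. Therefore
\[
\widehat{\cS}\;=\;\varinjlim_i L_i^\vee\;=\;\bigcup_i L_i^\vee,
\]
which sits inside $(\R^n)^*\otimes_\Z\Q\cong\Q^n$, contains $\Z^n=\widehat{\T^n}$, has $\Q$-rank $n$, and acquires arbitrarily small denominators since the covering degrees are cofinally $\geq 2$; exactly as in \emph{theorem} \ref{tophom1d}, this is the meaning of ``dense subgroup of $\Q^n$''.

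For the bijection, given a dense subgroup $A\subset\Q^n$ of rank $n$, one can (after a $\mathrm{GL}(n,\Q)$-change of coordinates) arrange $A\supset\Z^n$, write $A=\bigcup_i A_i$ as an ascending union of finitely generated subgroups each isomorphic to $\Z^n$, dualise to obtain a tower of finite covers of tori $\R^n/A_{i+1}^\vee\to\R^n/A_i^\vee$, and take the inverse limit; this produces a McCord solenoid over $\T^n$ with Pontryagin dual $A$. The two assignments are mutual inverses. For the statement that non-isomorphic $A$'s yield non-homeomorphic $\cS$'s, I would invoke Scheinberg's theorem \cite{Sche}, already used immediately after \emph{theorem} \ref{tophom1d}: homeomorphic locally compact connected abelian groups are topologically isomorphic, hence have isomorphic Pontryagin duals. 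The main obstacle is the coherent choice of basepoints that promotes every bonding map in the tower to a group homomorphism; this is possible precisely because $\pi_1(\T^n)$ is abelian, so regularity of the covers is automatic. A secondary point, worth flagging, is that for $n\geq 2$ the isomorphism classification of dense subgroups of $\Q^n$ is vastly richer than Baer's $n=1$ classification by types, so the theorem yields a Galois-type correspondence rather than an explicit list.
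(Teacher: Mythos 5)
Your proposal is correct and follows essentially the same route as the paper: you recognize that every bonding map in the tower over $\T^n$ can be taken to be a surjective homomorphism of compact connected abelian groups (the paper phrases this via nonsingular integer matrices acting as endomorphisms of $\T^n$, you phrase it via lattices $L_i\subset\Z^n$ and the quotients $\R^n/L_i$), conclude that the inverse limit is a compact connected abelian group, identify the Pontryagin dual as the ascending union of the dual lattices inside $\Q^n$, and invoke Scheinberg \cite{Sche} for the correspondence. Your version is somewhat more explicit than the paper's (which mostly defers to the proof of \emph{theorem} \ref{tophom1d}), notably in the dual-lattice computation and the reverse construction from a subgroup $A\subset\Q^n$ back to a tower; the one step you assert rather than prove --- that the dual is actually \emph{dense} in $\Q^n$ --- is treated with exactly the same brevity in the paper itself, so no discrepancy arises on that point.
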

 
 \begin{proof} The proof is the same as the proof of {\it theorem} \ref{tophom1d}.
 Every covering map $p:\T^n\to\T^n$ is regular and it is a surjective endomorphism corresponding
 to the image of an injective homomorphism $\phi:\Z^n\to\Z^n$, of the fundamental group
 of $\pi_1(\T^n)=\Z^n$. The injective homomorphism $\phi$ is
 determined by a nonsingular $n\times{n}$ matrix $A=(a_{ij})$, with integer coefficients.
 If $\sS^1=\{z\in\C:|z|=1\}$, then the endomorphism induced by $A$ is given by:
 \[
 (z_1,\cdots,z_n)\longmapsto(z_1^{a_{11}}z_2^{a_{12}}\cdots{z_n^{a_{1n}}},
 \cdots, z_1^{a_{j1}}z_2^{a_{j2}}\cdots{z_n^{a_{jn}}},  \cdots,z_1^{a_{n1}}z_2^{a_{n2}}\cdots{z_n^{a_{nn}}}).
 \]
 Therefore, the inverse limit is a compact abelian group, as the bonding maps are group homomorphisms between compact abelian groups. Since the topological dimension of $\cS$ is $n$, it follows that its Pontryagin dual is a subgroup of the additive group $\Gamma$ of $\Q^n$. Since the inverse limit is a solenoid
 it follows that $\Gamma$ must be dense in $\Q^n$. \end{proof}
 \noi Again by \cite{Sche}, to nonisomorphic, additive, dense subgroups of $\Q^n$
 correspond nonhomeomorphic $n$-dimensional solenoids.
  In particular, when
 $\Gamma=\Q^n$ we have the algebraic universal covering solenoid of $\T^n$:  
 
 \begin{definition}
 $\widehat\T^n$ will denote the {\it algebraic universal covering of $\T^n$} \ie the inverse limit
 of {\it all} its finite coverings.
 \end{definition}
\noi Since $\widehat\T^n$ is the Pontryagin dual of $\Q^n=\Q\oplus\cdots\oplus\Q$
and the dual of $\Q$ is the algebraic universal solenoidal covering of the circle
$\hat\sS^1$ we have:

 \begin{proposition}\label{profinite-is-a-product} $\widehat\T^n$, the algebraic universal covering of $\T^n$, is isomorphic to the product
 $\hat\sS^1\times\cdots\times\hat\sS^1$ ($n$ factors). The algebraic fundamental group of $\widehat\T^n$ is $\hat\Z^n=\hat\Z\times\cdots\times\hat\Z$, with
 $\hat\Z$ the profinite completion of the integers.
 \end{proposition}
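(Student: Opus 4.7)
The plan is to deduce both statements from two elementary functoriality properties: Pontryagin duality turns finite direct sums of discrete abelian groups into finite direct products of compact abelian groups, and profinite completion turns finite direct products of finitely generated groups into finite direct products of their profinite completions. Combined with the identifications already established in the previous paragraphs (namely that $\widehat{\sS}^1 = \Q^*$ and that $\widehat{\T}^n$ is the Pontryagin dual of $\Q^n$), these two facts give the result at once.

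More concretely, I would proceed as follows. First, I would invoke the fact that Pontryagin duality is a contravariant equivalence between the categories of locally compact Hausdorff abelian groups which carries finite biproducts to finite biproducts; in particular
\[
(\Q^n)^* = (\Q \oplus \cdots \oplus \Q)^* \;\cong\; \Q^* \times \cdots \times \Q^* \;=\; \hat{\sS}^1 \times \cdots \times \hat{\sS}^1.
\]
By {\it Theorem} \ref{McCord-tori}, the left-hand side is exactly $\widehat{\T}^n$, so the first assertion follows. For the second assertion, I would use that $\pi_1(\T^n) = \Z^n = \Z \oplus \cdots \oplus \Z$ and that the profinite completion functor $\Gamma \mapsto \widehat{\Gamma}$ preserves finite products of (finitely generated) groups, since the directed system of finite-index normal subgroups of $\Gamma_1 \times \Gamma_2$ is cofinal with the one formed from products of finite-index normal subgroups of each factor; this gives
\[
\widehat{\pi_1(\T^n)} \;=\; \widehat{\Z^n} \;\cong\; \hat{\Z} \times \cdots \times \hat{\Z} \;=\; \hat{\Z}^n,
\]
which is what we want to identify with the algebraic fundamental group of $\widehat{\T}^n$ as in {\it Definition} \ref{alg-fund-grp}.

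The only step requiring mild care is the compatibility between the two descriptions: one must verify that the Cantor-group fiber of $\Pi : \widehat{\T}^n \to \T^n$ (obtained as in {\it Proposition} \ref{algcover} from the tower of covers) coincides with the profinite completion arising from the Pontryagin-dual description. This follows because the bonding maps in the inverse system defining $\widehat{\T}^n$ are the endomorphisms of $\T^n$ induced by nonsingular integer matrices $A : \Z^n \to \Z^n$ (as recalled in the proof of {\it Theorem} \ref{McCord-tori}), so passing to Pontryagin duals realizes the same inverse system whose limit is $\hat{\Z}^n$. I would not expect any genuine obstacle: the argument is essentially a diagram-chase combined with the preservation of finite products by the two functors involved.
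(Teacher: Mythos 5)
Your proposal is correct and follows essentially the same route as the paper, which justifies the proposition in a single sentence preceding it: $\widehat\T^n$ is the Pontryagin dual of $\Q^n=\Q\oplus\cdots\oplus\Q$, the dual of $\Q$ is $\hat\sS^1$, and duality carries finite direct sums to finite products. Your additional observations (that profinite completion preserves finite products, and that the two descriptions of the fiber are compatible via dualizing the inverse system of integer matrices) are correct elaborations of details the paper leaves implicit.
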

 \noi Therefore, $\Z^n=$ acts freely and properly on $\R^n\times\hat\Z^n$. Using
 the notation in {\it proposition} \ref{algcover} we have:
\[
\widehat\T^n=\Z^n \backslash(\R^n\times\hat\Z^n)=\hat\sS^1\times\cdots\times\hat\sS^1,
\quad (n\,\,\, \text{factors})
\]

\noi In particular for the special case  $n=2$ we have:
 
 \begin{definition}[Universal euclidean solenoidal surface]  
 $\widehat\T^2$,  
the algebraic universal covering of the 2-torus,
will be called the {\em universal euclidean solenoidal surface}.
 $\widehat\T^2=\hat\sS^1\times\hat\sS^1$ and its algebraic fundamental group is
 $\widehat{\pi_1(\T^2)}=\hat\Z\times\hat\Z$, It is the Pontryagin dual of
 $(\Q^2,+)$. The component of the identity is called the {\it base leaf}.
\end{definition}
 
\begin{remark}\label{n>1=no-classificatiion} As shown in {\it theorem} \ref{tophom1d} the classification of compact topologically homogeneous 1-dimensional solenoids is equivalent to the classification of the dense subgroups of the additive rationals and there is a good understanding of these subgroups due to Baer \cite{Baer}.
One may try to classify the $n$-dimensional solenoids described in theorem \ref{McCord-tori} for $n>1$ but this could be an impossible task. The classification is equivalent to the classification of abelian, torsion-free, groups of finite rank $n>1$, whose Pontryagin dual is a solenoid \ie dense subgroups of the additive group $(\Q^n,+)$. For $n>1$, there is no hope for a classification,
the complexity increases with $n$.
We refer to work
due to S. Adams \cite{A}, G. Hjorth \cite{Hj}, A. S. Kechris \cite{Ke} and S. Thomas \cite{Tho}.
They work in the context of both abelian group theory and countable Borel equivalence relations and provide examples of countable Borel equivalence relations as well as a new way of thinking about the complexity of the classification problem for torsion-free abelian groups of finite rank.
\end{remark}
 
\begin{remark} By Bieberbach's theorem, given a compact flat $n$-manifold
$N$ with a flat metric $g$, there exists a flat
$n$-torus $(\T^n, \tilde{g})$ and a locally isometric covering projection
$\pi_1:\T^n\to{N}$. If $\cS^n_{\T^n}$ is the solenoid obtained
from an infinite tower of regular coverings of $(\T^n,\tilde{g})$ then we can endow
$\cS^n_{\T^n}$ with the metric $\hat{g}$ obtained by lifting the metric on the leaves by the canonical projection $\Pi:\cS^n_{\T^n}\to\T^n$.
 
\noi The metric $\hat{g}$ renders all leaves flat and it is homogeneous:
the group of isometries of $(\cS^n_{\T^n}, \hat{g})$ acts transitively. This is an example
of a {\it compact, homogeneous, flat $n$-dimensional solenoid}. In fact, it is a compact $n$-dimensional abelian group.
 
\medskip
\noi {\it Remark} \ref{n>1=no-classificatiion}
implies that the task of classifying flat, homogeneous, $n$-dimensional solenoids is hopeless if $n>1$. However the algebraic universal covering of a flat torus cover, in the solenoidal sense, every such flat solenoid: all are of the form
$(\R^n\times\widehat{\Z^n})/(\Z^n\times\Gamma_\alpha)$ where $\Gamma_\alpha$ is a closed subgroup
of $\widehat{\Z^n}$ with quotient $\widehat{\Z^n}/\Gamma_\alpha$ a Cantor group.
\end{remark}
 
\section{Geometric 3-dimensional solenoids} \label{g3dsm}
 \subsection{Locally homogeneous 3-dimensional solenoidal Laminations} Let $(N,G)$ be one of the eight Thurston geometries  where $N$
is an oriented, simply connected, geometric 3-manifold and $G$ the corresponding
group of {\it orientation-preserving} isometries acting transitively on $N$. Here is the list of these geometric manifolds with their corresponding group of
isometries which homotopic to the identity:
 
\begin{multiitem}\label{listgeom}
    \item[]{\bf Geometric Manifold}
    \item[(1)] $N=U(2)=\sS^3$ the 3-sphere
    \item[(2)] $N=\mathbb E^3$ euclidean 3-space
    \item[(3)] $N=\H^3$ hyperbolic 3-space
    \item[(4)] $N=\widetilde\PSL$
    \item[(5)] $N=\H^2\times\R$
    \item[(6)]$N=\sS^2\times\R$
    \item[(7)]$N=Nil_3$
    \item[(8)]$N=Solv_3$

    \item[] {\bf Group of Isometries}
    \item $G_0=SO(4)$
    \item $G_0={\mathbb E}^3\rtimes SO(3)$
    \item $G_0=PSL(2,\C)$
    \item $0\to\R\to{G_0}\to\PSL\to1$
    \item $G_0=\PSL\times\R$    
    \item $G_0=SO(3)\times\R$
    \item $G_0={Nil_3}\rtimes{SO(2)}$    
    \item $G_0=Solv_3$
 
  \end{multiitem}
 
\noi In this list $G_0$ denotes the connected component of the group of isometries of the corresponding geometric manifold $(N,G)$ (see \cite{Sc} for further details).
 
\noi See C. T. McMullen's paper \cite{McM1} for a
useful historical survey.
 
\noi ${\mathbb E}^3\rtimes SO(3)$ is the group of direct Euclidean motions,
 $\widetilde\PSL$ is the universal covering of $\PSL$ with a left-invariant
 Riemannian metric, the corresponding group $G_0$ is an extension of $\R$ by
 $\PSL=Isom_+(\H)^2$. The manifold
$N=Nil_3$ is the Heisenberg group \ie the group upper triangular matrices with real coefficients and
$N=Solv_3$, is the simply connected solvable Lie group with Lie algebra generated by $X$, $Y$, and $Z$ with the relation
$[X,Y]=Y$, $[X,Z]=-Z$, $[Y,Z]=X$, both endowed with left-invariant Riemannian  metrics.
\begin{remark} Every compact geometric 3-dimensional manifold modeled on $(N,G)$ admits a compact covering by a geometric manifold modeled on $(N,G_0)$, where the covering projection is a local isometry. Therefore by
{\it proposition \ref{smooth-McCord}} item \ref{shift-equivalent} we can consider
McCord solenoids based on geometric manifolds modeled on $(N,G_0)$.
 
\end{remark}
 
\begin{definition} A compact {\em $3$-dimensional geometric solenoidal lamination} modeled on the Thurston model $(N,G)$ or, briefly, a {\em geometric solenoidal 3-manifold}
$\M$ modeled on $(N,G)$, is a solenoidal 3-manifold in the sense of
Definition (\ref{definition:lamination}) with special charts.
It consists of a compact metrizable space $\M$, together
with a family $\{(U_\alpha, \varphi_\alpha)\}$
such that:
\begin{itemize}
\item[\textbullet] $\{U_\alpha\}$ is an open covering of $\M$,
\item[\textbullet] $\varphi_\alpha: U_\alpha \to V_\alpha\times T_\alpha$ is a
    homeomorphism, where $V_\alpha$ is an open subset of $N$
 and $T_\alpha$
is an open subset of the Cantor set $K$.
\item[\textbullet] for $(y,t)\in \varphi_\beta(U_\alpha\cap U_\beta)$,
$\varphi_\alpha \circ \varphi_\beta^{-1}(y,t) = (g_{\alpha\beta}(t)(y), h_{\alpha\beta}(t))$
and $g_{\alpha\beta}(t)\in{G}$.
\end{itemize}
\end{definition}
 
\noi Therefore, as in {\it definition} \ref{definition:lamination}, one has a lamination
which is the partition of $\M$ into 3-dimensional leaves and a Cantor transverse structure.
 
\noi
The leaves are immersed 3-dimensional manifolds with a geometric structure modeled on the geometric  Thurston model $(N,G)$,
where the local charts are the restrictions of $p_1\circ\varphi_\alpha$'s to plaques
and $p_1:V_\alpha\times T_\alpha\to V_\alpha$
is the projection onto the first factor. The {\it locally homogeneous}
Riemannian metric on the leaves varies continuously in the transverse direction so
we can endow $\M$ with a smooth laminated, locally homogeneous, Riemannian metric $g$.
 
\begin{remark} The case of geometric 3-dimensional solenoidal manifolds modeled on
$(\sS^3,SO(4))$ is not interesting, since any such solenoid is of the form
$M\times{K}$ where $M$ is one of the spherical manifolds (lens spaces and the finite set of manifolds corresponding to the finite non-cyclic subgroups of $SO(4)$).
 
\noi {\it We will not deal with the spherical case.}
\end{remark}
\begin{remark}\label{exceptional3solenoids} Every 3-dimensional compact, connected, geometric solenoid, modeled on $\sS^2\times\R$, which is a McCord solenoid is of the form $\sS^2\times\mathcal{S}$ with $\mathcal{S}$ a compact, connected, abelian 1-dimensional solenoidal group. Every 3-dimensional compact, connected, geometric McCord solenoid, modeled on $\H^2\times\R$ splits as a product $\mathcal{S}^2\times\mathcal{S}^1$, with $\mathcal{S}^2$ a compact hyperbolic lamination and
$\mathcal{S}^1$ a compact abelian group.
\end{remark}
 
\begin{example}[Algebraic universal coverings of geometric 3-manifolds]
Let $(M,G)$ be a compact geometric manifold modeled on the geometric model $(N, G)$ {\it different from $(\sS^3,SO(4))$}. The fundamental group
of $M$ is infinite and residually finite. Let $\mathbb{M}$ be its algebraic universal covering as in {\it definition} \ref{univ-alg-cov} formula (\ref{towercovers}) \ie the inverse limit of the tower of all its finite pointed coverings:
\begin{equation}
\mathbb{M}={\lim_{\overset{p_{_{(\beta_1,\beta_2)}}}{\longleftarrow} }} \,\,
\{p_{_{(\beta_1,\beta_2)}}:
(\tilde{M}_{\beta_1},\tilde{x}_{\beta_1})\to
(\tilde{M}_{\beta_2},\tilde{x}_{\beta_2}),\,\,\,
{\beta_1},\,{\beta_2}\in\mathfrak{N},\,\beta_1\preceq\beta_2\}
\end{equation}
\end{example}
\noi By {\it proposition} \ref{McCord-minimal},
 $\mathbb{M}$
is a minimal  {\em 3-dimensional solenoidal manifold}, \ie with dense leaves.
We endow  $\mathbb{M}$ with the laminated metric $g_{can}$ obtained as the pullback of the homogeneous metric under the
canonical projection $\Pi:\mathbb{M}\to{M}$. Each leaf is
 isometric to $N$ and the restriction of the projection $\Pi_\beta:\mathbb{M}\to{M}_\beta$ to
 any leaf is a local isometry.
 \begin{definition}\label{canonicalmetric}The laminated Riemannian metric $g_{can}$ is called the {\it canonical metric} of $\mathbb{M}$. In general, any McCord solenoid over a geometric 3-manifold $M$ has a {\it canonical laminated metric} obtained by the pullback
 of the homogeneous metric on $M$.
\end{definition}
 
\noi If $M$ is a geometric manifold modeled on $(N,G)$ the fundamental group of $M$ is identified with a subgroup of $G$ which acts properly discontinuously,
freely, and isometrically on $N$ and the quotient space is isometric to $M$ so that $\tilde{M}$, the universal covering of $M$, is isometric to $N$.

\noi The profinite completion $\widehat{\pi_1(M)}$ acts by fiberwise  right translations on $\tilde{M}\times{\widehat{\pi_1(M)}}$ (thus without fixed points). It acts simply-transitive on
$\widehat{\pi_1(M)}$ on the fibers. This shows again that the leaves on $\mathbb M$ are dense
(since $j\left(\pi_1(M)\right)$ is dense in $\widehat{\pi_1(M)}$) and isometric to $N$.
 
\begin{proposition} With $M$ and $(N,G)$ as above, the algebraic universal
covering $\mathbb M$ of $M$ with the canonical metric $g_{can}$ of
{\it definition} \ref{canonicalmetric} is a geometric solenoidal 3-manifold modeled on the Thurston model $(N,G)$. More generally any McCord solenoid obtained as the inverse
limit of an increasing tower of regular coverings of $M$ is also a geometric 3-dimensional solenoidal manifold with respect to the canonical laminated metric.
\end{proposition}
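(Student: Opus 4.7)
The plan is to build an explicit geometric atlas on $\mathbb{M}$ by combining the identification $\tilde{M}\cong N$ (which holds because $M$ is geometric modeled on $(N,G)$, so its universal cover is isometrically $N$ and $\pi_1(M)$ embeds in $G$ as a cocompact discrete subgroup acting by isometries) with the principal bundle description \eqref{barM}, namely $\mathbb{M}=\pi_1(M)\backslash(\tilde{M}\times\widehat{\pi_1(M)})$. The main tool is {\it proposition} \ref{algcover} together with the action formula \eqref{profiniteaction}.

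First, choose a locally finite cover $\{U_\alpha\}$ of $M$ by geodesically convex open sets small enough that each $U_\alpha$ is evenly covered by $p:\tilde{M}\to M$ and that each admits a continuous section $s_\alpha:U_\alpha\to\tilde{M}$ of $p$. Set $V_\alpha=s_\alpha(U_\alpha)\subset\tilde{M}=N$. The principal bundle $\Pi:\mathbb{M}\to M$ trivializes over each $U_\alpha$ via
\[
\overline{\varphi}_\alpha:V_\alpha\times\widehat{\pi_1(M)}\longrightarrow\Pi^{-1}(U_\alpha),\qquad (y,g)\longmapsto [y,g],
\]
where $[\,\cdot\,,\,\cdot\,]$ denotes the $\pi_1(M)$-orbit in $\tilde{M}\times\widehat{\pi_1(M)}$. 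This gives a chart $\varphi_\alpha=\overline{\varphi}_\alpha^{-1}:\Pi^{-1}(U_\alpha)\to V_\alpha\times T_\alpha$ with $T_\alpha=\widehat{\pi_1(M)}$, a Cantor set.

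Next, I would compute the transition maps. On a connected component $W$ of $U_\alpha\cap U_\beta$, the two lifts $s_\alpha|_W$ and $s_\beta|_W$ of the identity differ by a unique deck transformation $\gamma_{\alpha\beta}\in\pi_1(M)$, that is $s_\beta(x)=\gamma_{\alpha\beta}\cdot s_\alpha(x)$ for all $x\in W$. A direct check using the action \eqref{profiniteaction} shows that a point $[s_\alpha(x),g]$ coincides with $[s_\beta(x),g']$ if and only if $g'=j(\gamma_{\alpha\beta})\cdot g$. Consequently, on $\overline{\varphi}_\alpha^{-1}(\Pi^{-1}(W))$,
\[
\varphi_\beta\circ\varphi_\alpha^{-1}(y,t)=\bigl(\gamma_{\alpha\beta}(y),\;j(\gamma_{\alpha\beta})\cdot t\bigr),
\]
so $g_{\alpha\beta}(t)=\gamma_{\alpha\beta}\in\pi_1(M)\subset G$ (locally constant in $t$, but in any event in $G$) and $h_{\alpha\beta}(t)=j(\gamma_{\alpha\beta})\cdot t$ is a homeomorphism of Cantor sets. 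This is precisely the structure required in the definition of a geometric solenoidal $3$-manifold modeled on $(N,G)$. The leafwise metric is the $G$-invariant Riemannian metric on $N$, pulled to each plaque $\overline{\varphi}_\alpha(V_\alpha\times\{t\})$, and varies trivially (hence smoothly) in the transverse direction.

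For the generalization to an arbitrary McCord solenoid $\cS_\Gamma=\tilde{\mathbb{M}}/\Gamma$ arising from a closed normal subgroup $\Gamma\subset\widehat{\pi_1(M)}$ as in \eqref{form-of-solenoids}, I would replace $\widehat{\pi_1(M)}$ by the Cantor group quotient $\widehat{\pi_1(M)}/\Gamma$ throughout the construction above; the action \eqref{profiniteaction} descends, the local sections $s_\alpha$ and the deck transformations $\gamma_{\alpha\beta}\in\pi_1(M)\subset G$ are unchanged, and the transition law becomes $(y,t)\mapsto(\gamma_{\alpha\beta}(y),\overline{\jmath}(\gamma_{\alpha\beta})\cdot t)$ where $\overline{\jmath}$ is the composition of $j$ with the quotient map to $\widehat{\pi_1(M)}/\Gamma$. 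I expect the only subtle bookkeeping to concern the consistency check on overlaps of three charts (verifying the cocycle identity coming from $\gamma_{\beta\gamma}\gamma_{\alpha\beta}=\gamma_{\alpha\gamma}$ on connected pieces), and the verification that the $T_\alpha$ can be chosen so that all transition maps $h_{\alpha\beta}$ are defined on clopen subsets; both are routine because left translation in a profinite group is a homeomorphism and the fiber is already a Cantor set with $G$-valued cocycle built out of $\pi_1(M)$. No genuine difficulty arises, so the main ``obstacle'' is merely the careful identification $\tilde{M}\cong N$ and the formal checking that the cocycle $\gamma_{\alpha\beta}$ lands in $G$, which is exactly the hypothesis that $M$ is geometric modeled on $(N,G)$.
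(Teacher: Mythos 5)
Your proposal is correct and follows essentially the same route as the paper: the paper states this proposition without a formal proof, as an immediate consequence of the preceding observations that $\tilde{M}$ is isometric to $N$ with $\pi_1(M)$ acting through a discrete cocompact subgroup of $G$, together with the description $\mathbb{M}=\pi_1(M)\backslash(\tilde{M}\times\widehat{\pi_1(M)})$ and the pullback of the locally homogeneous metric. Your explicit atlas, with transition cocycle $(y,t)\mapsto(\gamma_{\alpha\beta}(y),\,j(\gamma_{\alpha\beta})\cdot t)$ and $\gamma_{\alpha\beta}\in\pi_1(M)\subset G$, simply writes out the verification the paper leaves implicit, and the passage to a general McCord solenoid via the quotient Cantor group $\widehat{\pi_1(M)}/\Gamma$ is handled correctly.
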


\begin{definition} Two compact 3-dimensional manifolds $M_1$ and $M_2$ are said to be {\em topologically commensurable} if there exists a common
finite covering \ie if there exists a compact manifold $M_3$ and coverings $p_1:M_3\to{M_1}$ and $p_2:M_3\to{M_2}$.
If $M_1$ and $M_2$ are commensurable, then their algebraic universal solenoidal covering spaces are diffeomorphic,
in the solenoidal sense, and vice versa.

\end{definition}
 %%%
 \begin{definition} Let $M_1$ and $M_2$ be  geometric 3-manifolds
 which are both modeled on the Thurston model $(N,G)$.
 Then $M_1$ and $M_2$ are said to be {\em geometrically commensurable}
 if there exists a geometric manifold $M_3$ modeled also on $(N,G)$
 and coverings $p_1:M_3\to{M_1}$ and $p_2:M_3\to{M_2}$ which are locally isometries.
  \end{definition}
 %%%
\noi The following proposition follows immediately from the definition of algebraic universal covering and its canonical solenoidal Riemannian metric:
 
\begin{proposition} If $M_1$ and $M_2$ are two geometrically commensurable geometric 3-manifolds
 which are both modeled on the Thurston model $(N,G)$, then their algebraic universal solenoidal covering spaces, with their canonical metrics, are isometric. Reciprocally, if their algebraic universal solenoidal coverings are isometric then the manifolds are geometrically commensurable.
\end{proposition}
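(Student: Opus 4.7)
The plan is to treat the two implications separately. The forward direction reduces to a cofinality argument on towers of finite covers, while the backward direction requires extracting a common finite quotient from an isometry of inverse limits.

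For the forward direction, assume a compact common cover $M_3$ is given, with finite coverings $p_i : M_3 \to M_i$ for $i=1,2$. The induced injections $(p_i)_* : \pi_1(M_3) \hookrightarrow \pi_1(M_i)$ have images of finite index. I claim that the directed system $\mathfrak{N}_3$ of finite-index normal subgroups of $\pi_1(M_3)$, viewed via $(p_i)_*$, is cofinal in the corresponding system $\mathfrak{N}_i$ for $\pi_1(M_i)$: given any $\beta \in \mathfrak{N}_i$, the subgroup $\beta \cap (p_i)_*(\pi_1(M_3))$ has finite index in both $\beta$ and $(p_i)_*(\pi_1(M_3))$, so its intersection with enough conjugates gives a normal finite-index subgroup of $\pi_1(M_3)$ contained in $(p_i)_*^{-1}(\beta)$. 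Since inverse limits only depend on cofinal subsystems and all bonding maps in the tower (\ref{towercovers}) are local isometries of the respective homogeneous metrics pulled back from $(N,G)$, the induced identifications $\mathbb{M}_1 \cong \mathbb{M}_3 \cong \mathbb{M}_2$ are isometries of the canonical laminated metrics.

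For the backward direction, let $\Phi : \mathbb{M}_1 \to \mathbb{M}_2$ be an isometry. Since leaves are the maximal smooth connected immersed submanifolds of $\mathbb{M}_i$, each isometric to $N$, and $\Phi$ is a homeomorphism sending leaves to leaves, the restriction $\Phi|_{L_1} : L_1 \to L_2$ on any leaf is a global isometry of two copies of $N$, hence realized by some element $g(L_1) \in G$. Choose base points $\tilde{x}_1 \in \mathbb{M}_1$ and $\tilde{x}_2 = \Phi(\tilde{x}_1)$, and pick foliation charts $\varphi_i : U_i \to V_i \times T_i$ around $\tilde{x}_i$ with $V_i$ open in $N$ and $T_i$ Cantor, chosen so that $\Phi(U_1) \subset U_2$. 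Because the laminated metric is nontrivial only along leaves and $\Phi$ is an isometry, in these coordinates $\Phi$ has the form $(y,k) \mapsto (g(k)(y), h(k))$ with $g(k) \in G$ depending continuously on $k$, and $h$ a local homeomorphism of Cantor sets; in particular $\Phi$ conjugates the holonomy pseudogroup of $\mathbb{M}_1$ onto that of $\mathbb{M}_2$.

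The main obstacle, and the delicate step, is to descend $\Phi$ to a common finite cover. The plan is to exploit the fact that every open cover of $M_i$ by evenly covered sets lifts to a clopen decomposition of $\mathbb{M}_i$ whose components are product neighborhoods indexed by finite quotients of $\widehat{\pi_1(M_i)}$. By compactness of $\mathbb{M}_1$, the pullback under $\Phi$ of a sufficiently fine such decomposition of $\mathbb{M}_2$ is refined by a decomposition coming from a deep enough finite quotient of $\widehat{\pi_1(M_1)}$. Iterating this refinement in both directions along the cofinal systems $\mathfrak{N}_i$ and using that the local form $(g(k)(y),h(k))$ is locally constant on each piece of a sufficiently fine transversal decomposition, one extracts finite-index subgroups $\beta_i \in \mathfrak{N}_i$ and a well-defined quotient isometry $\overline{\Phi} : M_1^{(\beta_1)} \to M_2^{(\beta_2)}$ making the diagram with the canonical projections commute. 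The common manifold $M_1^{(\beta_1)} \cong M_2^{(\beta_2)}$ then serves as a common finite cover, proving that $M_1$ and $M_2$ are commensurable.
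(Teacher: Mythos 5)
Your forward direction is correct and is surely the argument the paper has in mind when it says the statement ``follows immediately from the definition'': the finite-index normal subgroups of $\pi_1(M_3)$ form, via $(p_i)_*$, a cofinal subsystem of the finite-index subgroups of $\pi_1(M_i)$, so the three inverse limits coincide, and since all bonding maps are local isometries for the pulled-back $(N,G)$-metrics the identifications are leafwise isometries. (The one point you leave implicit, as does the paper, is that the two geometric structures induced on $M_3$ from $M_1$ and from $M_2$ agree; for the geometries under discussion this is Mostow rigidity or a normalization convention.)

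The backward direction, however, has a genuine gap at its crux. You assert that on a sufficiently fine clopen transversal decomposition the local form $(y,k)\mapsto(g(k)(y),h(k))$ of the isometry is locally constant in $k$, and everything else is made to follow from that. But a continuous map from a Cantor set to the Lie group $G$ (or to another Cantor set) need not be locally constant, and in this setting local constancy of $g$ is essentially equivalent to the assertion that $\Phi$ descends to a map of finite covers --- that is, to the commensurability you are trying to prove --- so as written the step is circular. What actually forces the conclusion is the interaction between the leafwise identification and the transverse return structure. Fix a leaf $L_1$ of $\mathbb{M}_1$; identifying $L_1$ and $\Phi(L_1)$ with $N$, the restriction $\Phi|_{L_1}$ is realized by a single isometry $g_0$ of $N$. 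The trace of $L_1$ on a small clopen transversal through a point $y$ is the orbit $\beta_1\cdot y$ of a finite-index subgroup $\beta_1\leq\Gamma_1=\pi_1(M_1)$, these orbits shrinking to orbits of deeper subgroups as the transversal shrinks. Since $\Phi$ is a homeomorphism carrying transversals to transversals, it must send this trace into the corresponding trace for $\Phi(L_1)$, and freeness of the actions then gives $g_0\beta_1 g_0^{-1}\subseteq\Gamma_2$ for a suitable finite-index $\beta_1$; being a cocompact lattice contained in the cocompact lattice $\Gamma_2$, the group $g_0\beta_1 g_0^{-1}$ has finite index there, so $N/\beta_1$ is a common finite cover of $M_1$ and $M_2$. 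Some argument of this kind, exploiting the discreteness of $\Gamma_2$ in the isometry group of $N$ and the density of $j(\Gamma_i)$ in $\widehat{\pi_1(M_i)}$, is needed precisely where you currently invoke local constancy.
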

 
\begin{corollary}
The geometric commensurability classes of 3-dimensional geometric manifolds (\`a la Thurston) are determined by the isometry classes of their
solenoidal algebraic covering spaces and reciprocally. Therefore, solenoidal algebraic universal covering spaces are in bijection with the commensurability classes.
 The commensurability class of the geometric 3-manifolds is determined by the profinite completion
of the fundamental group of one of its members.
\end{corollary}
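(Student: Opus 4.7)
The plan is to deduce this corollary directly from the previous proposition by unpacking it into the language of a bijection. Define the map
\[
\Phi : \{\text{commensurability classes of geometric 3-manifolds modeled on } (N,G)\} \longrightarrow \{\text{isometry classes of geometric solenoidal 3-manifolds modeled on } (N,G)\}
\]
by $\Phi([M]) = [\mathbb{M}]$, where $\mathbb{M}$ is the algebraic universal solenoidal covering of $M$ endowed with the pullback of the locally homogeneous metric. The corollary asserts exactly that $\Phi$ is a bijection onto its image, so I would verify well-definedness, injectivity, and identify the image.

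Well-definedness is the forward direction of the preceding proposition: if $M_1$ and $M_2$ are commensurable via a common finite cover $M_3$, then the tower of pointed finite covers of $M_3$ is cofinal in the towers of $M_1$ and $M_2$, so the three inverse limits defining $\mathbb{M}_1$, $\mathbb{M}_2$, $\mathbb{M}_3$ coincide; moreover, the covering projections are local isometries for the canonical metrics, so the induced isometry classes agree. Injectivity is the reverse direction of the proposition: an isometry $\mathbb{M}_1\to\mathbb{M}_2$ intertwines the canonical actions of the Cantor groups $\widehat{\pi_1(M_i)}$, and quotienting by a suitable finite-index closed subgroup yields a compact quotient that is simultaneously a finite cover of $M_1$ and of $M_2$, hence witnesses their commensurability.

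Finally, the image of $\Phi$ is, by construction, the collection of isometry classes of those solenoids that arise as algebraic universal covers of compact geometric 3-manifolds modeled on $(N,G)$; the bijection statement of the corollary is to be read onto this image. The main (essentially only) conceptual obstacle is the second step, namely converting an abstract isometry between two solenoids into a concrete common finite cover of $M_1$ and $M_2$; but since the previous proposition already packages this, here I would merely cite it and remark that the Cantor-group principal bundle structure of $\mathbb{M}_i$ over $M_i$, together with the fact that $\pi_1(M_i)$ embeds densely in $\widehat{\pi_1(M_i)}$, guarantees that finite regular quotients of the solenoid recover precisely the finite covers of the base. With these observations the corollary is an immediate logical consequence of the preceding proposition.
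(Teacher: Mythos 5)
Your proposal is correct and matches the paper's (implicit) argument: the corollary is stated without proof precisely because it is the preceding proposition repackaged as a bijection, with well-definedness given by the forward implication and injectivity by the converse. Your additional remarks on cofinality of the towers and on recovering finite covers from the principal Cantor-group bundle structure are consistent with how the paper justifies the proposition itself.
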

The profinite completion of fundamental groups can detect many properties
of 3-manifolds as shown in the papers \cite{WZ1}, \cite{WZ2}, \cite{WZ3} by H. Wilton and P. Zalesskii.
 
\subsection{Hyperbolic Solenoids. The spherical solenoidal boundary}\label{hypsol}
 
\noi Let us identify $n$-dimensional hyperbolic space $\H^n$ with the concrete Poincaré disk model $(D^n,\mathcal{h})$
where $D^n=\{(x_1,\cdots,x_n)\in\R^n\,:\, x_1^2+\cdots+x_n^2<1\}$ and $\mathcal{h}$
is the Poincaré Riemannian metric
$\mathcal{h}(x_1,\cdots,x_n)=4(1-\sum_1^n\,x_i^2)^{-2}\sum_1^n\,dx_i^2\,$, of constant sectional curvature minus one. The group of orientation-preserving isometries of $(D^n,\mathcal{h})$ is the Lie group $\rm{SO}^+(n,1)$.
 
\noi If $M$ is an $n$-dimensional hyperbolic manifold, the Cartan-Hadamard theorem implies that its Riemannian universal covering manifold is the hyperbolic $n$-space $\H^n$. Using the Poincaré disk model, it can be compactified by adding the boundary unit sphere, which becomes the ``sphere at infinity'' $\sS^{n-1}$, with its natural conformal structure. The group of direct hyperbolic isometries of the Poincaré disk, $\rm{SO}^+(n,1)$, extends continuously as the group of conformal orientation-preserving diffeomorphisms of the sphere. We denote by
$\overline\H^n=\H^n\cup\sS^{n-1}$ this compactification which we can identify
with the closed unit $n$-disk $\overline{D}^n$ in $\R^n$.
\noi References \cite{BP}, \cite{Ka}, \cite{Th1}, for instance, contain the
information we need regarding hyperbolic manifolds.

\noi Let $M$ be a compact, orientable, hyperbolic $n$-manifold. Therefore, by the Cartan-Hadamard theorem, there is an injective representation
$\rho_M:\pi_1(M)\to{\rm{SO}^+(n,1)}$, whose image $\Gamma$ is a lattice, acting properly discontinuously and co-compactly by isometries on
$(D^n,\mathcal{h})$, with orbit space $D^n/\Gamma$ isometric to $M$.
 
\noi Henceforth $\wg$ will denote the profinite completion
$\widehat{\pi_1(M)}$ of $\Gamma=\pi_1(M)$.
 
\noi Then, $\wg$ is a Cantor group.
Let $\H^n_{_{\wg}}=D^n\times\widehat{\Gamma}=\H^n\times\widehat{\Gamma}$ be the lamination with leaves
$D^n_\kh=D^n\times\{\kh\}=\left\{\left((x_1,\cdots,x_n), \kh \right)\,:\, (x_1,\cdots,x_n)\in{D^n},\,\, \kh\in\widehat{\Gamma}\right\}$.
 
\noi Let $\mathcal{h}_\kh$ be
the Poincaré Riemannian metric
on the leaf $D^n_\kh$ given by $\mathcal{h}_\kh(x_1,\cdot,x_n)=4(1-\sum_1^n\,x_i^2)^{-2}\sum_1^n\,dx_i^2\,$ (\ie independent of $\kh$).
The family of metrics $\{\mathcal{h}_{\kh}\}_{\kh\in\wg}$ determines a laminated Riemannian metric
$\mathcal{h}_{\wg}$ on $\H^n_{_{\wg}}=D^n\times\widehat{\Gamma}$.
 
\begin{definition}[$\wg$-universal hyperbolic covering solenoid]\label{uhcs}
Let $\H^n_{_{\wg}}=\H^n\times\widehat{\Gamma}$ as in the previous paragraph. Using the Poincaré model, we identify also
$\H^n_{_{\wg}}$ with $D^n\times\wg$. For $n\geq3$ we call
the geometric solenoid $(\H^n_{_{\wg}},\mathcal{h}_{\wg})$ the
{\it $\wg$-universal $n$-dimensional Hyperbolic Solenoid}.
The reason for the name is (as we shall see in {\it proposition} \ref{SCHT}) that for $n\geq2$, $\H^n_{_{\wg}}$ covers (in the solenoidal sense)
any compact, $n$-dimensional, hyperbolic geometric solenoid which is a McCord solenoid
corresponding to an infinite tower of coverings of a compact hyperbolic manifold
$M$ with fundamental group $\Gamma$. Any manifold commensurable to
$M$ determines the same profinite group $\wg$, and therefore is also covered by
$\H^n_{_{\wg}}$.
\end{definition}
 \noi We can compactify $\H^n_{_{\wg}}=D^n\times\widehat{\Gamma}$
by adding to each leaf $L_{\kh}$ its boundary sphere at infinity
$\sS^{n-1}_\mathfrak{h}:=\sS^{n-1}\times\{\mathfrak{h}\}$. We have:
 
\begin{definition}[Universal spherical boundary solenoid]\label{compactification} The compact solenoid with boundary
\begin{equation}
\overline\H^n\times\wg\simeq\overline{D}^n\times\wg,
\end{equation}
\noi homeomorphic to the product of the closed $n$-disk $\overline{D}^n$ and the Cantor group $\wg$, is called the {\it compactification of the $\wg$-universal $n$-dimensional Hyperbolic Solenoid}. The compact subset $\sS^{n-1}_{\wg}$ defined as follows:
\begin{equation}\label{sph-bdy}
\sS^{n-1}_{\wg}:=\sS^{n-1}\times\wg\overset{\text{def}}=\partial\left(\overline\H^n\times\wg\right),
\end{equation}
\noi is homeomorphic to the product of the $(n-1)$-sphere and the Cantor group $\wg$, and it is called the
{\it spherical solenoidal boundary} of the $\wg$-universal $n$-dimensional Hyperbolic Solenoid. We will simply refer to it as the {\it spherical solenoidal boundary}
if $\wg$ is understood. Each spherical boundary leaf $\,\sS^{n-1}\times\{\mathfrak{h}\}$ has a natural conformal structure equivalent to the canonical conformal structure of the unit sphere in $\R^n$.
 \end{definition}
 
\begin{definition} [Hyperbolic McCord solenoidal manifolds] Let $\cL^n$ be a
compact, connected, oriented, $n$-dimensional McCord solenoidal manifold given by the fibration $\Pi:\cL^n\to{M}$  over the compact, connected, smooth manifold $M$ with fiber a Cantor group $K=\wg/\Gamma_\alpha$ ({\it proposition} \ref{uncountable-subgroups}). The pair
 $(\cL^n,g)$ s called a {\it Hyperbolic McCord solenoidal manifold} if $g$
 is a laminated Riemannian metric such  that each leaf has constant
 negative curvature equal to -1.  
 \end{definition}
 
 The universal hyperbolic lamination $\cL_\mathcal{h}$ in {\it definition} \ref{UHLdef} is an example of a 2-dimensional McCord solenoid that admits
 an infinite dimensional space of laminated hyperbolic Riemannian metrics because
 its Teichmüller space $\mathcal{T}(\cL_\mathcal{h})$ is infinite-dimensional.
 The example shows that not all laminated hyperbolic metrics on  $\cL_\mathcal{h}$
 are obtained by the pullback of a hyperbolic Riemannian metric on a compact hyperbolic surface. More generally if $M^n$ is a compact hyperbolic manifold then its
algebraic universal covering solenoidal manifold $\mathbb{M}$
becomes an $n$-dimensional McCord Hyperbolic solenoidal manifold if we endow $\mathbb{M}$ with the canonical metric $g_{cal}$ obtained by pulling back the unique (by Mostow's rigidity theorem) hyperbolic metric on $M$. In {\it theorem} \ref{base-hyp}
it will be shown that every Hyperbolic McCord solenoidal
manifold of dimension $n\geq3$ is isometric to one obtained by a sequence of finite coverings over a hyperbolic metric, and the metric is obtained by pullback
(see {\it remark} \ref{notallcanonical} below).
 \begin{remark}\label{lochom}One important feature of the McCord Solenoidal manifold
 $(\mathbb{M},g_{can})$ is that it is locally homogeneous.
 \end{remark}

\noi Let $(\cL^n,g)$ be a hyperbolic McCord solenoidal manifold.  By {it proposition} \ref{uncountable-subgroups} and {\it remark} \ref{UPM}, there is a compact, orientable manifold $M$ and a fibration $\Pi:\cL^n\to{M}$ with fiber a Cantor group
$\widehat{\pi_1(M)}/\Gamma_{\alpha}$ and a solenoidal covering map
$\tilde\Pi:\tilde{\mathbb{M}}=\tilde{M}\times\widehat{\pi_1(M)}\to\cL^n$, where
$\tM$ is the universal covering of $M$ ({\it proposition} \ref{uncountable-subgroups},  {\it formula} (\ref{form-of-solenoids}), and {\it proposition} \ref{algcover}, formulas (\ref{barM}), (\ref{pihat})).
Let $\tg$ be the lifting to $\mathbb{M}$ of the metric $g$.
We don't
know {\it a priori} that $M$ is a hyperbolic manifold (however this is true and
if $n\geq3$ it will be proven in {\it corollary} \ref{Mhyperbolic} and {\it Theorem} \ref{base-hyp}).
 
\begin{lemma}\label{sameuniversal} The lifted metric $\tilde{g}$ is leafwise isometric to the metric
$\mathcal{h}_{\wg}$ \ie there exists a homeomorphism
\begin{equation}\label{I}
I:(\H^n_{_{\wg}},\mathcal{h}_{\wg})\to(\tilde{\mathbb{M}},\tilde{g}),
\end{equation}
which is a leafwise isometry.
\end{lemma}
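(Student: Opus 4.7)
The plan is to build $I$ in two stages: first identify each individual leaf of $(\tilde{\mathbb{M}},\tg)$ with $(\H^n,\mathcal{h})$ via the Killing--Hopf/Cartan--Hadamard classification of complete, simply connected constant curvature manifolds, and then assemble these leafwise isometries into a single continuous map by choosing base data continuously across the Cantor transversal $\wg$.

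First I would check that every leaf $\tM\times\{\mathfrak{g}\}$ of $\tilde{\mathbb{M}}$, endowed with the restriction of $\tg$, is a complete, simply connected Riemannian $n$-manifold of constant sectional curvature $-1$. Simple connectivity is immediate, since the leaf is a copy of the universal cover $\tM$. The curvature condition follows because $g$ is laminated hyperbolic on $\cL^n$ by hypothesis and $\tg$ is its pullback via the solenoidal covering $\hat\Pi$; by Remark~\ref{leaves-homeo-leaves} the restriction of $\hat\Pi$ to a leaf is a homeomorphism onto a leaf of $\cL^n$, and since $\hat\Pi$ is locally a diffeomorphism intertwining the two laminated metrics, this leafwise restriction is in fact an isometry, so the pullback metric inherits constant curvature $-1$. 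Completeness of each leaf of $\tilde{\mathbb{M}}$ follows from completeness of the leaves of the compact lamination $\cL^n$ (standard via Hopf--Rinow for any smooth laminated metric on a compact solenoidal manifold) transported through the same leafwise isometry. The Killing--Hopf theorem then supplies an isometry of each leaf with $(D^n,\mathcal{h})=\H^n$.

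Next I would organise these pointwise isometries. Fix once and for all a base point $\tx_0\in\tM$ and a positively oriented orthonormal frame $(e_1^0,\ldots,e_n^0)$ at $0\in D^n$ for $\mathcal{h}$. Using the product structure of $\tilde{\mathbb{M}}=\tM\times\wg$, the tangent space $T_{(\tx_0,\kh)}(\tM\times\{\kh\})$ is canonically identified with the fixed vector space $T_{\tx_0}\tM$ for every $\kh\in\wg$, and $\tg$ thus determines a continuous $\wg$-parametrised family of inner products on $T_{\tx_0}\tM$. Applying Gram--Schmidt to a fixed oriented basis produces a continuous family $\kh\mapsto(e_1(\kh),\ldots,e_n(\kh))$ of positively oriented orthonormal frames. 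For each $\kh$ the classical rigidity statement --- two oriented, complete, simply connected manifolds of constant curvature $-1$ are related by a unique orientation-preserving isometry matching any prescribed pointed orthonormal frames --- supplies a unique isometry
\[
I_{\kh}\colon (D^n,\mathcal{h})\longrightarrow \bigl(\tM\times\{\kh\},\,\tg|_{\text{leaf}}\bigr)
\]
with $I_{\kh}(0)=(\tx_0,\kh)$ and $(dI_{\kh})_0(e_i^0)=e_i(\kh)$. I then set $I(y,\kh):=(I_{\kh}(y),\kh)$, which is a fibrewise bijection over $\wg$.

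The hard part will be to verify that $I$ is continuous on all of $\H^n_{_{\wg}}=D^n\times\wg$, not merely leafwise. To this end I would exploit the explicit formula
\[
I_{\kh}(y)\;=\;\exp^{\tg}_{(\tx_0,\kh)}\!\bigl(\Phi_{\kh}\circ(\exp^{\mathcal{h}}_0)^{-1}(y)\bigr),
\]
where $\Phi_{\kh}:T_0 D^n\to T_{(\tx_0,\kh)}(\tM\times\{\kh\})$ is the linear isometry sending $(e_i^0)$ to $(e_i(\kh))$. Continuity of $I$ in $\kh$ then reduces to continuous dependence of the exponential map on parameters in the underlying metric, which in turn follows from continuous dependence of solutions of the geodesic ODE on its Christoffel symbols; by the definition of a laminated metric these Christoffel symbols depend continuously on $\kh$, and $\Phi_{\kh}$ depends continuously on $\kh$ by the Gram--Schmidt construction. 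Bijectivity of each $I_{\kh}$ is automatic from Killing--Hopf, and continuity of $I^{-1}$ follows by applying the same argument in reverse. This yields the required leafwise isometric homeomorphism.
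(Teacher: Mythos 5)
Your proposal is correct and follows essentially the same route as the paper: both arguments fix base points, produce a continuously varying orthonormal frame field over $\wg$ by Gram--Schmidt applied to a frame pulled back from $\tilde{M}$, and invoke the uniqueness of the frame-matching isometry between complete, simply connected hyperbolic $n$-manifolds to assemble the leafwise isometries into $I$. The only difference is that you spell out the transverse continuity of $I$ via the exponential-map formula and continuous dependence on the Christoffel symbols, a point the paper's proof leaves implicit.
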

 
\begin{proof} Let $p:\H^n\times\wg\to\H^n$ be projection on the first factor. Then $p_\kh$, the restriction of $p$ to the leaf $L_\kh=\H^n\times\{\kh\}$, is an isometry. Let $x\in\H^n$ and let $F_\kh$ be an orthonormal
frame of the tangent space of the leaf $L_\kh$ at $(x,\kh)$, such that it has as image
the frame $F_x$, under the differential of $p_\kh$ \ie $dp_\kh(F_\kh)=F_x$.
Let $\tx\in\tilde{M}$ and
$K_{\tx}=\{({\tx},\mathfrak{h})
\,:\,\mathfrak{h}\in\wg\}$; for each point
${\tx}_{\mathfrak{h}}:=({\tx},\mathfrak{h})$
let $F'_{\kh}$ be an orthonormal framing, with respect to the metric $\tg$, at the tangent space ${\tx}_{\mathfrak{h}}$ of the leaf
$\tilde{M}\times\{\kh\}$ of $\tilde{\mathbb{M}}$. We can assume that the framing
$F'_{\kh}$ varies continuously with respect to $\kh$
(for instance by pulling back a framing at
the tangent space of $\tilde{M}$ at $\tx$ under the projection
$q:\tilde{M}\times\wg\to\tilde{M}$ and using Gram-Schmidt orthogonalization).
 
\noi For each $\mathfrak{h}$ there exists a unique isometry of leaves
$\mathfrak{I}_{\mathfrak{h}}:{L_{\kh}}=\H^n\times\{\kh\}\to\tilde{M}\times\{\kh\}$
such that the differential of $\mathfrak{I}_{\mathfrak{h}}$ sends
the frame $F_{\kh}$ onto the frame of $F'_\kh$. The desired leafwise isometry is given by the formula: $I(x,\kh)=\mathfrak{I}_{\mathfrak{h}}(x,\kh)\,$ ($x\in\H^n$, $\kh\in\wg$).
\end{proof}

\noi {\it Lemma} \ref{sameuniversal} implies the following solenoidal version of the Cartan-Hadamard theorem. It shows that $H^n_{\wg}=\H^n\times\wg$ covers
(in the solenoidal sense) every $n$-dimensional hyperbolic McCord solenoidal manifold which fibers over a compact manifold $M$ having
a fundamental group whose profinite completion is $\wg$.
\bigskip

\begin{proposition}[Solenoidal Cartan-Hadamard theorem]\label{SCHT} As in the
the previous paragraph, let $(\cL^n,g)$ be a hyperbolic McCord solenoidal manifold obtained as an infinite tower of regular coverings
of the compact, orientable manifold $M$ with fundamental group $\pi_1(M)=\Gamma$, with profinite completion $\wg$.
There exists a leafwise isometry between the hyperbolic lamination
$\,(\cL^n, g)$ and the orbit space of an action of $\pi_1(M)\times\Gamma_{\alpha}$ on      
$H^n_{\wg}=\H^n\times\wg$, given a formula of the type:
\begin{equation}\label{action}
F_{_{(\gamma,\kg)}} (x,\mathfrak{h})=(\Psi_{\mathfrak{h}}(\gamma,\kg)(x), j(\gamma)\cdot\mathfrak{h}\cdot\kg^{-1}) \quad (\gamma\in\pi_1(M),\,
\kg\in\Gamma_{\alpha}).
\end{equation}
\noi  Where $\Gamma_\alpha\subset\wpi$ a closed normal group such that $\wpi/\Gamma_\alpha$ is a Cantor group.
If $\mathfrak{e}$ is the identity element of $\Gamma_\alpha$ and
$\Phi_\kh(\gamma):=\Psi_{\kh}(\gamma,\mathfrak{e})$, then, for each $\kh\in\wg$, $\Phi_\kh$
satisfies:
\begin{equation}\label{phihomomorphism} \Phi_{\mathfrak{h}}(\gamma_2\gamma_1)=\Phi_{\mathfrak{h}}(\gamma_2)\circ\Phi_{\mathfrak{h}}(\gamma_1),
\quad \Phi_\kh(\gamma)\in\text{SO}^+(n,1).
\end{equation}

\noi Here, $j$ is the canonical inclusion of $\pi_1(M)$ into $\wg=\widehat{\pi_1(M)}$.

 The action of $\pi_1(M)$ on $\H^n\times\wg$ obtained by restricting the action
 (\ref{action}) to $\pi_1(M)\times\{\mathfrak{e}\}$ is given by the formula:
 \begin{equation}\label{phi}
 \varphi_\gamma(x,\kh)=F_{_{(\gamma,\mathfrak{e})}}(x,\kh)
 =(\Phi_\kh(\gamma)(x),j(\gamma)\cdot\kh).
 \end{equation}
\noi This action of $\pi_1$ is properly discontinuous, co-compact, and by isometries on the leaves endowed with the leafwise hyperbolic metric
 $\mathcal{h}_{\wg}$ on $\H^n\times\wg$ ({\it definition} \ref{uhcs}).
 Therefore all hyperbolic McCord solenoids over the compact manifold $M$
 are covered, in the solenoidal sense, by $(H^n_{\wg},\mathcal{h})$.
The covering solenoidal map is a leafwise isometry.

 \end{proposition}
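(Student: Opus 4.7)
The strategy is to transport the known action of $\pi_1(M)\times\Gamma_\alpha$ on $\tilde{\mathbb{M}}=\tilde{M}\times\wg$ through the leafwise isometry $I$ furnished by Lemma~\ref{sameuniversal} to obtain an action on $H^n_{\wg}=\H^n\times\wg$ that has the desired shape. By Propositions~\ref{algcover} and~\ref{uncountable-subgroups} (see formulas~(\ref{action-Gamma-hatGamma}) and~(\ref{form-of-solenoids})), we have $\cL^n=\cS_\alpha=(\pi_1(M)\times\Gamma_\alpha)\backslash\tilde{\mathbb{M}}$, with the action $F_{(\gamma,\kg)}(\tx,\kh)=(\gamma(\tx),\,j(\gamma)\kh\kg^{-1})$ being free, properly discontinuous, cocompact, and by leafwise isometries (the latter because $\gamma$ acts on $\tilde M$ by deck transformations, which are isometries of the lifted metric $\tg$). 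Lemma~\ref{sameuniversal} gives a leafwise isometry $I\colon(H^n_{\wg},\mathcal{h}_{\wg})\to(\tilde{\mathbb{M}},\tg)$, and I will use it to conjugate.

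First, I define the transported action by $\tilde F_{(\gamma,\kg)}:=I^{-1}\circ F_{(\gamma,\kg)}\circ I$. Since $F_{(\gamma,\kg)}$ alters the $\wg$-coordinate through $\kh\mapsto j(\gamma)\kh\kg^{-1}$ independently of the leaf point, and since both $I$ and $F_{(\gamma,\kg)}$ preserve the product (foliation) structure, $\tilde F_{(\gamma,\kg)}$ must take the form $(x,\kh)\mapsto(\Psi_\kh(\gamma,\kg)(x),\,j(\gamma)\kh\kg^{-1})$. The factor $\Psi_\kh(\gamma,\kg)$ is then an orientation-preserving isometry of the hyperbolic leaf $\H^n\times\{\kh\}$ onto $\H^n\times\{j(\gamma)\kh\kg^{-1}\}$, hence an element of $\mathrm{SO}^+(n,1)$. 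This gives (\ref{action}).

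Next, for the homomorphism property, I specialize to $\kg=\mathfrak{e}$ and set $\Phi_\kh(\gamma):=\Psi_\kh(\gamma,\mathfrak e)$ as in~(\ref{phi}). The key input is a \emph{coherent} choice of the leafwise identifications $\mathfrak I_\kh\colon\H^n\to\tilde{M}\times\{\kh\}$ that enter the construction of $I$: since every leaf of $\tilde{\mathbb{M}}$ is a complete, simply connected Riemannian manifold of constant curvature $-1$, it is isometric to $\H^n$, and the leaves vary continuously in the transverse Cantor direction. Using the flatness of the principal $\wg$-bundle structure, one can spread a single isometry $\mathfrak I_{\mathfrak e}\colon\H^n\to\tilde{M}\times\{\mathfrak e\}$ to a family $\{\mathfrak I_\kh\}$ so that $\mathfrak I_{j(\gamma)\kh}=\gamma\circ\mathfrak I_\kh$ whenever the right-hand side makes sense; continuity in $\kh$ and density of $j(\pi_1(M))$ in $\wg$ extend this to all $\kh$. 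With such a choice, unwinding definitions gives $\Phi_\kh(\gamma)=\mathfrak I_{j(\gamma)\kh}^{-1}\circ\gamma\circ\mathfrak I_\kh$, and the composition law for the left action $\tilde F_{(\gamma_2,\mathfrak e)}\circ\tilde F_{(\gamma_1,\mathfrak e)}=\tilde F_{(\gamma_2\gamma_1,\mathfrak e)}$ collapses to~(\ref{phihomomorphism}).

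Finally, proper discontinuity, freeness, and cocompactness of the transported action on $H^n_{\wg}$ are immediate from the corresponding properties of $\pi_1(M)\times\Gamma_\alpha$ on $\tilde{\mathbb{M}}$, since $I$ is a homeomorphism and the quotient is $\cL^n$, which is compact. The solenoidal covering map $H^n_{\wg}\to\cL^n$ is a leafwise isometry because both $I$ and the quotient $\tilde{\mathbb{M}}\to\cS_\alpha$ are leafwise isometries. The restriction to the $\pi_1(M)\times\{\mathfrak e\}$-subaction is properly discontinuous and cocompact on leaves for the same reason, and yields the stated formula~(\ref{phi}). The injectivity and discreteness claims about $\Phi_\kh$ are deferred, as the author notes, to Corollary~\ref{Mhyperbolic}. \textbf{The main obstacle} is engineering the coherent family $\{\mathfrak I_\kh\}$ so that $\Phi_\kh$ is a genuine homomorphism rather than a 1-cocycle $\Phi_\kh(\gamma_2\gamma_1)=\Phi_{j(\gamma_1)\kh}(\gamma_2)\circ\Phi_\kh(\gamma_1)$ that results from the naive conjugation; resolving this requires using the flat transverse structure of the principal $\wg$-bundle $\tilde{\mathbb{M}}$ together with the continuous variation of the hyperbolic leaf metrics.
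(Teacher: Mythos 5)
Your overall strategy is the paper's: conjugate the known action $f_{(\gamma,\kg)}(\tx,\kh)=(\gamma(\tx),\,j(\gamma)\kh\kg^{-1})$ on $\tM\times\wg$ by the leafwise isometry $I$ of Lemma~\ref{sameuniversal}, note that the conjugate preserves the product structure and moves the second coordinate by $\kh\mapsto j(\gamma)\kh\kg^{-1}$, conclude that the first component is an isometry of the hyperbolic leaf and hence lies in $\mathrm{SO}^+(n,1)$, and transport proper discontinuity, cocompactness and the leafwise-isometric covering through the homeomorphism $I$. Up to that point your argument and the paper's coincide.

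The genuine problem is your resolution of what you call the main obstacle. With $\Phi_\kh(\gamma)=\mathfrak{I}_{j(\gamma)\kh}^{-1}\circ\gamma\circ\mathfrak{I}_\kh$, the coherence condition you impose, $\mathfrak{I}_{j(\gamma)\kh}=\gamma\circ\mathfrak{I}_\kh$, makes this composition the \emph{identity} for every $\gamma$ and $\kh$. That does satisfy (\ref{phihomomorphism}), but vacuously and disastrously: the transported action would become $(x,\kh)\mapsto(x,\,j(\gamma)\kh)$, whose quotient is $\H^n\times\bigl(j(\pi_1(M))\backslash\wg\bigr)$, a non-Hausdorff space because $j(\pi_1(M))$ is dense in $\wg$; the action is then not properly discontinuous and the quotient cannot be the compact solenoid $\cL^n$. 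It also contradicts the final assertion of Proposition~\ref{SCHT} that each $\Phi_\kh$ is a monomorphism with discrete cocompact image. Worse, the coherent family cannot exist continuously at all: choose distinct $\gamma_m\in\pi_1(M)$ with $j(\gamma_m)\to\mathfrak{e}$; continuity of $\kh\mapsto\mathfrak{I}_\kh$ would force $\gamma_m(\mathfrak{I}_{\mathfrak{e}}(x))\to\mathfrak{I}_{\mathfrak{e}}(x)$, which is impossible since $\pi_1(M)$ acts properly discontinuously on $\tM$. What the group law actually delivers from the naive conjugation is, as you correctly observe, the cocycle identity $\Phi_\kh(\gamma_2\gamma_1)=\Phi_{j(\gamma_1)\kh}(\gamma_2)\circ\Phi_\kh(\gamma_1)$; the honest homomorphism is obtained not by renormalizing the identifications $\mathfrak{I}_\kh$ but by restricting to the subgroup $\{(\gamma,j(\gamma))\}\subset\pi_1(M)\times\wg$, which preserves the single leaf over $\mathfrak{e}$ and therefore acts on it by a genuine representation --- this is exactly what the paper exploits in Corollary~\ref{Mhyperbolic}. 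The paper's own proof does not attempt your construction; it deduces the stated properties directly from the fact that $I$ conjugates the two actions leaf by leaf.
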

 \begin{proof} By {\it remark} \ref{UPM}, there exists a closed normal group $\Gamma_\alpha\subset\wpi$ such that $\wpi/\Gamma_\alpha$ is a Cantor group and $\,\cL^n$ is obtained as the orbit space of the
 action of $\pi_1(M)\times\Gamma_\alpha$ on $\tilde{M}\times\wg$ given by the formula:
 \begin{equation}\label{action-gamma-alpha}
 f_{(\gamma,\kg)}(x,\kh)=(\gamma(x),j(\gamma)\cdot\kh\cdot\kg^{-1})
 \quad  (\gamma\in\pi_1(M),\, \,\,\kh\in\Gamma_\alpha),
  \end{equation}
 where $\pi_1(M)$ acts by deck transformations on $\tilde{M}$.
 
 \noi If we lift the laminated metric $g$ to a metric $\tg$
 on $\tilde{M}\times\wg$ then $\pi_1(M)$ acts by
 leafwise isometries with respect to this metric.
 Since the homeomorphism $I:\H_{\wg}^n\times\wg\to\cL^n$ of {\it lemma} \ref{sameuniversal}
 sends leaves onto leaves isometrically with respect to the
 laminated metrics, it follows that for each
 $\gamma\in\pi_1(M)$ and $\kg\in\Gamma_\alpha$ the homeomorphism
 $
 F_{(\gamma,\kg)}=I^{-1}\circ{f_{(\gamma,\kg)}}\circ{I}:\H^n_{\wg}\to\H^n_{\wg}
 $
 is a leafwise isometry.
 
 \noi The fact that
 $(\gamma,\kg)\mapsto{F}_{(\gamma,\kg)}$ is
 an action implies that it is of the form given by the formula (\ref{action}). Since
 $I$ conjugates the actions
 $\{f_{(\gamma,\kg)} :(\gamma,\kg)\in\pi_1(M)\times\Gamma_\alpha\}$ and
 $\{F_{(\gamma,\kg)} :(\gamma,\kg)\in\pi_1(M)\times\Gamma_\alpha\}$ and
 $I$ sends the leaf
 $\H^n\times\{\kh\}$ onto the leaf $\tilde{M}\times\{\kh\}$ it follows that the action has all the stated properties and $\Phi_{\kh}$ satisfies (\ref{phihomomorphism}).
 \end{proof}
 
 \vskip1cm
\begin{remark}\label{Mishyp}
 For a fixed $\mathfrak{h}$, the maps $\Phi_\kh:\pi_1(M)\to\text{SO}^+(n,1)$, $\kh\in\wg$,
 Given by formula  (\ref{phihomomorphism}) above,
 are monomorphisms, and we will show, in {\it corollary} \ref{Mhyperbolic}, that the image is discrete and acts cocompactly on $\H^n$. This implies that $\pi_1(M)$ is a uniform lattice in
$\text{SO}^+(n,1)$. Since $M$ is covered by a leaf of $\cL^n$, it follows that
$M$ is aspherical so that it is an Eilenberg-MacLane space $K(\pi_1(M),1)$, and
 $M$ is homotopy equivalent to a hyperbolic $n$-manifold. In \cite{Ka2} Misha Kapovich shows essentially the same result \ie that a hyperbolic $n$-dimensional McCord solenoid 
 $\cL^n$ is based over a manifold $M$ homotopy equivalent to a compact hyperbolic $n$-manifold. As he indicates, in dimensions different from 4, a closed $n$-dimensional manifold which is homotopy-equivalent to a hyperbolic manifold is actually homeomorphic to it. In dimension 3 it is a corollary of Perelman’s Geometrization theorem, in dimensions greater than 4 this result is due to Farrell and Jones \cite{FJ}.  However, in our case, we know that the manifold can be chosen to be  $M=\H^n/\Phi_\kh(\pi_1(M))$
that is a hyperbolic manifold.
See {\it remark} \ref{isomtocanonical} below.
\end{remark}
 
\noi By {\it remark} \ref {Mishyp} a hyperbolic McCord solenoid is homeomorphic to a McCord solenoid based ({\it definition }\ref{McCord-Solenoids}) on a compact hyperbolic
 $n$-manifold. In what follows we will show the stronger fact that if $n\geq3$ and $(\cL^n,g)$ is a compact, connected, hyperbolic lamination
 which is topologically homogeneous or, equivalently, a McCord solenoid,
 then there exists a compact, orientable, connected, hyperbolic $n$-manifold
 $(M^n,h)$ such that $(\cL^n,g)$ is the inverse limit of an increasing tower
 of regular coverings of finite order which are local isometries and the
 laminated metric $g$ on $\cL^n$ is leafwise isometric to the laminated metric obtained as the pullback of the hyperbolic metric $h$ on $M^n$ under the canonical projection $\Pi:\cL^n\to{M^n}$ (\ie it is the {\it canonical metric}). By {\it remark} \ref{UPM} about the universal property of the algebraic universal covering,
to prove that every laminated metric is transversally locally constant, it is enough to prove the case when $\cL^n$ is homeomorphic
to the algebraic universal covering $\mathbb{M}$. We need to show that any laminated hyperbolic Riemannian metric on $\mathbb{M}$ is isometric
to the canonical laminated hyperbolic metric $g_{\rm{can}}$ \ie the
pullback metric under the projection $\Pi:\mathbb{M}\to{M}$, restricted to the leaves, of the hyperbolic metric on $M$.  
\begin{remark}\label{notallcanonical} For $n\geq3$
the laminated hyperbolic metric $g_{\rm{can}}$ is invariant under the action of
$\pi_1(M)\times\widehat{\pi_1(M)}$.
For $n\geq3$ this
property characterizes this canonical metric \ie it is ``transversally locally constant'' (TLC) in Sullivan's notation \cite{Su2} page 549.
In fact, for $n\geq3$ the structure is actually transversally constant:  {\it  The Teichmüller space of hyperbolic metrics on $\cL^n$ reduces to one point}.

\noi This is false
when $n=2$, since not every metric is TLC although they are dense in the Teichmüller space. This is the reason the Teichmüller space
of $\cL_{\mathcal{h}}$  ({\it definition} \ref{UHL}) is infinite-dimensional. In fact, in \cite{BV} it is shown that
 Sullivan’s Teichmüller space is Kähler isometric biholomophic to the space of continuous functions from the profinite completion of the fundamental group of a compact Riemann surface of genus greater than or equal to two to the Teichmüller space of this surface; i.e. one finds natural Kähler coordinates for the Sullivan’s Teichmüller space. It is an important example of an infinite dimensional Kähler manifold.
 \end{remark}
 
 \noi Henceforth we let $n\geq3$. Let $(\mathbb{M},g)$ be a compact, connected, hyperbolic $n$-dimensional McCord solenoidal manifold based on the compact, connected, and oriented manifold $M$.
Then by proposition \ref{uncountable-subgroups} and {\it remark} \ref{UPM},
$\mathbb{M}$ is covered
{\it in the solenoidal sense}, by $\tilde{\mathbb{M}}:=\tilde{M}\times\wg$ where $M$ is a compact $n$-manifold and $\wg=\wpi$.
 
\noi If we lift the leafwise hyperbolic metric $g$ to $\tilde{\mathbb{M}}$ we obtain a laminated metric
$\tilde{g}$ on
$\tilde{M}\times\wg$. For each $\mathfrak{h}\in\wg$  let $L_{\mathfrak{g}}:=\tilde{M}\times\{\mathfrak{h}\}$ denote the leaf corresponding to $\mathfrak{h}$ and let $d_{\mathfrak{h}}$ denote  hyperbolic distance in the leaf $L_{\mathfrak{h}}$:
\begin{equation}\label{leaf-distance}
d_{\mathfrak{h}}\left((\tilde{x}_1,\mathfrak{h}),(\tilde{x}_2,\mathfrak{h})\right)
\end{equation}
Then the metric $\tilde{g}$ satisfies the following properties:
\begin{enumerate}
\item For each $\mathfrak{h}\in\wg$ the metric on the leaf
$L_{\mathfrak{h}}:=\tilde{M}\times\{\mathfrak{h}\}$ is a hyperbolic metric of sectional curvature -1.
\item Because the metric $\tilde{g}$ is the lifting of $g$, the action of $\pi_1(M)$ on $\tilde{M}\times\wg$ given by the formula
\begin{equation}\label{action2}
(\tilde{x},\mathfrak{h})\overset{\gamma}\mapsto(\gamma(\tilde{x}), j(\gamma)\mathfrak{h}),\quad  \tilde{x}\in\tilde{M}, \mathfrak{h}\in\wg,
\end{equation}
where $j$ is the canonical injection ({\it definition} \ref{inclusion-j}), is by isometries on the leaves:
$
d_{\mathfrak{h}}\left((\tilde{x}_1,\mathfrak{h}),(\tilde{x}_2,\mathfrak{h})\right)=
d_{_{j(\gamma)\cdot\mathfrak{h}}}\left(\gamma(\tilde{x}_1),
j(\gamma)\cdot\mathfrak{h}),
(\gamma(\tilde{x}_2),j(\gamma)\cdot\mathfrak{h})\right)
$
\end{enumerate}
\medskip
\noi Using the solenoidal Cartan-Hadamard theorem, {\it proposition} \ref{SCHT}, we have the following proposition:
\begin{proposition}\label{extension-at-infinity} The action
$\{\varphi_\gamma:\H^n\times\wg\to\H^n\times\wg :\gamma\in\pi_1(M)\}$ of $\pi_1(M)$
by the leafwise isometries
on $\H^n\times\wg$, given by formula (\ref{phi}), has the
following properties:
\begin{enumerate}
\item  It extends to a continuous action $\{\hat\varphi_\gamma :\gamma\in\pi_1(M)\}$
on the compactification
$\overline\H^n\times\wg$.
\item The extended action maps boundary spheres onto boundary spheres conformally,
\ie if $\gamma\in\pi_1(M)$ then $\varphi_\gamma$ maps $\sS^{n-1}\times\{\mathfrak{h}\}$ onto
$\sS^{n-1}\times\{j(\gamma)\mathfrak{h}\}$ conformally.
\item Furthermore, the action
of $\wg$ on $\H^n\times\wg$
given by the formula:
\begin{equation}
\label{action-hatGamma}
F_{\mathfrak{g}}(\tilde{x},\, \mathfrak{h})=
(\tilde{x}\,,\,\mathfrak{h}\cdot\mathfrak{g}^{-1}),
\quad  \mathfrak{g}\in\Gamma_\alpha
\end{equation}
\noi is by quasi-isometries \ie for each
$\mathfrak{g}\in\wg$
there is a positive constant $c$, independent of $\mathfrak{g}$ such that:
\[
c^{-1}\mathbf{d}_\mathfrak{h}((\tilde{x}_1,\mathfrak{h}),(\tilde{x}_2,\mathfrak{h}))\leq
\mathbf{d}_{\mathfrak{h}\cdot\mathfrak{g}^{-1}}((\tilde{x}_1,\mathfrak{h}\cdot\mathfrak{g}^{-1}),
(\tilde{x}_2,\mathfrak{h}\cdot\mathfrak{g}^{-1}))
\leq{c\,\mathbf{d}_\mathfrak{h}((\tilde{x}_1,\mathfrak{h}),(\tilde{x}_2,\mathfrak{h}))}
\]
where $\mathbf{d}_\mathfrak{h}$
is the distance in formula (\ref{leaf-distance}). Notice that the action is not {a priori} necessarily by isometries, since the lifted metric $\tilde{g}$
is not necessarily the product metric (however, this will be the case by
Theorem \ref{base-hyp}).
 
\noi In fact, the combined action
of $\pi_1(M)\times\wg$ on $\H^n\times\wg$:
\begin{equation}\label{combinedaction}
F_{(\gamma,\mathfrak{g})}(\tilde{x}, \mathfrak{h})=
({\gamma}(\tilde{x}),j(\gamma)\cdot\mathfrak{h}\cdot\mathfrak{g}^{-1}),
\quad \left(\gamma\in\pi_1(M), \,\, \mathfrak{g},
\in\Gamma_\alpha\right)
\end{equation}
%%%%%%%%%%%%%%%%%%%%%%%%%%%%%
%%%%%%%%%%%%%%%%%%%%%%%%%%%%%
\noi acts by quasi-isometries on leaves \ie for each
$(\gamma,\mathfrak{g})\in\pi_1(M)\times\wg$,
$F_{(\gamma,\mathfrak{g})}$ is a quasi-isometric diffeomorphism from the leaf $L_{\mathfrak{h}}$
onto the leaf $L_{_{j(\gamma)\cdot\mathfrak{h}\cdot\mathfrak{g}^{-1}}}
$.

\noi In other words, there is a positive constant $c$ such that:
\[
c^{-1}\mathbf{d}_{\mathfrak{h}}((\tilde{x}_1,\mathfrak{h}),(\tilde{x}_2,\mathfrak{h}))\leq
\mathbf{d}_{_{\kh'}}
((\gamma(\tilde{x}_1),\kh'),
(\gamma(\tilde{x}_2),\kh'))
\leq{c\,\mathbf{d}_{\mathfrak{h}}((\tilde{x}_1,\mathfrak{h}),(\tilde{x}_2,\mathfrak{h}))},
\]
where $\kh'=j(\gamma)\cdot\mathfrak{h}\cdot\mathfrak{g}^{-1}$.
 \end{enumerate}
\end{proposition}
 
\begin{proof}
The proof is a laminated version of the usual arguments used to prove the first steps in the Mostow rigidity theorem. In fact, it is almost identical to the proofs of Lemma 5.9.1 given by Thurston in  \cite{Th1}.
Since $\pi_1(M)$ acts by isometries on the leaves, properties (1) and (2) are evident. For the action of $\wg$ we proceed as follows:
 
\noi For each
$\mathfrak{g}\in\wg$ fixed,
the map
$F_{\mathfrak{g}}$ is a diffeomorphism of
$\H^n\times\wg$, in the
laminated sense, since this map is a lifting of the fiber-preserving right action of
$\wg$ by isometries on the fibers with respect to the bi-invariant metric $d$ on the Cantor fiber. Since
$\mathbb{M}$ is compact it follows that $F_{\mathfrak{g}}$ is uniformly bi-Lipschitz. This implies that the maps $F_{(\gamma,\mathfrak{g})}$ have uniformly  bounded derivatives, hence they are bi-Lipschitz and we can
choose $c$ to be a Lipschitz constant, with $c$ independent of
$(\gamma,\mathfrak{g})$. This implies the result.
\end{proof}

\begin{corollary}\label{qc-extension} The action of
$\pi_1(M)\times\wg$ on
$\tilde{M}\times\wg=\H^n\times\wg$,
given by formula (\ref{combinedaction}), extends to a continuous action on the compactification (given in {definition} \ref{compactification})
$\,\bar\H^n\times\wg=\bar{D}^n\times\wg$.
The extended action is by uniformly quasi-conformal maps on the boundary spheres.
\end{corollary}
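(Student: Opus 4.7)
The plan is to reduce the corollary to a purely leafwise statement, namely the classical fact that every bi-Lipschitz (more generally, quasi-isometric) self-map of hyperbolic $n$-space extends continuously to a quasi-conformal self-homeomorphism of the sphere at infinity, with quasi-conformal dilation depending only on the quasi-isometry constants. This is a standard ingredient in the first steps of Mostow rigidity (see, e.g., Thurston \cite{Th1}, Chapter 5, or Benedetti--Petronio \cite{BP}), and was already invoked in {\it proposition} \ref{extension-at-infinity}. The point of the corollary is to assemble these leafwise extensions into a single continuous action on the compactification $\bar\H^n\times\wg=\bar D^n\times\wg$.

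First I would treat the two factors separately. For $\gamma\in\pi_1(M)$, the homeomorphism $\varphi_\gamma$ acts leafwise by the isometry $\Phi_\kh(\gamma)\in\text{SO}^+(n,1)$ and permutes leaves via the rule $L_\kh\mapsto L_{j(\gamma)\kh}$; hence $\varphi_\gamma$ extends leaf by leaf to the conformal diffeomorphism $\Phi_\kh(\gamma)$ of $\sS^{n-1}_\kh$ onto $\sS^{n-1}_{j(\gamma)\kh}$, and this extension is already known to be continuous on $\bar\H^n\times\wg$ by parts (1)--(2) of {\it proposition} \ref{extension-at-infinity}. For $\mathfrak g\in\wg$, the map $F_\kg$ is a leafwise diffeomorphism that is bi-Lipschitz with a constant $c$ that is uniform in $\kg$; applying the classical extension theorem leaf by leaf produces a homeomorphism $\hat F_\kg$ of $\sS^{n-1}\times\wg$ with $K$-quasi-conformal restriction to each boundary leaf, where $K=K(c)$ depends only on $c$. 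Composing the two gives the leafwise extension of the combined action
\[
\hat F_{(\gamma,\kg)}(\xi,\kh)=\bigl(\Phi_\kh(\gamma)\cdot\hat F_\kg(\xi),\,j(\gamma)\cdot\kh\cdot\kg^{-1}\bigr),\qquad \xi\in\sS^{n-1},
\]
which is quasi-conformal on each boundary sphere with uniform dilation bounded in terms of $c$ alone.

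Next I would verify joint continuity of the extended action on $\bar\H^n\times\wg$. The only nontrivial case is continuity at a boundary point $(\xi,\kh)\in\sS^{n-1}\times\wg$ along sequences $(x_n,\kh_n)\to(\xi,\kh)$. Continuity in the Cantor variable $\kh$ is automatic because the leafwise action of $\wg$ is nothing more than right multiplication in the second coordinate, which is continuous by construction of the profinite topology. Continuity in the hyperbolic variable at boundary points is the delicate point: one invokes the Morse--Efremovich--Tihomirova lemma leafwise, which ensures that the image of a geodesic ray converging to $\xi$ is a quasi-geodesic lying within bounded Hausdorff distance of a genuine geodesic ray whose endpoint is the boundary image $\hat F_\kg(\xi)$; uniformity of the bi-Lipschitz constants in $\kg$ then upgrades this convergence to a modulus of continuity which is uniform in the transverse Cantor direction, yielding joint continuity.

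The step I expect to be the main obstacle is precisely this uniform transverse control of the boundary extension. Leafwise, the extension theorem is classical; but the compactification $\bar\H^n\times\wg$ is not locally compact in a way that matches the leaf topology, and so one must argue that the choice of boundary image $\hat F_\kg(\xi)$ varies continuously in $\kg\in\wg$. This is where the uniformity in $\kg$ of the bi-Lipschitz constant from {\it proposition} \ref{extension-at-infinity} and the continuous transverse dependence of the laminated metric $\mathcal{h}_{\wg}$ are essential: together they force the Morse-lemma constants and hence the modulus of continuity of the extension to be independent of $\kg$, which is exactly what is needed to glue the leafwise quasi-conformal extensions into a single continuous action on $\bar\H^n\times\wg$ that is quasi-conformal on each boundary sphere.
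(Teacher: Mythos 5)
Your proposal is correct and follows essentially the same route as the paper: both arguments reduce the boundary extension to the classical Mostow--Thurston mechanism (quasi-geodesics shadowed by geodesics, via the Morse lemma), feeding in the uniform bi-Lipschitz constants supplied by {\it proposition} \ref{extension-at-infinity} to get quasi-conformality on each boundary sphere. Your factorization into the $\pi_1(M)$-part (leafwise isometries, hence conformal extensions) and the $\wg$-part (uniform quasi-isometries), and your explicit treatment of joint continuity in the transverse Cantor direction, are details the paper compresses into a citation of Thurston's Corollaries 5.9.3--5.9.6 and then omits, so they are welcome but not a different method.
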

 \noi  The existence of a uniformly quasiconformal extension is a standard fact
  originally proven by Mostow himself. However, for the sake of completeness
  in the solenoidal version, we recall
  the proof as adapted and presented in Lemma 5.9.2 in
Thurston's notes \cite{Th1}.

  \begin{proof} One uses the fact that our maps are quasi-isometries between hyperbolic leaves: the image of a geodesic
$\alpha:\R\to{L_{\mathfrak{h}}}$
in the leaf $L_{\mathfrak{h}}$ under $F_{(\gamma,\mathfrak{g})}$ is a
quasi-geodesic, $\beta:=F_{\gamma,\mathfrak{g}}\circ\alpha:\R\to L_{j(\gamma\cdot\mathfrak{h})\mathfrak{g}^{-1}}$, in $L_{j(\gamma\cdot\mathfrak{h})\mathfrak{g}^{-1}}$. Then, there exists a unique
geodesic $\alpha':\R\to L_{j(\gamma\cdot\mathfrak{h})\mathfrak{g}^{-1}}$ such that the image of the quasi-geodesic $\beta$ is contained
in a tubular neighborhood of points at a bounded distance
$D$ from $\alpha'$ (\ie quasi-geodesics are ``shadowed'' by actual geodesics).
This is Morse-Mostow lemma \cite{Mor}.
Let $\sS^{n-1}_\mathfrak{h}:=\sS^{n-1}\times\{\mathfrak{h}\}$. Let $z$ be a point in the
solenoidal boundary leaf ${\sS^{n-1}_\mathfrak{h}}$ and
$\alpha:\R\to{L_{\mathfrak{h}}}$ is a geodesic
in $L_{h}$ such that $\alpha(\infty)=z$. One defines the extension of
$F_{(\gamma,\mathfrak{g})}$ at the point $z$ by the formula
$\alpha'(\infty)=\beta(\infty)$.
 \noi This is exactly as Mostow did in his celebrated paper \cite{Mo}, (see also \cite{Mo1}) and is presented in an adapted form in Thurston notes \cite{Th1} Corollary 5.9.3. Since we can define this extension for
every point in the solenoidal boundary $\sS^{n-1}\times\widehat{\pi_1(M)}$,
one obtains for each $(\gamma,\mathfrak{g})$ a ``boundary'' map of the map
$F_{(\gamma,\mathfrak{g})}$ given by formula \ref{combinedaction}:
\begin{equation}\label{actionboundary}
\partial{F_{(\gamma,\mathfrak{g})}}:\sS^{n-1}\times\wg\to
\sS^{n-1}\times\wg
\end{equation}
For each $(\gamma,\mathfrak{g})$, the map $\partial{F_{(\gamma,\mathfrak{g})}}$
has the following properties:
\begin{enumerate}
\item It is a homeomorphism of the boundary solenoid.
\item It restricts to a quasi-conformal map from the sphere
$\sS^{n-1}_{\mathfrak{h}}$ to the sphere
$\sS^{n-1}_{j(\gamma)\cdot\mathfrak{h}\cdot\mathfrak{g}^{-1}}\,$, with respect to their natural conformal structures.
\end{enumerate}
The proof of both items is identical (in a leafwise sense) to the original one given by Mostow in \cite{Mo}, and is a consequence of geometric properties of hyperbolic geometry as explained in \cite{Th1}, Corollaries 5.9.5 and 5.9.6. We omit the details.\end{proof}
\begin{theorem}[Ergodicity of the laminated geodesic flow]\label{ELGF}
Let $(\cL,g)$ be a compact hyperbolic McCord solenoidal manifold so that $\cL$ is a principal
fiber bundle with fiber the Cantor group $\wg_\alpha$.
 Let $T^1\cL$ be the
laminated unit tangent bundle of $\cL$. Then, the laminated geodesic flow
$g_t:T^1\cL\to{T^1\cL}$ is ergodic with respect to the
product measure $\nu=\mu\times\rm{d}\mathbf{h}$, where $\mu$ is Lebesgue measure on the unit sphere bundle
and $\rm{d}\mathbf{h}$ is Haar measure on $\wg_\alpha$
\end{theorem}

\noi To motivate the proof, for $n=2$, let's refer to the following theorem in \cite{ADMV}:
\begin{theorem}[Theorem 1 in \cite{ADMV}] \label{thm:uniqueergodicity}
If $\mathcal{F}$ is a minimal Riemannian foliation of a closed manifold $M$ by hyperbolic surfaces, then the horocycle flow $h_s^+$ on the unit tangent bundle $\hat M = T^1\mathcal{F}$ is strictly ergodic \ie it is minimal and the and uniquely ergodic with unique invariant measure the measure that is locally the product of Liouville measure in the unit tangent bundles of the leaves and the. transversal  Haar measure inherited by the Riemannian foliation. In particular, the geodesic flow on the unit tangent bundle  $\hat M = T^1\mathcal{F}$  is ergodic. For the special case of the unit tangent bundle of Sullivan's universal hyperbolic surface $\cL_\mathcal{h}$  (\ref{UHL}) minimality of the horocycle flow (\ie Hedlund's theorem) was proven in \cite{MMV}.
\end{theorem}

\vskip1cm

\noi  McCord solenoids are minimal and equicontinuous ({\it Remark} \ref{Equicontinuity of McCord solenoids}) so that they behave as Riemannian foliations and the proof that the geodesic flow on the unit tangent bundle of a compact McCord lamination by hyperbolic leaves is ergodic  
given in \cite{ADMV} carries through {\it mutatis mutandis}.
 
\begin{proof}[Proof of Theorem \ref{ELGF}]
The measure $\mathbf{\nu}=\mathbf{\lambda}\times\rm{d}\mathbf{h}$, where $\mathbf{\lambda}$ is the Liouville measure on the unit tangent bundle of the compact hyperbolic lamination $\mathbb{M}$ and $\rm{d}\mathbf{h}$ is Haar measure on
$\wg$, is invariant under the laminated geodesic flow,
which is a laminated Anosov flow \cite{Ano, V2}.

\noi We will use a very general version of the Hopf argument by Yves Coudène in \cite{Cou, Cou3}.
 Let $(X,d)$, be a metric space endowed with a Borel probability measure $\nu$. Let $f_t : X\to{X}, t\in\R$ be a measure-preserving measurable flow.
The {\it stable distribution}  or {\it stable set} of $x\in{X}$ of the flow $f_t$ is defined as follows:
\[\label {SS}
W^{ss}(x) = \left\{y \in X : d(f_t(x),f_t(y)) \to0\quad as\quad t\to\infty\right\}.
\]
The {\it unstable distribution}  or {\it unstable set} is defined as follows

\[\label {UUS}
W^{uu}(x) = \left\{y \in X : d(f_t(x),f_t(y)) \to0\quad as\quad t\to-\infty\right\}.
\]

\begin{remark} \noi Let $N$ be an $n$-dimensional hyperbolic manifold. Its geodesic
flow $g_t:T^1{N}\to{T^1{N}}$, on its unit tangent bundle $T^1{N}$ is an Anosov flow \cite{Ano,V2} and the stable and unstable sets
$W^{ss}(x), W^{uu}(x)$ are immersed differentiable manifolds of dimension $n-1$. These
manifolds are the leaves of two smooth foliations $\mathcal{F}^{ss}$,
 $\mathcal{F}^{uu}$. The foliations are called the {\it strongly stable} and
{\it strongly unstable} foliation respectively.  The geodesic
flow preserves both foliations (\ie sends leaves into leaves). The
orbits of $W^{ss}(x)$ and $W^{uu}(x)$ under the geodesic flow are smooth immersed manifolds of dimension $n$.
The manifolds, $W^s(x)$ and $W^u(x)$ are invariant under $g_t$ and are the leaves of two smooth foliations
$\mathcal{F}^{cs}$, $\mathcal{F}^{cu}$ which meet transversally along the orbits of the geodesic flow. These  $n$-dimensional foliations
are called {\it central stable} and {\it central unstable} foliations respectively.

\noi If $g_t:T^1\cL\to{T^1\cL}$ is the laminated geodesic flow, the restriction of the flow to each leaf is an Anosov flow since the leaves
on the ($2n-1$)-lamination $T^1\cL$  are the unit tangent bundles of the hyperbolic leaves of $\cL$. Therefore $T^1\cL$ admits the four foliations $\cF^s,\cF^u, \cF^{ss}$ and $\cF^{uu}$ with leaves
$W^s(x),W^u(x), W^{ss}(x)$ and $W^{uu}(x)$. We call the foliations $\cF^{ss}$ and $\cF^{uu}$ the {\it laminated horocycle foliations}.

\noi The laminated geodesic flow $g_t$ admits a
{\it local product structure}: for each $x\in{T^1\cL}$ there exists a homeomorphism
$\varphi:D^{n-1}\times{D^n}\times{K}\to\mathcal{U}$, where $\mathcal{U}$ is
an open neighborhood of $x$ in $T^1\cL$, $D^{n-1}$ is the open unit ball in $\R^{n-1}$,
$D^{n}$ is is the open unit ball in $\R^{n}$, $K \subset\wg_\alpha(x)$ is a Cantor set which is an open subset of the Cantor group fiber $\wg_\alpha(x)$
through $x$. Furthermore:
\begin{enumerate}

\item $\varphi(0,0,k)=k, \,\, \forall k\in{K}$,
\item $\varphi(D^{n-1},\{0\},\{k\})\subset {W^{ss}(k)},\,\,\forall k\in{K}$
\item $\varphi(\{x\}, D^{n},\{k\})\subset {W^{u}(\varphi(x,0,k))}
,\,\,\forall (x,k)\in{D^{n-1}\times{K}}$
\end{enumerate}
\begin{remark}\label{product}Let $\mathbf{p}:T^1\cL\to \cL$ be the natural projection. The map
$\mathbf{p}$ is a locally trivial fibration over $\cL$ with fiber $\sS^{n-1}$.
The image $\mathbf{p}(W^{u}(x))$ of a horospherical leaf $W^{u}(x)$
(respectively the image $\mathbf{p}(W^{s}(x))$)
is the whole leaf $L(\mathbf{p}(x))$ of $\cL$. This means that in the leaf
$L(\mathbf{p}(x))$ of
$\cL$ there exists a point $y\in{L(\mathbf{p}(x))}$ and a unit vector $v_y$ at $y$ such that $v_y\in{W^u(x)}$. Since the leaf $L(\mathbf{p}(x))$ is dense in $\cL$ it follows
that $W^u(x)\cap{K}:=K'$ is dense in $K$, and therefore $W^u(x)$ meets $\cU$ in infinitely many plaques. By the local product structure, every point  $z\in\cU$ of the form $z=\varphi(x,y,k)$ with $k\in{K'}$ has the property that
$W^{ss}(z)\cap{{W^u(x)}}\neq\emptyset$.
 \end{remark}

\medskip

\noi Theorem 1.3 in \cite{Pl} of J. Plante establishes that for an Anosov flow on a compact manifold $N$ with non-wandering set equal to $N$,
both $\mathcal{F}^{cs}$ and
$\mathcal{F}^{cu}$ are minimal.  He also proves that if either $\cF^{ss}$ or $\cF^{uu}$ is not minimal then the Anosov flow is the suspension of an Anosov diffeomorphism. When $n=2$, $\mathcal{F}^{ss}$ and
$\mathcal{F}^{uu}$ are the stable and unstable horocycle orbits and thus Plante's, theorem implies that both the
stable and unstable horocycle flows on the unit tangent bundle of a hyperbolic surface are minimal because the flow is not a suspension
(providing another, and easier,  proof of Hedlund's theorem).
\end{remark}
\noi The minimality of the foliations $\cF^{cs}$ and $\cF^{cu}$ also holds for the laminated geodesic flow on $T^1\cL$. Indeed, since the laminated geodesic flow, $g_t$ preserves the measure $\nu$, that is positive on open sets, it follows from Poincaré's recurrence theorem that the set of recurrent points of the geodesic flow is dense. Theorem 1.3 of Plante  in \cite{Pl} can be easily adapted using the density of recurrent points and the product neighborhood,  described above, to show that for each
 $x\in{T^1\cL}$, the closure $\overline{W^u(x)}$ is both open and closed. Then, since $T^1\cL$ is connected,
 $\overline{W^u(x)}=T^1\cL$. Indeed,
 if $z\in\overline{W^u(x)}$ and  $\cU$ is a product neighborhood of $z$ then, for every  recurrent point $y$ in $\cU$
 since $\overline{W^u(x)}$ is closed and invariant under the flow,
 and $W^{ss}(y)\cap{W^u(x)}\neq\emptyset$  ({\it remark} \ref{product}) one has $y\in\overline{W^u(x)}$
 (since $\lim_{t\to\infty}d(g_t(y),x)=0$). Since the recurrent points are dense it follows that $\cU\subset\overline{W^u(x)}$, hence $\overline{W^u(x)}$ is open.
   Similarly one shows that $\overline{W^s(x)}=T^1\cL$. This implies that the foliations $\cF^{cs}$ and  $\cF^{cu}$ are minimal.

\begin{definition} A measurable function $g : X \to \R$ is $W^{ss}$- saturated
(respectively  $W^{uu}$- saturated) if there is a set $\Omega\subset{X}$
with $\mu(\Omega)=1$ such that
$x,y\in\Omega$ and $y\in{W^{ss}}$ (respectively $y\in{W^{uu}}$) implies that
$g(x) = g(y)$, \ie $g$ is constant for points on the same stable and unstable sets in a set of full measure.
\end{definition}

 %%%%%%

 %%%%%%
 
\noi The following is a strong version,  due to Y. Coudène \cite{Cou3}, of the Eberhard Hopf argument (\cite{Hop},
\cite{Cou}, \cite{Cou1},\cite{Wilk}).
 \begin{theorem} \label{coudène} Let $X $be a metric space, $\nu$ a Borel probability measure on $X$, and $f_t: X \to X$ a measure-preserving flow on $X$. Then any $f_t$ invariant $L^2$-function is both $W^{ss}$-saturated and  $W^{uu}$-saturated.
 \end{theorem}
\begin{corollary}\label{stableinvariant} Let $X$ be a metric space, $\nu$ a Borel probability measure on $X$ and  $f_t: X \to X$ a measure preserving flow on $X$. Assume that any measurable function which is $W^{ss}$- saturated and $W^{uu}$- saturated and invariant under the flow is constant almost everywhere. Then $f_t$ is ergodic.
 \end{corollary}
 
 \noi {\it Theorem \ref{coudène}} , {\it Corollary} \ref{stableinvariant}  and minimality of the foliation $\cF^{cs}$
 imply that any continuous function $f:T^1\cL \to\R$ which is $g_t$-invariant is constant. Standard facts of analysis
 (\ie the fact that continuous functions are dense in $L^2$ functions, see \cite{Wilk}, page 12, for instance) imply that any measurable $g_t $-invariant function is constant
 \noi This finishes the proof of {\it theorem} \ref{ELGF}.\end{proof}
%%%%%%
\begin{remark} The ergodicity proof of {\it theorem} \ref{ELGF} given here holds for any compact minimal lamination by leaves of negative curvature, not necessarily constant if the lamination has a transverse measure invariant under holonomy: the geodesic flow on the unit tangent bundle is ergodic with respect to the measure that is locally the product of the transverse measure and the leafwise Liouville measure along the unit tangent bundle of the leaves.
\end{remark}
%%%%%%
\noi The group $G=\pi_1(M)\times\wg\times{\text{SO}^+(n,1)}$ acts transitively
on the set
\[
\mathcal{D}:=\{((z,\mathfrak{h}),(w,\mathfrak{h}))\,: z,w\in\sS^{n-1},\mathfrak{h}\in \times\wg) \}
\subset(\sS^{n-1}\times\wg)\times(\sS^{n-1}\times\wg),
\]
via by the diagonal action. The set $\mathcal{D}$ can be identified with the set of oriented geodesics
on the hyperbolic solenoid $(\cL,g_{can})$ with the canonical metric. The stabilizer $B$ of a point
$((z_0,\mathfrak{h}_0),(w_0,\mathfrak{h}_0))$ is non-compact and both $G/\pi_1(M)$ and
$G/(\pi_1(M)\times \wg)$ have finite measure. {\it Theorem} \ref{ELGF}
 implies the stronger fact that the diagonal action of $\pi_1(M)\times\wg$ on the subset
is weakly ergodic \ie any measurable invariant set is of measure 0 or 1. The action of $G$ on $\sS^{n-1}\times\wg$ is also weakly ergodic. Both facts follow by a
theorem of C. C. Moore (Proposition 3 in \cite{Moore}). From this it follows:

\begin{lemma}\label{ergodic} The action $\partial{F_{(\gamma,\mathfrak{g})}}$ of $\pi_1(M)\times\wg$, given by the formula
(\ref{actionboundary}), on the boundary
solenoid $\sS^{n-1}\times\wg$, is weakly ergodic with respect to the
product measure $\mu\times\rm{d}\mathbf{h}$, where $\mu$ is Lebesgue measure on the unit sphere
and $\rm{d}\mathbf{h}$ is Haar measure on $\wg$.
\end{lemma}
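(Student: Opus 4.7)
The plan is to reduce to the classical fact that a cocompact lattice in $\mathrm{SO}^{+}(n,1)$ acts ergodically on the sphere $\sS^{n-1}$ with respect to Lebesgue measure. Unpacking formula (\ref{actionboundary}), the two commuting subactions take the explicit form
\[
\partial F_{(e,\mathfrak{g})}(z,\mathfrak{h}) \;=\; (z,\, \mathfrak{h}\mathfrak{g}^{-1}), \qquad
\partial F_{(\gamma,e)}(z,\mathfrak{h}) \;=\; \bigl(\partial\Phi_\mathfrak{h}(\gamma)(z),\, j(\gamma)\mathfrak{h}\bigr),
\]
where $\partial\Phi_\mathfrak{h}(\gamma)\in\mathrm{Conf}^{+}(\sS^{n-1})$ is the M\"obius extension of the hyperbolic isometry $\Phi_\mathfrak{h}(\gamma)$. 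The $\wg$-subaction is the identity on the sphere factor and hence preserves $\mu\times\mathrm{d}\mathbf{h}$; the $\pi_1(M)$-subaction acts by M\"obius maps on each sphere-fibre and so preserves the measure class.

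Let $E\subset\sS^{n-1}\times\wg$ be measurable and $(\pi_1(M)\times\wg)$-invariant. First I would exploit only $\wg$-invariance. For $\mu$-almost every $z$, the slice $E_z := \{\mathfrak{h}\in\wg : (z,\mathfrak{h})\in E\}$ is Borel and, modulo Haar-null sets, right-translation invariant under every element of $\wg$. Since Haar measure is the unique right-invariant probability on the compact group $\wg$ and the right-translation action is transitive, every such slice has Haar measure in $\{0,1\}$. Setting $A := \{z \in \sS^{n-1} : \mathrm{d}\mathbf{h}(E_z) = 1\}$, Fubini yields the structural reduction $E = A\times\wg$ modulo $(\mu\times\mathrm{d}\mathbf{h})$-null sets.

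Next I would feed $E = A\times\wg$ into the $\pi_1(M)$-invariance. For each $\gamma\in\pi_1(M)$ the invariance becomes, modulo null, the equivalence $z\in A \iff \partial\Phi_\mathfrak{h}(\gamma)(z)\in A$ for a.e.\ $(z,\mathfrak{h})$. Fubini and countability of $\pi_1(M)$ then produce a full-$\mathrm{d}\mathbf{h}$-measure set $\mathcal{H}\subset\wg$ such that, for every $\mathfrak{h}\in\mathcal{H}$, the set $A$ is $\partial\Phi_\mathfrak{h}(\pi_1(M))$-invariant modulo $\mu$-null. By Proposition \ref{SCHT} together with the forthcoming Corollary \ref{Mhyperbolic}, this image is a cocompact lattice in $\mathrm{SO}^{+}(n,1)$; the classical Hopf boundary-ergodicity theorem, obtained by running Hopf's double-integration argument against the ergodic geodesic flow on the closed hyperbolic manifold $\H^{n}/\Phi_\mathfrak{h}(\pi_1(M))$, then forces $\mu(A)\in\{0,1\}$, whence $(\mu\times\mathrm{d}\mathbf{h})(E) = \mu(A)\in\{0,1\}$.

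The main obstacle is precisely the appeal to Corollary \ref{Mhyperbolic}: the whole argument rests on knowing that $\Phi_\mathfrak{h}(\pi_1(M))$ is a cocompact lattice for $\mathrm{d}\mathbf{h}$-a.e.\ $\mathfrak{h}$, which is the genuinely deep, solenoidal Mostow-type input established independently. Proposition \ref{SCHT} alone supplies only that $\Phi_\mathfrak{h}$ is a monomorphism into $\mathrm{SO}^{+}(n,1)$; discreteness and cocompactness of the image are indispensable for invoking classical boundary ergodicity. Once these are in hand on a full-measure set of transverse fibres, the remainder of the proof is Fubini bookkeeping plus the classical Hopf argument.
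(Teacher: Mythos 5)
Your argument has a genuine circularity at its central step. You invoke Corollary \ref{Mhyperbolic} to know that $\Phi_{\mathfrak{h}}(\pi_1(M))$ is a cocompact lattice in $\mathrm{SO}^+(n,1)$, and this is precisely the input that lets you run Hopf's boundary-ergodicity argument fibrewise. But in the paper Corollary \ref{Mhyperbolic} is deduced \emph{from} Lemma \ref{ergodic}: the Sullivan--Tukia step there (``an invariant measurable field of ellipsoids of non-trivial eccentricity would violate ergodicity'') is exactly where the present lemma is consumed. So the corollary is not ``established independently,'' and as written your proof assumes what the lemma is needed to prove. The circularity is in fact repairable without the corollary: discreteness of $\Phi_{\mathfrak{h}}(\pi_1(M))$ follows from restricting the proper discontinuity in Proposition \ref{SCHT} to a single fibre, and cocompactness then follows because $M$ is aspherical (its leaves are copies of $\tilde{M}\cong\H^n$), so $\H^n/\Phi_{\mathfrak{h}}(\pi_1(M))$ is an aspherical $n$-manifold homotopy equivalent to $M$ and hence closed by the top-homology argument. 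But you must supply this chain; you cannot cite the corollary.

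There is a second gap in your structural reduction $E=A\times\wg$. Before Corollary \ref{Mhyperbolic} one only knows, from Corollary \ref{qc-extension}, that $\partial F_{(e,\mathfrak{g})}$ is \emph{quasi-conformal} on each boundary sphere. In the coordinates of Definition \ref{compactification}, where $\mu$ really is Lebesgue measure on the round sphere, the first component of $\partial F_{(e,\mathfrak{g})}$ is the boundary extension of the quasi-isometry obtained by transporting the identity of $\tilde M$ through the leafwise isometries $I$ of Lemma \ref{sameuniversal}; it is the identity only if the laminated metric is already transversally locally constant, which is the conclusion of the whole section, not a hypothesis. Hence the slices $E_z$ are not merely permuted by right translation and your Fubini argument does not close (there is also the routine but unaddressed quantifier issue of upgrading ``invariant mod null for each $\mathfrak{g}$'' over the uncountable group $\wg$ to an almost-every-slice statement). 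The paper's proof sidesteps both problems by working upstairs with the laminated geodesic flow on the compact solenoid: the measure $\lambda\times\mathrm{d}\mathbf{h}$ (Liouville times Haar) is invariant, the Hopf--Mautner argument applies to this laminated Anosov flow, and minimality together with unique ergodicity of the transverse $\wg$-translations yields ergodicity of the flow, from which boundary ergodicity of $\pi_1(M)\times\wg$ follows by the usual dictionary --- no fibrewise lattice property is ever needed.
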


\begin{corollary}\label{Mhyperbolic} {\rm Still under the hypothesis $n\geq3$}.
For each $(\gamma,\mathfrak{g})$,
the map $\partial{F_{(\gamma,\mathfrak{g})}}$
restricts to a conformal map from the sphere
$\sS^{n-1}_{\mathfrak{h}}$ to the sphere
$\sS^{n-1}_{j(\gamma)\cdot\mathfrak{h}\cdot\mathfrak{g}^{-1}}\,$, with respect to their natural conformal structures \ie the restriction belongs to $\text{SO}^+(n,1)$.
In particular, choosing $\mathfrak{h}=\mathfrak{e}$, the identity element
of $\pi_1(M)$, and $\mathfrak{g}=j(\gamma)$ we have that
$\partial{F_{(\gamma,j(\gamma))}}$ maps conformally $\sS^{n-1}_{\mathfrak{e}}$
onto itself.
 
\noi Denoting by
$f_\gamma:\sS^{n-1}_{\mathfrak{e}}\to\sS^{n-1}_{\mathfrak{e}}$ this restriction
one has that $f_\gamma\in{\text{SO}^+(n,1)}$. Furthermore, if
$\gamma_1,\gamma_2\in\pi_1(M)$ then
$f_{\gamma_1\cdot\gamma_2}=f_{\gamma_1}\circ{f_{\gamma_2}}$, in other words,
the subgroup $\pi_1(M)\times{j(\pi_1(M))}\subset\pi_1(M)\times\wg$
is isomorphic to $\pi_1(M)$, leaves invariant  the sphere $\sS^{n-1}_{\mathfrak{e}}$.
The group $G=\{f_\gamma \,:\, \gamma\in\pi_1(M)\}\simeq\pi_1(M)$
extends to the closed disk
$\bar\H^n\times\{\mathfrak{e}\}$ and acts on the interior
$\H^n\times\{\mathfrak{e}\}$ by hyperbolic isometries, freely, co-compactly
and $(\H^n\times\{\mathfrak{e}\})/G=M$. So that $M$ is a $n$-hyperbolic Manifold
and we conclude that the fundamental group of $M$ is isomorphic to a uniform lattice of $\text{SO}^+(n,1)$.\end{corollary}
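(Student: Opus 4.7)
My plan is to follow the classical Mostow rigidity strategy adapted to the solenoidal setting. The three ingredients I would use are: the quasi-conformal boundary extension of Corollary \ref{qc-extension}, the uniform bi-Lipschitz bounds from Proposition \ref{extension-at-infinity}, and the ergodicity of the boundary action supplied by Lemma \ref{ergodic}. Combined with the Rademacher-Stepanov a.e. differentiability of quasi-conformal maps and Liouville's rigidity theorem in dimension $n\geq 3$, these should upgrade each boundary restriction from quasi-conformal to conformal (i.e., M\"obius); the remaining assertions will then follow by specialization and a standard cocompact-deck-transformation argument.

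First I would observe that the maps $\partial F_{(\gamma,\kg)}$ are uniformly $K$-quasi-conformal on each sphere $\sS^{n-1}_\kh\to\sS^{n-1}_{j(\gamma)\kh\kg^{-1}}$, since the leafwise diffeomorphisms $F_{(\gamma,\kg)}$ are uniformly bi-Lipschitz by Proposition \ref{extension-at-infinity} and $\mathbb{M}$ is compact. Applying Rademacher-Stepanov, I get a.e. differentiability and, from the derivative, a measurable Beltrami differential on each source sphere. These assemble into a measurable conformal structure $\mathcal{C}$ on the whole boundary solenoid $\sS^{n-1}\times\wg$ (a measurable section of the bundle of conformal classes on the leaves), invariant under the entire $\pi_1(M)\times\wg$ action, because the group acts through quasi-conformal maps whose Beltrami differentials transform equivariantly.

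Next I would invoke the ergodicity of the diagonal action in Lemma \ref{ergodic} to conclude that this invariant measurable conformal structure $\mathcal{C}$ coincides a.e. with the standard round one on each sphere. The mechanism is the classical Mostow/Tukia argument: an invariant measurable ellipse field which is not (a.e.) a field of round balls produces, via the ratio-of-axes function, a non-constant invariant measurable function on pairs of distinct boundary points; ergodicity forces this function to be essentially constant, and a normalization of its eccentricity then forces it to be identically $1$. Hence each restriction $\partial F_{(\gamma,\kg)}|_{\sS^{n-1}_\kh}$ is $1$-quasi-conformal, and for $n\geq 3$ Liouville's rigidity theorem promotes it to an element of $SO^+(n,1)$.

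Specializing to $\kh=\mathfrak{e}$ and $\kg=j(\gamma)$, the target label $j(\gamma)\cdot\mathfrak{e}\cdot j(\gamma)^{-1}$ equals $\mathfrak{e}$, so the leaf $\sS^{n-1}_\mathfrak{e}$ is preserved and $f_\gamma:=\partial F_{(\gamma,j(\gamma))}|_{\sS^{n-1}_\mathfrak{e}}\in SO^+(n,1)$. The composition law $f_{\gamma_1\gamma_2}=f_{\gamma_1}\circ f_{\gamma_2}$ follows from $F_{(\gamma_1,j(\gamma_1))}\circ F_{(\gamma_2,j(\gamma_2))}=F_{(\gamma_1\gamma_2,\,j(\gamma_1\gamma_2))}$, since $j$ is a homomorphism and the action law in (\ref{action}) is a left action. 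Finally, each $f_\gamma$ extends uniquely to a hyperbolic isometry of $\H^n_\mathfrak{e}$; the resulting action of $G\simeq\pi_1(M)$ on $\H^n$ is conjugate, via the leafwise isometry $I$ of Lemma \ref{sameuniversal} restricted to the leaf $\mathfrak{e}$, to the deck-transformation action of $\pi_1(M)$ on the Riemannian universal cover $\tilde M$. The latter is free, properly discontinuous and cocompact with quotient $M$, so $\H^n/G=M$ and $M$ is a compact hyperbolic $n$-manifold. The hard part will be the ergodic-to-conformality step: verifying that the Beltrami-type invariant constructed leafwise genuinely assembles into a measurable object on the entire boundary solenoid to which Lemma \ref{ergodic} applies, and checking that the standard Mostow/Tukia reduction goes through uniformly in the Cantor-transverse variable with respect to the product of Lebesgue and Haar measure.
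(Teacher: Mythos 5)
Your proposal follows essentially the same route as the paper's proof: Rademacher--Stepanov a.e.\ differentiability of the quasi-conformal boundary maps, an invariant measurable ellipsoid field whose non-roundness would contradict the ergodicity of Lemma \ref{ergodic} (the Sullivan--Tukia argument), and Liouville-type rigidity in dimension $n\geq 3$ to promote $1$-quasi-conformality to membership in $SO^+(n,1)$. The paper's proof is only a sketch of this same argument, so your additional details on the specialization $\kh=\mathfrak{e}$, $\kg=j(\gamma)$, the composition law, and the identification of the resulting action with the deck action of $\pi_1(M)$ are a correct filling-in rather than a different approach.
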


\begin{proof} By {\it Corollary} \ref{qc-extension} each
$\partial{F_{(\gamma,\mathfrak{g})}}$ is quasi-conformal, therefore by
the theorem of Rademacher-Stepanov (\cite{RM},\cite{Ste}, \cite{Va} page 97)
$\partial{F_{(\gamma,\mathfrak{g})}}$ is differentiable almost everywhere with respect to the measure
$\mathbf{\mu}$. Hence the differentials of the group elements, which are defined at a set of total measure one, send round $\sS^{n-2}$ spheres tangent to spheres on the boundary leaves onto  $(n-2)$-ellipsoids of bounded eccentricity. To prove the corollary one uses the fact that if the action were not 1-quasi-conformal then the action
would leave invariant a field of ellipsoids of eccentricity different from on, on a set of positive measure (see  \cite{Su3} and
\cite{Tu}), and this fact would violate, by {\it lemma}, \ref{ergodic} ergodicity of the action of $\pi_1(M)\times\wg$. Therefore all elements are 1-quasi-conformal and well-known and deep analytic theorems imply that they are conformal diffeomorphisms \cite{Va}.\end{proof}

\begin{remark}\label{isomtocanonical}
By {\it corollary} \ref{Mhyperbolic} it follows that if $n\geq3$ then the action of the subgroup
$\pi_1(M)\times{j(\pi_1(M))}\simeq\pi_1(M)$ of $\pi_1(M)\times\wg$ on $\H^n\times\wg$ given by formula \ref{combinedaction}
acts conformally and leaves invariant  the sphere $\sS^{n-1}_{\mathfrak{e}}$.
The group $G=\{f_\gamma=F_{(\gamma,j(\gamma))} \,:\, \gamma\in\pi_1(M)\}\simeq\pi_1(M)$
extends by isometries to $\H^n\times\{\mathfrak{e}\}$ and the action is properly discontinuous, free, and uniform. Therefore, we can assume that if $n\geq3$ the hyperbolic McCord solenoidal manifold $(\cL^n,g)$ is based on the hyperbolic
$n$-manifold $M=(\H^n\times\{\mathfrak{e}\})/G$ and $(\cL^n,g)$ is isometric to
$(\cL^n,g_{can})$.

\end{remark}
 
 \begin{remark}[Structural stability of the hyperbolic solenoidal geodesic flow] One of the most remarkable and celebrated theorems of D. V. Anosov \cite{An} is
the fact that the geodesic flow on a compact manifold with a Riemannian metric of negative sectional curvature is structurally stable.
In fact, given any two Riemannian metrics of strictly negative sectional curvature on the smooth compact manifold $N$, {\it if the metrics are sufficiently close in the $C^1$ topology,} then their corresponding geodesic flows  $g_t, g'_t$  are topologically equivalent
\ie there exists a homeomorphism $h:T^1{N} \to T^1{N}$ which transforms the oriented trajectories of  $g_t$  into the oriented trajectories of $g_t$.
Furthermore, the homeomorphism $h$ depends continuously on the metrics.
A proof  using functional analysis originally due to
J. Mather and J. Moser \cite{Math, Mos} can be found in  \cite{RoVe}
 and \cite{V2}. The proof in \cite{RoVe}, which corrects an error in \cite{Math}, can be easily adapted to show structural
 stability for the laminated geodesic flow $g_t:T^1\cL\to{T^1\cL}$.
\end{remark}

\subsection{Mostow rigidity theorem for hyperbolic solenoids}\label{Mostow}
The proofs of the results in the previous section were very natural and could have been done by following
several other approaches, for instance Mostow \cite{Mo, Mo1}, Prasad \cite{Pr}, Gromov \cite{Gr} (with an appropriate notion of
solenoidal simplicial volume \cite{Gr1}) or Besson, Courtois, and Gallot \cite{BCG}. We have used the proof in the book by Thurston \cite{Th1}. Of course, the idea was to adapt a leafwise version of what is done in \cite{Th1}. With the aid of the results in section \ref{hypsol} one can prove the following solenoidal version
of Mostow's rigidity theorem.

\begin{theorem}[Mostow rigidity for hyperbolic McCord solenoids]\label{mostow}
Let $\cL_1$ and $\cL_2$ be two
compact, McCord hyperbolic, $n$-dimensional  solenoidal manifolds with $n\geq3$
and simply connected leaves. Thus the two laminations are endowed with Riemannian metrics with leaves homeomorphic to $\H^n$. Let $f:\cL_1\to\cL_2$ be a homotopy equivalence (in particular a homeomorphism). Then, there exists
an isometry $g:\cL_1\to\cL_2$ homotopic to $f$. \end{theorem}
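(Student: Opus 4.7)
The plan is to transplant the classical Mostow rigidity argument (as presented in Thurston's notes) to the solenoidal setting, using as the main engine the infrastructure already established in the hyperbolic solenoid section: the solenoidal Cartan--Hadamard theorem, the quasi-conformal boundary extension, and the ergodicity of the leafwise geodesic flow. Write $\cL_i$ as McCord solenoids of a compact $n$-manifold $M_i$ with $\Gamma_i=\pi_1(M_i)$, and identify their universal solenoidal covers with $\H^n\times\wg_i$ equipped with the canonical leafwise hyperbolic metric, as in Proposition \ref{SCHT}. The homotopy equivalence $f:\cL_1\to\cL_2$ lifts, by Theorem \ref{solenoid-lifting}, to a leaf-preserving continuous map
\[
\tilde f:\H^n\times\wg_1\longrightarrow \H^n\times\wg_2,\qquad \tilde f(x,\mathfrak{g})=(\tilde f_{\mathfrak{g}}(x),h(\mathfrak{g})),
\]
with $h:\wg_1\to\wg_2$ continuous. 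Since $\cL_1$ and $\cL_2$ are compact and the leafwise metric on each $\cL_i$ is locally isometric to $\H^n$, the argument of Proposition \ref{extension-at-infinity} applies uniformly in the transverse direction: each $\tilde f_{\mathfrak{g}}$ is a leafwise quasi-isometry with constants that can be chosen independently of $\mathfrak{g}$.

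The next step is to extend $\tilde f$ continuously to the spherical boundary solenoid $\sS^{n-1}\times\wg_i$ of Definition \ref{compactification}. Exactly as in Corollary \ref{qc-extension}, the Morse--Mostow lemma, applied leafwise, shows that each quasi-geodesic $\tilde f_{\mathfrak{g}}\circ\alpha$ is shadowed by a unique geodesic $\alpha'$ in the target leaf; the assignment $\alpha(\infty)\mapsto\alpha'(\infty)$ defines a leafwise quasi-conformal map
\[
\partial\tilde f:\sS^{n-1}\times\wg_1\longrightarrow \sS^{n-1}\times\wg_2,
\]
whose restriction to each boundary sphere $\sS^{n-1}\times\{\mathfrak{g}\}$ is a quasi-conformal self-map with distortion bounded independently of $\mathfrak{g}$. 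Morever, $\partial\tilde f$ is $(\Gamma_1\times\wg_1,\Gamma_2\times\wg_2)$-equivariant with respect to the boundary extensions of the actions (\ref{action-Gamma-hatGamma}).

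Now the Rademacher--Stepanov theorem grants almost everywhere differentiability of each leafwise restriction, so $\partial\tilde f$ transports a measurable field of infinitesimal ellipsoids on $\sS^{n-1}\times\wg_1$ invariant under the $\Gamma_1\times\wg_1$-action. By Lemma \ref{ergodic}, this diagonal action on the sphere bundle is ergodic for the product of Lebesgue and Haar measure, so the eccentricity of this ellipsoid field must be almost everywhere constant; the standard Sullivan--Tukia argument then forces the eccentricity to equal $1$, i.e.\ each leafwise restriction of $\partial\tilde f$ is $1$-quasi-conformal and hence, by the analytic measurable Riemann mapping theorem in dimension $n\geq 3$, a Möbius transformation in $SO^+(n,1)$. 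Extending these boundary Möbius maps radially produces a leafwise hyperbolic isometry $\tilde g:\H^n\times\wg_1\to\H^n\times\wg_2$ which is still equivariant with respect to the actions of $\Gamma_1\times\wg_1$ and $\Gamma_2\times\wg_2$, and hence descends to an isometry $g:\cL_1\to\cL_2$.

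It remains to show that $g$ is homotopic to $f$. Since both $\tilde g$ and $\tilde f$ are leaf-preserving lifts that induce the same boundary map $\partial\tilde f$ on each leaf, the straight-line (geodesic) homotopy in each hyperbolic leaf of the target, i.e.\
\[
H_t(x,\mathfrak{g})=\bigl(\gamma_{(x,\mathfrak{g}),t},\,h(\mathfrak{g})\bigr),
\]
where $\gamma_{(x,\mathfrak{g}),t}$ is the unique unit-speed geodesic from $\tilde f_{\mathfrak{g}}(x)$ to $\tilde g_{\mathfrak{g}}(x)$ in the leaf of $\H^n\times\wg_2$ indexed by $h(\mathfrak{g})$, depends continuously on the transverse parameter because the laminated metric does, and descends to a homotopy from $f$ to $g$ on $\cL_1$. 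The main obstacle I expect is the last step in paragraph three: justifying that the measurable ellipsoid field extracted from $\partial\tilde f$ is genuinely equivariant for the \emph{combined} action of $\Gamma_i\times\wg_i$ and not merely for $\Gamma_i$, since only then does Lemma \ref{ergodic} deliver constancy of the eccentricity; this is where the uniform (in $\mathfrak{g}$) bi-Lipschitz control provided by Proposition \ref{extension-at-infinity}, and the fact that the transverse $\wg_i$-action is by translations on the fibers, must be used crucially to propagate the pointwise ergodic conclusion across the Cantor transversal.
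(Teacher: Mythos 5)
Your analytic core --- lift to $\H^n\times\wg_i$, uniform leafwise quasi-isometry, Morse--Mostow boundary extension, Rademacher--Stepanov plus ergodicity (Lemma \ref{ergodic}) plus the Sullivan--Tukia argument to force $1$-quasi-conformality, radial extension and descent --- is exactly the engine the paper uses. Where you diverge is at the outset: the paper does \emph{not} run this argument directly on a map between two a priori different solenoids. It first observes that, under the hypotheses, the holonomy pseudogroups are Morita equivalent and the leaves are contractible, so by the classification theorem of Clark--Hurder--Lukina \cite{CHL} the two solenoids $\cL_1$ and $\cL_2$ are already homeomorphic; the theorem is thereby reduced to Lemma \ref{homisometry}, a statement about a single solenoid carrying two laminated hyperbolic metrics and a map homotopic to the identity. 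That reduction buys something concrete: for a map homotopic to the identity the lift can be normalized so that its transverse component is the identity on $\wg$ (this is the corollary following Remark \ref{liftfail}), and the equivariance needed to feed the pulled-back ellipsoid field into Lemma \ref{ergodic} is then automatic.

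Your direct route is closer to the classical Mostow argument, but it places real weight on the point you yourself flag at the end: Theorem \ref{solenoid-lifting} only produces \emph{some} continuous transverse map $h:\wg_1\to\wg_2$, lifts are highly non-unique, and (Remark \ref{liftfail}) the usual lifting formalism misbehaves for solenoids. To make your ellipsoid field invariant under the full $\pi_1(M_1)\times\wg_1$-action --- which is what Lemma \ref{ergodic} actually requires --- you must show that $h$ can be taken to be the continuous isomorphism of profinite completions induced by $f_*:\pi_1(M_1)\to\pi_1(M_2)$ and that $\tilde f$ is equivariant for the actions of formula (\ref{action-Gamma-hatGamma}); neither is supplied by the lifting theorem as stated. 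This is not an unfixable defect, but it is precisely the bookkeeping the paper's preliminary reduction via \cite{CHL} is designed to avoid, and as written your proposal leaves it as an acknowledged hole rather than closing it.
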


\noi Under the hypothesis, the holonomy pseudogroup of local homeomorphisms of the Cantor set are Morita equivalent \cite{BC}, \cite{Hae}.
Since the leaves of both solenoids are
homeomorphic to $\R^n$-- it follows from \cite{CHL}
that $\cL_1$ and $\cL_2$ are homeomorphic. The proofs in \cite{CHL}, for the general case of solenoids with contractible leaves, use some subtle facts by Farrel and Jones about the Borel Conjecture for manifolds with negative curvature, valid for dimensions greater than four. However, in our case, the leaves are homeomorphic to $\R^n$ and we avoid exoticism. Therefore, the proof of {\it theorem} \ref{mostow} is reduced to the following lemma:

\begin{lemma}\label{homisometry}
 Let $f:(\cL^n,h_1)\to(\cL^n,h_2)$ be a continuous map homotopic to the identity,  where
$\cL^n$ is a compact McCord hyperbolic, $n$-dimensional  
solenoidal manifold, with leaves isometric to $\H^n$, with $n\geq3$ and $h_1$, $h_2$ are laminated hyperbolic metrics.
Then $f$ is homotopic to a laminated isometry. In particular if two compact,
topologically transitive, hyperbolic solenoids of dimension $n\geq3$ are homeomorphic
then there exists a homeomorphism which is a leafwise isometry.
\end{lemma}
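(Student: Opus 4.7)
The plan is to adapt the classical boundary-extension proof of Mostow rigidity to the laminated setting, essentially repeating the analysis of the preceding section applied to both hyperbolic structures $h_1$ and $h_2$ on the same McCord solenoid. First I would pass to the common universal covering solenoid $\H^n\times\wg$, identified via Lemma \ref{sameuniversal} with the Riemannian universal cover of each of $(\cL^n,h_1)$ and $(\cL^n,h_2)$. Since $f$ is homotopic to the identity, it preserves every leaf, so the corollary following Theorem \ref{solenoid-lifting} produces a lift $\tilde f:\H^n\times\wg\to\H^n\times\wg$ of the form $\tilde f(x,\mathfrak{g})=(\tilde f_{\mathfrak{g}}(x),\mathfrak{g})$. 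After first replacing $f$ by a leafwise smooth representative in its homotopy class (e.g.\ via the heat flow of Theorem \ref{Solenoidal-Eells-Sampson}, or direct laminated smoothing), compactness of $\cL^n$ furnishes a bounded fundamental domain, and the argument already used in Proposition \ref{extension-at-infinity} shows that each $\tilde f_{\mathfrak{g}}$ is a quasi-isometry of $\H^n$ with constants uniform in $\mathfrak{g}\in\wg$.

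Next, invoking the Morse-Mostow lemma exactly as in Corollary \ref{qc-extension}, each $\tilde f_{\mathfrak{g}}$ extends continuously to a quasi-conformal self-homeomorphism $\partial\tilde f_{\mathfrak{g}}$ of the boundary sphere $\sS^{n-1}$, with dilatation uniformly bounded in $\mathfrak{g}$. Assembling these maps yields a fibrewise quasi-conformal boundary map $\partial\tilde f:\sS^{n-1}\times\wg\to\sS^{n-1}\times\wg$, equivariant with respect to the extended boundary action of the deck group $\pi_1(M)\times\Gamma_\alpha$. By Lemma \ref{ergodic}, this action is ergodic on $(\sS^{n-1}\times\wg,\lambda\times\mathrm{d}\mathbf{h})$.

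Now the argument used to prove Corollary \ref{Mhyperbolic} applies almost verbatim. By the Rademacher-Stepanov theorem, $\partial\tilde f$ is differentiable almost everywhere on each boundary sphere; the field of infinitesimal ellipsoids it determines has dilatation bounded and invariant under the ergodic action, and the Sullivan-Tukia argument forces this dilatation to equal $1$ a.e., so $\partial\tilde f$ is conformal on each boundary leaf. Since $n\geq 3$, every conformal self-diffeomorphism of $\sS^{n-1}$ is the boundary extension of a unique hyperbolic isometry of $\H^n$, producing for each $\mathfrak{g}$ a leafwise hyperbolic isometry $\tilde g_{\mathfrak{g}}$ with the same boundary values as $\tilde f_{\mathfrak{g}}$. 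Setting $\tilde g(x,\mathfrak{g})=(\tilde g_{\mathfrak{g}}(x),\mathfrak{g})$ defines a laminated isometry.

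Finally, the equivariance of $\partial\tilde f$ propagates to $\tilde g$ under $\pi_1(M)\times\Gamma_\alpha$, so $\tilde g$ descends to a laminated isometry $g:(\cL^n,h_1)\to(\cL^n,h_2)$. Because $\tilde f$ and $\tilde g$ induce identical boundary maps on each Hadamard leaf, the unique-geodesic interpolation between them yields a leafwise homotopy which descends to a homotopy from $f$ to $g$. The main obstacle is the ergodicity step: one must check carefully that the Sullivan-Tukia argument and the Rademacher-Stepanov theorem go through for the boundary of a compact hyperbolic McCord solenoid, so that $1$-quasi-conformality of $\partial\tilde f$ can be extracted from the ergodic action of $\pi_1(M)\times\wg$ on $\sS^{n-1}\times\wg$; once this is in hand, everything else is mechanics already developed in Section \ref{hypsol}.
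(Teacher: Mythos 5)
Your proposal is correct and follows essentially the same route as the paper: lift $f$ to $\H^n\times\wg$ via the solenoidal Cartan--Hadamard identification, show the lift is a uniform leafwise quasi-isometry, extend to a fibrewise quasi-conformal map of the spherical boundary solenoid as in Corollary \ref{qc-extension}, and then use ergodicity (Lemma \ref{ergodic}) together with the Rademacher--Stepanov and Sullivan--Tukia arguments to force leafwise conformality, hence a leafwise isometry that descends and is geodesically homotopic to $f$. The paper's own proof is just a terser version of exactly this argument, citing Proposition \ref{extension-at-infinity} and its corollaries.
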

 
\begin{proof}The proof of {\it lemma} \ref{homisometry} is almost identical to
{\it proposition} \ref{extension-at-infinity} and its corollaries. By {\it proposition} \ref{SCHT},
 $\cL^n$ is covered by the algebraic universal covering
$\tilde{\mathbb{M}}=\tilde{M}\times\wg$
of a hyperbolic $n$-manifold $M$. Using the solenoidal Cartan-Hadamard we lift the homotopy
equivalence $f$ to a homotopy equivalence
$\tilde{f}:\tilde{M}\times\wg\to\tilde{M}\times\wg$. Using the same arguments as  
{\it corollary} \ref{qc-extension}, and ergodicity ({\it lemma} \ref{ergodic}), this homotopy equivalence extends to a homeomorphism
$F:\sS^{n-1}_{\wg}\to\sS^{n-1}_{\wg}$ of the spherical boundary
solenoid $\sS^{n-1}_{\wg}=\sS^{n-1}\times\wg$ as a homeomorphism which is conformal on each sphere. Therefore this boundary map extends to a map $\pi_1(M)$-equivariant $\hat{F}:\bar\H^n\times\wg\to\bar\H^n\times\wg$ which is an isometry on
$\H^n\times\wg$. The map $\bar{F}$ descends to a leafwise isometry, isotopic to the identity, between
$(\cL^n,h_1)$ and $(\cL^n,h_2)$.
\end{proof}

\noi {\it Theorem} (\ref{mostow}) implies

 \begin{theorem}\label{base-hyp} If $(\cL^n,\hat{g})$ is a hyperbolic, $n$-dimensional McCord solenoidal manifold with $n\geq3$ then $\cL^n$ is isometric to the solenoid
 $(\cL^n,g_{can})$ obtained as the inverse limit of an infinite tower of regular coverings of a compact hyperbolic
 $n$-manifold $(M,g)$, where the solenoid
 is endowed with the canonical metric ({\it definition} \ref{canonicalmetric}).
 
\noi We recall that this metric is the leafwise pullback of the hyperbolic metric $g$ of $M$
under the canonical projection $\Pi:\cL^n\to{M}$. The solenoidal $n$-manifold corresponds to a conjugacy class
 of a closed normal subgroup $\Gamma$ of the Cantor group $\wg=\wpi$,
 such that the quotient group $\wg/\Gamma$ is a Cantor group. In other words,
 $(\cL^n,\hat{g})$ is isometric to the quotient of $\H^n\times{M^n}$
 under the free,
 properly discontinuous and co-compact action of $\pi_1(M)$ on $\H^n\times\wg$ of the form
 $f_\gamma(x,\kh)=(\gamma(x),j(\gamma)\cdot\kh)\,$ $($$\gamma\in\pi_1(M)$$)$,
 where $\gamma$ acts by isometries on $\H^n$. Any hyperbolic manifold in the same commensurability class of $M$ determines $(\cL^n,\tg)$.
 \end{theorem}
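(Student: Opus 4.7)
The plan is to assemble the pieces developed in Section \ref{hypsol}. Since $(\cL^n,\hat{g})$ is assumed McCord, Proposition \ref{uncountable-subgroups} and Remark \ref{UPM} yield a compact manifold $M$ with residually finite fundamental group such that $\cL^n$ is isomorphic, as a principal $\wg/\Gamma$-bundle, to $\tilde{\mathbb{M}}/\Gamma$, where $\tilde{\mathbb{M}} = \tM\times\wg$ with $\wg=\wpi$, and $\Gamma$ is a closed normal subgroup of $\wg$ with Cantor quotient. Lifting $\hat{g}$ through $\hat\Pi:\tilde{\mathbb{M}}\to\cL^n$ gives a laminated metric $\tg$ whose leaves have constant sectional curvature $-1$.

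Next I would invoke the Solenoidal Cartan--Hadamard Theorem (Proposition \ref{SCHT}) to obtain a leafwise isometry $I:(\H^n_{\wg},\mathcal{h}_{\wg})\to(\tilde{\mathbb{M}},\tg)$ intertwining the $\pi_1(M)\times\Gamma$-actions, which now takes the form (\ref{action}). The crucial structural input is Corollary \ref{Mhyperbolic}: in dimension $n\geq 3$, the quasi-conformal boundary extension of Corollary \ref{qc-extension}, together with ergodicity of the leafwise geodesic flow on the boundary solenoid (Lemma \ref{ergodic}), Rademacher--Stepanov differentiability, and the Sullivan--Tukia ellipsoid field argument, forces each $\partial F_{(\gamma,j(\gamma))}$ to be genuinely conformal. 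This upgrades each monomorphism $\Phi_{\mathfrak{e}}:\pi_1(M)\to SO^+(n,1)$ to have discrete image acting freely, properly discontinuously and cocompactly on $\H^n$ with quotient $M$; hence $M$ is a compact hyperbolic $n$-manifold.

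With $M$ now known to be hyperbolic, the action (\ref{action-Gamma-hatGamma}) is free, properly discontinuous, cocompact and leafwise isometric, so the quotient $\tilde{\mathbb{M}}/(\pi_1(M)\times\Gamma)$ equipped with the canonical pullback metric is one hyperbolic McCord solenoid over $M$ realizing the description in the statement. To identify $\hat{g}$ with this canonical metric I would apply Mostow rigidity for hyperbolic McCord solenoids (Theorem \ref{mostow} via Lemma \ref{homisometry}) to the identity self-map of $\cL^n$ carrying $\hat{g}$ to the canonical metric: both laminated metrics are hyperbolic and the identity is a homotopy equivalence, so it is leafwise homotopic to an isometry. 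This simultaneously gives the explicit quotient description of $(\cL^n,\hat{g})$ and identifies $\Gamma$ as the determining closed normal subgroup.

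For the commensurability assertion: if $M'$ is commensurable to $M$ via a common finite cover $M_3$, then the towers of finite-sheeted covers of $M$ and of $M'$ are both cofinal with the tower of covers of $M_3$, so their algebraic universal solenoidal covers are canonically identified and $\widehat{\pi_1(M')}\cong\wg$. The lattices of closed normal subgroups with Cantor quotient therefore coincide, and the same $(\cL^n,\tg)$ arises over $M'$. The genuinely hard part of the whole program lives inside the invoked results --- specifically the ergodicity plus Rademacher--Stepanov plus Sullivan--Tukia argument underwriting Corollary \ref{Mhyperbolic} --- so the present theorem is a structural synthesis that packages the uniqueness of the hyperbolic structure, the existence of the hyperbolic base, and the commensurability invariance into a single statement.
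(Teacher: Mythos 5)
Your proposal is correct and follows essentially the same route as the paper: the text preceding the theorem establishes, via Proposition \ref{SCHT}, Corollary \ref{qc-extension}, Lemma \ref{ergodic} and Corollary \ref{Mhyperbolic}, that the base manifold $M$ is compact hyperbolic, and the paper then derives Theorem \ref{base-hyp} directly from the solenoidal Mostow rigidity theorem (Theorem \ref{mostow}, via Lemma \ref{homisometry}) applied exactly as you do, to identify the given laminated metric with the canonical pullback metric. Your expansion of the commensurability assertion via cofinality of the towers matches the paper's earlier proposition on commensurable geometric manifolds having isometric algebraic universal covers.
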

 \begin{remark} The McCord Solenoidal manifold
 $(\cL^n,g_{can})$ is locally homogeneous since it is isometrically covered by the universal solenoid $(\mathbb{M},g_{cal})$ ({\it remark} \ref{lochom}).
 \end{remark}
 
 \begin{corollary} The Teichmüller space of a compact $n$-dimensional
 topologically transitive hyperbolic solenoid (\ie a McCord solenoid)
  is one point if $n\geq3$.
 \end{corollary}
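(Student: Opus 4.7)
The plan is to deduce the corollary directly from Theorem \ref{mostow} (solenoidal Mostow rigidity) and its technical refinement Lemma \ref{homisometry}, together with the definition of the Teichmüller space as given (in the surface case) prior to \ref{UHL} and adapted in the obvious way to higher dimensions: two laminated hyperbolic metrics $h_1,h_2$ on $\cL^n$ represent the same point of $\mathcal{T}(\cL^n)$ if there is a laminated diffeomorphism $f:\cL^n\to\cL^n$, isotopic to the identity, which pulls $h_2$ back to $h_1$.

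First, I would fix a compact, topologically transitive, hyperbolic solenoidal $n$-manifold $\cL^n$ with $n\geq 3$ and pick any two laminated hyperbolic metrics $h_1, h_2$ on it. By hypothesis and Theorem \ref{ClarkHurder}, $\cL^n$ is a McCord solenoid, so both $(\cL^n,h_1)$ and $(\cL^n,h_2)$ are compact McCord hyperbolic $n$-dimensional solenoidal manifolds in the sense of Theorem \ref{mostow}. I would then apply Lemma \ref{homisometry} to the identity map $\mathrm{id}:(\cL^n,h_1)\to(\cL^n,h_2)$, which is trivially a continuous map homotopic to the identity; this yields a laminated isometry $g:(\cL^n,h_1)\to(\cL^n,h_2)$ homotopic to $\mathrm{id}$.

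Because $g$ is homotopic to the identity, it preserves each leaf (homotopy cannot shuffle leaves in a compact minimal lamination since the transverse structure is totally disconnected). Hence $g^{*}h_2=h_1$ with $g$ a leafwise isometry isotopic to the identity, which is exactly the equivalence relation defining $\mathcal{T}(\cL^n)$. Therefore $h_1$ and $h_2$ determine the same class, and since they were arbitrary, $\mathcal{T}(\cL^n)$ reduces to a single point.

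The main obstacle in the chain of reasoning is not in the corollary itself, which is essentially a one-line consequence, but in the ingredient Lemma \ref{homisometry}: namely, one needs the quasi-conformal boundary extension of the lifted homotopy equivalence to $\bar\H^n\times\wg$, together with ergodicity of the $\pi_1(M)\times\wg$-action on the spherical boundary solenoid (Lemma \ref{ergodic}) and the Sullivan--Tukia argument used in Corollary \ref{Mhyperbolic} to upgrade quasi-conformal to $1$-quasi-conformal (hence conformal, by Rademacher--Stepanov and the analytic rigidity of $1$-quasi-conformal maps in dimension $\geq 3$). These are exactly the steps that force the hypothesis $n\geq 3$; in dimension $2$ the argument breaks down because there are many conformal structures on the sphere modulo quasi-conformal deformations, which is precisely why $\mathcal{T}(\cL_{\mathfrak{h}})$ is infinite-dimensional.
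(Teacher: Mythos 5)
Your proposal is correct and follows essentially the same route as the paper: the corollary is an immediate consequence of Lemma \ref{homisometry} (equivalently, of Theorem \ref{mostow} via Theorem \ref{base-hyp}), applied to the identity map between $(\cL^n,h_1)$ and $(\cL^n,h_2)$, which produces a leafwise isometry isotopic to the identity and hence identifies the two metrics in $\mathcal{T}(\cL^n)$. You also correctly locate the real content (and the role of $n\geq3$) in the boundary extension, ergodicity, and the Sullivan--Tukia $1$-quasi-conformality argument feeding into Lemma \ref{homisometry}, exactly as in the paper.
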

 
 \begin{remark} We have not dealt with geometric solenoidal manifolds
which are not compact but are of finite solenoidal volume
({\it definition} \ref{solenoidalmeasure}). In the case of
$n$-dimensional, connected, hyperbolic McCord solenoidal manifolds of finite volume
when $n\geq3$, most of our results hold. In particular the following
version of Mostow's rigidity theorem for hyperbolic McCord solenoids of finite
volume holds:
\begin{theorem}
{\it Any connected $n$-dimensional hyperbolic solenoidal manifold of
finite solenoidal volume (given by {\it definition} \ref{solenoidalmeasure}) is isometric to the solenoid obtained as the inverse limit of an increasing tower of finite regular coverings of a complete hyperbolic manifold of finite volume. The number
of solenoidal ``cusps'' is finite and they are $(n-1)$-dimensional solenoidal nil-manifolds}.
\end{theorem}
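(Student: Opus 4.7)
The plan is to mimic, in the finite-volume setting, the compact-case arguments that yielded \emph{Theorem} \ref{mostow} and \emph{Theorem} \ref{base-hyp}, with the extra geometric ingredient being the Margulis thick--thin decomposition applied leafwise. First I would apply the classical Margulis lemma along each leaf: since every leaf of $\cL^n$ is a complete hyperbolic $n$-manifold with $n\geq 3$ and the laminated Riemannian metric varies continuously in the transverse Cantor direction, one gets a uniform Margulis constant $\epsilon>0$ and a saturated thick/thin decomposition $\cL^n=\cL^n_{\geq\epsilon}\cup\cL^n_{<\epsilon}$. The finiteness of the solenoidal volume $\int_{\cL^n}\mathrm{d}\mu_g<\infty$, combined with the standard horoball volume estimate applied leafwise and integrated against the Haar measure on the transverse Cantor group (\emph{Definition} \ref{solenoidalmeasure}), forces $\cL^n_{\geq\epsilon}$ to be compact and $\cL^n_{<\epsilon}$ to have only finitely many saturated connected components $\mathcal{C}_1,\dots,\mathcal{C}_r$ (the solenoidal cusps), after absorbing finitely many Margulis tubes into the thick part by a slight adjustment of $\epsilon$.

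Next I would analyze each cusp. Lifting to $\H^n\times\wg$ via the solenoidal Cartan--Hadamard theorem (\emph{Proposition} \ref{SCHT}), each $\mathcal{C}_i$ corresponds to an orbit of a parabolic fixed point on the spherical boundary solenoid $\sS^{n-1}_{\wg}$ of \emph{Definition} \ref{compactification}. The stabilizer of such a point acts on each based horosphere by Euclidean isometries, yielding a compact Bieberbach $(n-1)$-manifold as quotient. Taking the transverse Cantor structure into account, a horospherical cross-section of $\mathcal{C}_i$ is a Cantor-group bundle over a Bieberbach manifold, that is, an $(n-1)$-dimensional solenoidal nil-manifold in the broad sense used in this paper. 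Truncating each cusp along such a cross-section gives a compact solenoidal $n$-manifold with boundary $\cL^n_{\mathrm{trunc}}$ whose boundary components are precisely these solenoidal nil-manifolds.

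Then I would apply the compact rigidity machinery of \emph{Sections} \ref{hypsol}--\ref{Mostow} to $\cL^n_{\mathrm{trunc}}$. The leafwise geodesic flow of $\cL^n$ admits an invariant Liouville-type measure of finite total mass, so the Mautner phenomenon used in \emph{Lemma} \ref{ergodic}, together with the Howe--Moore theorem for $\mathrm{SO}^+(n,1)$, still yields ergodicity of the diagonal action of $\pi_1(M)\times\wg$ on $\sS^{n-1}_{\wg}\times\sS^{n-1}_{\wg}$. Hence the Tukia--Sullivan argument of \emph{Corollary} \ref{Mhyperbolic} carries over verbatim: the induced quasi-conformal action on the spherical solenoidal boundary is in fact $1$-quasi-conformal, hence conformal, hence in $\mathrm{SO}^+(n,1)$. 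This produces a discrete torsion-free subgroup $\Gamma\subset\mathrm{SO}^+(n,1)$ with $\H^n/\Gamma$ a complete finite-volume hyperbolic manifold $M$; by the universal property of \emph{Remark} \ref{UPM}, the solenoid $\cL^n$ is leafwise isometric to $(\H^n\times\wg)/\pi_1(M)$, i.e., to the inverse limit of an increasing tower of finite regular coverings of $M$.

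The main obstacle I expect is controlling the transition between thick part and cusps in a uniformly laminated way: one must check that the horospherical cross-sections chosen leaf by leaf glue continuously in the transverse Cantor coordinate, so that $\cL^n_{\mathrm{trunc}}$ really is a compact solenoid with boundary and not merely a fiberwise truncation. This requires upgrading the leafwise Margulis lemma to a transversally continuous statement, using the continuous transverse dependence of the leafwise hyperbolic metric together with the compactness of a fundamental domain for the $\pi_1(M)$-action on $\H^n\times\wg$. Once this uniform laminated Margulis decomposition is in place, the remainder of the argument follows the compact-case template with essentially cosmetic modifications.
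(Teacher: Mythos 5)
You should know at the outset that the paper does not actually prove this statement: it occurs inside a remark following \emph{Theorem} \ref{base-hyp}, prefaced only by the assertion that ``most of our results hold true'' in the finite-volume setting. So your proposal is not competing with an argument in the text; it is attempting to supply one, and the route you choose (thick--thin decomposition, finite volume forcing a compact core and finitely many cusps with flat cross-sections, then the boundary-at-infinity ergodicity and Tukia--Sullivan conformality argument of \emph{Corollary} \ref{Mhyperbolic}) is exactly the route the author gestures at. Your last paragraph also correctly flags transverse uniformity of the truncation as an issue. Nevertheless, two steps of the sketch do not survive scrutiny as written.

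First, the ``leafwise Margulis lemma'' is vacuous in this setting: by \emph{Proposition} \ref{McCord-minimal} and \emph{Lemma} \ref{sameuniversal} the leaves of the solenoids treated in \emph{subsection} \ref{hypsol} are simply connected, isometric to $\H^n$ itself, so their intrinsic thin parts are empty. The thin part of $\cL^n$ has to be defined through a \emph{laminated} injectivity radius --- the shortest leafwise path returning to the same fiber of a projection $\Pi:\cL^n\to M$ --- and this presupposes the principal Cantor-bundle presentation over a finite-volume manifold $M$, which is part of what the theorem asserts; the structural input used to obtain that presentation in the compact case (\emph{Theorem} \ref{ClarkHurder} and \emph{Remark} \ref{UPM}) is stated and proved only for compact solenoidal manifolds, and your second paragraph already invokes \emph{Proposition} \ref{SCHT} before $M$ has been produced. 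Second, and more seriously, the uniform quasi-isometry estimate of \emph{Proposition} \ref{extension-at-infinity} --- the engine that yields the boundary extension and everything downstream of it --- is obtained in the paper by the one-line argument that ``since $\mathbb{M}$ is compact it follows that $F_{\mathfrak{g}}$ is uniformly bi-Lipschitz.'' In the finite-volume case that argument is unavailable, and controlling the comparison of two laminated hyperbolic metrics deep inside the cusps is precisely the point at which Prasad \cite{Pr} had to add genuinely new arguments to Mostow's compact-case proof; your claim that the rigidity machinery ``carries over verbatim'' skips this. Until the thin part is defined correctly and a cusp-uniform bi-Lipschitz (or at least quasi-isometry) estimate is supplied, the proposal is a plausible outline rather than a proof.
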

\end{remark}
\begin{remark} The remarkable results by T. Farrell and P. Ontaneda \cite{Farr} imply
that for $n\geq10$ the space of negatively curved metrics of a closed negatively curved Riemannian $n$-manifold, is highly non-connected. In particular, they show in
Corollary 2 \cite{Farr}, that for any closed hyperbolic manifold $(M^n,g)$, $n\geq10$, there is a hyperbolic metric $g'$ on $M$ such that $g$ and $g'$ cannot be joined by a path of negatively curved metrics. This phenomenon holds true for compact hyperbolic
McCord solenoidal manifolds of dimension $n\geq10$.

\end{remark}

\noi Mostow's rigidity theorem does not hold for the groups $G=Solv_3$    
and $G=Nil_3$.  
\noi Theorem A in the paper by W.D. Neumann \cite{Neu} implies that all compact, {\it orientable}, geometric non simply connected
3-manifolds with ``Seifert geometries''
$\mathbb{E}^3$, $\widetilde\PSL$, $\sS^2\times\R$, $\H^2\times\R$,
 and $Nil_3$ have only one commensurability class and thus to each class
 corresponds a unique (up to homeomorphism) algebraic universal covering
 $\mathbb{M}$. See Theorem \ref{neu} by W. D. Newmann below.
  %%%%%
 \begin{remark} The solenoidal Mostow rigidity theorem fails in these ``Seifert'' cases (except for the ``trivial'' case of geometries modeled on $\sS^3$ and $\sS^2\times\R$).
 The reason is the following. From \cite{WZ1}
(Theorem B) one has that if $M$ is a closed, orientable, aspherical 3–manifold, then $M$ is Seifert fibered if and only if the profinite completion $\widehat{\pi_1(M)}$ has a nontrivial procyclic normal subgroup (\ie a normal subgroup that is topologically generated by a single element). This uses the {\it Seifert Fiber Space Theorem}, of the 1990s: Let $M$ be an orientable irreducible 3-manifold whose $\pi_1$ is infinite and contains a nontrivial normal cyclic subgroup. Then $M$ is a Seifert fiber space.
 This theorem is due to several authors (see the survey \cite{Pre} {Theorem 5}). Due to the work of Perelman one can omit the reducibility hypothesis. It follows that if $\mathcal{S}^3$ is a geometric McCord solenoidal manifold modeled on the geometries $\widetilde\PSL$, $\sS^2\times\R$, $\H^2\times\R$,
 and $Nil_3$ then  $\mathcal{S}^3$ fibers over a geometric solenoidal manifold
 $\mathcal{S}^2$ (either euclidean or hyperbolic) with fiber a 1-dimensional solenoid and in both cases the Teichmüller space of flat or hyperbolic metrics in infinite dimensional.
\end{remark}

\subsection{Hyperbolization of McCord solenoids}  
\label{hyperbolization} Thurston's Hyperbolization Theorem, and Theorem A in \cite{WZ1},
imply that if $M$ is a closed, orientable, aspherical 3–manifold, then $M$ is hyperbolic if and only if the profinite completion
$\widehat{\pi_1}(M)$ does not contain a subgroup isomorphic to $\widehat\Z^2$.
Then,  {\it proposition} \ref{uncountable-subgroups} and {\it remark} \ref{UPM} imply:
\begin{theorem}
Let $\cL^3$ be a compact, connected and orientable McCord solenoid which is a principal
fiber bundle over the compact, oriented, and connected manifold $M$ with fiber the Cantor group $\widehat\Gamma$. Suppose that the leaves are aspherical and suppose that every maximal free abelian subgroup of  $\Gamma$ is isomorphic to $\Z$ (hence
$\wg$ does not have a  subgroup isomorphic to $\widehat\Z^2$). Then $\cL^3$ has a laminated hyperbolic metric
which is unique.
 
\noi In fact, $M$ is a compact hyperbolic manifold and the unique
laminated metric is the canonical metric obtained by pullback under the
canonical projection $\Pi:\cL^3\to{M}$.
\end{theorem}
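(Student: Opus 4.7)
The plan is to reduce the hyperbolization question for $\cL^3$ to Thurston's Hyperbolization Theorem applied to the base manifold $M$, and then construct the laminated hyperbolic metric as the pullback along the canonical projection. Uniqueness will follow from Theorem \ref{base-hyp} and the solenoidal Mostow rigidity theorem \ref{mostow}.

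\emph{Step 1: The base $M$ is a closed hyperbolic $3$-manifold.} Since $\cL^3$ is a McCord solenoid over $M$, Proposition \ref{uncountable-subgroups} and Remark \ref{UPM} realize the Cantor fiber $\Gamma$ as a quotient $\widehat{\pi_1(M)}/\Gamma_\alpha$ for some closed normal subgroup $\Gamma_\alpha \subset \widehat{\pi_1(M)}$, and present each leaf of $\cL^3$ as a cover of $M$ whose own universal cover coincides with $\tilde M$. Asphericity of the leaves therefore forces $\tilde M$ to be contractible, so $M$ is closed, orientable, aspherical, and hence irreducible. The hypothesis that every maximal free abelian subgroup of $\Gamma$ is $\Z$ transfers, via the profinite detection of essential tori furnished by Theorem A of Wilton-Zalesskii \cite{WZ1}, to the statement that $\pi_1(M)$ itself contains no subgroup isomorphic to $\Z \oplus \Z$. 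These are exactly the hypotheses of Thurston's Hyperbolization Theorem, which produces a hyperbolic metric $g_M$ on $M$.

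\emph{Step 2: Existence and uniqueness of the laminated hyperbolic metric.} Set $\hat g := \Pi^* g_M$, the pullback of $g_M$ along the canonical projection $\Pi: \cL^3 \to M$. Because $\Pi$ restricts to a local isometry on every leaf (the leaves being solenoidal covers of $M$), $\hat g$ is a laminated Riemannian metric whose every leaf has constant sectional curvature $-1$; transverse continuity is automatic from the smoothness of $g_M$. For uniqueness, let $g'$ be another laminated hyperbolic metric on $\cL^3$. The identity map is a homotopy equivalence $(\cL^3,\hat g) \to (\cL^3, g')$, so Theorem \ref{mostow} produces a homotopic leafwise isometry; invoking Theorem \ref{base-hyp}, which identifies any compact hyperbolic McCord solenoid of dimension $\geq 3$ with the pullback solenoid determined by the commensurability class of its base, one concludes that $\hat g$ and $g'$ coincide up to isotopy.

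\emph{Main obstacle.} The crux lies in Step 1, specifically in the algebraic translation of the hypothesis on the profinite Cantor fiber $\Gamma$ into a hypothesis on the discrete group $\pi_1(M)$ that can be fed into Thurston's theorem. The presence or absence of a $\Z \oplus \Z$ subgroup inside $\pi_1(M)$ is a delicate invariant of the profinite completion, and the required detection result is the nontrivial content of \cite{WZ1}. Once this profinite-to-discrete bridge is crossed, the remainder is essentially formal: Thurston's theorem supplies the geometric structure on $M$, and the solenoidal Cartan--Hadamard theorem \ref{SCHT} together with the solenoidal Mostow rigidity theorem \ref{mostow} upgrade this to the required unique laminated hyperbolic metric on $\cL^3$.
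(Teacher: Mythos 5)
Your proposal is correct and follows essentially the same route the paper intends: the paper derives this theorem precisely from Thurston's Hyperbolization Theorem, Theorem A of Wilton--Zalesskii \cite{WZ1} (the profinite-to-discrete bridge you single out as the crux), Proposition \ref{uncountable-subgroups} and Remark \ref{UPM}, with existence given by pulling back the hyperbolic metric on $M$ and uniqueness supplied by the solenoidal Mostow rigidity results (\ref{mostow}, \ref{base-hyp}). Your write-up merely makes explicit the assembly that the paper leaves to the reader.
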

 
\subsection{McCord hyperbolic solenoids and virtual fibration theorem} \label{Virtual fibration}

\noi Using {\it theorem} \ref{ClarkHurder} in {\it subsection} \ref{hom1d} and the deep results of Ian Agol \cite{IA} and Daniel Wise\cite{Wis} on the virtual fibration theorem of compact hyperbolic manifolds we obtain:
 
\begin{theorem} {\bf (Solenoidal virtual fibration theorem).}\label{vft} Let
$\mathfrak{S}_{{\H}^3}$ be a compact, connected, hyperbolic solenoidal manifold
\ie a compact geometric solenoidal manifold modeled on $(\H^3, PSL(2,\C))$.  
Suppose  $\mathfrak{S}_{{\H}^3}$  is topologically homogeneous (in particular it is minimal).
Then  there exists a locally trivial smooth fibration $\mathfrak{P}$ of $\mathfrak{S}_{{\H}^3}$ of over the circle ${\mathbb S}^1$:
$$
\mathfrak{P}: \mathfrak{S}_{{\H}^3}\to{\sS}^1.
$$
\noi Here we define smoothness in the sense of laminations.
 
\noi Thus $\mathfrak{P}$ is a smooth submersion. The fiber $\mathfrak{P}^{-1}(\{1\})$ is a 2-dimensional solenoidal surface $\mathcal{S}^2$  which is a McCord solenoidal surface with every leaf simply connected. Therefore, there exists a homeomorphism $f:\mathcal{S}^2\to\mathcal{S}^2$ such that $\mathfrak{S}_{{\H}^3}$ is obtained by suspending $f$.
\end{theorem}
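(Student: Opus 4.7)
The plan is to combine Clark--Hurder with Agol's virtual fibration theorem and assemble the fibration by inverse limits. By Theorem~\ref{ClarkHurder}, $\mathfrak{S}_{\H^3}$ is a McCord solenoid, and by Theorem~\ref{base-hyp} it is isometric to the inverse limit $\varprojlim M_i$ of a tower of regular finite covers of a compact hyperbolic $3$-manifold $M=M_0$. Applying the Agol--Wise virtual fibration theorem \cite{IA,Wis} to $M$, one obtains a finite cover $\widetilde M\to M$ with a fibration $p:\widetilde M\to\sS^1$ over the circle, with fiber a closed hyperbolic surface $\Sigma$; replacing $\widetilde M$ by its normal closure in $\pi_1(M)$ if necessary (the composition with the further cover remains a fibration), I may assume $\widetilde M\to M$ is regular.

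The crucial step is to arrange that, from some index $i_0$ onward, each $M_i$ factors through $\widetilde M$. By Remark~\ref{UPM}, $\mathfrak{S}_{\H^3}$ corresponds to a closed normal subgroup $\Gamma\subset\widehat{\pi_1(M)}$ with Cantor quotient, and $\mathfrak{S}_{\H^3}$ is realized as $\varprojlim M_i$ for any cofinal decreasing chain of clopen normal subgroups of $\widehat{\pi_1(M)}$ with intersection $\Gamma$. Using that the fibered finite covers provided by the RFRS property of $\pi_1(M)$ form a cofinal subfamily in the lattice of finite-index normal subgroups of $\pi_1(M)$ (one iteratively applies Agol's theorem to every term of the tower, noting that any finite cover of a fibered manifold remains fibered by composition), I would then replace the chain by a cofinal subchain whose terms all lie in $\overline{\pi_1(\widetilde M)}$. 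Composing $M_i\to\widetilde M$ with $p$ yields a compatible family of fibrations $p_i:M_i\to\sS^1$ whose fibers $\Sigma_i=p_i^{-1}(t)$ are connected regular finite covers of $\Sigma$ (the deck group of $M_i\to\widetilde M$ preserves $p_i$ and acts freely on $\Sigma_i$ with quotient $\Sigma$). Passing to the inverse limit gives the desired locally trivial smooth fibration $\mathfrak{P}:\mathfrak{S}_{\H^3}\to\sS^1$ with fiber $\mathcal{S}^2:=\varprojlim\Sigma_i$, a $2$-dimensional McCord solenoidal surface over $\Sigma$.

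To complete the argument, each leaf $L$ of $\mathfrak{S}_{\H^3}$ is isometric to $\H^3$ by Theorem~\ref{base-hyp}, so the restriction $\mathfrak{P}|_L$ lifts to a smooth submersion $L\to\R$ whose fibers are isometric copies of $\H^2$ (the universal cover of $\Sigma$); these are precisely the leaves of $\mathcal{S}^2$, and each is therefore simply connected. Since $\mathfrak{P}$ is a locally trivial bundle over $\sS^1$, $\mathfrak{S}_{\H^3}$ is the mapping torus of the monodromy self-homeomorphism $f:\mathcal{S}^2\to\mathcal{S}^2$. The main obstacle in this plan is the refinement step: when $\Gamma\not\subset\overline{\pi_1(\widetilde M)}$ the naive projection $\mathfrak{S}_{\H^3}\to\widetilde M$ does not exist, and one must exploit the cofinality of fibered finite covers (an input beyond the bare statement of Agol's theorem) to locate a fibered cover through which $\mathfrak{S}_{\H^3}$ actually projects.
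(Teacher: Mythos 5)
Your proposal follows the same route as the paper's proof: Clark--Hurder (Theorem \ref{ClarkHurder}) plus Theorem \ref{base-hyp} realize $\mathfrak{S}_{\H^3}$ as $\varprojlim M_i$ over a compact hyperbolic base $M$, Agol--Wise produces a fibered finite cover, and $\mathfrak{P}$ is the composition of the canonical projection with the fibration of that cover. Your subsidiary points (regularizing the fibered cover --- though one takes the normal \emph{core}, not the normal closure --- the compatibility of the fibrations $p_i$ with the deck groups, and identifying the fibers of $\mathfrak{P}|_L$ with copies of $\H^2$ to get simple connectivity of the leaves of $\mathcal{S}^2$) are all fine, and the last is cleaner than the paper's appeal to minimality. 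The one place you go beyond the paper is exactly the place you are right to worry about: the paper simply asserts the tower ``can be chosen'' so that its first term fibers, whereas $\mathfrak{S}_{\H^3}$ only projects onto those finite covers $\widetilde{M}/H$ with $\Gamma\subset\overline{H}$, equivalently $H\supset G_i$ for some term of the defining chain, and Agol's fibered cover need not be among them.

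The repair you sketch does not close this gap. Cofinality of the fibered normal subgroups holds \emph{downward} (below every finite-index normal subgroup there is a fibered one, by Agol--Wise applied to the corresponding cover), but what is needed is a fibered subgroup in the \emph{upward} closure $\{H : H\supset G_i\ \text{for some}\ i\}$ of the given chain; replacing the chain by a deeper subchain lying in $\overline{\pi_1(\widetilde M)}$ computes the inverse limit attached to $\Gamma\cap\overline{\pi_1(\widetilde M)}$, which is a different (covering) solenoid, not $\mathfrak{S}_{\H^3}$. Worse, the needed statement can genuinely fail: since $H^1(\,\cdot\,;\Q)$ pulls back injectively under finite covers, a chain all of whose terms are rational homology spheres has an upward closure consisting entirely of rational homology spheres, none of which fibers; infinite regular towers of hyperbolic rational homology spheres exist (Calegari--Dunfield, made unconditional by Boston--Ellenberg), and for the resulting McCord solenoid $\check{H}^1(\mathfrak{S};\Z)=\varinjlim H^1(M_i;\Z)=0$, so no fibration over $\sS^1$ exists at all. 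Thus your argument --- and the paper's --- is complete only under the extra hypothesis that $\mathfrak{S}_{\H^3}$ projects onto some fibered finite cover of $M$; this is automatic for the algebraic universal cover $\mathbb{M}$, where $\Gamma$ is trivial and every finite cover of $M$ (in particular Agol's) receives a projection, but not for an arbitrary topologically homogeneous hyperbolic solenoid.
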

 
\begin{proof} By {\it theorem} \ref{ClarkHurder}, $\,\mathfrak{S}_{{\H}^3}$ is a McCord solenoid and by {\it theorem} \ref{base-hyp} the laminated metric is the canonical metric.
Hence there exists a directed set of finite regular coverings (which can be assumed to be local isometries) of a compact hyperbolic manifold $M$, such that the inverse limit is leafwise isometric to $\mathfrak{S}_{{\H}^3}$. By {\it remark} \ref{chain-cofinal}  we can choose the directed set to be a chain of coverings $\{p_{n+1}:M_{n+1}\to{M_n}:\,\in\N\}$. By the theorem of  Ian Agol \cite{IA} and Daniel Wise \cite{Wis} any hyperbolic manifold is finitely covered by a compact hyperbolic manifold that fibers over the circle with connected fiber. Thus we can always choose the sequence of coverings $\{p_{n+1}:M_{n+1}\to{M_n}:\,\in\N\}$ such that the inverse limit is indexed by the naturals $\N$ and the first manifold $M_1$ fibers over the circle: $\pi^1:M_1\to\sS^1$ with connected fiber a compact surface $S$ of genus greater than one and
monodromy isotopic to a pseudo--Anosov homeomorphism.
If $\Pi_1:\mathfrak{S}_{{\H}^3}\to{M_1}$ is the canonical projection then we can choose $\mathfrak{P}=\pi^1\circ\Pi_1$. Then $\mathfrak{P}$ is a locally trivial fibration with fiber $\mathcal S=\mathfrak{P}^{-1}(S)$. Since $
\mathfrak{S}_{{\H}^3}$ is topologically homogeneous, it follows that
$\mathcal S$ is a minimal
(therefore connected) 2-dimensional solenoidal McCord solenoid with simply connected leaves. {\em Of course the manifold $M$ is not unique, it can be replaced
by a  compact hyperbolic manifold  commensurable to $M$.}
\end{proof}
\begin{remark}
Each $M_{n+1}$ fibers over the circle by the map $\pi^{n+1}:M_{n+1}\to\sS^1$ defined by the composition  
$\pi^{n+1}=\pi^1\circ{p_{1}}\circ\cdots\circ{p_{n+1}}$. The fiber is a compact orientable surface
$S_{n+1}$ of genus greater or equal to 2.
\end{remark}
\begin{remark} {\em Theorem \ref{vft} can be thought as the generalization of Ian Agol
\cite{IA} and Daniel Wise's \cite{Wis}
 {\bf virtual fibration theorem} to the case of compact hyperbolic 3-dimensional solenoidal manifolds.
The only requirement is topological homogeneity.
But of course, it depends directly
on the depth of the results by Agol and Wise and {\it theorem} \ref{ClarkHurder}}.
\end{remark}
 
 \noi The properties of the homeomorphism $f:\mathcal{S}^2\to\mathcal{S}^2$ in
 {\it theorem} \ref{vft} above are very interesting. It is the lifting, in the tower of coverings,
 of the fiber $p^{-1}(\{1\})$, of the virtual fibration of the pseudo-Anosov homeomorphism of the fiber which determines the fibration over the circle.
 
\noi By {\it proposition} \ref{uncountable-subgroups} the surface solenoid $\mathcal{S}^2$ is covered, in the solenoidal sense, by Sullivan's universal hyperbolic lamination $\cL_\mathcal{h}$ in
{\it definition} \ref{UHLdef}. In other words, if
$\wg$ is the profinite completion of the fundamental
group of the surface $\Sigma$ of genus 2, there exists a closed normal subgroup
$\Gamma_\alpha$ of $\wg$ such that $\Gamma_\alpha$ acts fiberwise and properly
on $\cL_\mathcal{h}$ (considered as a principal bundle over
$\Sigma$ with fiber $\wg$) and $\mathcal{S}^2=\cL_\mathcal{h}/\Gamma_{\alpha}$
\noi Therefore the algebraic fundamental group of $\mathfrak{S}_{{\H}^3}$ is a semidirect product $\hat\Gamma_{\alpha}\rtimes\hat{\Z}$.
 
\smallskip
\noi To each fibration $\pi^{n+1}:M_{n+1}\to\sS^1$ we can associate an infinite cyclic covering \cite{Mi}
$\mathfrak{p}_{n+1}:S_{n+1}\times\R\to{M_{n+1}}$. The group of deck transformations of this covering is isomorphic to $\Z$ and
it is generated by a homeomorphism of the form $\mathfrak{t}_{n+1}(x,t)=(h_{n+i}(x),t+1)$ where $h_{n+1}:S_{n+1}\to{S_{n+1}}$ is a
pseudo-Anosov homeomorphism leaving invariant two measured foliations $\cF^u$ and $\cF^s$ with dilation $\lambda_{n+1}>1$ thus the sequence $\{\lambda_n\}_{n\in\N}$ is a commensurability invariant. By Thurston \cite{Th} (see also \cite{Fr}) each $\lambda_n$ is an algebraic unit in a number field. In fact, it is a bi-Perron number.
Of course, the sequence of Alexander polynomials corresponding to the sequence of infinite cyclic branched coverings \cite{McM}
in the spirit of J. W. Milnor \cite{Mi}
is also a commensurability class invariant for compact hyperbolic 3-manifolds.
 
\subsection{Commensurability invariants for compact hyperbolic Solenoids} \label{Invariants hyperbolic case}
\noi In \cite{Bo}, \cite{MacR},\cite{NR}, \cite{CSW}, \cite{Neu} invariants of commensurability classes of hyperbolic manifolds
(especially for arithmetic manifolds) are described. For instance if $G\subset{PSL(2,\C)}$
is a Kleinian group of finite co-volume then the field $\Q(\{trace(g^2):\,\, g\in G\}$ is an invariant for the commensurability class of $G$.
 
\noi We have the sequence $\{p^*_{n+1}: S_{n+1}\times\R \to{S_{n}}\times\R  ,\,\, n\in\N\}$
of finite coverings of hyperbolic manifolds obtained by pullback:
 
$$
\begin{CD}
S_{n+1}\times\R    @>p^*_{n+1}>>  S_{n}\times\R\\
@VV{\mathfrak{p}_{n+1}}V        @VV{\mathfrak{p}_{n}}V\\
M_{n+1}     @>p_{n+1}>>  M_{n}
\end{CD},
$$
\noi In this diagram, $M_1=M$ and $p^*_{n+1}(x,t)=(f_{n+1(x)},t)$.
 
\noi Therefore we have the inverse limit:
$$
\widehat{\mathfrak{S}_{{\H}^3}}={\lim_{\overset{p^*_{n+1}}{\longleftarrow}}} \,\,\,\{p^*_{n+1}:S_{n+1}\times\R \to {S_{n}}\times\R ,\,\, n\in\N\}
\simeq\cL_{\mathcal{h}}\times\R
$$
 
\noi We have an infinite cyclic covering (in the sense of solenoids):
 
$$
\hat{\mathfrak{P}}: \cL_{\mathcal{h}}\times\R  \to\mathfrak{S}_{{\H}^3}
$$
 
\noi The group of deck transformations is $\Z$ and it is generated by the homeomorphism
$$
\mathfrak{T}: \cL_{\mathcal{h}}\times\R \to\cL_{\mathcal{h}}\times\R , \quad\quad (s,t)\overset{\mathfrak{T}}\mapsto(\mathfrak{t}(x),t+1)
$$
 
\noi The homeomorphism $\mathfrak{t}:\cL_{\mathcal{h}}\to\cL_{\mathcal{h}}$ is the lifting of the initial basic pseudo-Anosov diffeomorphism
$h:S\to{S}$ whose suspension gives the base hyperbolic manifold $M$. The contracting and expanding measured foliations $\cF^s$ and $\cF^u$ corresponding to the quadratic differential associated with $h$ lift to very interesting foliations $\hat{\cF}^s$ and $\hat{\cF}^u$ on the solenoidal surface $\cL_{\mathcal{h}}$.
 
\noi The homeomorphism $\mathfrak{T}$ induces an isomorphism of the profinite completion $\Gamma:=\widehat{\pi_1(S)}$. If we denote by ${H^s_1}(\Gamma)$
the first homology group of the solenoidal surface in the sense of Steenrod \cite{St1}, \cite{St2}, then since
$\mathfrak{T}$ induces an isomorphism $t$ of the abelian group $H_1(\Gamma)$
we have, as usual, that, $H^s_1(\Gamma)$ is a module over
$\Z[t,t^{-1}]$. We call this module the {\em Alexander invariant or Alexander module} of the commensurability class.
 
\noi Of course the commensurator group should play a role in the classification of commensurability classes.
This module is a complete
invariant of the commensurability class for hyperbolic manifolds.
 
\begin{remark} The fibered commensurability class has been studied in depth by Danny Calegari,
 Hongbin Sun and Shicheng Wang in \cite{CSW} in the hyperbolic case. They show that there is a minimal orbifold representative in each commensurability class. Properties of profinite rigidity are studied, for instance, in \cite{BRW},  \cite{BRW1}, \cite{R}, \cite{Wil}.
\end{remark}

\subsection{3-Dimensional solvmanifolds  and real quadratic fields} \label{Invariants solvable case}

\noi  A matrix $A\in{SL(2,\Z)}$ is hyperbolic if $|\text{trace}(A)|>2$. Let $f_A:\mathbb{T}^2\to\mathbb{T}^2$ be the hyperbolic linear automorphism of the 2-torus, induced by $A$ for example, the one given by the
matrix
\[
\left(\begin{array}{cc}2&1\\1&1\end{array}\right).
\]
 
The suspension of $f_A$ is a 3-manifold $\mathbb{T}^3_A$ which fibers
over the
circle with fiber the torus $\mathbb{T}^2$ and monodromy $f_A$. It is a solvmanifold whose universal covering is a solvable Lie group whose Lie
algebra is generated by $X$, $Y$, and $Z$ satisfying:
$$[X,Y]=0,\,\, [Z,X]=-X, \,\, [Z,Y]=Y.$$
Therefore, $Z$ and $Y$ generate a locally free action of the affine group, as well as $Z$ and $X$. The flow generated by $Z$
is a classic example of an Anosov flow \cite{Ano}.
 
\noi Since the matrix $A$ is diagonalizable with positive eigenvalues there exists a 2 by 2 matrix $C$ such that $e^C=A$ so that for every integer  $n$ one has $e^{nC}=A^n$.
 
\noi Let ${\mathbb R}^{3}_A$ be the solvable group whose elements  $((x,y),t)$ belong to ${\mathbb R}^2\times\mathbb R$ and with operation $\star$ defined by:
$$
((x_1,y_1),t_1)\star((x_2,y_2),t_2)=(e^{t_1C}(x_2,y_2)+(x_1,y_1),t_1+t_2)
$$
 
\noi Thus ${\mathbb R}^{3}_A$ is the semidirect product  $\R^2\rtimes _\phi \R$ of $\mathbb R$ with $\mathbb R^2$ corresponding to the monomorphism $\Phi:\R\to GL(2,\R)=Aut(\R^2)$ given by $t\mapsto {e^{tC}}$. The 3-dimensional Lie group ${\mathbb R}^{3}_A$  
is isomorphic to the unique 3-dimensional
unimodular solvable non-nilpotent Lie group, usually denoted by $Solv_3$. It has as discrete and uniform subgroup the group $\Z^3_A$ consisting of elements $((n,m),l)$ of $\R^3_A$ where  $m, n$ and $l$  are integers. The group $\Z^3_A$ is the semidirect product  $\Z^2\rtimes _\phi \Z$ of $\Z^2$ with $\Z$
corresponding to the monomorphism $\phi:\Z\to{SL(2,\Z)}=Aut(\Z^2)$ given by $n\mapsto{A^n}$.
 
\begin{remark} $\Z^2\rtimes _A \Z$ in \cite{BG} denotes the semidirect  
product $\Z^3_A=\Z^2\rtimes _\phi \Z$. We will use the same notation sometimes.
\end{remark}
 
\noi We have $\mathbb{T}^3_A=\R^3_A/\Z^3_A$ and the commutative diagram of short exact sequences

$$
\begin{tikzcd}
0 \arrow{r} & \Z^2 \arrow{r} \arrow{d} & \Z^3_A \arrow{r} \arrow{d} & \Z  \arrow{r}  \arrow{d} & 0\\
0 \arrow{r} & \R^2 \arrow{r} & \R^3_A \arrow{r} &\R \arrow{r} & 0
\end{tikzcd}
$$
 
\noi Therefore, one obtains the sequence  of differentiable maps
$$
0\longrightarrow{\mathbb T}^2\longrightarrow \mathbb{T}^3_A   \overset{F}{\longrightarrow}{\mathbb S}^1\longrightarrow0
$$
 where $F: \mathbb{T}^3_A\longrightarrow {\mathbb S}^1$ is a locally trivial fibration.
There are two subgroups $B_1$ and $B_2$ of  the simply connected solvable Lie group $\mathbb{R}^3_A$
$$
B_1=\{((sa_1,sa_2),t)\in\mathbb{R}^3_A \,:\,s,t\in \R \}
$$

$$
B_2=\{((sb_1,sb_2),t)\in\mathbb{R}^3_A \,:\,s,t\in \R \}
$$
 where $(a_1,a_2)$ and $(b_1,b_2)$ are eigenvectors of $A$ corresponding to the eigenvalues $\lambda$
 and $\lambda^{-1}$.
 
\noi Both $B_1$ and $B_2$ are isomorphic to the real affine group $GA(\R)$:
 $$
 GA(\R)=\left\{  \left(\begin{array}{cc}a&b\\0&{a^{-1}}\end{array}\right):\quad a>0, \,\,b\in\R    \right\}.
 $$
 
\noi $B_1$ and $B_2$ intersect in the
one-parameter subgroup $\{((0,0),(\log\lambda)^{-1}t): t\in\R\}$. This one-parameter subgroup generates the vector field $Z$ and the one-parameter subgroups
$$\{((sb_1,sb_2),0)\in\mathbb{R}^3_A \,:\,s\in \R \}$$
$$\{((sa_1,sa_2),0)\in\mathbb{R}^3_A \,:\,s\in \R \}$$
 generate the vector fields $X$ and $Y$.   These three vector fields descend to the three vector fields described at the beginning.
This is an algebraic description of the suspension of the Anosov diffeomorphism of the 2 torus induced by the matrix $A$.
 
\noi We have two locally free actions of the affine group $B$ on $\mathbb{T}^3_A$ so that we have two hyperbolic laminations i.e.,
 the induced metrics on the orbits of each of the actions of the affine group are hyperbolic.

\noi We also have a locally free action of ${\mathbb R}^2$ induced by the commuting vector fields $X$ and $Y$. The orbits of this action on $\mathbb{T}^3_A$ are the
2-tori which are the fibers of $F$. The orbits of both $X$ and $Y$ are minimal in the torus fiber which contains them.
 
\noi Thus in the 2-torus fibers there are two irrational foliations with ``slope'' an equivalence class of quadratic surds under the action of $PSL(2,\Z)$ on
$\sS^1=\R\cup{\infty}$. This follows from the fact that the eigenvalues of $A\in SL(2,\Z)$ are the roots
of the equation $x^2-(\text{trace}A)x+1$.
 
\noi If $M$ is a compact manifold with $Solv_3$ geometry, then either $M$ fibers over $\sS^1$ with fiber the torus $\T^2$ and monodromy
given by a hyperbolic matrix $A\in{SL(2,\Z)}$ or it is a quotient of such a torus bundle over the circle by a group of order at most 8. Since we are interested in commensurability classes it is enough to consider torus bundles $\T_A$. The following proposition can be found in  \cite{Ba},\cite{BG} or \cite{Fu}.
\begin{proposition} \label{AB-matrices} Let $A,B\in{SL(2,\Z)}$ be two hyperbolic matrices. Then:
\begin{enumerate}
\item $\T_A$ is isometric to $\T_B$ if and only if $A$ is conjugate to $B$ or $B^{-1}$ by a matrix in $GL(2,\Z)$.
 
\noi This is equivalent to $\Z^3_A$  is isomorphic to  $\Z^3_B$.
 
\item $\T_A$ is commensurable to $\T_B$ if and only if there exist integers $n,m\in\Z-\{0\}$ such that
$A^n$ is conjugate to $B^m$ by a matrix in $GL(2,\Q)$.
 
\noi This is equivalent to:  $\Z^3_A$ is commensurable to $\Z^3_B$.
\end{enumerate}
\end{proposition}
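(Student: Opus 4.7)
The plan is to translate each geometric statement (isometry or commensurability of the $\T_A$) into the corresponding algebraic statement on the lattices $\Z^3_A = \Z^2 \rtimes_A \Z$, and then to reduce the lattice isomorphism/commensurability question to the matrix-theoretic conditions on $A, B$. Two bridges link the geometric and algebraic pictures: first, since each $\T_A$ is aspherical with contractible universal cover $\R^3_A$, finite covers correspond bijectively to finite-index subgroups of $\pi_1(\T_A) = \Z^3_A$; second, the Mostow rigidity for $Solv_3$ recalled earlier in the paper implies that two compact $Solv_3$-solvmanifolds with isomorphic fundamental groups are isometric, and consequently that homotopy equivalent aspherical $Solv_3$-solvmanifolds are diffeomorphic.

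For part (1), Mostow rigidity reduces the statement to showing $\Z^3_A \cong \Z^3_B$ if and only if $A$ is $GL(2,\Z)$-conjugate to $B^{\pm 1}$. The key observation is that $\Z^2 \subset \Z^3_A$ is the Fitting subgroup: it is abelian and normal, and no larger normal nilpotent subgroup exists because the hyperbolicity of $A$ forces iterated commutators $(A^n - I)^k w$ to be unbounded. Hence $\Z^2$ is preserved by any isomorphism $\varphi: \Z^3_A \to \Z^3_B$, so $\varphi|_{\Z^2}$ is given by a matrix $P \in GL(2,\Z)$, and $\varphi$ sends a lift $t \in \Z^3_A$ of a generator of $\Z^3_A/\Z^2$ to an element $(w, \epsilon) \in \Z^3_B$ with $\epsilon = \pm 1$. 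Comparing $\varphi(tvt^{-1}) = PAv$ with $\varphi(t)\varphi((v,0))\varphi(t)^{-1} = B^\epsilon Pv$ yields $PAP^{-1} = B^\epsilon$. The converse is direct: if $PAP^{-1} = B^\epsilon$, then $(v,k) \mapsto (Pv, \epsilon k)$ is an isomorphism.

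For part (2), the geometric equivalence (commensurability of $\T_A,\T_B$ versus of $\Z^3_A,\Z^3_B$) follows from the covering correspondence together with the aspherical rigidity above. For the algebraic equivalence, suppose first that $Q A^n Q^{-1} = B^m$ with $Q \in GL(2,\Q)$ and nonzero $n,m \in \Z$. Choose $d \in \N$ so that both $dQ$ and $dQ^{-1}$ have integer entries and set $L := d \Z^2$. Then $L$ is $A^n$-invariant (as $A^n \Z^2 = \Z^2$), and $QL \subset \Z^2$ is $B^m$-invariant since $B^m(QL) = Q A^n L = QL$. The map $(v,k) \mapsto (Qv,k)$ then furnishes an isomorphism between the finite-index subgroups $L \rtimes_{A^n} \Z \subset \Z^3_A$ and $QL \rtimes_{B^m} \Z \subset \Z^3_B$, establishing commensurability. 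Conversely, given an isomorphism $\psi: \Lambda \to \Lambda'$ between finite-index subgroups $\Lambda \subset \Z^3_A$ and $\Lambda' \subset \Z^3_B$, set $L := \Lambda \cap \Z^2$ (automatically finite-index in $\Z^2$) and pick any lift $(v,n) \in \Lambda$ of a generator of $\Lambda/L \cong n\Z$. A direct semidirect-product calculation shows that conjugation by $(v,n)$ acts on $L$ as $A^n$, so $\Lambda \cong L \rtimes_{A^n} \Z$; an analogous analysis gives $\Lambda' \cong L' \rtimes_{B^{\tilde m}} \Z$ with $\tilde m \in \Z \setminus \{0\}$. As in part (1), $\psi|_L : L \to L'$ extends $\Q$-linearly to $Q \in GL(2,\Q)$ with $Q A^n Q^{-1} = B^{\tilde m}$.

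The main obstacle is the bookkeeping in the reverse direction of (2): one must identify a finite-index subgroup $\Lambda \subset \Z^3_A$ internally with a semidirect product $L \rtimes_{A^n}\Z$ (the lift $(v,n)$ is not unique, and the resulting vector part must be normalized away by a suitable change of generator), and then transport this internal presentation across $\psi$ while tracking the sign $\tilde m = \pm m$ that records whether $\psi$ preserves or reverses the orientation of the circle fiber. Once the algebraic identifications are aligned, extracting the rational conjugacy $Q$ is routine.
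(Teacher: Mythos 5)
Your proof is correct, but note that the paper itself does not prove this proposition: it simply refers the reader to \cite{Ba}, \cite{BG} and \cite{Fu}, so there is no in-text argument to compare against. What you have written is essentially the standard argument from those references, and it fills the gap soundly. The two load-bearing points are exactly the right ones: (i) the geometric--algebraic bridge, which requires Mostow rigidity for compact $Solv_3$-manifolds (invoked elsewhere in the paper via \cite{Mo1}, \cite{Ra1}) together with the correspondence between finite covers of the aspherical space $\T_A$ and finite-index subgroups of $\Z^3_A$; and (ii) the fact that $\Z^2$ is the Fitting subgroup of $\Z^2\rtimes_A\Z$, hence characteristic, which is what lets you restrict any isomorphism to a map of lattices and read off the conjugating matrix $P\in GL(2,\Z)$ (resp. $Q\in GL(2,\Q)$ after tensoring with $\Q$). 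Your justification of (ii) is slightly telegraphic but repairable in one line: if a normal subgroup $N$ contains $(w,n)$ with $n\neq 0$, normality forces $(A^n-I)\Z^2\subset N$, and since $A^n-I$ is invertible over $\Q$ the iterated commutators $(A^n-I)^k\Z^2$ never vanish, so $N$ is not nilpotent; thus every normal nilpotent subgroup lies in $\Z^2$. The forward direction of (2), rescaling $\Z^2$ by $d$ to make $Q$ integral and matching the $A^n$- and $B^m$-invariant sublattices, and the reverse direction, identifying a finite-index subgroup $\Lambda$ internally with $L\rtimes_{A^n}\Z$ via conjugation by a lift $(v,n)$, are both carried out correctly, including the sign bookkeeping $\tilde m=\pm m$ coming from the induced map on the infinite cyclic quotients.
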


\begin{corollary}\label{sol=quad} (Compare T. Barbot \cite{Ba}, M.  Bridson and S. Gersten \cite{BG})
Commensurability classes among the solvable groups of the form $\Z^3_A=\Z^2\rtimes _A \Z$,
where $A\in{SL(2,\Z)}$ is hyperbolic, are in one-to-one correspondence with the commensurability classes
relative to the multiplicative group of the positive reals $\R^\bullet=\{t\in\R:\,\,t>0\}$
 of cyclic subgroups
$G_\lambda=\{\lambda^n:\,\,n\in\Z\}$ where $\lambda+\frac1{\lambda}\in\N,\,\, \lambda\in(0,\infty),\,\,\lambda\neq1$.
\end{corollary}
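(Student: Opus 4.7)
The plan is to use Proposition \ref{AB-matrices} as a black box and reduce everything to a statement about characteristic polynomials. To a hyperbolic $A\in SL(2,\Z)$ I would attach the eigenvalue $\lambda_A>1$; since $\det A=1$ the eigenvalues are $\lambda_A,\lambda_A^{-1}$ with $\lambda_A+\lambda_A^{-1}=\operatorname{tr}(A)$. If $\operatorname{tr}(A)<-2$ the eigenvalues are negative and do not lie in $\R^\bullet$, so I would first replace $A$ by $A^{2}$ (which is in the same commensurability class by Proposition \ref{AB-matrices} (2), via $n=2$, $m=1$, conjugation by the identity). After this normalization $\lambda_A>1$ and $\lambda_A+\lambda_A^{-1}=\operatorname{tr}(A)\in\N_{\geq3}$, so the rule $[A]\mapsto [G_{\lambda_A}]$ sends commensurability classes to cyclic subgroups of $\R^\bullet$ of the required form.

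To verify that this map is well-defined, suppose $\Z^3_A$ and $\Z^3_B$ are commensurable. By Proposition \ref{AB-matrices} (2) there exist nonzero integers $n,m$ and $P\in GL(2,\Q)$ with $PA^nP^{-1}=B^m$. The two matrices then share a characteristic polynomial, so $\{\lambda_A^{n},\lambda_A^{-n}\}=\{\lambda_B^{m},\lambda_B^{-m}\}$; after possibly changing the sign of $m$ this gives $\lambda_A^{n}=\lambda_B^{m}$, so this common element lies in $G_{\lambda_A}\cap G_{\lambda_B}$ with finite index in each, hence $G_{\lambda_A}$ and $G_{\lambda_B}$ are $\R^\bullet$-commensurable.

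For injectivity of the induced map on classes, I would run the previous argument in reverse. If $G_{\lambda_A}$ and $G_{\lambda_B}$ are commensurable then $\lambda_A^{n}=\lambda_B^{m}$ for some positive $n,m$, so $A^n$ and $B^m$ have the same characteristic polynomial $x^2-(\lambda_A^n+\lambda_A^{-n})x+1$ with two distinct real roots. Since a $2\times 2$ matrix with distinct eigenvalues is similar over any field containing its entries to the companion matrix of its characteristic polynomial, $A^n$ and $B^m$ are conjugate in $GL(2,\Q)$, and Proposition \ref{AB-matrices} (2) returns commensurability of $\Z^3_A$ and $\Z^3_B$. Surjectivity is then by explicit construction: for each integer $N\geq 3$ the matrix $A_N=\bigl(\begin{smallmatrix}N&-1\\1&0\end{smallmatrix}\bigr)\in SL(2,\Z)$ has characteristic polynomial $x^2-Nx+1$, whose roots are the prescribed $\lambda,\lambda^{-1}$.

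The step I expect to be the most delicate is the sign bookkeeping in the normalization passing from $A$ to $A^2$ when $\operatorname{tr}(A)<-2$, together with confirming that two $2\times 2$ integer (or rational) matrices with the same characteristic polynomial and simple spectrum are genuinely $GL(2,\Q)$-conjugate and not merely $GL(2,\R)$-conjugate; this is where rational canonical form is essential, and it is also what makes the whole correspondence work without loss in denominators. Everything else reduces to unwinding Proposition \ref{AB-matrices}.
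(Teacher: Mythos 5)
Your proposal is correct and follows essentially the same route as the paper, which treats the corollary as an immediate consequence of Proposition \ref{AB-matrices}(2) by matching the eigenvalues $\lambda>1$ of powers of $A$ and $B$; you have merely supplied the details the paper leaves implicit (the sign normalization $A\mapsto A^{2}$, the rational canonical form argument showing that equal characteristic polynomials with simple spectrum give $GL(2,\Q)$-conjugacy, and the explicit companion matrix for surjectivity). All of these check out, so nothing further is needed.
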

\noi Corollary \ref{sol=quad} is a fancy way to say the following:
 
\noi $\T^3_A$ is commensurable to $\T^3_B$ if and only if there exist non-zero integers
$n$ and $m$ such that $\lambda_1^n\lambda_2^m=1$ where $\lambda_1>1$ and $\lambda_2>1$ are the stretching
factors of the linear torus Anosov diffeomorphisms induced by $A$ and $B$, \ie they are the quadratic irrationalities which are roots
of $x^2-trace(A)x+1$  and $x^2-trace(B)x+1$ and are bigger than one. In dynamical systems this is very natural:
If $\T^3_A$ and $\T^3_B$ are commensurable then a compact manifold that covers both is necessarily of the form $\T^3_C$ and
its Lyapunov exponents are commensurable in $\R^\bullet$.
 
\noi Equivalently:
\begin{corollary}\label{sol=quad2} There exists a one-to-one correspondence between geometric commensurability classes of compact 3-dimensional
manifolds modeled in $Solv_3$ and real quadratic fields.
\end{corollary}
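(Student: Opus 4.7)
The plan is to combine Corollary \ref{sol=quad} with Dirichlet's unit theorem. That corollary reduces the claim to exhibiting a bijection between (a) commensurability classes (relative to $\R^\bullet$) of cyclic subgroups $G_\lambda=\langle\lambda\rangle$, where $\lambda>0$, $\lambda\neq 1$, and $t:=\lambda+\lambda^{-1}\in\N$, and (b) real quadratic fields. The candidate map is $G_\lambda\longmapsto K(\lambda):=\Q(\lambda)$. Since $\lambda$ is a root of $x^{2}-tx+1$ with $t\in\Z$ and, by strict AM--GM, $t>2$, we get $K(\lambda)=\Q(\sqrt{t^{2}-4})$, a real quadratic field.

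First I would check that $\lambda\notin\Q$: writing $\lambda=p/q$ in lowest terms with $p,q>0$, the integrality of $(p^{2}+q^{2})/(pq)$ together with $\gcd(p,q)=1$ forces $p=q=1$, contradicting $\lambda\neq 1$. Consequently $\lambda^{n}$ is an irrational quadratic integer for every nonzero $n\in\Z$, so $\Q(\lambda^{n})=\Q(\lambda)$. Well-definedness on commensurability classes is then immediate: if $\lambda_{1}^{n}=\lambda_{2}^{m}$ with $nm\neq 0$, then $\Q(\lambda_{1})=\Q(\lambda_{1}^{n})=\Q(\lambda_{2}^{m})=\Q(\lambda_{2})$.

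For injectivity, suppose $\Q(\lambda_{1})=\Q(\lambda_{2})=K$. Each $\lambda_{i}$ is a unit of $\mathcal{O}_{K}$ of norm $N_{K/\Q}(\lambda_{i})=\lambda_{i}\bar\lambda_{i}=1$. Dirichlet's unit theorem gives $\mathcal{O}_{K}^{\times}\cong\{\pm 1\}\times\Z$, so the norm-one subgroup, modulo $\{\pm 1\}$, is cyclic; since the $\lambda_{i}$ are positive, there exist nonzero integers $n,m$ with $\lambda_{1}^{n}=\lambda_{2}^{m}$, so the classes agree. For surjectivity, given a real quadratic field $K$, let $\varepsilon>1$ be the fundamental unit of $\mathcal{O}_{K}$ and set $\lambda=\varepsilon$ when $N_{K/\Q}(\varepsilon)=1$, and $\lambda=\varepsilon^{2}$ otherwise. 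In either case $\lambda>1$ is a norm-one unit, hence $t=\lambda+\lambda^{-1}=\operatorname{tr}_{K/\Q}(\lambda)\in\Z$ with $t>2$; then $\lambda$ is an eigenvalue of the hyperbolic matrix $A=\left(\begin{smallmatrix}t&-1\\1&0\end{smallmatrix}\right)\in SL(2,\Z)$, and $\Q(\lambda)=K$, providing the required preimage.

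The only genuinely delicate point is the injectivity step, where one must rule out multiplicative dependencies among norm-one units other than those produced by powers of the fundamental unit; this is precisely what Dirichlet's theorem supplies, and it is essential that the unit rank of a real quadratic field is exactly one. The surjectivity subtlety of replacing $\varepsilon$ by $\varepsilon^{2}$ when $N_{K/\Q}(\varepsilon)=-1$ (so that $\bar\lambda=\lambda^{-1}$ rather than $-\lambda^{-1}$, putting the trace into the form $\lambda+\lambda^{-1}\in\N$) is routine but must not be overlooked.
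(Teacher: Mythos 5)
Your argument is correct and follows essentially the same route as the paper: the field $\Q(\lambda)$ of the stretching factor is a commensurability invariant by Corollary \ref{sol=quad}, and Dirichlet's unit theorem supplies both injectivity (the positive norm-one units form a rank-one group, so any two such $\lambda$'s in the same field have a common power) and surjectivity (a norm-one power of the fundamental unit realizes each field). The only cosmetic difference is that you produce the preimage via the companion matrix of $x^2-tx+1$, whereas the paper lets the unit act by multiplication on the lattice $\mathcal{O}_F\subset\R^2$; these give conjugate Anosov automorphisms, and you are in fact more explicit than the paper on the injectivity step.
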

\noi First let's notice that there are {\em oriented} manifolds modeled on $Sovl_3$ which do not fiber over the circle: one takes two nontrivial
interval bundles over the Klein bottle and glues appropriately their boundaries (which are two tori). However any compact 3-manifold with a
$Solv_3$ geometry is double covered by one that  {\em does} fiber over the circle. Then this double covering is of the form $\T^3_A$ and if we take $\T^3_{A^2}$
which is commensurable with $\T^2_A$ we can assume that $A$ or $A^2$ have positive trace then we can assume that $\lambda$ is in
the quadratic field $\Q\left(\sqrt{(trace(A))^2-4}\right)$.  
By proposition \ref{AB-matrices} this field is an invariant of the commensurability class.
 Reciprocally, given a square-free positive integer $d$ we can consider the totally real quadratic field $F=\Q(\sqrt{d})$ and its two real embeddings
 $i_1:\Q(\sqrt{d})\to\R$ and $i_2:\Q(\sqrt{d})\to\R$. If we take the embedding
 $I:\Q(\sqrt{d})\to\R^2,\quad I(k)=(i_1(k), i_2(k))$  we see that
 $I(\mathfrak{O}_F)$  is a lattice $\Lambda$  in $\R^2$.
 
 \noi By Dirichlet's unit theorem the group of units $\mathfrak{O}_F^*$ of  $\mathfrak{O}_F$ is infinite cyclic plus a
 group of order 2 and it acts linearly by multiplication
 $(t,s)\overset{\mathfrak{u}}\mapsto(\mathfrak{u}t,\mathfrak{u}^{-1}s)\quad (\mathfrak{u}\in\mathfrak{O}_F)$ on
 $\R^2$ preserving $\Lambda$. So it descends to an Anosov linear map $f_\mathfrak{u}:\R^2/\Lambda\to\R^2/\Lambda$
 of the torus $\R^2/\Lambda$. The suspension of this map is a
 solvmanifold that fibers over the circle with hyperbolic monodromy. Its commensurability class is independent of the unit since the
 group of units is free cyclic plus 2-torsion. This description implies: that the fundamental group of any solvmanifold
 $\T^3_A$ is a subgroup of the real affine group:
  $$
 GA(\Q(\sqrt{d}))=\left\{  \left(\begin{array}{cc}a&b\\0&{a^{-1}}\end{array}\right):\quad a>0, \,\, a,b\in\Q(\sqrt{d})    \right\}
$$
 
 \subsection{Adelic solenoids and solenoidal manifolds modeled in $Solv_3$}
 
 \noi Consider $\T^3_A$ in the spirit of geometric solenoidal 3-manifolds modeled on $Solv_3$. Then we denote its algebraic universal covering
 by the symbol $\mathfrak{S}_{A}$ and we have:  
 \begin{equation}
 \mathfrak{S}_{A}=(\widehat\sS^1\times\widehat\sS^1)\times _{\widehat{A}}\widehat\sS^1
 \end{equation}
 
 \noi where $\widehat\sS^1$ is the 1-dimensional
 universal solenoid which is the algebraic  universal covering of the circle and it is the Pontryagin dual of the additive rationals $(\Q,+)$ as
 described in definition \ref{adelesclassgroup}.
 (Thus $\widehat\sS^1=\mathbb{A}_\Q/\Q$ is the {\em ad\`ele class group of the rationals}).
 The 3-dimensional solenoid $\mathfrak{S}_{A}$ fibers over $\widehat\sS^1$ (the ad\`elic circle) with fiber
 $\widehat\sS^1\times\widehat\sS^1$ ({\em ad\`elic torus}).
The algebraic fundamental group of $\sS^1$ is the profinite completion of the integers $\widehat{\Z}$.
 Therefore the algebraic fundamental group of $(\widehat\sS^1\times\widehat\sS^1)\times _{\widehat{A}}\widehat\sS^1$  is
 the semidirect product $(\widehat{\Z}\times\widehat{\Z})\rtimes_{\widehat{A}}\widehat{\Z}$.
 
 \noi This encapsulates the commensurability classes
 of solvmanifolds.
 %%%%%%%
 \begin{remark} Corollary \ref{sol=quad2} was proved by Walter D. Neumann in \cite{Neu},
Theorem A. \end{remark}
 In fact, in \cite{Neu} Walter D. Neumann has given a classification of topological commensurability classes as follows:
 \begin{theorem}[Theorem A in Walter D. Newmann \cite{Neu}]\label{neu}
 For each of the six ``Seifert geometries" modeled on
 $\sS^3$, $\mathbb E^3$, $\H^2\times\R$,
 $\sS^2\times\R$, $\widetilde\PSL$ and $Nil_3$
 there is just one topological commensurability class of compact geometric 3-manifolds with the given geometric structure (two for the last two geometries if orientation-preserving commensurability of oriented manifolds is considered).
For the remaining non-hyperbolic geometry $Solv_3$, the geometric commensurability classes are in one-one correspondence with real quadratic number fields.
\end{theorem}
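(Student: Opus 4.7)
The plan is to handle the six Seifert geometries and the $Solv_3$ geometry separately. For $Solv_3$, the bijection with real quadratic fields is essentially already laid out in Subsection \ref{Invariants solvable case}, so I would simply assemble those pieces. I would invoke that every compact orientable $Solv_3$-manifold is, after at most a degree-$2$ cover, a hyperbolic torus bundle $\T^3_A$, then apply Proposition \ref{AB-matrices} to conclude that $\T^3_A$ and $\T^3_B$ are commensurable iff $\Q(\sqrt{(\operatorname{trace} A)^2-4})=\Q(\sqrt{(\operatorname{trace} B)^2-4})$, and finally realize every real quadratic field $F=\Q(\sqrt{d})$ by letting a fundamental unit of $\mathcal{O}_F$ act on $\mathcal{O}_F\cong\Z^2$ by multiplication, producing a hyperbolic matrix in $SL(2,\Z)$ with the prescribed discriminant field. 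That closes the $Solv_3$ half of the theorem.

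For the six Seifert geometries I would exploit the fact that any compact $M$ with one of these geometries admits a Seifert fibration $M\to B$ over a $2$-orbifold $B$, with fiber coming from the center of the identity component of the model isometry group. After passing to a finite normal cover (using Selberg's lemma to kill torsion) one may assume $B$ is a smooth orientable surface and $M\to B$ is an orientable principal $\sS^1$-bundle, classified by $(B,e)$ with $e\in\Z$. The proof would then proceed geometry by geometry: for $\sS^3$ I would use that every spherical manifold is finitely covered by $\sS^3$ itself; for $\mathbb E^3$ I would apply Bieberbach's theorem to obtain $\T^3$ as a common finite cover; for $\sS^2\times\R$ I would note that the finite list of such manifolds all have $\sS^2\times\sS^1$ as a common cover; for $Nil_3$ I would combine Mal'cev rigidity of nilpotent lattices with the classification of Euclidean $2$-orbifolds to reduce to the Heisenberg nilmanifold. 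For $\H^2\times\R$ and $\widetilde\PSL$ I would produce, given any two base hyperbolic surfaces $B_1,B_2$, a common finite unbranched cover $B$ by a Riemann--Hurwitz count (choose a genus $k$ such that $k-1$ is a common multiple of $g_i-1$, which always admits such a cover because surface groups contain subgroups of every finite index), and then put the two $\sS^1$-bundles over $B$ on a common cover by a cyclic fiberwise cover scaling the Euler number to a common multiple.

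The two-versus-one subtlety in the oriented case for $Nil_3$ and $\widetilde\PSL$ I would account for by noting that the center of the model Lie group endows the Seifert fiber with an intrinsic orientation, so the sign of the Euler number cannot be reversed by an orientation-preserving commensurability; this partitions the oriented commensurability classes into two by $\operatorname{sign}(e)$, which collapse in the unoriented category. The main obstacle I anticipate is the uniform treatment of $\H^2\times\R$ and $\widetilde\PSL$, since one must simultaneously control the base orbifold type and the Euler class under a common finite cover; this is the technical heart of the Seifert part, and resolving it requires the combined Riemann--Hurwitz existence argument together with the fiberwise cyclic-cover scaling described above. Once these geometry-by-geometry pieces are assembled and collated with the $Solv_3$ bijection, the theorem follows.
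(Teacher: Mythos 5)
Your proposal is sound in outline, but note that the paper itself does not prove this statement: Theorem \ref{neu} is quoted verbatim as Theorem A of Neumann \cite{Neu}, and the only part for which the paper supplies an argument is the $Solv_3$ half, developed in Subsection \ref{Invariants solvable case} via Proposition \ref{AB-matrices}, the eigenvalue/unit-group reformulation of Corollary \ref{sol=quad}, and Dirichlet's unit theorem for the realization of every real quadratic field. Your treatment of $Solv_3$ coincides with that argument (one small point of care, which the paper also handles by passing to $A^2$: the fundamental unit may have norm $-1$ or negative trace, so one squares it to land in $SL(2,\Z)$ with trace $>2$). For the six Seifert geometries you are therefore supplying a proof where the paper offers only a citation, and your geometry-by-geometry sketch is the standard one: $\sS^3$, $\mathbb{E}^3$ and $\sS^2\times\R$ are immediate, $Nil_3$ follows from commensurability of all lattices in the Heisenberg group, and the real content sits in $\H^2\times\R$ and $\widetilde{\PSL}$. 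There your two moves are correct but the bookkeeping deserves one more sentence: under a cover of degree $m$ on the base and $n$ on the fiber the normalized Euler number $e/\chi$ is multiplied by the positive rational $1/n$ after $\chi$ picks up $m$ and $e$ picks up $m/n$, and the connected $n$-fold fiberwise cover exists only when $n$ divides the order of the fiber class in $H_1$, i.e.\ $n\mid e$; so to pass from $e$ to $e'$ of the same sign one should first take a base cover of degree $|e'|$ and then a fiberwise cover of degree $|e|$. With that divisibility check made explicit, your argument for the single unoriented class (and the two oriented classes detected by $\mathrm{sign}(e/\chi)$ for $Nil_3$ and $\widetilde{\PSL}$) is complete, and the assembled statement matches Neumann's.
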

{\bf Acknowledgement,} Thanks are due to the referee who made some useful suggestions for improving the first version of this article.

\end{document}